\newtheorem{theorem}{Theorem}[section]
\newtheorem{lemma}[theorem]{Lemma}
\newtheorem{proposition}[theorem]{Proposition}
\newtheorem{corollary}[theorem]{Corollary}
{ \theoremstyle{definition}
\newtheorem{definition}[theorem]{Definition}}
{ \theoremstyle{remark}
\newtheorem{remark}[theorem]{Remark}}
\numberwithin{equation}{section}
\newcommand{\N}{\mathbb{N}}
\newcommand{\Z}{\mathbb{Z}}
\newcommand{\R}{\mathbb{R}}
\newcommand{\E}{\mathbb{E}}
\newcommand{\CL}{\mathcal{L}}
\title{Characterization of $H$-Brownian Gibbsian line ensembles}
\date{\today}
\author{Evgeni Dimitrov}
\begin{document}

\maketitle 

\begin{abstract}
In this paper we show that an $H$-Brownian Gibbsian line ensemble is completely characterized by the finite-dimensional marginals of its lowest indexed curve for a large class of interaction Hamiltonians $H$. A particular consequence of our result is that the KPZ line ensemble is the unique line ensemble that satisfies the $H_1$-Brownian Gibbs property with $H_1(x) = e^x$ and whose lowest indexed curve is equal to the Cole-Hopf solution to the narrow wedge KPZ equation.
\end{abstract}

\tableofcontents

%
\section{Introduction and main result}\label{Section1}

%
\subsection{Gibbsian line ensembles}\label{Section1.1}In the past two decades there has been an increased interest in studying {\em Gibbsian line ensembles} due to their occurrence in integrable probability and the Kardar-Parisi-Zhang (KPZ) universality class. Informally, a Gibbsian line ensemble can be thought of as a collection of labeled random walks or Brownian motions, whose joint law is reweighed by a Radon-Nikodym derivative proportional to the exponential of the sum of local interaction energies between consecutively labeled curves. We refer to the introduction of \cite{BCDpaperII} for a more extensive discussion of general Gibbsian line ensembles and their applications. Below we focus on a specific class of Gibbsian line ensembles, whose underlying paths are Brownian and whose curves interact through a Hamiltonian function $H$. We begin by introducing a bit of notation.

A {\em $\Sigma$-indexed line ensemble} is a collections of curves $\mathcal{L} = \{ \mathcal{L}_i \}_{i \in \Sigma}$ all defined on the same probability space, where $\Sigma \subset \mathbb{Z}$ and for each $i \in \Sigma$ the $\mathcal{L}_i$ is a continuous function on a fixed interval $\Lambda  \subset \mathbb{R}$. In this paper, we will almost exclusively consider the case when $\Sigma = \llbracket 1, N \rrbracket$, where we write $\llbracket p, q \rrbracket = \{p, p+1, \dots, q\}$ and $N \in \mathbb{N} \cup \infty$ (if $N = \infty$ we adopt the convention $\Sigma = \mathbb{N}$). We say that the $\llbracket 1, N \rrbracket$-indexed line ensemble $\mathcal{L}$ satisfies the {\em $H$-Brownian Gibbs property} if for any $k_1, k_2 \in \Sigma$ with $k_1\leq k_2$ and $[a,b] \subset \Lambda$ the law of the curves $\mathcal{L}_{k_1}, \dots, \mathcal{L}_{k_2}$ on the interval $[a,b]$ is that of $k = k_2 - k_1 + 1$ independent Brownian bridges $\{ \mathcal{Q}_i \}_{i = k_1}^{k_2}$ between the points $(a, \mathcal{L}_{i}(a))$ and $(b, \mathcal{L}_{i}(b))$ for $i = k_1, \dots, k_2$, whose law has been reweighed by the following Radon-Nikodym derivative
\begin{equation}\label{S1E1}
W_H(\mathcal{Q}_{k_1}, \dots, \mathcal{Q}_{k_2}) \propto \exp \left( - \sum_{i = k_1 - 1}^{k_2} \int_a^b H \left( \mathcal{Q}_{i+1}(u) - \mathcal{Q}_i(u)\right)du \right),
\end{equation}
where we have $\mathcal{Q}_{k_1-1} = \mathcal{L}_{k_1-1}$ and $\mathcal{Q}_{k_2 + 1} = \mathcal{L}_{k_2+1}$. 

There are a few subtleties about the above definition. For example, one needs to make a convention about what happens when $k_1 = 1$ or $k_2 = N$ (assuming $N < \infty$) and we will assume in these cases that $\mathcal{L}_0 = \infty$, while $\mathcal{L}_{N+1} = -\infty$. In addition, one should investigate the well-posedness of the Radon-Nikodym derivative implicitly defined in (\ref{S1E1}) (for example if $H: [-\infty, \infty) \rightarrow [0, \infty)$ is continuous the Radon-Nikodym is well-defined). All of these issues will be addressed in Section \ref{Section2.1} where the formal definition is given, and here we proceed with our (somewhat informal) discussion of $H$-Brownian Gibbsian line ensembles.\\

In plain words, a $\llbracket 1, N \rrbracket$-indexed $H$-Brownian Gibbsian line ensemble $\mathcal{L} = \{\mathcal{L}_i \}_{i = 1}^N$ is a collection of $N$ random curves, whose local structure is that of independent Brownian bridges, such that the $i$-th curve $\mathcal{L}_i$ interacts only with its neighbors $\mathcal{L}_{i-1}$ and $\mathcal{L}_{i+1}$ as described in (\ref{S1E1}). For example, if $H(x) \equiv \mathsf{const}$ for $x \in \Lambda$ then the curves do not interact at all and $N$ independent standard Brownian motions form a  $\llbracket 1, N \rrbracket$-indexed $H$-Brownian Gibbsian line ensemble on $\Lambda = [0, \infty)$. 

Another natural choice for $H$ is to set $H(x) = 0$ if $x < 0$ and $H(x) = \infty$ if $x \geq 0$. Notice that for this choice of $H$ we have that $W_H$ in (\ref{S1E1}) equals $1$ if $\mathcal{Q}_{k_1}(x) > \cdots > \mathcal{Q}_{k_2}(x)$ for all $x \in [a,b]$ and otherwise it equals $0$. In this case the law of $\mathcal{L}$ is supported on curves such that $\mathcal{L}_1(x) > \mathcal{L}_2(x) > \cdots$ for all $x \in \Lambda$ and the local structure of $\mathcal{L}$ is that of independent Brownian bridges, which have been conditioned to never intersect. For this special choice of $H$, the terms {\em Brownian Gibbs property} and {\em Brownian Gibbsian line ensembles} are used instead, cf. \cite{CorHamA}. For finite $N$ {\em Dyson Brownian motion} \cite{Dys62} is a $\llbracket 1, N \rrbracket$-indexed Brownian Gibbsian line ensemble on $\Lambda = (0,\infty)$. An example of a $\llbracket 1, N \rrbracket$-indexed Brownian Gibbsian line ensemble with $N =\infty$ and $\Lambda = \mathbb{R}$ is given by the {\em parabolic Airy line ensemble} $\mathcal{L}^{Airy}$, see Figure \ref{S1_1}. 
\begin{figure}[ht]
\begin{center}
  \includegraphics[scale = 0.7]{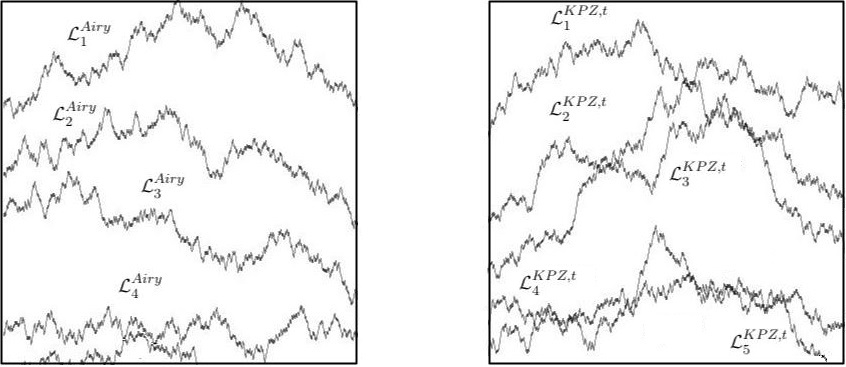}
  \vspace{-2mm}
  \caption{The left picture depicts the parabolic Airy line ensemble $\mathcal{L}^{Airy} = \{\mathcal{L}^{Airy}_i\}_{i = 1}^\infty$ and the right depicts the KPZ$_t$ line ensemble $\mathcal{L}^{KPZ,t} = \{\mathcal{L}^{KPZ,t}_i\}_{i = 1}^\infty$.}
  \label{S1_1}
  \end{center}
\end{figure}

The parabolic Airy line ensemble is a random collection of curves $\mathcal{L}^{Airy} = \{\mathcal{L}^{Airy}_i\}_{i = 1}^\infty$ such that:
\begin{enumerate}[leftmargin =  20mm ]
\item[{ \em    (Airy-1)}] If $\mathcal{A}^{Airy} = \{\mathcal{A}_i^{Airy}\}_{i = 1}^\infty$ is defined by $\mathcal{A}^{Airy}_i(t) = 2^{1/2} \mathcal{L}^{Airy}_i(t) + t^2$ for $i \in \mathbb{N}$, then the lowest indexed curve $\mathcal{A}_1^{Airy}$ is equal in distribution to the {\em Airy process} from \cite{Spohn};
\item[{ \em    (Airy-2)}] $\mathcal{L}^{Airy}$ satisfies the Brownian Gibbs property, i.e. the $H$-Brownian Gibbs property with $H(x) = 0$ if $x < 0$ and $H(x) = \infty$ if $x \geq 0$.
\end{enumerate}
The $\llbracket 1, N \rrbracket$-indexed line ensemble $\mathcal{A}^{Airy}$ is called the {\em Airy line ensemble} and was constructed in \cite{CorHamA} as the weak $N \rightarrow \infty$ limit of $N$ non-intersecting Brownian bridges. It is a stationary line ensemble, whose top curve is given by the Airy process, introduced in \cite{Spohn}. The parabolic Airy line ensemble $\mathcal{L}^{Airy}$ is obtained from $\mathcal{A}^{Airy}$ by subtracting $t^2$ and dividing by $2^{1/2}$. The parabolic shift by $-t^2$ is what ensures that $\mathcal{L}^{Airy}$ satisfies the Brownian Gibbs property, and the division by $2^{1/2}$ is to make the paths of $\mathcal{L}^{Airy}$ be Brownian bridges with diffusion parameter $1$.

Yet another natural choice of $H$, which is closer to the topic of the present paper, is to take $H(x) = H_t(x):= e^{t^{1/3}x}$ for $t > 0$. One can think of this family of interactions as interpolating between the $H_0(x) \equiv 1$ and $H_\infty(x) = \infty \cdot {\bf 1}\{ x> 0\}$, which are the two cases we considered above. The curves of a $\llbracket 1, N \rrbracket$-indexed $H_t$-Brownian Gibbsian line ensemble are allowed to go out of order (i.e. cross); however, they are exponentially penalized for doing so. For finite $N$ the free energies of the {\em O'Connell-Yor semi-discrete Brownian polymer} \cite{CorHamK, OC, OCY} form a $\llbracket 1, N \rrbracket$-indexed $H_1$-Brownian Gibbsian line ensemble on $[0,\infty)$. An example of a $\llbracket 1, N \rrbracket$-indexed $H_t$-Brownian Gibbsian line ensemble with $N =\infty$ and $\Lambda = \mathbb{R}$ is given by the KPZ$_t$ line ensemble $\mathcal{L}^{KPZ,t}$, see Figure \ref{S1_1}. For $t \geq 1$ the KPZ$_t$ line ensemble is a random collection of curves $\mathcal{L}^{KPZ,t} = \{\mathcal{L}^{KPZ,t}_i\}_{i = 1}^\infty$ such that:
\begin{enumerate}[leftmargin =  20mm ]
\item[{ \em    (KPZ$_t$-1)}] The lowest indexed curve $\mathcal{L}^{KPZ,t}_1$ is equal in distribution to the time $t$ Cole-Hopf solution of the KPZ equation 
\begin{equation}\label{KPZEq}
\partial_tH(x,t) = \frac{1}{2} \partial^2_x \mathcal{H}(x,t) + \frac{1}{2} [ \partial_x \mathcal{H}(x,t)]^2 +  \xi(x,t),
\end{equation}
with narrow wedge initial data;
\item[{ \em    (KPZ$_t$-2)}] $\mathcal{L}^{KPZ,t} $ satisfies the $H$-Brownian Gibbs property with $H(x) =  e^{t^{1/3}x}$.
\end{enumerate}
In (\ref{KPZEq}) the letters $t$ and $x$ denote time and space, and $\xi$ is space-time Gaussian white noise, so that formally $\mathbb{E} \left[ \xi(x,t) \xi(y,s)\right] = \delta(t-s)\delta(x-y).$ For more background and history of the KPZ equation we refer the reader to the surveys  \cite{CU2, QS}. The KPZ$_t$ line ensemble was constructed in \cite{CorHamK} as the weak limit of the free energies of the O'Connell-Yor semi-discrete Brownian polymer.\\

The above few paragraphs introduced our main objects of interest -- the $H$-Brownian Gibbsian line ensembles, and gave some natural examples. We next turn to formulating our results.

In the recent joint work with K. Matetski \cite{DM20}, the author showed that a Brownian Gibbsian line ensemble is completely characterized by the finite-dimensional distribution of its lowest indexed curve. In particular, the parabolic Airy line ensemble $\mathcal{L}^{Airy}$ is the unique $\mathbb{N}$-indexed line ensemble on $\mathbb{R}$ that satisfies conditions (Airy-1) and (Airy-2) above. In the present paper we extend this result to $H$-Brownian Gibbsian line ensembles. The main result of the paper is as follows.

\begin{theorem}\label{ThmMainS1}  Let $\Sigma = \{ 1,2, \dots,  N \}$ with $N \in \mathbb{N}$ or $\Sigma =\mathbb{N}$, and let $\Lambda \subset \mathbb{R}$ be an interval. Suppose further that $H: [-\infty, \infty) \rightarrow [0, \infty)$ is a continuous function, which is convex and such that for some fixed $\lambda > 0$ one has for any $M \in \mathbb{N}$ that
\begin{equation}\label{S1E1.5}
\limsup_{y \rightarrow \infty} \sup_{x \in [-M, M]} \left| \frac{H(x+y)}{H(y)} - e^{\lambda x} \right| = 0.
\end{equation}
 Let $\mathcal{L}^1$ and $\mathcal{L}^2$ be $\Sigma$-indexed line ensembles on $\Lambda$ satisfying the $H$-Brownian Gibbs property with laws $\mathbb{P}_1$ and $\mathbb{P}_2$, respectively. Suppose further that for every $k\in \mathbb{N}$,  $t_1 < t_2 < \cdots < t_k$ with $t_i \in \Lambda$ and $x_1, \dots, x_k \in \mathbb{R}$ we have that 
\begin{equation*}
\mathbb{P}_1 \left( \mathcal{L}^1_1(t_1) \leq x_1, \dots,\mathcal{L}^1_1(t_k) \leq x_k  \right) =\mathbb{P}_2 \left( \mathcal{L}^2_1(t_1) \leq x_1, \dots,\mathcal{L}^2_1(t_k) \leq x_k  \right).
\end{equation*}
Then we have that $\mathbb{P}_1 = \mathbb{P}_2$.
\end{theorem}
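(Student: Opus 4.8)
\smallskip
\noindent\emph{Proof proposal.} The plan is to bootstrap from the lowest-indexed curve to the whole ensemble using the $H$-Brownian Gibbs property, stochastic monotonicity, and a sandwiching argument built from finite approximations. First I would reduce the theorem to a statement about cylinder laws: the law of a $\Sigma$-indexed line ensemble on $\Lambda$ is determined by the joint laws of $\left(\mathcal{L}_1|_{[a,b]}, \dots, \mathcal{L}_k|_{[a,b]}\right)$ over $k \in \Sigma$ and compact $[a,b] \subset \Lambda$, and each such law on $C([a,b])^k$ is determined by its finite-dimensional marginals. Hence it suffices to prove, by induction on $k$, that $\mathbb{P}_1$ and $\mathbb{P}_2$ give $(\mathcal{L}_1, \dots, \mathcal{L}_k)$ the same law; the base case $k=1$ is precisely the hypothesis. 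For the inductive step (assuming $k+1 \in \Sigma$) the task is to show that the regular conditional distribution of $\mathcal{L}_{k+1}$ given $(\mathcal{L}_1,\dots,\mathcal{L}_k)$ is described by the same measurable kernel under $\mathbb{P}_1$ and $\mathbb{P}_2$, which together with the inductive hypothesis upgrades agreement of the top $k$ curves to agreement of the top $k+1$.

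To analyze this conditional law I would work inside a large box $\llbracket k+1, m\rrbracket \times [-T,T]$, with $m = N$ when $N < \infty$ (so $\mathcal{L}_{m+1}\equiv -\infty$) and $m\to\infty$ jointly with $T$ when $N = \infty$. By the $H$-Brownian Gibbs property, conditionally on the top $k$ curves, on the endpoint heights $\big(\mathcal{L}_i(\pm T)\big)_{k+1\le i\le m}$, and on $\mathcal{L}_{m+1}|_{[-T,T]}$, the curves $\big(\mathcal{L}_{k+1},\dots,\mathcal{L}_m\big)|_{[-T,T]}$ have the explicit law of independent Brownian bridges reweighted by $W_H$, depending on the conditioning only through $\mathcal{L}_k|_{[-T,T]}$, those finitely many endpoint heights, and $\mathcal{L}_{m+1}|_{[-T,T]}$. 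Convexity of $H$ makes $W_H$ log-supermodular, so there is a stochastic monotone coupling in which raising the endpoint heights and the lower boundary curve raises all of the reweighted bridges; this lets me sandwich the conditional law, restricted to a fixed window and a fixed set of curves, between two extremal versions obtained by sending the free boundary data to $\pm\infty$. The analytic inputs I would need are: (i) a priori tail and modulus-of-continuity bounds on $\mathcal{L}^1, \mathcal{L}^2$, obtained from the Gibbs property by comparison with Brownian bridges; (ii) quantitative bounds on the $W_H$ normalizing constants, where the asymptotics (\ref{S1E1.5}) and convexity of $H$ are essential, since they force the interaction to behave like $e^{\lambda x}$ and thereby confine each curve to an $O(1)$ band below its upper neighbour; and (iii) the strong (stopping-time) Gibbs property, to place these boxes at well-chosen random times. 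The prescribed law of $\mathcal{L}_1$ is used not only in the base case but throughout: it is the anchor that, propagated down the chain of nearest-neighbour interactions, keeps the lower curves from escaping to $-\infty$ and makes the extremal sandwiching ensembles tight.

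The main obstacle, and the technical heart, is to show that the upper and lower sandwiching laws collapse onto a common limit as $T\to\infty$ (and $m\to\infty$): the influence on a fixed window of the boundary data at $\pm T$ and of the index truncation must wash out. This is a quantitative decoupling statement that has to combine the monotone coupling with the confinement estimates coming from (\ref{S1E1.5}) -- propagated through all the intervening curves -- and reconcile them with the control inherited from the fixed law of $\mathcal{L}_1$; the common limit is then a deterministic functional of $\mathcal{L}_k|_{[a,b]}$, hence the same for $\mathbb{P}_1$ and $\mathbb{P}_2$, which closes the induction. When $N=\infty$ one additionally has to approximate the $\mathbb{N}$-indexed ensemble by its finite-$N$ truncations and check that all of the above estimates are uniform in $N$.
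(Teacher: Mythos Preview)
Your approach is natural but different from the paper's, and the step you correctly flag as ``the main obstacle'' is a genuine gap for which you give no mechanism. Showing that the upper and lower sandwiching laws collapse as $T,m\to\infty$ is a uniqueness-of-Gibbs-measure statement for the semi-infinite ensemble with prescribed top boundary $\mathcal{L}_k$; this is essentially what the theorem asserts, so you cannot invoke it without proof. The $H$-Brownian Gibbs property only gives the conditional law of $(\mathcal{L}_{k+1},\dots,\mathcal{L}_m)|_{[-T,T]}$ given the \emph{full} exterior $\sigma$-algebra, including $\mathcal{L}_{m+1}$ and the endpoint heights; to conclude that the conditional law of $\mathcal{L}_{k+1}$ given only $(\mathcal{L}_1,\dots,\mathcal{L}_k)$ is the same kernel under $\mathbb{P}_1$ and $\mathbb{P}_2$, you must show the residual randomness is irrelevant in the limit, and monotone coupling plus tightness does not do this --- it gives tightness of the sandwich, not coincidence of its two ends. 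Your proposed use of (\ref{S1E1.5}) as a confinement estimate is also unclear: the condition is an asymptotic ratio statement, not a spatial-mixing bound, and you have not said how it would propagate through many curves to force decoupling.

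The paper avoids this decoupling problem entirely by a direct observable argument. Instead of reconstructing the conditional kernel, it builds functions $G_w$ of the top $k$ curves alone (so $\mathbb{E}_{\mathbb{P}_1}[G_w]=\mathbb{E}_{\mathbb{P}_2}[G_w]$ by induction) and shows $\mathbb{E}_{\mathbb{P}_v}[G_w]\to\mathbb{E}_{\mathbb{P}_v}\bigl[\exp\bigl(-z\,e^{\lambda\mathcal{L}^v_{k+1}(t)}\bigr)\bigr]$. The observable forces the top $k$ curves below level $-A_w\to-\infty$ at two nearby times, normalized by the same probability with no lower boundary; after applying the Gibbs property, the Boltzmann factor from the interaction with $\mathcal{L}_{k+1}$ over an interval of length $2\beta/H(A_w)$ is approximately $\exp\bigl(-2\beta\,H(\mathcal{L}_{k+1}(t)+A_w)/H(A_w)\bigr)$, and (\ref{S1E1.5}) is used precisely here to make this converge to $\exp\bigl(-2\beta\,e^{\lambda\mathcal{L}_{k+1}(t)}\bigr)$. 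Convexity enters only once, via the monotone coupling, to dominate the pre-limit ratios by $1$ for bounded convergence. Varying $\beta$ then determines the one-point (and, in the multi-point version, the finite-dimensional) law of $\mathcal{L}_{k+1}$, closing the induction without any spatial decoupling.
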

\begin{remark}
In plain words, Theorem \ref{ThmMainS1} states that if two line ensembles both satisfy the $H$-Brownian Gibbs property and their lowest indexed curves have the same finite-dimensional distributions, then they have the same distribution as line ensembles, provided that $H$ satisfies a few technical conditions. 
\end{remark}
\begin{remark}
Theorem \ref{ThmMainS1} is formulated slightly more generally after introducing some necessary notation as Theorem \ref{ThmMain} in the main text.
\end{remark}
\begin{remark}
Since the functions $H_t(x) = e^{t^{1/3}x}$ for $t > 0$ satisfy the conditions in the Theorem \ref{ThmMainS1}, we conclude that for any $t \geq 1$ the KPZ$_t$ line ensemble $\mathcal{L}^{KPZ,t}$ is the unique $\mathbb{N}$-indexed line ensemble on $\mathbb{R}$ that satisfies conditions (KPZ$_t$-1) and (KPZ$_t$-2). We mention that Theorem \ref{ThmMainS1} {\em does not} apply to the parabolic Airy line ensemble since (for example) the function $H_\infty(x) = \infty \cdot {\bf 1}\{ x> 0\}$ is not continuous. In addition, Theorem \ref{ThmMainS1} {\em does not} apply to the case of $N$ independent Brownian motions since $H_0(x) =1$ does not satisfy $\lim_{y \rightarrow \infty} H_0(1 + y)/H_0(y) = e^{\lambda}$ for any $\lambda > 0$.
\end{remark}
\begin{remark}
The class of functions $H$ in the statement of Theorem \ref{ThmMainS1} is dictated by the particular approach we take in the proof, which goes through constructing a certain class of observables of the top $k$ curves that probe the $(k+1)\textsuperscript{st}$. It seems possible that one can handle more general functions $H$ by modifying these observables, but it is presently not clear how to accomplish this. We also mention that in addition to $H(x) = e^{\lambda x}$ for $\lambda >0$, Theorem \ref{ThmMainS1} holds when $H(x) = f(x) e^{\lambda x} + g(x)$ is non-negative, continuous and convex, $f(x)$ is a polynomial and $g(x)$ grows slower than $e^{\lambda x}$ near infinity. For example one can take $H(x) = e^{\lambda x} + x^2\cdot {\bf 1}\{x \geq 0\} \mbox{ or } H(x) = C_0 e^{\lambda x} + \sum_{i = 1}^k C_i e^{\lambda_i x},$
where  $\lambda > \max_{1 \leq i \leq k} \lambda_i$ and $\lambda_i,C_i  >0$ for all $i$. One can also take $H(x) = (x^2 + 4) e^x$. 
\end{remark}

In the next few pages we will explain the main ideas behind the proof of Theorem \ref{ThmMainS1}, or rather its generalization Theorem \ref{ThmMain} in the text. We will try to illustrate how these ideas are different from those in \cite{DM20}, which (informally) correspond to the case $H(x) = H_\infty(x) = \infty \cdot {\bf 1}\{ x> 0\}$. We will also explain the origin of the assumptions on $H$ in Theorems \ref{ThmMainS1} and \ref{ThmMain}. 

The philosophy of the proof of Theorem \ref{ThmMain} is similar to \cite{DM20}. The argument is inductive and one roughly shows that if two $H$-Brownian Gibbsian line ensembles $\mathcal{L}^1$ and $\mathcal{L}^2$ have the same finite-dimensional distributions when restricted to their lowest-labeled $k$ curves then the same is true for when they are restricted to their lowest-labeled $k+1$ curves. The difficulty lies in establishing the induction step, since we are assuming the statement of the theorem for the base case $k = 1$. In going from $k$ to $k+1$ we use equality of laws of $\{\mathcal{L}^1_i\}_{i = 1}^k$ and $\{\mathcal{L}^2_i\}_{i = 1}^k$, available by the induction hypothesis. The key idea is to construct a family of observables, which are measurable with respect to the lowest-labeled $k$ curves, but which probe the $(k+1)\textsuperscript{st}$ one. Informally speaking, the law of $\{\mathcal{L}^v_i\}_{i = 1}^k$ for $v \in \{1,2\}$  is that of $k$ independent Brownian bridges, which interact with each other and the $(k+1)\textsuperscript{st}$ curve $\mathcal{L}^v_{k+1}$ through the Hamiltonian $H$ as in (\ref{S1E1}). Then, the observables we construct measure the difference in the local behavior between $\{\mathcal{L}^v_i\}_{i = 1}^k$ and that of $k$ independent Brownian bridges that continue to interact with each other but {\em not} with $\mathcal{L}^v_{k+1}$.

The differences with \cite{DM20} lie in the choice of observables and also in determining general conditions on $H$, for which the above strategy would work. To illustrate the differences, let us explain how one establishes the equality in law of $\mathcal{L}^1_2(t)$ and $\mathcal{L}^2_2(t)$ for a fixed $t \in \Lambda$ in \cite{DM20} (which is the case $H(x) = H_\infty(x) = \infty \cdot {\bf 1}\{ x> 0\}$) and in our setup. In \cite{DM20} the authors considered the following family of observables for $v \in \{1,2\}$
\begin{equation}\label{S1Obs1}
q_n^v = \mathbb{E}_{\mathbb{P}_v} \left[ \frac{{\bf 1} \{ \mathcal{L}_1^v(t) \leq x \}}{\mathbb{P}_{free}^{v, n} (\mathcal{Q}_{bot}(t) \leq x)} \right],
\end{equation}
where $x \in \mathbb{R}$ is fixed, $t \in [a_n, b_n]$ and $b_n - a_n \rightarrow 0$. In the above equation $\mathbb{P}_{free}^{v, n}$ denotes the law of a Brownian bridge with diffusion parameter $1$ between the points $(a_n, \mathcal{L}_1^v(a_n))$ and $(b_n, \mathcal{L}_1^v(b_n))$ and $\mathcal{Q}_{bot}$ has law $\mathbb{P}_{free}^{v,n}$, see Figure \ref{S1_2}. 
\begin{figure}[ht]
\begin{center}
  \includegraphics[scale = 0.47]{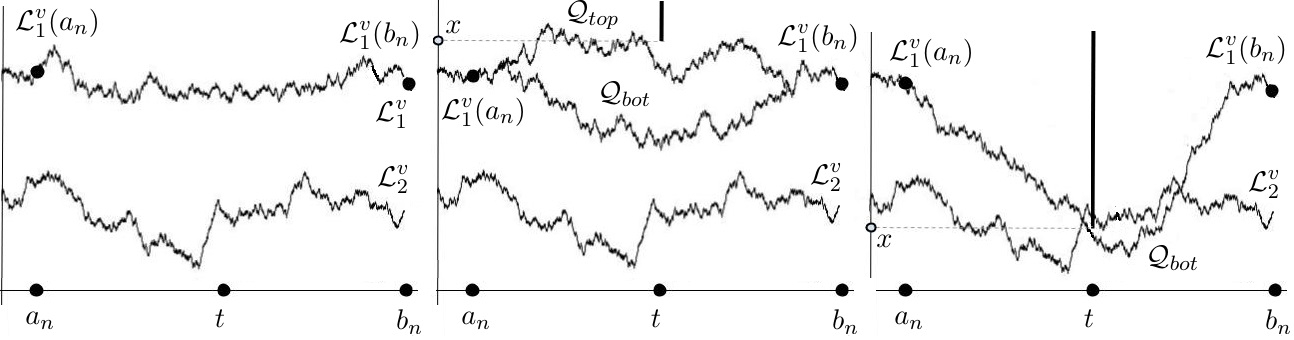}
  \vspace{-2mm}
  \caption{The left picture depicts the lowest-indexed two curves $\mathcal{L}^v_1$ and $\mathcal{L}^v_2$ of the line ensemble $\mathcal{L}^v$. The middle picture shows a level $x$ that is above $\mathcal{L}^v_2(t)$ and samples of $\mathcal{Q}_{top}$ from $\mathbb{P}_{H_\infty}^{v,n} $ and $\mathcal{Q}_{bot}$ from $\mathbb{P}_{free}^{v,n}$ such that $\mathcal{Q}_{top}(t) \leq x$ and $\mathcal{Q}_{bot}(t) \leq x$. Both $\mathcal{Q}_{top}$ and $\mathcal{Q}_{bot}$ are Brownian bridges between the points $(a_n, \mathcal{L}^v_1(a_n))$ and $(b_n, \mathcal{L}^v_1(b_n))$ with $\mathcal{Q}_{top}$ being conditioned to stay above $\mathcal{L}^v_2$ on $[a_n, b_n]$, and $\mathcal{Q}_{bot}$ being free to fall below it. The picture on the right depicts $\mathcal{Q}_{bot}$ when $x < \mathcal{L}^v_2(t)$. In this case there is no $\mathcal{Q}_{top}$ with $\mathcal{Q}_{top}(t) \leq x$, since $\mathcal{Q}_{top}$ is supported on curves above $\mathcal{L}^v_2$ on $[a_n, b_n]$. }
  \label{S1_2}
  \end{center}
\end{figure}

Since the function inside the expectation only depends on the lowest-labeled curve $\mathcal{L}_1^v$ we have that $q_n^1 = q_n^2$ for all $n \in \mathbb{N}$. Upon applying the $H_{\infty}$-Brownian Gibbs property one obtains that 
$$q_n^v = \mathbb{E}_{\mathbb{P}_v} \left[ \frac{\mathbb{P}_{H_\infty}^{v,n} (\mathcal{Q}_{top}(t) \leq x)}{\mathbb{P}_{free}^{v,n} (\mathcal{Q}_{bot}(t) \leq x)} \right],$$
where we write $\mathbb{P}_{H}^{v,n}$ for the law of a Brownian bridge with diffusion parameter $1$ between the points $(a_n, \mathcal{L}_1^v(a_n))$ and $(b_n, \mathcal{L}_1^v(b_n))$, which interacts with $\mathcal{L}^v_2$ on the interval $[a_n, b_n]$ through the Hamiltonian $H$ as in (\ref{S1E1}), and $\mathcal{Q}_{top}$ has law $\mathbb{P}_{H}^{v,n}$.

What the authors of \cite{DM20} were able to show (modulo some technicalities) is that as $n \rightarrow \infty$ one has $\mathbb{P}_v$-almost surely
\begin{equation}\label{S1E2}
 \frac{\mathbb{P}_{H_{\infty}}^{v,n} (\mathcal{Q}_{top}(t) \leq x)}{\mathbb{P}_{free}^{v,n} (\mathcal{Q}_{bot}(t) \leq x)} \rightarrow {\bf 1} \{ \mathcal{L}_2^v(t) \leq x\}.
\end{equation}
In addition, the authors showed that the random curves $\mathcal{Q}_{top}$ and $\mathcal{Q}_{bot}$ can be coupled so that $\mathcal{Q}_{top}(s) \geq \mathcal{Q}_{bot}(s)$ for all $s \in [a_n, b_n]$, see Figure \ref{S1_2}. Consequently, the quantities on the left side of (\ref{S1E2}) are bounded by $1$ and the bounded convergence theorem implies
\begin{equation}\label{S1E3}
\mathbb{P}_1(\mathcal{L}_2^1(t) \leq x) = \lim_{n\rightarrow \infty} q_n^1 =\lim_{n\rightarrow \infty} q_n^2  = \mathbb{P}_2(\mathcal{L}_2^2(t) \leq x).
\end{equation}
This proves the equality in law of $\mathcal{L}_2^1(t) $ and $\mathcal{L}_2^2(t)$.

In the setup of Theorem \ref{ThmMain} the above choice of observables will not yield anything, because $\mathbb{P}_v$-almost surely
\begin{equation}\label{S1E3.5}
\frac{\mathbb{P}_{H}^{v,n} (\mathcal{Q}_{top}(t) \leq x)}{\mathbb{P}_{free}^{v,n} (\mathcal{Q}_{bot}(t) \leq x)} \rightarrow 1.
\end{equation}
The reason behind this stark difference with (\ref{S1E2}) can be understood by looking at the Radon-Nikodym derivative of $\mathcal{Q}_{top}$ with respect to a Brownian bridge, which is given by 
\begin{equation}\label{S1E4}
\frac{d \mathbb{P}_{H}^{v,n} }{d \mathbb{P}_{free}^{v,n} }  (\mathcal{Q}) = W^{v,n}_H(\mathcal{Q}) \propto \exp \left( -  \int_{a_n}^{b_n} H \left(  \mathcal{L}_2^v(u) -\mathcal{Q}(u) \right)du \right).
\end{equation}
If $H = H_\infty$ we see that the above Radon-Nikodym derivative is $0$ if $\mathcal{Q}(t) < \mathcal{L}_2^v(t)$ regardless of how small $b_n - a_n$ is and if $\mathcal{Q}(t) > \mathcal{L}_2^v(t)$ it is asymptotically (as $n \rightarrow \infty$) equal to $1$. This dichotomy is what ultimately fuels the limit in (\ref{S1E2}) to an indicator function. On the other hand, for a continuous function $H$ if $b_n - a_n \rightarrow 0$ then we always have $W^{v,n}_H(\mathcal{Q}) \rightarrow 1$, and this ultimately results in (\ref{S1E3.5}).

Faced with the above difficulty, we needed to find a different set of observables from those in (\ref{S1Obs1}) that would work in the case of a continuous Hamiltonian $H$. The main idea behind our new observables is that instead of pushing the lowest-labeled curve below a fixed level $x$ as is done in (\ref{S1Obs1}), we push it  below a varying with $n$ level $-A_n$ where $A_n \rightarrow \infty$ as $n \rightarrow \infty$. For technical reasons, that will become clearer later in the paper, it is more convenient to push the lowest-labeled curve below $-A_n$ at two points $c_n, d_n$ such that $a_n < c_n < t < d_n < b_n$ and both $b_n - d_n$, $c_n - a_n$ are tiny compared to $d_n - c_n$. The resulting observables are then given by
\begin{equation}\label{S1Obs2}
p_n^v = \mathbb{E}_{\mathbb{P}_v} \left[ \frac{{\bf 1} \{ \mathcal{L}_1^v(c_n) \leq -A_n \} \cdot {\bf 1} \{ \mathcal{L}_1^v(d_n) \leq -A_n \}}{\mathbb{P}_{free}^{v,n} (\mathcal{Q}_{bot}(c_n) \leq  - A_n \mbox{ and } \mathcal{Q}_{bot}(d_n) \leq  - A_n)} \right].
\end{equation}
As before, we have that the function inside the expectation only depends on the lowest-labeled curve $\mathcal{L}_1^v$ and so $p_n^1 = p_n^2$ for all $n \in \mathbb{N}$. Upon applying the $H$-Brownian Gibbs property one obtains that 
$$p_n^v = \mathbb{E}_{\mathbb{P}_v} \left[ \frac{\mathbb{P}_{H}^{v,n} (\mathcal{Q}_{top}(c_n) \leq  - A_n \mbox{ and } \mathcal{Q}_{top}(d_n) \leq  - A_n)}{\mathbb{P}_{free}^{v,n}  (\mathcal{Q}_{bot}(c_n) \leq  - A_n \mbox{ and } \mathcal{Q}_{bot}(d_n) \leq  - A_n)} \right].$$
See Figure \ref{S1_3} for a pictorial depiction of some of the above notation.
\begin{figure}[ht]
\begin{center}
  \includegraphics[scale = 0.50]{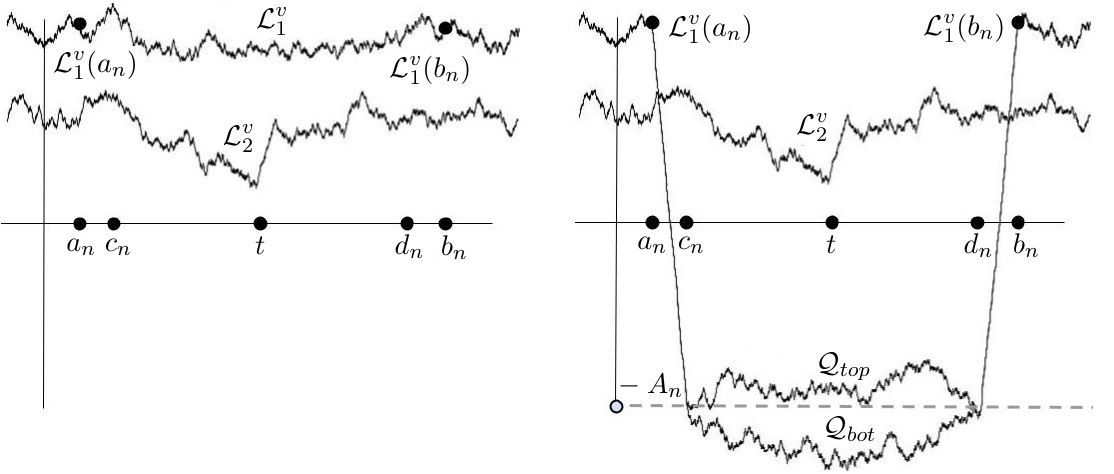}
  \vspace{-2mm}
  \caption{The left picture depicts the lowest-indexed two curves $\mathcal{L}^v_1$ and $\mathcal{L}^v_2$ of the line ensemble $\mathcal{L}^v$. The right picture shows the level $-A_n$ and samples of $\mathcal{Q}_{top}$ from $\mathbb{P}_{H}^{v,n} $ and $\mathcal{Q}_{bot}$ from $\mathbb{P}_{free}^{v,n}$ such that $\mathcal{Q}_{top}(c_n),\mathcal{Q}_{top}(d_n)  \leq -A_n$ and $\mathcal{Q}_{bot}(c_n),\mathcal{Q}_{bot}(d_n) \leq -A_n$. Both $\mathcal{Q}_{top}$ and $\mathcal{Q}_{bot}$ are Brownian bridges between the points $(a_n, \mathcal{L}^v_1(a_n))$ and $(b_n, \mathcal{L}^v_1(b_n))$ with $\mathcal{Q}_{top}$ interacting with $\mathcal{L}^v_2$ on $[a_n, b_n]$ through the Hamiltonian $H$ as in (\ref{S1E1}), and $\mathcal{Q}_{bot}$ not interacting with it at all.  }
  \label{S1_3}
  \end{center}
\end{figure}

As will be carefully explained in Section \ref{Section3}, one has the general statement that if a Brownian bridge with diffusion parameter $1$ on $[a_n, b_n]$ has bounded (in $n$) endpoints and is forced to dip below $-A_n$ at times $c_n, d_n$ then it will stay close to the level $-A_n$ on the whole interval $[c_n, d_n]$, see the right part of Figure \ref{S1_3}. On the other hand, since $b_n - a_n \rightarrow 0$ as $n \rightarrow \infty$ and $t \in [a_n, b_n]$, we have that $\mathcal{L}^v_n(s) \approx \mathcal{L}^v_n(t)$ for all $s \in [a_n, b_n]$. The last two observations and the fact that $d_n - c_n \approx b_n -a_n$ suggest that the Radon-Nikodym derivative in (\ref{S1E4}) satisfies
\begin{equation}\label{S1E5}
 W^{v,n}_H(\mathcal{Q}) \propto \exp \left( -  \int_{a_n}^{b_n} H \left(  \mathcal{L}_2^v(u) -\mathcal{Q}(u) \right)du \right) \approx \exp \left( -  [b_n - a_n ]H \left(  \mathcal{L}_2^v(t) + A_n \right) \right).
\end{equation}
We now have two sequences $\delta_n = b_n - a_n$ and $A_n$ that we can control in the hopes of making $p_n^v$ converge as $n \rightarrow \infty$ to an observable of $\mathcal{L}_2^v(t)$. If $\lim_{n \rightarrow \infty} \delta_n H(x + A_n) = G(x)$ for some function $G$ in a sufficiently uniform way in $x$, then one can use (\ref{S1E5}) to essentially show that $\mathbb{P}_v$-almost surely
\begin{equation}\label{S1E6}
\lim_{n \rightarrow \infty} \frac{\mathbb{P}_{H}^{v,n} (\mathcal{Q}_{top}(c_n) \leq  - A_n \mbox{ and } \mathcal{Q}_{top}(d_n) \leq  - A_n)}{\mathbb{P}_{free}^{v,n}  (\mathcal{Q}_{bot}(c_n) \leq  - A_n \mbox{ and } \mathcal{Q}_{bot}(d_n) \leq  - A_n)} = \exp( - G(\mathcal{L}_2^v(t))),
\end{equation}
which upon taking expectations would imply
\begin{equation}\label{S1E7}
\mathbb{E}_{\mathbb{P}_1}\left[\exp( - G(\mathcal{L}_2^1(t)) \right] = \lim_{n\rightarrow \infty} p_n^1 =\lim_{n\rightarrow \infty} p_n^2  = \mathbb{E}_{\mathbb{P}_2}\left[\exp( - G(\mathcal{L}_2^2(t))) \right] .
\end{equation}
If we can show (\ref{S1E7}) for a sufficiently rich family of functions $G$, we would conclude the equality in law of $\mathcal{L}_2^1(t)$ and $\mathcal{L}_2^2(t)$.

We are now faced with the question of finding sequences, $A_n, \delta_n$ and functions $G,H$ that satisfy $\lim_{n \rightarrow \infty} \delta_n H(x + A_n) = G(x)$. Taking $x =0$, we see that we can basically only take $\delta_n = \beta/ H(A_n)$, where $\beta = G(0)$. This suggests $\lim_{y \rightarrow \infty}  H(x + y)/ H(y) = G(x)/ G(0)$ for all $x \in \mathbb{R}$, which implies 
\begin{equation*}
\begin{split}
\frac{G(x+y)}{ G(0)} = \lim_{n \rightarrow \infty}  \frac{H(x + y + A_n)}{H(A_n)}  = \lim_{n \rightarrow \infty} \frac{ H(x + y + A_n)}{ H(x+ A_n )} \cdot \lim_{n \rightarrow \infty} \frac{H(x + A_n)}{H(A_n)}  = \frac{G(y)}{ G(0)} \cdot \frac{G(x)}{ G(0)}.
\end{split}
\end{equation*}
The above equation shows that the only possible choices of $G(x)$ are $G(x) = \beta \cdot e^{\lambda x}$ for some $\lambda, \beta \in \mathbb{R}$. The $\lambda$ is fixed as it must satisfy $\lim_{y \rightarrow \infty} H(y + x)/H(y) = e^{\lambda x}$, while any $\beta > 0$ can be accommodated by taking $\delta_n = \beta/ H(A_n) $. We mention that since $\delta_n \rightarrow 0+$ we must have $\lim_{y \rightarrow \infty} H(y + x)/H(y) = e^{\lambda x}$  with $\lambda > 0$. The last paragraph explains the origin of (\ref{S1E1.5}). 

The reason we need to further assume that $H$ is convex is subtle. Even if $H$ is not convex one would still be able to obtain (\ref{S1E6}) if one assumes that $H$ is continuous and satisfies (\ref{S1E1.5}); however, to deduce (\ref{S1E7}) we need to apply some form of the dominated convergence theorem. If $H$ is convex one can show that $\mathcal{Q}_{top}$ and $\mathcal{Q}_{bot}$ can be coupled so that $\mathcal{Q}_{top}(s) \geq \mathcal{Q}_{bot}(s)$ for all $s \in [a_n ,b_n]$ -- this is done using certain monotone coupling lemmas, which are presented in Section \ref{Section3.4} and date back to \cite{CorHamK}. In particular, for convex $H$ the terms on the left side of (\ref{S1E6}) are all bounded by $1$ and so (\ref{S1E7}) follows from (\ref{S1E6})  and the bounded convergence theorem. We mention that this is the only place where the convexity of $H$ is utilized in the argument, but possibly one can relax this condition by finding a different way of deducing (\ref{S1E7}) from (\ref{S1E6}) -- we do not pursue this direction in the paper.

Once we know that (\ref{S1E7}) holds for $G(x) = \beta \cdot e^{\lambda x}$ for a fixed $\lambda > 0$ and all $\beta > 0$ we can deduce the equality of law of $\mathcal{L}_2^1(t)$ and $\mathcal{L}_2^2(t)$ -- see Lemma \ref{PropFD} for a generalization of this fact. Overall, we see that if one chooses observables as in (\ref{S1Obs2}) to probe the second curve, the assumptions on $H$ in Theorems \ref{ThmMainS1} and \ref{ThmMain} are essentially necessary. We believe that one can possibly relax the continuity and convexity assumptions on $H$ and still obtain (\ref{S1E6}), but we do not know how to obtain (\ref{S1E7}) as a consequence without these assumptions. It is also possible that one can remove the assumption (\ref{S1E1.5}); however, that would require working with a completely different set of observables from (\ref{S1Obs2}), and we are presently not aware of other candidates.

The above description of the main argument is of course quite reductive and the full argument, presented in Section \ref{Section4.1} for the special case we discussed above and Section \ref{Section4.2} in full generality, relies on various technical statements and definitions that are given in Sections \ref{Section2} and \ref{Section3}. We remark that some of the results we establish in these two sections have appeared in earlier studies on $H$-Brownian Gibbsian line ensembles; however, we could not always find complete proofs of them. We have thus opted to fill in the gaps in the proofs of these statements in the literature and this work is the content of the (somewhat) technical Section \ref{Section5}.\\

We end this section with a brief discussion of some of the motivation behind our work. Our interest in Theorem \ref{ThmMainS1} is twofold. Firstly, $H$-Brownian Gibbsian line ensembles have become central objects in probability theory and understanding their structure is an important area of research. As mentioned earlier, Dyson Brownian motion, the parabolic Airy line ensemble, the O'Connell-Yor semi-discrete Brownian polymer and the KPZ$_t$ line ensemble are all examples of $H$-Brownian Gibbsian line ensembles. The second, and more essential, reason we believe Theorem \ref{ThmMainS1} to be important is that it can be used as a tool for proving KPZ universality for various models in integrable probability. We elaborate on the second point below.

The KPZ$_1$ line ensemble is believed to be a universal scaling limit for various models that belong to the KPZ universality class, see \cite{CU2} for an expository review of this class. Theorem \ref{ThmMainS1} paves a way to showing uniform convergence to the KPZ$_1$ line ensemble for various models, which naturally have the structure of a line ensemble and for which the finite dimensional marginals of the top curve can be accessed. The primary examples, we are interested in applying this approach to, come from the {\em Macdonald processes} \cite{BorCor}, and include the {\em Hall-Littlewood processes} \cite{CD, ED}, the {\em $q$-Whittaker processes} \cite{BCF}, and the {\em log-gamma polymer} \cite{COSZ, Sep12}. Each of the models we listed naturally has the structure of a line ensemble with a Gibbs property, which can be found for the Hall-Littlewood process in \cite{CD} and for the log-gamma polymer in \cite{BCDpaperII, Wu19}. If we denote by $\{L^N_i\}_{i = 1}^\infty$ the discrete line ensemble associated to one of the above models and by $\{\mathcal{L}_i^{KPZ,1}\}_{i = 1}^\infty$ the KPZ$_1$ line ensemble, the proposed program for establishing the uniform convergence of $\{L^N_i \}_{i = 1}^\infty$  to $\{\mathcal{L}_i^{KPZ,1}\}_{i = 1}^\infty$ goes through the following steps:
\begin{enumerate}
\item show that as $N \rightarrow \infty$ the lowest indexed curve $L^N_1$ converges in the sense of finite dimensional distributions to $\mathcal{L}^{KPZ,1}_1$, which is the time $1$ Cole-Hopf solution to the KPZ equation (\ref{KPZEq});
\item show that $\left\{L^N_i \right\}_{i = 1}^\infty$ form a tight sequence of line ensembles and that every subsequential limit enjoys the $H$-Brownian Gibbs property with $H(x) = e^{x}$;
\item use the characterization of Theorem \ref{ThmMainS1} to prove that all subsequential limits are given by $\{\mathcal{L}_i^{KPZ,1}\}_{i = 1}^\infty$.
\end{enumerate}
One reason we are optimistic that our proposed program has a good chance of establishing convergence to the KPZ$_1$ line ensemble is that the first step in the program has been completed for several models from integrable probability including the asymmetric simple exclusion process (ASEP) \cite{BerGia} and the stochastic six vertex model \cite{CGST}, as well as for a large class of polymer models in \cite{AKQ}. In addition, there has been substantial progress on establishing tightness for sequences of line ensembles in various related contexts -- see e.g. \cite{BCDpaperII, caputo2019, CIW, CD,  CHH19, CorHamK, DREU,DNV19, DV18, Wu19}. Another reason we are optimistic about our proposed program is that its analogue from \cite{DM20} has already been applied in the preprint \cite{QuaSar} to show the convergence of the KPZ$_t$ line ensemble to the parabolic Airy line ensemble as $t\rightarrow \infty$.

It is worth pointing out that for the log-gamma polymer model the above program appears to have been completed in \cite{Wu19}. More specifically, \cite[Proposition 3.4.9]{Wu19} should imply the finite dimensional convergence in the first step, and \cite[Theorem 3.4.11]{Wu19} should imply the second step. The author believes that once the results from the thesis \cite{Wu19} have been verified and published, Theorem \ref{ThmMainS1} would improve the tightness result from \cite[Theorem 3.4.11]{Wu19} to a uniform convergence result to the KPZ$_1$ line ensemble.

Even beyond the above program, we believe that Theorem \ref{ThmMainS1} will be useful in reducing some of the work in showing convergence of various models to the KPZ$_1$ line ensemble, and is an important result that furthers our understanding of Gibbsian line ensembles in general.

%
\subsection{Outline of the paper}\label{Section1.2} The structure of this paper is as follows. In Section \ref{Section2} we introduce several important definitions, which are used throughout the paper. In particular, we  define $H$-Brownian bridge line ensembles and introduce the (partial) $H$-Brownian Gibbs property. The main result of this paper is stated in this section as Theorem \ref{ThmMain}. In Section \ref{Section3} we collect several technical lemmas involving estimates on Brownian bridges and $H$-Brownian bridge line ensembles as well as two monotone coupling lemmas. In Section \ref{Section4} we prove Theorem \ref{ThmMain} and in Section \ref{Section5} we prove the technical lemmas from Sections \ref{Section2.3} and \ref{Section3.4}.\\

%
\noindent {\bf Acknowledgments.} The author would like to thank Ivan Corwin and Alan Hammond for their useful comments on earlier drafts of this paper. The author is partially supported by the Minerva Foundation Fellowship.

%
%
\section{Definitions, main result and basic lemmas}\label{Section2}

In this section we introduce the basic definitions that are necessary for formulating our main result, given in Section \ref{Section2.2} below.

%
%
\subsection{Line ensembles and the (partial) $H$-Brownian Gibbs property}\label{Section2.1}
In order to state our main results, we need to introduce some notation as well as the notions of a {\em line ensemble} and the {\em (partial) $H$-Brownian Gibbs property}. Our exposition in this section closely follows that of  \cite[Section 2]{CorHamK} and \cite[Section 2]{DM20}. 

Given two integers $p \leq q$, we let $\llbracket p, q \rrbracket$ denote the set $\{p, p+1, \dots, q\}$. Given an interval $\Lambda \subset \mathbb{R}$, we endow it with the subspace topology of the usual topology on $\mathbb{R}$. We let $(C(\Lambda), \mathcal{C})$ denote the space of continuous functions $f: \Lambda \rightarrow \mathbb{R}$ with the topology of uniform convergence over compacts, see \cite[Chapter 7, Section 46]{Munkres}, and Borel $\sigma$-algebra $\mathcal{C}$. Given a set $\Sigma \subset \mathbb{Z}$, we endow it with the discrete topology and denote by $\Sigma \times \Lambda$ the set of all pairs $(i,x)$ with $i \in \Sigma$ and $x \in \Lambda$ with the product topology. We also denote by $\left(C (\Sigma \times \Lambda), \mathcal{C}_{\Sigma}\right)$ the space of continuous functions on $\Sigma \times \Lambda$ with the topology of uniform convergence over compact sets and Borel $\sigma$-algebra $\mathcal{C}_{\Sigma}$. We will typically take $\Sigma = \llbracket 1, N \rrbracket$ (we use the convention $\Sigma = \mathbb{N}$ if $N = \infty$) and then we write  $\left(C (\Sigma \times \Lambda), \mathcal{C}_{|\Sigma|}\right)$ in place of $\left(C (\Sigma \times \Lambda), \mathcal{C}_{\Sigma}\right)$.
The following defines the notion of a line ensemble.
\begin{definition}\label{DefLE}
Let $\Sigma \subset \mathbb{Z}$ and $\Lambda \subset \mathbb{R}$ be an interval. A {\em $\Sigma$-indexed line ensemble $\mathcal{L}$} is a random variable defined on a probability space $(\Omega, \mathcal{F}, \mathbb{P})$ that takes values in $\left(C (\Sigma \times \Lambda), \mathcal{C}_{\Sigma}\right)$. Intuitively, $\mathcal{L}$ is a collection of random continuous curves (sometimes referred to as {\em lines}), indexed by $\Sigma$,  each of which maps $\Lambda$ in $\mathbb{R}$. We will often slightly abuse notation and write $\mathcal{L}: \Sigma \times \Lambda \rightarrow \mathbb{R}$, even though it is not $\mathcal{L}$ which is such a function, but $\mathcal{L}(\omega)$ for every $\omega \in \Omega$. For $i \in \Sigma$ we write $\mathcal{L}_i(\omega) = (\mathcal{L}(\omega))(i, \cdot)$ for the curve of index $i$ and note that the latter is a map $\mathcal{L}_i: \Omega \rightarrow C(\Lambda)$, which is $(\mathcal{C}, \mathcal{F})-$measurable.
\end{definition}

We next turn to formulating the partial $H$-Brownian Gibbs property -- we do this in Definition \ref{DefPBGP} after introducing some relevant notation. If $W_t$ denotes a standard one-dimensional Brownian motion, then the process
$$\tilde{B}(t) =  W_t - t W_1, \hspace{5mm} 0 \leq t \leq 1,$$
is called a {\em Brownian bridge (from $\tilde{B}(0) = 0$ to $\tilde{B}(1) = 0 $) with diffusion parameter $1$.} 

Given $a, b,x,y \in \mathbb{R}$ with $a < b$, we define a random variable on $(C([a,b]), \mathcal{C})$ through
\begin{equation}\label{BBDef}
B(t) = (b-a)^{1/2} \cdot \tilde{B} \left( \frac{t - a}{b-a} \right) + \left(\frac{b-t}{b-a} \right) \cdot x + \left( \frac{t- a}{b-a}\right) \cdot y, 
\end{equation}
and refer to the law of this random variable as a {\em Brownian bridge (from $B(a) = x$ to $B(b) = y$) with diffusion parameter $1$.} Given $k_1, k_2 \in \mathbb{Z}$ with $k_1 \leq k_2$ and $\vec{x}, \vec{y} \in \mathbb{R}^{k_2 - k_1 + 1}$, we let $\mathbb{P}^{k_1, k_2, a,b, \vec{x},\vec{y}}_{free}$ denote the law of $k_2 - k_1 + 1$ independent Brownian bridges $\{B_i: [a,b] \rightarrow \mathbb{R} \}_{i = k_1}^{k_2}$ from $B_i(a) = x_i$ to $B_i(b) = y_i$ all with diffusion parameter $1$. We will denote the expectation with respect to $\mathbb{P}^{k_1, k_2, a,b, \vec{x},\vec{y}}_{free}$ by $\mathbb{E}^{k_1, k_2, a,b, \vec{x},\vec{y}}_{free}.$

The following definition introduces the notion of an $H$-Brownian bridge line ensemble, which in plain words can be understood as a random ensemble of $k$ independent Brownian bridges, whose adjacent curves interact with each other and with two boundary functions $f$ and $g$ through a Hamiltonian function $H$.
\begin{definition}\label{DefAvoidingLaw}
A {\em Hamiltonian} $H$ is any continuous function $H: [-\infty, \infty) \rightarrow [0, \infty)$. Let $k_1, k_2 \in \mathbb{Z}$ with $k_1 \leq k_2$, and $\vec{x}, \vec{y} \in \mathbb{R}^{k_2 - k_1 + 1}$, $a,b \in \mathbb{R}$ with $a < b$, and $f: [a,b] \rightarrow (-\infty, \infty]$ and $g: [a,b] \rightarrow [-\infty, \infty)$ be two continuous functions. 

With the above data we define the {\em $\llbracket k_1, k_2 \rrbracket \times (a,b)$-indexed $H$-Brownian bridge line ensemble with entrance data $\vec{x}$, exit data $\vec{y}$ and boundary data $(f,g)$} to be the $\Sigma$-indexed line ensemble $\mathcal{Q}$ with $\Sigma = \llbracket k_1, k_2\rrbracket$ on $\Lambda = [a,b]$ whose law $\mathbb{P}_H^{k_1, k_2, a,b, \vec{x}, \vec{y}, f,g}$ is given in terms of the following Radon-Nikodym derivative (with respect to the free Brownian bridge line ensemble $\mathbb{P}^{k_1, k_2, a,b, \vec{x},\vec{y}}_{free}$):
$$ \frac{d \mathbb{P}_H^{k_1, k_2, a,b, \vec{x}, \vec{y}, f,g}}{d\mathbb{P}^{k_1, k_2, a,b, \vec{x},\vec{y}}_{free}} (\mathcal{Q}_{k_1}, \dots, \mathcal{Q}_{k_2}) = \frac{W_H^{k_1, k_2, a,b, \vec{x}, \vec{y}, f,g} (\mathcal{Q}_{k_1}, \dots, \mathcal{Q}_{k_2}) }{Z_H^{k_1, k_2, a,b, \vec{x}, \vec{y}, f,g} }.$$
Here we call $\mathcal{Q}_{k_1 - 1} = f$, $\mathcal{Q}_{k_2 + 1} = g$ and define the {\em Boltzmann weight}
$$W_H^{k_1, k_2, a,b, \vec{x}, \vec{y}, f,g} (\mathcal{Q}_{k_1}, \dots, \mathcal{Q}_{k_2}) = \exp \left( - \sum_{i = k_1 - 1}^{k_2} \int_a^b H \left( \mathcal{Q}_{i+1}(u) - \mathcal{Q}_i(u)\right)du \right),$$
and the {\em normalizing constant}
\begin{equation}\label{S2E1}
Z_H^{k_1, k_2, a,b, \vec{x}, \vec{y}, f,g} = \mathbb{E}^{k_1, k_2, a,b, \vec{x},\vec{y}}_{free}\left[ W_H^{k_1, k_2, a,b, \vec{x}, \vec{y}, f,g} (\mathcal{Q}_{k_1}, \dots, \mathcal{Q}_{k_2})  \right].
\end{equation}
On the right side of (\ref{S2E1}) we have that the $\mathcal{Q}_{k_1}, \dots, \mathcal{Q}_{k_2}$ are distributed according to $\mathbb{P}^{k_1, k_2, a,b, \vec{x},\vec{y}}_{free}$. Notice that since $H: [-\infty, \infty) \rightarrow [0, \infty)$ is continuous, we have that $W_H^{k_1, k_2, a,b, \vec{x}, \vec{y}, f,g}$ is a continuous function on $C(\Sigma \times \Lambda)$, taking values in $(0,1]$. In particular, $Z_H^{k_1, k_2, a,b, \vec{x}, \vec{y}, f,g}  \in (0,1)$ and the above Radon-Nikodym derivative, and consequently $\mathbb{P}_H^{k_1, k_2, a,b, \vec{x}, \vec{y}, f,g}$, are well-defined. We will write $\mathbb{E}_H^{k_1, k_2, a,b, \vec{x}, \vec{y}, f,g}$ for the expectation with respect to $\mathbb{P}_H^{k_1, k_2, a,b, \vec{x}, \vec{y}, f,g}$.
\end{definition}

The following definition introduces the notion of the partial $H$-Brownian Gibbs property.
\begin{definition}\label{DefPBGP}
Fix a set $\Sigma = \llbracket 1 , N \rrbracket$ with $N \in \mathbb{N}$ or $N  = \infty$ and an interval $\Lambda \subset \mathbb{R}$.  A $\Sigma$-indexed line ensemble $\mathcal{L}$ on $\Lambda$ is said to satisfy the {\em partial $H$-Brownian Gibbs property} if and only if for any finite $K = \{k_1, k_1 + 1, \dots, k_2 \} \subset \Sigma$ with $k_2 \leq N - 1$ (if $\Sigma \neq \mathbb{N}$), $[a,b] \subset \Lambda$ and any bounded Borel-measurable function $F: C(K \times [a,b]) \rightarrow \mathbb{R}$ we have $\mathbb{P}$-almost surely
\begin{equation}\label{PBGPTower}
\mathbb{E} \left[ F(\mathcal{L}|_{K \times [a,b]}) {\big \vert} \mathcal{F}_{ext} (K \times (a,b))  \right] =\mathbb{E}_{H}^{k_1, k_2, a,b, \vec{x}, \vec{y}, f, g} \bigl[ F({\mathcal{Q}}) \bigr],
\end{equation}
where we have $D_{K,a,b} = K \times (a,b)$ and $D_{K,a,b}^c = (\Sigma \times \Lambda) \setminus D_{K,a,b}$,
$$\mathcal{F}_{ext} (K \times (a,b)) = \sigma \left \{ \mathcal{L}_i(s): (i,s) \in D_{K,a,b}^c \right\}$$
is the $\sigma$-algebra generated by the variables in the brackets above, and $ \mathcal{L}|_{K \times [a,b]}$ denotes the restriction of $\mathcal{L}$ to the set $K \times [a,b]$. On the right side of (\ref{PBGPTower}),  $\mathcal{Q} = (\mathcal{Q}_{k_1}, \dots, \mathcal{Q}_{k_2})$ is the $\llbracket k_1, k_2 \rrbracket \times (a,b)$-indexed $H$-Brownian bridge line ensemble with entrance data $\vec{x}$, exit data $\vec{y}$ and boundary data $(f,g)$ as in Definition \ref{DefAvoidingLaw}. Here $\vec{x} = (\mathcal{L}_{k_1}(a), \dots, \mathcal{L}_{k_2}(a))$, $\vec{y} = (\mathcal{L}_{k_1}(b), \dots, \mathcal{L}_{k_2}(b))$, $f = \mathcal{L}_{k_1 - 1}[a,b]$ (the restriction of $\mathcal{L}$ to the set $\{k_1 - 1 \} \times [a,b]$) with the convention that $f = \infty$ if $k_1 - 1 \not \in \Sigma$, and $g = \mathcal{L}_{k_2 +1}[a,b]$. 

If we furthermore have that $N = \infty$ or $N < \infty$ and equation (\ref{PBGPTower}) holds when $k_2 = N$ and $g = -\infty$, then we say that $\Lambda$ satisfies the {\em $H$-Brownian Gibbs property} (i.e. we drop ``partial'').
\end{definition}

\begin{remark}\label{RPBGP}
The $H$-Brownian Gibbs property is the same as the one introduced in \cite{CorHamK}, and the partial $H$-Brownian Gibbs property is related to it but is slightly different as we explain here. Assuming that $\Sigma = \mathbb{N}$ the two definitions are equivalent. However, if $\Sigma = \{1, \dots, N\}$ with $1 \leq N < \infty$, then a line ensemble that satisfies the $H$-Brownian Gibbs property also satisfies the partial $H$-Brownian Gibbs property, but the reverse need not be true. Specifically, the $H$-Brownian Gibbs property allows for the possibility that $k_2 = N$ in Definition \ref{DefPBGP} and in this case the convention is that $g = -\infty$. A distinct advantage of working with the partial $H$-Brownian Gibbs property instead of the $H$-Brownian Gibbs property is that the former is stable under projections, while the latter is not. Specifically, if $1 \leq M \leq N$ and $\mathcal{L}$ is a $\llbracket 1, N\rrbracket$-indexed line ensemble on $\Lambda$ that satisfies the partial $H$-Brownian Gibbs property, and $\tilde{\mathcal{L}}$ is obtained from $\mathcal{L}$ by projecting on $(\mathcal{L}_1, \dots, \mathcal{L}_M)$ then the induced law on $\tilde{\mathcal{L}}$ also satisfies the partial $H$-Brownian Gibbs property as a $\llbracket 1, M \rrbracket$-indexed line ensemble on $\Lambda$. Later in the text some of our arguments rely on an induction on $N$, for which having this projectional stability becomes important. This is why we choose to work with the partial $H$-Brownian Gibbs property instead of the $H$-Brownian Gibbs property.
\end{remark}

\begin{remark} \label{RemMeas}
Observe that if $N = 1$ then the conditions in Definition \ref{DefPBGP} become void. I.e., any line ensemble with one line satisfies the partial $H$-Brownian Gibbs property. 

It is perhaps worth explaining why equation (\ref{PBGPTower}) makes sense. Firstly, since $\Sigma \times \Lambda$ is locally compact, we know by \cite[Lemma 46.4]{Munkres} that $\mathcal{L} \rightarrow \mathcal{L}|_{K \times [a,b]}$ is a continuous map from $C(\Sigma \times \Lambda)$ to $C(K \times [a,b])$, so that the left side of (\ref{PBGPTower}) is the conditional expectation of a bounded measurable function, and is thus well-defined. A more subtle question is why the right side of (\ref{PBGPTower})  is $\mathcal{F}_{ext} (K \times (a,b))$-measurable. In fact we will show in Lemma \ref{LemmaMeasExp} that the right side is measurable with respect to the $\sigma$-algebra 
$$ \sigma \left\{ \mathcal{L}_i(s) : \mbox{  $i \in K$ and $s \in \{a,b\}$, or $i \in \{k_1 - 1, k_2 +1 \}$ and $s \in [a,b]$} \right\}.$$
\end{remark}

%
%
\subsection{Main result}\label{Section2.2} In this section we formulate the main result of the paper. We continue with the same notation as in Section \ref{Section2.1} and additionally introduce the following basic definition.

\begin{definition}\label{S2Halph} Let $\lambda > 0$ be given. We consider the following conditions for a function $H: [-\infty, \infty) \rightarrow [0,\infty)$ 
\begin{enumerate}
\item $H$ is continuous;
\item $H$ is convex;
\item for any $M \in \mathbb{N}$ one has $\limsup_{y \rightarrow \infty} \sup_{x \in [-M, M]} \left| \frac{H(x+y)}{H(y)} - e^{\lambda x} \right| = 0.$
\end{enumerate}
If $H$ satisfies all three conditions above we say that $H$ is a {\em $\lambda$-exponential Hamiltonian}.
\end{definition}

\begin{theorem}\label{ThmMain}  Let $\Sigma = \llbracket 1, N \rrbracket$ with $N \in \mathbb{N}$ or $N =\infty$, and let $\Lambda \subset \mathbb{R}$ be an interval. Fix $\lambda > 0$ and suppose that $H$ is a $\lambda$-exponential Hamiltonian in the sense of Definition \ref{S2Halph}. Suppose that $\mathcal{L}^1$ and $\mathcal{L}^2$ are $\Sigma$-indexed line ensembles on $\Lambda$ that satisfy the partial $H$-Brownian Gibbs property with laws $\mathbb{P}_1$ and $\mathbb{P}_2$ respectively. Suppose further that for every $k\in \mathbb{N}$,  $t_1 < t_2 < \cdots < t_k$ with $t_i \in \Lambda$ and $x_1, \dots, x_k \in \mathbb{R}$ we have that 
\begin{equation*}
\mathbb{P}_1 \left( \mathcal{L}^1_1(t_1) \leq x_1, \dots,\mathcal{L}^1_1(t_k) \leq x_k  \right) =\mathbb{P}_2 \left( \mathcal{L}^2_1(t_1) \leq x_1, \dots,\mathcal{L}^2_1(t_k) \leq x_k  \right).
\end{equation*}
Then we have that $\mathbb{P}^1 = \mathbb{P}^2$.
\end{theorem}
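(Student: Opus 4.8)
The plan is to carry out an induction on $k$, proving that for every $k \in \N$ (with $k \leq N$), the restricted line ensembles $\{\mathcal{L}^1_i\}_{i=1}^k$ and $\{\mathcal{L}^2_i\}_{i=1}^k$ have the same law as $\llbracket 1,k \rrbracket$-indexed line ensembles on $\Lambda$. The base case $k=1$ is exactly the hypothesis that the lowest-indexed curves have matching finite-dimensional distributions, combined with the standard fact that a probability measure on $C(\Lambda)$ is determined by its finite-dimensional marginals (Kolmogorov extension / a $\pi$-$\lambda$ argument on cylinder sets). Once the induction is complete, if $N < \infty$ we are done immediately by taking $k = N$; if $N = \infty$, equality of all finite-dimensional restrictions $\{\mathcal{L}^v_i\}_{i=1}^k$ again determines the law on $C(\mathbb{N} \times \Lambda)$ by another $\pi$-$\lambda$ argument over cylinder sets involving finitely many curves and finitely many times. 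Throughout, the projectional stability of the partial $H$-Brownian Gibbs property (Remark \ref{RPBGP}) is what allows the induction hypothesis to be applied cleanly: $\{\mathcal{L}^v_i\}_{i=1}^{k+1}$ still satisfies the partial $H$-Brownian Gibbs property as a $\llbracket 1,k+1 \rrbracket$-indexed ensemble.

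The heart of the argument is the induction step: assuming $\{\mathcal{L}^1_i\}_{i=1}^k \overset{d}{=} \{\mathcal{L}^2_i\}_{i=1}^k$, deduce $\{\mathcal{L}^1_i\}_{i=1}^{k+1} \overset{d}{=} \{\mathcal{L}^2_i\}_{i=1}^{k+1}$. By a monotone class argument, it suffices to show that for every finite collection of times $s_1 < \cdots < s_m$ in $\Lambda$ and bounded continuous $\Phi$, the quantity $\E_{\P_v}[\Phi(\{\mathcal{L}^v_i(s_j)\}_{1 \leq i \leq k, 1 \leq j \leq m}; \mathcal{L}^v_{k+1}(s_1), \dots, \mathcal{L}^v_{k+1}(s_m))]$ is the same for $v=1,2$. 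Since the joint law of the top $k$ curves already agrees, and the top $k$ curves determine (via the partial Gibbs property applied on strips $K = \llbracket 1,k \rrbracket \times [a,b]$) the conditional law of everything given $\mathcal{L}^v_{k+1}$ restricted outside, the problem reduces to showing that the finite-dimensional distributions of $\mathcal{L}^v_{k+1}$ are themselves determined by the law of $\{\mathcal{L}^v_i\}_{i=1}^k$. This is where the observables $p_n^v$ of \eqref{S1Obs2} enter: one localizes to a shrinking interval $[a_n, b_n]$ containing a fixed time $t$, picks $c_n, d_n$ and a level $A_n \to \infty$ with $\delta_n = b_n - a_n = \beta / H(A_n)$, applies the partial $H$-Brownian Gibbs property for the single curve $\mathcal{L}^v_{k+1}$ on $[a_n,b_n]$ (with boundary data $f = \mathcal{L}^v_k$, $g = \mathcal{L}^v_{k+1}$ below — or more precisely runs the $k$-to-$k{+}1$ observable through the Gibbs property for the $\llbracket k+1,k+1\rrbracket$ strip), and uses the Brownian-bridge estimates of Section \ref{Section3} together with \eqref{S1E1.5} to obtain the almost-sure limit \eqref{S1E6} with $G(x) = \beta e^{\lambda x}$. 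The convexity of $H$ supplies the monotone coupling (Section \ref{Section3.4}) bounding the ratio by $1$, so bounded convergence yields \eqref{S1E7}; varying $\beta > 0$ and invoking Lemma \ref{PropFD} pins down the one-dimensional law of $\mathcal{L}^v_{k+1}(t)$, and an analogous multi-point version of the observables (pushing down at several clusters of points) pins down all finite-dimensional distributions.

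There is a genuine subtlety I should flag: the observables $p_n^v$ as written probe $\mathcal{L}^v_2$ starting from knowledge of $\mathcal{L}^v_1$ only, i.e. they are really designed for the $k=1 \to k=2$ step. For the general $k \to k+1$ step one must use that the induction hypothesis already gives equality of the full joint law of $\{\mathcal{L}^v_i\}_{i=1}^k$, and build observables measurable with respect to those $k$ curves that, after applying the partial $H$-Brownian Gibbs property on the strip $\llbracket 1,k\rrbracket \times [a_n,b_n]$, isolate the dependence on $\mathcal{L}^v_{k+1}$ — concretely, one runs the analogue of the $\mathbb{P}^{v,n}_{\mathrm{free}}$ versus $\mathbb{P}^{v,n}_H$ comparison on the $k$-th curve (the only one of the top $k$ that feels $\mathcal{L}^v_{k+1}$) while treating curves $1,\dots,k-1$ as fixed boundary data. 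The bookkeeping here, and the need to let $b_n - d_n$, $c_n - a_n$ be much smaller than $d_n - c_n$ so that the endpoints of the localized bridge stay bounded while the bridge is pinned near $-A_n$ on all of $[c_n, d_n]$, is where the technical Brownian-bridge estimates of Section \ref{Section3} do the real work. I expect the main obstacle to be establishing the almost-sure convergence \eqref{S1E6} with sufficient uniformity — one must control both the numerator and denominator Radon–Nikodym normalizations, show the forced bridge concentrates near $-A_n$ uniformly enough that $\int_{a_n}^{b_n} H(\mathcal{L}^v_k(u) - \mathcal{Q}(u))\,du \approx \delta_n H(\mathcal{L}^v_k(t) + A_n) \to G(\mathcal{L}^v_k(t))$, and handle the error terms coming from the finitely many "anchor" sub-intervals near $a_n, c_n, d_n, b_n$ — rather than the comparatively soft measure-theoretic reductions around it.
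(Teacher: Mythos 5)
Your overall skeleton matches the paper: induction on the number of curves after projecting to a compact window, observables of the form \eqref{S1Obs2} that are measurable with respect to the already-known curves but probe the next one, the Brownian-bridge concentration estimates (Lemma \ref{BridgeLemma} / Corollary \ref{BridgeLemmaV2}), the monotone-coupling bound $\leq 1$ from convexity of $H$, bounded convergence, and finally Lemma \ref{PropFD}. The projectional stability remark (Remark \ref{RPBGP}) is used exactly as you say. So the high-level route is the one the paper takes.

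There is, however, a genuine gap in the reduction within your induction step. You state correctly that it suffices to match $\mathbb{E}_{\mathbb{P}_v}[\Phi(\{\mathcal{L}^v_i(s_j)\}_{1\leq i\leq k};\,\mathcal{L}^v_{k+1}(s_1),\dots,\mathcal{L}^v_{k+1}(s_m))]$, a \emph{joint} moment of all $k+1$ curves — but then immediately assert that this reduces to determining the marginal finite-dimensional distributions of $\mathcal{L}^v_{k+1}$ alone, and your observables (and the concluding sentence ``pins down the one-dimensional law of $\mathcal{L}^v_{k+1}(t)$'') indeed only produce those marginals. This reduction does not hold: knowing the law of $\{\mathcal{L}^v_i\}_{i=1}^k$, the marginal f.d.d.s.\ of $\mathcal{L}^v_{k+1}$, and the partial $H$-Brownian Gibbs property is not enough to determine the joint law of $(\{\mathcal{L}^v_i\}_{i=1}^k,\mathcal{L}^v_{k+1})$; the Gibbs property gives conditional laws of inner curves on a strip given the outer data, which does not constrain the joint distribution of boundary data across the two groups of curves. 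The paper resolves this by building the observable $G^M_w$ in (\ref{Trunc1})--(\ref{Trunc2}) as a product over $S^c$ of factors $\exp(-z_s\exp(\lambda\mathcal{L}^v_{n_s}(t_s)))$ with $n_s\leq N-1$ at times disjoint from the strips (these are $\mathcal{F}_{ext}$-measurable and pass through the conditional expectation unchanged) together with the normalized indicators over $s\in S$, yielding the full mixed moment \eqref{FDEPM2} with some $n_i = N$ and some $n_i < N$. You need this mixed structure, not ``a multi-point version pushing down at several clusters,'' which would still only give marginals of the $(k+1)\textsuperscript{st}$ curve.

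A secondary difference worth noting: you propose pushing down only the $k$th curve, conditioning on the singleton strip $\{k\}\times[a_n,b_n]$ with $f=\mathcal{L}^v_{k-1}$ as top boundary, whereas the paper conditions on $\llbracket 1,N-1\rrbracket\times[a_w^s,b_w^s]$ with $f=\infty$ and pushes \emph{all} top $N-1$ curves below $-A_w$ (see \eqref{Obs1}, Corollary \ref{BridgeLemmaV2}). Your variant is plausible — the extra interaction $\int H(\mathcal{Q}-\mathcal{L}^v_{k-1})$ is near $(b_n-a_n)H(-\infty)\to 0$ on the relevant event — but it would require re-proving the concentration estimate of Corollary \ref{BridgeLemmaV2} with a non-$\infty$ top boundary; the paper's choice of boundary data $(\infty,-\infty)$ is precisely what lets that corollary reduce cleanly to the pure Brownian estimate of Lemma \ref{BridgeLemma}. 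Pushing all $N-1$ curves down also keeps all intra-strip interactions $H(\mathcal{Q}_{i+1}-\mathcal{Q}_i)$ of order $H(0)$ on $[c_w,d_w]$, which is what makes the proof of (\ref{S33E2}) (the uniform positive lower bound on the Boltzmann weight) work with little effort.
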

In plain words, Theorem \ref{ThmMain} states that if two line ensembles both satisfy the partial $H$-Brownian Gibbs property and have the same finite-dimensional distributions of the top curve, then they have the same distribution as line ensembles, provided that $H$ is a $\lambda$-exponential Hamiltonian.

One of the assumptions in Theorem \ref{ThmMain} is that $\mathcal{L}^1$ and $\mathcal{L}^2$ have the same number of curves $N$, and a natural question is whether this condition can be relaxed. That is, can two $H$-Brownian Gibbsian line ensembles with a {\em different} number of curves have the same finite-dimensional distributions of the top curve? The answer to this question is negative and we isolate this statement in the following corollary.
\begin{corollary}\label{CorMain2}  Let $\Sigma_1 = \llbracket 1, N_1 \rrbracket$ with $N_1 \in \mathbb{N}$ and $\Sigma_2 = \llbracket 1, N_2 \rrbracket$ with $N_2 \in \mathbb{N}$ or $N_2 = \infty$ such that $N_2 > N_1$. In addition, let $\Lambda \subset \mathbb{R}$ be an interval, fix $\lambda > 0$ and suppose that $H$ is a $\lambda$-exponential Hamiltonian in the sense of Definition \ref{S2Halph}. Suppose that $\mathcal{L}^i$ are $\Sigma_i$-indexed line ensembles on $\Lambda$ for $i = 1,2$ such that $\mathcal{L}^1$ satisfies the $H$-Brownian Gibbs property and $\mathcal{L}^2$ satisfies the partial $H$-Brownian Gibbs property with laws $\mathbb{P}_1$ and $\mathbb{P}_2$, respectively. Then there exist $k\in \mathbb{N}$,  $t_1 < t_2 < \cdots < t_k$ with $t_i \in \Lambda$ and $x_1, \dots, x_k \in \mathbb{R}$ such that
\begin{equation*}
\mathbb{P}_1 \left( \mathcal{L}^1_1(t_1) \leq x_1, \dots,\mathcal{L}^1_1(t_k) \leq x_k  \right) \neq \mathbb{P}_2 \left( \mathcal{L}^2_1(t_1) \leq x_1, \dots,\mathcal{L}^2_1(t_k) \leq x_k  \right).
\end{equation*}
\end{corollary}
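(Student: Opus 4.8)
The plan is to argue by contradiction: assume that for every $k$, all $t_1<\cdots<t_k$ in $\Lambda$ and all $x_1,\dots,x_k$ the two probabilities in the statement coincide, and derive a contradiction. Let $\tilde{\mathcal{L}}^2=(\mathcal{L}^2_1,\dots,\mathcal{L}^2_{N_1})$ be the projection of $\mathcal{L}^2$ onto its first $N_1$ curves. By the projectional stability of the partial $H$-Brownian Gibbs property noted in Remark \ref{RPBGP}, $\tilde{\mathcal{L}}^2$ is a $\llbracket 1,N_1\rrbracket$-indexed line ensemble satisfying the partial $H$-Brownian Gibbs property; and $\mathcal{L}^1$, satisfying the full $H$-Brownian Gibbs property, also satisfies the partial one. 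Since $\tilde{\mathcal{L}}^2_1=\mathcal{L}^2_1$, the contradiction hypothesis says the top curves of $\mathcal{L}^1$ and $\tilde{\mathcal{L}}^2$ have the same finite-dimensional distributions, so Theorem \ref{ThmMain} yields that $\mathcal{L}^1$ and $\tilde{\mathcal{L}}^2$ have the same law as $\llbracket 1,N_1\rrbracket$-indexed line ensembles. Because the full $H$-Brownian Gibbs property is a statement about the joint law (its two sides in \eqref{PBGPTower} are determined by the distribution of the ensemble), it follows that $\tilde{\mathcal{L}}^2$ satisfies the full $H$-Brownian Gibbs property. The rest of the proof shows this is impossible when $N_2>N_1$, i.e. when $\mathcal{L}^2$ genuinely has a curve of index $N_1+1$.

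The mechanism is a comparison of two ways of resampling the bottom curve. Fix $[a,b]\subset\Lambda$ and $m:=(a+b)/2$. Since $N_1\le N_2-1$, the partial $H$-Brownian Gibbs property of $\mathcal{L}^2$ permits resampling curve $N_1$ on $[a,b]$: conditionally on $\mathcal{F}_1$, the external $\sigma$-algebra $\mathcal{F}_{ext}(\{N_1\}\times(a,b))$ for the $\llbracket 1,N_2\rrbracket$-ensemble (which in particular knows $\mathcal{L}^2_{N_1+1}$), the conditional law of $\mathcal{L}^2_{N_1}|_{[a,b]}$ is that of an $H$-Brownian bridge from $\mathcal{L}^2_{N_1}(a)$ to $\mathcal{L}^2_{N_1}(b)$ with upper boundary $f:=\mathcal{L}^2_{N_1-1}|_{[a,b]}$ and lower boundary $g:=\mathcal{L}^2_{N_1+1}|_{[a,b]}$, in the sense of Definition \ref{DefAvoidingLaw}; write $\Psi_1(F):=\mathbb{E}_{\mathbb{P}_2}[F(\mathcal{L}^2_{N_1}|_{[a,b]})\mid\mathcal{F}_1]$ for bounded Borel $F$. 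On the other hand, since $\tilde{\mathcal{L}}^2$ satisfies the full $H$-Brownian Gibbs property and $\mathcal{L}^2_{N_1}|_{[a,b]}=\tilde{\mathcal{L}}^2_{N_1}|_{[a,b]}$, conditioning on $\mathcal{F}_0:=\mathcal{F}_{ext}(\{N_1\}\times(a,b))$ for the $\llbracket 1,N_1\rrbracket$-ensemble (which does \emph{not} know $\mathcal{L}^2_{N_1+1}$) gives that the conditional law is that of an $H$-Brownian bridge with the same endpoints and upper boundary $f$ but lower boundary $g\equiv-\infty$; the lower interaction then contributes only the constant $H(-\infty)$, which cancels, so this is simply the free bridge reweighted by $\exp(-\int_a^b H(\mathcal{Q}(u)-f(u))\,du)$. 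Write $\Psi_0(F):=\mathbb{E}_{\mathbb{P}_2}[F(\mathcal{L}^2_{N_1}|_{[a,b]})\mid\mathcal{F}_0]$ and $\mu_0$ for this ($\mathcal{F}_0$-conditional) one-sided $H$-bridge law. Since $\mathcal{F}_0\subseteq\mathcal{F}_1$, the tower property gives $\mathbb{E}_{\mathbb{P}_2}[\Psi_1(F)\mid\mathcal{F}_0]=\Psi_0(F)$ $\mathbb{P}_2$-almost surely for every bounded Borel $F$. Moreover $\Psi_1$ is $\mu_0$ reweighted by $v(\mathcal{Q}):=\exp(-\int_a^b H(g(u)-\mathcal{Q}(u))\,du)\in(0,1]$, so $\Psi_0(F)-\Psi_1(F)=-\mathrm{Cov}_{\mu_0}(F,v)/\mathbb{E}_{\mu_0}[v]$.

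Now take $F(\mathcal{Q})={\bf 1}\{\mathcal{Q}(m)\le -R\}$, which is bounded, Borel, and non-increasing in $\mathcal{Q}$ in the pointwise order. Because $H$ is convex, the monotone coupling lemmas of Section \ref{Section3.4} couple the two $H$-bridges — with lower boundaries $-\infty$ and $g$, all other data equal — so that the former lies pointwise below the latter; hence $\Psi_1(F)\le\Psi_0(F)$ $\mathbb{P}_2$-a.s., equivalently $\mathrm{Cov}_{\mu_0}(F,v)\le 0$. Thus $\Psi_0(F)-\Psi_1(F)$ is a nonnegative random variable whose conditional expectation given $\mathcal{F}_0$ vanishes, so it must vanish $\mathbb{P}_2$-a.s. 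To contradict this it suffices to exhibit, for a suitable $[a,b]$ and $R$, an event of positive $\mathbb{P}_2$-probability on which $\mathrm{Cov}_{\mu_0}(F,v)<0$, i.e. $\mathbb{E}_{\mu_0}[v\mid \mathcal{Q}(m)\le -R]<\mathbb{E}_{\mu_0}[v]$. Intuitively this holds because on $\{\mathcal{Q}(m)\le -R\}$ the bridge is forced far below $m$, so $g(u)-\mathcal{Q}(u)$ is large there, and since $g=\mathcal{L}^2_{N_1+1}|_{[a,b]}$ is $\mathbb{P}_2$-a.s. a finite continuous curve and $H(y)\to\infty$ as $y\to\infty$ by condition (3) of Definition \ref{S2Halph}, the factor $v$ is then very small, whereas $\mathbb{E}_{\mu_0}[v]$ (which does not depend on $R$) is bounded below. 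Making this precise: restrict first to a compact set $A$ of boundary data $(\mathcal{L}^2_{N_1-1}|_{[a,b]},\mathcal{L}^2_{N_1}(a),\mathcal{L}^2_{N_1}(b),g)$ with $\mathbb{P}_2(A)>1/2$, use the Brownian bridge small-ball and lower-tail estimates of Section \ref{Section3} to control the conditioned bridge on $\{\mathcal{Q}(m)\le -R\}$ uniformly over $\omega\in A$, and take $R$ large enough; this yields $\Psi_0(F)>\Psi_1(F)$ on $A$, the desired contradiction.

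The main obstacle is the last step: converting the qualitative statement ``a genuine lower curve really does repel the bottom curve upward'' into a quantitative strict inequality valid on a set of positive probability, \emph{uniformly} over the random boundary data, while only being allowed to use a fixed test function $F$. This is where one must combine the monotone couplings of Section \ref{Section3.4} with the Brownian bridge estimates of Section \ref{Section3} and a preliminary restriction to compact boundary data; Steps involving the projection, Theorem \ref{ThmMain}, Remark \ref{RPBGP}, and the tower property are essentially bookkeeping by comparison.
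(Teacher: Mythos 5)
Your proposal takes a genuinely different route from the paper's, though both share the same opening moves. You and the paper both argue by contradiction, project $\mathcal{L}^2$ to its top $N_1$ curves, invoke Theorem~\ref{ThmMain} to identify the law of this projection with that of $\mathcal{L}^1$, and conclude (implicitly in the paper, explicitly in your write-up) that the projected ensemble satisfies the \emph{full} $H$-Brownian Gibbs property. The divergence is in how the contradiction is extracted. The paper simply re-runs the machinery of Proposition~\ref{PropMain}: it defines the observable $p^v_w$ of~(\ref{S42PDef}) with $k = 1$, $S = \{1\}$, $n_1 = N_1 + 1$, notes that $p^v_w$ is a functional of the top $N_1$ curves so $p^1_w = p^2_w$, observes that for $v=1$ the boundary is $g = -\infty$ making the ratio identically $1$, and then cites Steps 2--5 of Section~\ref{Section4.2} to get $\lim_w p^2_w = \mathbb{E}_{\mathbb{P}_2}[\exp(-z_1 e^{\lambda \mathcal{L}^2_{N_1+1}(t_1)})]$, which is strictly less than $1$ for a finite curve $\mathcal{L}^2_{N_1+1}$. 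All the quantitative work is thus borrowed. Your argument instead compares the two resamplings of curve $N_1$ (lower boundary $g=\mathcal{L}^2_{N_1+1}|_{[a,b]}$ versus $g=-\infty$) via the tower property, and uses Lemma~\ref{MCLfg} to show $\Psi_0(F_R)-\Psi_1(F_R)\ge 0$ for the monotone test function $F_R = \mathbf{1}\{\mathcal{Q}(m)\le -R\}$; since a nonnegative variable with vanishing conditional expectation vanishes, $\Psi_0 \equiv \Psi_1$ a.s., and the contradiction follows once one exhibits strict inequality on a positive-probability event. This is a clean and conceptually attractive second-moment-type argument that sidesteps the $p^v_w$ convergence analysis entirely.

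The one place your proposal is incomplete is precisely the place you flag: your Step~8. You correctly identify that on a compact set $A$ of boundary data, $\mathbb{E}_{\mu_0}[v]$ is bounded below while $\mathbb{E}_{\mu_0}[v\mid \mathcal{Q}(m)\le -R]\to 0$ as $R\to\infty$ (using that $H(y)\to\infty$ by condition (3) of Definition~\ref{S2Halph} and that $g$ is bounded on $A$), but the intermediate input --- that under $\mu_0$ conditioned on $\{\mathcal{Q}(m)\le -R\}$ the bridge is below, say, $-R/2$ on a positive fraction of $[a,b]$ with overwhelming conditional probability, \emph{uniformly} over $A$ --- is a nontrivial estimate in its own right (a one-point analogue of Lemma~\ref{BridgeLemma}), and must further be transferred from the free bridge to $\mu_0$ via the boundedness of the $f$-reweighting. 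None of this is spelled out, and since the inequality must hold uniformly over a positive-probability set of boundary data (not just for each $\omega$ separately, which would require a random $R(\omega)$), the uniformity matters. The paper's route, which amounts to feeding a carefully designed family of observables through the already-established convergence statements of Section~\ref{Section4.2}, avoids needing any new estimate of this type.
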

\begin{remark} It is important that $\mathcal{L}^1$ satisfies the usual rather than the partial $H$-Brownian Gibbs property in Corollary \ref{CorMain2}. Indeed, otherwise one could take $\mathcal{L}^2$ and project this line ensemble to its top $N_1$ curves. The resulting $\Sigma_1$-indexed line ensemble on $\Lambda$ will have the same top curve distribution as $\mathcal{L}^2$ and also satisfy the partial $H$-Brownian Gibbs property -- see Remark \ref{RPBGP}. In a sense $\mathcal{L}^1$ can be understood as a line ensemble with $N_1+1$ curves with the $(N_1+1)\textsuperscript{st}$ curve sitting at $-\infty$, while $\mathcal{L}^2$ has a $(N_1+1)\textsuperscript{st}$ curve that is finite-valued and the question that Corollary \ref{CorMain2} answers in the affirmative is whether we can distinguish between these two cases using only the top curve of the line ensemble. 
\end{remark}

%
%
\subsection{Basic lemmas}\label{Section2.3} 
In this section we present two lemmas that are related to line ensembles. As their proofs are somewhat technical, we postpone them until Section \ref{Section5.1}. 

\begin{definition}\label{ContFunInf} We endow $(-\infty, \infty]$ and $[-\infty, \infty)$ with the usual topology, i.e. the one for which $\{ (x-1/n, x+1/n)\}_{n \in \mathbb{N}}$ forms a local base at a point $x \in (-\infty, \infty)$, $\{ (n, \infty]\}_{n \in \mathbb{N}}$ is a local base of $\infty$ and $\{ [-\infty, -n)\}_{n \in \mathbb{N}}$ is a local base of $-\infty$. Let $C^+([a,b])$ be the space of continuous functions $f: [a,b] \rightarrow (-\infty, \infty]$ and $C^-([a,b])$ the space of continuous functions on $g: [a,b] \rightarrow [-\infty, \infty)$. We endow $C^+([a,b])$ with the topology where $f_n \rightarrow f$ if and only if $e^{-f_n} \rightarrow e^{-f}$ uniformly on $[a,b]$. Similarly, we endow $C^-([a,b])$ with the topology where $f_n \rightarrow f$ if and only if $e^{f_n} \rightarrow e^{f}$ uniformly on $[a,b]$.
\end{definition}

The following lemma explains why equation (\ref{PBGPTower}) makes sense, see also Remark \ref{RemMeas}.
\begin{lemma}\label{LemmaMeasExp} Assume the same notation as in Definitions \ref{DefAvoidingLaw} and \ref{ContFunInf} and suppose that $h : C( \llbracket k_1, k_2 \rrbracket \times [a,b]) \rightarrow \mathbb{R}$ is a bounded Borel-measurable function. Let $k = k_2 - k_1 + 1$, and
\begin{equation*}
\begin{split}
&S = \{ (\vec{x}, \vec{y}, f,g) \in \mathbb{R}^k \times  \mathbb{R}^k \times C^+([a,b]) \times C^-([a,b]) \}, \\
\end{split}
\end{equation*}
where $S$ is endowed with the product topology and corresponding Borel $\sigma$-algebra. Then the function $G_h: S \rightarrow \mathbb{R}$, given by
\begin{equation}\label{S2MeasExpFun}
\begin{split}
&G_h(\vec{x}, \vec{y}, f,g) := \mathbb{E}_{H}^{k_1, k_2, a,b,\vec{x}, \vec{y},f,g}[h(\mathcal{Q})],
\end{split}
\end{equation}
is bounded and measurable.
\end{lemma}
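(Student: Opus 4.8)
The plan is to prove boundedness and measurability of $G_h$ separately, with measurability being the substantive part. Boundedness is immediate: since $W_H$ takes values in $(0,1]$ and $Z_H \in (0,1)$, the Radon-Nikodym derivative $W_H / Z_H$ is a bona fide probability density, so $|G_h(\vec{x},\vec{y},f,g)| = |\mathbb{E}_{H}^{k_1,k_2,a,b,\vec{x},\vec{y},f,g}[h(\mathcal{Q})]| \le \sup |h| < \infty$ uniformly over $S$. For measurability, the key observation is that we can unfold the Radon-Nikodym derivative and write
\begin{equation}\label{S2MeasUnfold}
G_h(\vec{x},\vec{y},f,g) = \frac{\mathbb{E}^{k_1,k_2,a,b,\vec{x},\vec{y}}_{free}\left[ h(\mathcal{Q}) \, W_H^{k_1,k_2,a,b,\vec{x},\vec{y},f,g}(\mathcal{Q}) \right]}{\mathbb{E}^{k_1,k_2,a,b,\vec{x},\vec{y}}_{free}\left[ W_H^{k_1,k_2,a,b,\vec{x},\vec{y},f,g}(\mathcal{Q}) \right]},
\end{equation}
so it suffices to show that the numerator and denominator are each measurable functions of $(\vec{x},\vec{y},f,g) \in S$ (the denominator being strictly positive, the quotient is then measurable). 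Thus the problem reduces to showing that a map of the form $(\vec{x},\vec{y},f,g) \mapsto \mathbb{E}^{k_1,k_2,a,b,\vec{x},\vec{y}}_{free}[\Phi(\mathcal{Q},f,g)]$ is measurable, where $\Phi$ is a bounded function depending measurably on its arguments.

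The main step is a decoupling of the $\vec{x},\vec{y}$-dependence from the $f,g$-dependence. Using the affine representation (\ref{BBDef}), a Brownian bridge $B_i$ from $x_i$ to $y_i$ on $[a,b]$ can be written as $B_i(t) = \tilde{B}_i(t) + \ell_i(t;x_i,y_i)$, where $\tilde{B}_i$ is a Brownian bridge from $0$ to $0$ (whose law does not depend on $\vec{x},\vec{y}$) and $\ell_i(t;x_i,y_i) = \frac{b-t}{b-a}x_i + \frac{t-a}{b-a}y_i$ is the deterministic linear interpolation. Hence under $\mathbb{P}^{k_1,k_2,a,b,\vec{x},\vec{y}}_{free}$ we may realize $\mathcal{Q}$ on a single fixed probability space carrying $k$ independent standard Brownian bridges $\vec{\tilde B}$, via $\mathcal{Q}_i = \tilde B_i + \ell_i(\cdot;x_i,y_i)$. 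The numerator of (\ref{S2MeasUnfold}) becomes $\mathbb{E}_{\vec{\tilde B}}\left[ \Psi(\vec{\tilde B}, \vec{x},\vec{y},f,g) \right]$ for an explicit bounded function $\Psi$ built from $h$ and $W_H$. Now I would invoke a standard measurability-of-integrals result (e.g. an application of the monotone class theorem, or \cite[Chapter 46]{Munkres}-style Fubini-type reasoning): it suffices to check that $\Psi$ is jointly measurable in $(\vec{\tilde B}, \vec{x}, \vec{y}, f, g)$ and bounded, and then $\mathbb{E}_{\vec{\tilde B}}[\Psi(\cdot)]$ is measurable in $(\vec{x},\vec{y},f,g)$. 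Joint measurability of $\Psi$ in turn follows from: (i) continuity of $(\vec{\tilde B},\vec{x},\vec{y}) \mapsto \mathcal{Q}$ into $C(\llbracket k_1,k_2\rrbracket\times[a,b])$, hence measurability of $h(\mathcal{Q})$; and (ii) continuity of $(\mathcal{Q}_{k_1},\dots,\mathcal{Q}_{k_2},f,g) \mapsto W_H^{k_1,k_2,a,b,\vec{x},\vec{y},f,g}(\mathcal{Q})$.

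The hard part will be (ii) — verifying that the Boltzmann weight depends continuously (or at least measurably) on the boundary data $f \in C^+([a,b])$ and $g \in C^-([a,b])$ in the topologies of Definition \ref{ContFunInf}, since $f$ and $g$ are allowed to take the values $+\infty$ and $-\infty$. The relevant terms are $\int_a^b H(\mathcal{Q}_{k_1}(u) - f(u))\,du$ and $\int_a^b H(g(u) - \mathcal{Q}_{k_2}(u))\,du$. The topology on $C^+([a,b])$ is designed precisely so that $f_n \to f$ means $e^{-f_n} \to e^{-f}$ uniformly; since $H$ is continuous on $[-\infty,\infty)$ with $H(-\infty)$ finite (as $-\infty$ is in the domain), and $\mathcal{Q}_{k_1}$ is bounded on $[a,b]$, the map $f \mapsto H(\mathcal{Q}_{k_1}(\cdot) - f(\cdot))$ is continuous pointwise on $[a,b]$ with values in $[0,\infty)$, uniformly bounded on compacts of the data, so dominated convergence gives continuity of the integral; the case where $f \equiv \infty$ (so the $i = k_1 - 1$ term is $\int_a^b H(-\infty)\,du = (b-a)H(-\infty)$, which is the relevant limit since $\mathcal{Q}_{k_1}(u) - f(u) \to -\infty$) must be handled as the boundary point of this topology, and one checks the convergence $e^{-f_n}\to 0$ uniformly forces $\mathcal{Q}_{k_1} - f_n \to -\infty$ uniformly, so the integrand converges to $H(-\infty)$ uniformly. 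Once continuity (hence Borel measurability) of $W_H$ in all arguments jointly is established, the remaining bookkeeping — joint measurability of the integrand and passing the expectation through — is routine, and measurability of $G_h$ follows from (\ref{S2MeasUnfold}) together with positivity of the denominator.
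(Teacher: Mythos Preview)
Your proposal is correct and shares its core technical content with the paper's proof, but the two arguments are organized differently.

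Both you and the paper write $G_h$ as the ratio $F_h/F_1$ with $F_h(\vec{x},\vec{y},f,g) = \mathbb{E}_{free}^{k_1,k_2,a,b,\vec{x},\vec{y}}[h(\mathcal{Q})\,W_H(\mathcal{Q},f,g)]$, realize the free bridges on a fixed probability space via the affine representation (\ref{BBDef}), and then confront the one genuinely nontrivial point: continuity of the Boltzmann weight $W_H$ in $(f,g)$ under the topologies of Definition~\ref{ContFunInf}, including the boundary case $f\equiv\infty$. Your treatment of this via dominated convergence and the continuity of $H$ at $-\infty$ is exactly what the paper does in its Step~3.

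Where you diverge is in handling general bounded measurable $h$. The paper runs a monotone class argument on $h$: it shows that $F_h$ is \emph{continuous} on $S$ whenever $h$ is continuous, approximates indicators of finite-dimensional cylinder sets by continuous functions, and then invokes the monotone class theorem to reach all bounded measurable $h$. You instead keep $h$ fixed and argue that the integrand $\Psi(\vec{\tilde B},\vec{x},\vec{y},f,g)=h(\mathcal{Q})\cdot W_H(\mathcal{Q},f,g)$ is jointly Borel measurable (using that $(\vec{\tilde B},\vec{x},\vec{y})\mapsto\mathcal{Q}$ is continuous, $h$ is Borel, and $W_H$ is continuous in all arguments), and then appeal to Fubini to conclude that $\mathbb{E}_{\vec{\tilde B}}[\Psi]$ is measurable in $(\vec{x},\vec{y},f,g)$. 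This is legitimate since all the spaces involved are Polish (in particular $C^+([a,b])$ is homeomorphic via $f\mapsto e^{-f}$ to a closed subset of $C([a,b])$), so the Borel $\sigma$-algebra on the product coincides with the product $\sigma$-algebra and Fubini applies. Your route is slightly more direct and avoids the monotone class reduction; the paper's route has the side benefit of establishing continuity of $G_h$ for continuous $h$, which is not needed here but is a pleasant byproduct.
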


\begin{lemma}\label{PropFD}  
 Let $\Sigma = \llbracket 1, N \rrbracket$ with $N \in \mathbb{N}$ or $N =\infty$, and let $\Lambda \subset \mathbb{R}$ be an interval. Suppose that $\mathcal{L}^1$ and $\mathcal{L}^2$ are $\Sigma$-indexed line ensembles on $\Lambda$ with laws $\mathbb{P}_1$ and $\mathbb{P}_2$ respectively. Suppose further that for every $k\in \mathbb{N}$,  $ t_1 < t_2 < \cdots < t_k$ with $t_i \in \Lambda^o$ (the interior of $\Lambda$) for $i = 1, \dots, k$; $n_1, \dots, n_k \in \Sigma$ and $z_1, \dots, z_k \in \mathbb{N}$ we have
\begin{equation}\label{S2FDStrict}
\mathbb{E}_{\mathbb{P}_1} \left[ \prod_{ i =1}^k \exp \left( - z_i \exp \left( \mathcal{L}^1_{n_i}(t_i)\right) \right) \right] =\mathbb{E}_{\mathbb{P}_2} \left[ \prod_{ i =1}^k \exp \left( - z_i \exp \left( \mathcal{L}^2_{n_i}(t_i)\right) \right)  \right].
\end{equation}
Then we have that $\mathbb{P}_1 = \mathbb{P}_2$.
\end{lemma}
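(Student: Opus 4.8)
\textbf{Proof proposal for Lemma \ref{PropFD}.}

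The plan is to show that the collection of functionals appearing on the left side of \eqref{S2FDStrict} is rich enough to determine the law of the line ensemble as a $C(\Sigma\times\Lambda)$-valued random variable. Since both $\mathbb{P}_1$ and $\mathbb{P}_2$ are Borel probability measures on the Polish space $C(\Sigma\times\Lambda)$ (endowed with uniform-on-compacts convergence), by Dynkin's $\pi$-$\lambda$ theorem it suffices to check equality of $\mathbb{P}_1$ and $\mathbb{P}_2$ on a $\pi$-system generating $\mathcal{C}_\Sigma$. A convenient such $\pi$-system is the family of finite-dimensional cylinder events $\{\mathcal{L}_{n_1}(t_1)\le c_1,\dots,\mathcal{L}_{n_k}(t_k)\le c_k\}$ with $t_i\in\Lambda$, $n_i\in\Sigma$, $c_i\in\mathbb{R}$; indeed, the maps $\mathcal{L}\mapsto(\mathcal{L}_{n_1}(t_1),\dots,\mathcal{L}_{n_k}(t_k))$ are continuous, and uniform-on-compacts convergence on $C(\Sigma\times\Lambda)$ is metrizable and separable, so the Borel $\sigma$-algebra is generated by these evaluation maps. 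Thus the lemma reduces to proving that all finite-dimensional distributions of $\mathcal{L}^1$ and $\mathcal{L}^2$ agree. First I would reduce from general $t_i\in\Lambda$ to $t_i\in\Lambda^o$: if an endpoint of $\Lambda$ is attained, approximate it from inside by a sequence $t_i^{(m)}\to t_i$ and use continuity of the curves together with the dominated (or bounded) convergence theorem to pass the limit through both the probability and the test functionals, so it is enough to work with interior points, which is exactly the hypothesis of \eqref{S2FDStrict}.

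Next, fixing $k$, interior times $t_1<\cdots<t_k$ and indices $n_1,\dots,n_k\in\Sigma$, I would argue that \eqref{S2FDStrict} determines the joint law of the random vector $\big(\mathcal{L}^v_{n_1}(t_1),\dots,\mathcal{L}^v_{n_k}(t_k)\big)\in\mathbb{R}^k$. Set $U_i^v=\exp(\mathcal{L}^v_{n_i}(t_i))>0$, so that \eqref{S2FDStrict} asserts
\begin{equation*}
\mathbb{E}_{\mathbb{P}_1}\Big[\prod_{i=1}^k e^{-z_i U_i^1}\Big]=\mathbb{E}_{\mathbb{P}_2}\Big[\prod_{i=1}^k e^{-z_i U_i^2}\Big]\qquad\text{for all }z_1,\dots,z_k\in\mathbb{N}.
\end{equation*}
The left side, as a function of $(z_1,\dots,z_k)$, is (up to a sign pattern) the multivariate Laplace transform of the law of $(U^v_1,\dots,U^v_k)$ on $(0,\infty)^k$, evaluated on the integer lattice. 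A standard argument upgrades equality on $\mathbb{N}^k$ to equality on $[0,\infty)^k$: the map $(w_1,\dots,w_k)\mapsto \mathbb{E}_{\mathbb{P}_v}[\prod_i e^{-w_i U_i^v}]$ extends to a function that is real-analytic (indeed holomorphic) in each $w_i$ on the right half-plane $\{\mathrm{Re}\,w_i>0\}$ — differentiation under the expectation is justified by the uniform bound $|U_i^v e^{-w_i U_i^v}|\le C$ on compact subsets of the half-plane — so agreeing on the lattice $\mathbb{N}^k$, which has an accumulation point structure sufficient for identity-theorem arguments applied one variable at a time, forces agreement on all of $(0,\infty)^k$. (Alternatively: for fixed values of the other coordinates, equality for all $z_1\in\mathbb{N}$ of a function analytic in $\mathrm{Re}\,z_1>0$ and bounded, combined with a Müntz–Szász / moment-type uniqueness input, pins down the one-dimensional marginal, then induct on coordinates.) Once the Laplace transform of $(U_1^v,\dots,U_k^v)$ on $(0,\infty)^k$ is known to coincide, uniqueness of multivariate Laplace transforms of measures on $(0,\infty)^k$ gives that $(U_1^1,\dots,U_k^1)$ and $(U_1^2,\dots,U_k^2)$ have the same law, hence so do $(\mathcal{L}^1_{n_1}(t_1),\dots,\mathcal{L}^1_{n_k}(t_k))$ and $(\mathcal{L}^2_{n_1}(t_1),\dots,\mathcal{L}^2_{n_k}(t_k))$, since $x\mapsto e^x$ is a homeomorphism $\mathbb{R}\to(0,\infty)$.

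Finally I would assemble the pieces: the two line ensembles have identical finite-dimensional distributions, and since these generate $\mathcal{C}_\Sigma$ on the Polish space $C(\Sigma\times\Lambda)$, a $\pi$-$\lambda$ argument yields $\mathbb{P}_1=\mathbb{P}_2$ as measures on $C(\Sigma\times\Lambda)$. The main obstacle I anticipate is the analytic-continuation step, i.e.\ rigorously passing from equality of the exponential functionals on the integer set $\mathbb{N}^k$ to equality of the full Laplace transform on $(0,\infty)^k$ and thereby to equality of laws; this needs a careful justification of holomorphy in each complex variable (via dominated convergence for the difference quotients) and then a one-variable-at-a-time application of the identity theorem, using that $\mathbb{N}$ accumulates at infinity inside the half-plane in a way controlled by boundedness of the transform. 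Everything else — the endpoint approximation, the reduction to finite-dimensional sets, and the final $\pi$-$\lambda$ step — is routine.
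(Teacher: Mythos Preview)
Your overall structure is sound --- reduce to finite-dimensional distributions and then show the hypothesis determines the joint law of the point evaluations --- but your primary analytic-continuation argument has a genuine gap. The identity theorem for holomorphic functions requires the agreement set to have an accumulation point \emph{inside the domain}, and $\mathbb{N}$ has no accumulation point in the right half-plane; it only accumulates at $\infty$. (The function $\sin(\pi w)$ vanishes on all of $\mathbb{N}$ without being identically zero.) You correctly sense that boundedness of the transform is the missing ingredient: a bounded holomorphic function on a half-plane whose zero set violates the Blaschke condition --- as $\sum_n 1/n=\infty$ does after conformal transfer to the disk --- must vanish identically. So the argument can be rescued, but it is a Blaschke-type uniqueness statement, not ``the identity theorem,'' and you should invoke it explicitly rather than gesturing at accumulation at infinity.

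The paper avoids this complex-analytic detour with an elementary substitution that is essentially the ``moment-type'' alternative you mention in passing. Set $Y_i=\exp(-\exp(\mathcal{L}_{n_i}(t_i)))\in(0,1)$; then \eqref{S2FDStrict} reads $\mathbb{E}_{\mathbb{P}_1}\big[\prod_i (Y_i^1)^{z_i}\big]=\mathbb{E}_{\mathbb{P}_2}\big[\prod_i (Y_i^2)^{z_i}\big]$ for all $z_i\in\mathbb{N}$, i.e.\ all mixed integer moments of the $[0,1]^k$-valued vectors agree. Linearity extends this to polynomials, Stone--Weierstrass to all of $C([0,1]^k)$, hence the laws of $(Y_1,\dots,Y_k)$ coincide, and since $x\mapsto e^{-e^x}$ is a homeomorphism $\mathbb{R}\to(0,1)$ so do the laws of $(\mathcal{L}_{n_1}(t_1),\dots,\mathcal{L}_{n_k}(t_k))$. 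This is both shorter and entirely real-variable. The final passage from finite-dimensional agreement to $\mathbb{P}_1=\mathbb{P}_2$ on $C(\Sigma\times\Lambda)$ is handled in the paper by citation, but your direct $\pi$-$\lambda$ argument there is fine.
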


%
%
\section{Key lemmas}\label{Section3} In the following section we isolate several technical lemmas, which will be required in the proof of Theorem \ref{ThmMain}. The main result of the section, Lemma \ref{BridgeLemma}, is presented in Section \ref{Section3.1} and is followed by a brief description of the method of proof. The complete proof is given in Section \ref{Section3.2}. In Section \ref{Section3.3} we formulate two results for $H$-Brownian bridge line ensembles, with the first being a generalization of Lemma \ref{BridgeLemma}. In Section \ref{Section3.4} we present two monotone coupling lemmas for $H$-Brownian bridge line ensembles, whose proof is deferred to Section \ref{Section5}.

%
\subsection{Estimates on Brownian bridges}\label{Section3.1} If $W_t$ is a standard Brownian motion, we recall that $\tilde{B}(t) = W_t - t W_1$ for $0 \leq t \leq 1$ is a standard Brownian bridge. Given $a, b,x,y \in \mathbb{R}$ with $a < b$, we recall from (\ref{BBDef}) that
\begin{equation*}
B(t) = (b-a)^{1/2} \cdot \tilde{B} \left( \frac{t - a}{b-a} \right) + \left(\frac{b-t}{b-a} \right) \cdot x + \left( \frac{t- a}{b-a}\right) \cdot y, 
\end{equation*}
is the Brownian bridge (from $B(a) = x$ to $B(b) = y$) with diffusion parameter $1$, and we denote its law by $\mathbb{P}_{free}^{a,b,x,y}$. 

The following lemma is the main result of the section.
\begin{lemma}\label{BridgeLemma} Let us fix $ \lambda, M > 0$, $x,y \in [-M,M]$. We let $a_n, b_n$ be sequences such that $n^{-5/4} \leq b_n \leq n^{-3/4}$ and $a_n = b_n + n^{-2}$ for $n \in \mathbb{N}$. We also set $W_n = \lambda^{-1} \log n$, $V_n = n^{-1/3}$ for $n \in \mathbb{N}$. Let $B_n$ be the Brownian bridge with law $\mathbb{P}_{free}^{-a_n, a_n,x,y}$ and denote the events:
\begin{enumerate}
\item $A_n = \{ -W_n - 1 \leq B_n(t) \leq M + 1 \mbox{ for $-a_n \leq t \leq -b_n $} \}$;
\item $D_n = \{ -W_n - 1 \leq B_n(t) \leq M + 1 \mbox{ for $b_n \leq t \leq a_n$} \}$;
\item $E_n = \{  -W_n - 2V_n \leq B_n(t) \leq -W_n + 2V_n \mbox{ for $-b_n \leq t \leq b_n$} \}$;
\item $F_n = \{B_n(-b_n) \leq -W_n \mbox{ and } B_n(b_n) \leq -W_n \} .$
\end{enumerate}
Then there exist positive constants $C,c$, depending on $\lambda, M$ such that for all $n \geq 1$ one has
\begin{equation}\label{BBEM}
\mathbb{P}_{free}^{-a_n, a_n, x,y} ( A^c_n \cup D^c_n \cup E^c_n | F_n) \leq C e^{-c n^{1/12} }.
\end{equation}
\end{lemma}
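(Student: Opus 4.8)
The plan is to condition on the event $F_n$ and analyze the resulting bridge. Since $F_n$ only constrains the two values $B_n(-b_n)$ and $B_n(b_n)$, I would first compute the conditional law of the pair $(B_n(-b_n), B_n(b_n))$ given $F_n$. Under $\mathbb{P}_{free}^{-a_n,a_n,x,y}$, this pair is jointly Gaussian with means interpolating linearly between $x$ and $y$ (so bounded in absolute value by $M$), and with covariance of order $b_n$ on the diagonal and the off-diagonal term also of order $b_n$ (since the spacing between $-b_n$ and $b_n$ is $2b_n \ll a_n$). Thus both coordinates have standard deviation $\asymp b_n^{1/2}$, which is at most $n^{-3/8}$. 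Conditioning on $\{B_n(-b_n)\le -W_n,\ B_n(b_n)\le -W_n\}$ forces both coordinates to be below $-W_n = -\lambda^{-1}\log n$, which is many (of order $W_n / b_n^{1/2} \asymp n^{3/8}\log n$) standard deviations below the mean. A standard Gaussian tail computation then shows that, conditionally on $F_n$, both $B_n(-b_n)$ and $B_n(b_n)$ concentrate within $O(b_n^{1/2}\log n) = o(V_n)$ of the level $-W_n$; more precisely, for a suitable constant I expect
\begin{equation*}
\mathbb{P}_{free}^{-a_n,a_n,x,y}\bigl( |B_n(\pm b_n) + W_n| > V_n \,\big|\, F_n\bigr) \le C e^{-c n^{1/12}},
\end{equation*}
since overshooting the already-huge deviation by an extra $V_n = n^{-1/3}$ costs an additional Gaussian factor, and $V_n / b_n^{1/2} \gtrsim n^{-1/3}\cdot n^{3/8} = n^{1/24}$, which dominates $n^{1/12}$ up to renaming constants (one would actually track exponents carefully here — the $n^{1/12}$ in the statement is presumably the worst of several competing rates).

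Next, I would use the Markov property of the Brownian bridge at the times $-b_n$ and $b_n$. Conditionally on the values $(B_n(-b_n), B_n(b_n)) = (u,v)$, the three pieces of $B_n$ — on $[-a_n,-b_n]$, on $[-b_n,b_n]$, and on $[b_n,a_n]$ — are independent Brownian bridges with diffusion parameter $1$ between the prescribed endpoints. On the middle interval $[-b_n,b_n]$, the bridge runs between $u$ and $v$, both within $V_n$ of $-W_n$ on the good event, over a time span $2b_n \le 2n^{-3/4}$; the maximal fluctuation of such a bridge away from the chord has Gaussian tails with scale $b_n^{1/2}$, so $\mathbb{P}(\sup_{[-b_n,b_n]}|B_n - \text{chord}| > V_n) \le Ce^{-c V_n^2 / b_n} \le Ce^{-cn^{1/6}}$, and combined with the endpoint control this gives $E_n$ up to the claimed error. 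For the two outer intervals, each is a bridge of length $n^{-2}$ between an endpoint bounded by $M$ (namely $x$ or $y$) and an endpoint near $-W_n$; here I would bound $\sup|B_n - \text{chord}|$ over an interval of length $n^{-2}$, whose Gaussian scale is $n^{-1}$, so the probability that the bridge exits the chord by more than $1$ is at most $Ce^{-cn^2}$, and since the chord itself stays between $-W_n - o(1)$ and $M + o(1)$, the events $A_n$ and $D_n$ hold off an event of probability $\le Ce^{-cn^2}$.

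Finally I would assemble these estimates: writing $A_n^c \cup D_n^c \cup E_n^c$ as a union and bounding $\mathbb{P}(\cdot \mid F_n)$ of each term, the dominant contribution is the slowest of the rates $e^{-cn^{1/12}}$ (from the endpoint concentration given $F_n$), $e^{-cn^{1/6}}$ (middle bridge fluctuation), and $e^{-cn^2}$ (outer bridges), so the sum is $\le Ce^{-cn^{1/12}}$ after adjusting $C,c$. The main obstacle, and the step requiring genuine care rather than routine estimation, is the conditional endpoint concentration in the first paragraph: one must compute the $2\times 2$ Gaussian covariance of $(B_n(-b_n),B_n(b_n))$ under $\mathbb{P}_{free}^{-a_n,a_n,x,y}$ precisely enough (in particular, that it is nondegenerate and that the off-diagonal entry is close to the diagonal entry, i.e.\ the two values are strongly positively correlated), and then carry out the conditioning on the far-tail event $F_n$ to extract a Gaussian concentration bound of the correct order in $n$ — the bookkeeping of which exponent of $n$ survives is where the specific choices $b_n \in [n^{-5/4}, n^{-3/4}]$, $W_n = \lambda^{-1}\log n$, and $V_n = n^{-1/3}$ all enter, and getting $n^{1/12}$ rather than something weaker is the crux.
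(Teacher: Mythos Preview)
Your overall strategy matches the paper's exactly: introduce the event $G_n = \{B_n(-b_n) < -W_n - V_n\} \cup \{B_n(b_n) < -W_n - V_n\}$, show $\mathbb{P}(G_n \mid F_n)$ is small, then condition on the pair $(B_n(-b_n), B_n(b_n))$ to split into three independent bridges and control each via the explicit formula for the max of a Brownian bridge. However, your variance computation is off in a way that distorts the picture. Under $\mathbb{P}_{free}^{-a_n,a_n,x,y}$ the variance of $B_n(\pm b_n)$ is $\frac{(a_n - b_n)(a_n + b_n)}{2a_n} \approx a_n - b_n = n^{-2}$, \emph{not} $b_n$ --- the points $\pm b_n$ sit at distance $n^{-2}$ from the endpoints of the bridge, not in the bulk. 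Likewise the covariance $\text{Cov}(B_n(-b_n),B_n(b_n)) = \frac{(a_n-b_n)^2}{2a_n}$ is of smaller order, so the two values are nearly independent rather than strongly correlated.

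This error doesn't break the argument (the true variance is smaller than you assumed, so concentration is even tighter; the paper in fact obtains a bound of order $\exp(-c\, n^{5/3}\log n)$ for $\mathbb{P}(G_n\mid F_n)$ via direct integral estimates on the two-point density), but it means the endpoint concentration is \emph{not} the bottleneck. The genuine bottleneck is your middle-bridge estimate: there $V_n^2/b_n \ge n^{-2/3}/n^{-3/4} = n^{1/12}$, not $n^{1/6}$ --- you must take the worst case $b_n = n^{-3/4}$ --- and this is precisely where the exponent $n^{1/12}$ in the statement comes from.
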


Let us explain the content of Lemma \ref{BridgeLemma} and the key ideas of the proof, see also Figure \ref{S3_1}. In the lemma we are given a sequence of Brownian bridges $B_n$ that are defined on small intervals $[-a_n, a_n]$. Inside the intervals $[-a_n, a_n]$ there are slightly smaller intervals $[-b_n, b_n]$ such that $a_n -b_n = n^{-2}$ is tiny compared to $b_n$. The event $F_n$ is the event that the Brownian bridge $B_n$ is lower than $-W_n = - \lambda^{-1} \log n$ both at times $-b_n$ and $b_n$. The starting and ending location of $B_n$ are points $x,y$ which are assumed to lie inside of a fixed window $[-M,M]$. The event $F_n$ is a tail event and very unlikely for large $n$ as the Brownian bridge is required to go very low in a tiny amount of time from $-a_n$ to $-b_n$ and again from time $a_n$ backwards to $b_n$. The main result of the lemma, equation (\ref{BBEM}), says that all three events $A_n$, $D_n$ and $E_n$ are extremely likely to occur if the event $F_n$ occurs.
\begin{figure}[ht]
\begin{center}
  \includegraphics[scale = 0.7]{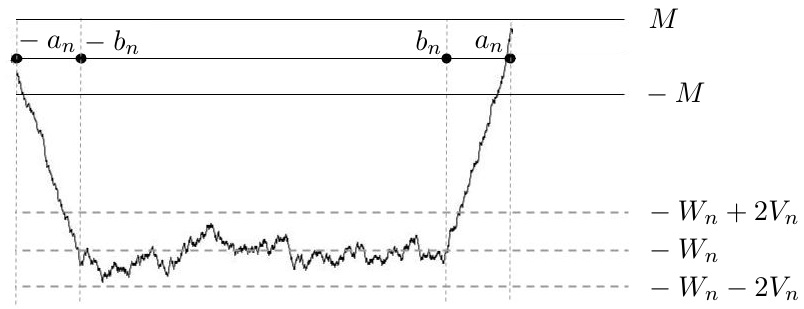}
  \caption{The figure shows $B_n$ for an outcome in the event $A_n \cap D_n \cap E_n \cap F_n$. }
  \label{S3_1}
  \end{center}
\end{figure}

To see why $A_n, D_n$ and $E_n$ are likely given $F_n$, let us consider the event $G_n = G_n^+ \cup G_n^-$, where 
$$G^+_n = \{ B_n(b_n) \leq -W_n - V_n \} \mbox{ and } G^-_n =\{ B_n(-b_n) \leq -W_n - V_n \}.$$
On the event $F_n$ one can show that the event $G_n$ is extremely unlikely. The reason behind this is that on $F_n$ the bridge $B_n$ is already forced to go below $-W_n$ at the times $\pm b_n$, while on $G_n$ it is forced at one of these two times to go even lower, namely below $-W_n - V_n$. Even though the bridge is forced to go only a small additional distance lower, this extra bit of deviation of the bridge from its mean comes at a great energetic cost making it unlikely. Once we have shown that $G_n$ is unlikely given $F_n$, it suffices to show that $A_n, D_n, E_n$ are all very likely conditional on $F_n \cap G_n^c$. 

Conditional on the values $B_n(-b_n)$ and $B_n(b_n)$ the law of the three pieces of $B_n$ on $[-a_n, -b_n]$, $[-b_n, b_n]$ and $[b_n, a_n]$ becomes the law of three independent Brownian bridges from $x$ to $B_n(-b_n)$, from $B_n(-b_n)$ to $B_n(b_n)$ and from $B_n(b_n)$ to $y$, respectively, all with diffusion parameters $1$. On the event $F_n \cap G_n^c$ we have that 
$$-W_n \geq B_n(-b_n), B_n(b_n) \geq - W_n- V_n,$$
and the bridge from $x$ to $B_n(-b_n)$ closely follows the segment connecting the points $(-a_n, x)$ and $(-b_n, B(-b_n))$. By our assumption that $x \in [-M, M]$ and $-W_n  \geq B_n(-b_n) \geq -W_n - V_n$ (which holds on  $F_n \cap G_n^c$) we have that this segment lies well below the horizontal level $M+1$ and above the level $-W_n - 1$. This means that on $F_n \cap G_n^c$ the bridge from $x$ to $B_n(-b_n)$ is unlikely to dip below $-W_n-1$ and to rise above $M+ 1$, making $A_n$ very likely. An analogous argument shows that $D_n$ is also very likely on $F_n \cap G_n^c$. 

Finally, the bridge from $B_n(-b_n)$ to $B_n(b_n)$ on the event $F_n \cap G_n^c$ starts and ends between the levels $-W_n$ and $-W_n - V_n$. Since this bridge is on the interval $[-b_n, b_n]$, it has fluctuations of order $\sqrt{b_n}$, which means that it is unlikely to deviate too much from the straight segment connecting $(-b_n, B_n(-b_n))$ and $(b_n, B_n(b_n))$, which makes the event $E_n$ very likely conditional on $F_n \cap G_n^c$. 

The above few paragraphs sketch the main steps of the proof of Lemma \ref{BridgeLemma}, and in Section \ref{Section3.2} we supply the necessary details to make the argument complete.\\

We end this section by recalling a few basic results about Gaussian distributions and Brownian bridges. We denote by $\Phi(x)$ and $\phi(x)$ the cumulative distribution function and density of a standard normal random variable. The following lemma can be found in \cite[Section 4.2]{MZ}. 
\begin{lemma}\label{LemmaI1}
There is a constant $c_0 > 1$ such that for all $x \geq 0$ we have
\begin{equation}\label{LI2}
 \frac{1}{c_0(1+x)} \leq \frac{1 - \Phi(x)}{\phi(x)} \leq \frac{c_0}{1 +x},
\end{equation}
\end{lemma}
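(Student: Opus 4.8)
The plan is to derive the two classical one-sided Gaussian tail bounds and then upgrade them to the stated two-sided estimate via a compactness argument near the origin. Throughout write $\phi(x) = (2\pi)^{-1/2} e^{-x^2/2}$ and $1-\Phi(x) = \int_x^\infty \phi(t)\,dt$, and set $R(x) := (1-\Phi(x))/\phi(x)$ for $x \ge 0$; since $\phi$ is continuous and strictly positive on all of $\mathbb{R}$, the function $R$ is continuous and strictly positive on $[0,\infty)$.

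First I would prove the upper estimate $R(x) \le 1/x$ for $x>0$. For $t \ge x$ we have $t/x \ge 1$, and $t\phi(t) = -\phi'(t)$, so
\begin{equation*}
1 - \Phi(x) = \int_x^\infty \phi(t)\,dt \le \frac{1}{x}\int_x^\infty t\,\phi(t)\,dt = \frac{1}{x}\bigl[-\phi(t)\bigr]_x^\infty = \frac{\phi(x)}{x}.
\end{equation*}
Next I would prove the lower estimate $R(x) \ge x/(1+x^2)$ for $x \ge 0$ by the standard auxiliary-function trick. Put $g(x) := 1 - \Phi(x) - x\phi(x)/(1+x^2)$. Using $\phi'(x) = -x\phi(x)$, a direct differentiation gives
\begin{equation*}
g'(x) = -\phi(x) - \frac{\phi(x)\,(1 - 2x^2 - x^4)}{(1+x^2)^2} = -\frac{2\phi(x)}{(1+x^2)^2} < 0,
\end{equation*}
so $g$ is strictly decreasing on $[0,\infty)$; since $g(x)\to 0$ as $x\to\infty$, this forces $g(x) \ge 0$ for all $x\ge 0$, i.e. $1-\Phi(x) \ge x\phi(x)/(1+x^2)$.

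Finally I would assemble the two bounds. For $x \ge 1$ they yield $x/(1+x^2) \le R(x) \le 1/x$, hence $(1+x)R(x) \le (1+x)/x \le 2$ and $(1+x)R(x) \ge x(1+x)/(1+x^2) \ge x^2/(1+x^2) \ge 1/2$. On the compact interval $[0,1]$ the map $x \mapsto (1+x)R(x)$ is continuous and strictly positive, hence bounded above and below there by positive constants. Combining the two regimes, there are constants $0 < m \le M_0$ with $m \le (1+x)R(x) \le M_0$ for all $x \ge 0$. Taking $c_0 := \max\{M_0,\, 1/m,\, 2\} > 1$ then gives $\tfrac{1}{c_0(1+x)} \le \tfrac{m}{1+x} \le R(x) \le \tfrac{M_0}{1+x} \le \tfrac{c_0}{1+x}$, which is precisely (\ref{LI2}). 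The argument is entirely elementary; the only mild subtlety is that the clean lower bound $x/(1+x^2)$ degenerates at $x=0$, which is exactly why the behavior near the origin is handled by continuity and compactness rather than by a single explicit inequality. (Alternatively, one may simply cite \cite[Section 4.2]{MZ}, where these estimates are worked out.)
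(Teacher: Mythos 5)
Your proof is correct. The paper does not actually prove Lemma \ref{LemmaI1}; it simply cites the inequality (\ref{LI2}) from \cite[Section 4.2]{MZ}. You supply a self-contained elementary derivation: the upper bound $1-\Phi(x)\le\phi(x)/x$ by the standard $t/x\ge 1$ comparison, the lower bound $1-\Phi(x)\ge x\phi(x)/(1+x^2)$ via the auxiliary function $g$ (your computation of $g'(x)=-2\phi(x)/(1+x^2)^2$ is right), and then the uniform two-sided bound on $(1+x)R(x)$ by splitting at $x=1$ and invoking continuity and positivity on the compact piece $[0,1]$. All steps check out, and the compactness step is indeed needed since the clean lower estimate degenerates at $x=0$. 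The only comment worth making is that what you gain over the paper's bare citation is transparency at no real cost; and one could alternatively avoid the compactness step entirely by observing that $1-\Phi(x)\le\phi(x)\cdot\sqrt{\pi/2}$ for all $x\ge 0$ (since $R$ is decreasing with $R(0)=\sqrt{\pi/2}$), which together with $R(x)\ge x/(1+x^2)\ge 1/(2(1+x))$ gives explicit constants directly, but your argument is equally valid.
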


The following result can be found in \cite[Chapter 4, Eq.~3.40]{KS}. 
\begin{lemma}\label{LemmaBBmax} Let $a \in \mathbb{R}$, $T > 0$ and $\beta > \max(a, 0)$. Let $B: [0,T] \rightarrow \mathbb{R}$ denote a Brownian bridge from $B(0) = 0$ to $B(T) = a$ with diffusion parameter $1$. Then we have
\begin{equation*}
\mathbb{P}^{0,T,0,a}_{free}\left( \max_{0 \leq t \leq T} B(t) \geq \beta \right) = \mathbb{P}^{0,T,0,-a}_{free}\left( \min_{0 \leq t \leq T} B(t) \leq -\beta \right)  = e^{-2\beta (\beta -a)/T}.
\end{equation*}
\end{lemma}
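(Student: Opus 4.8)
\textbf{Proof plan for Lemma \ref{LemmaBBmax}.}

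The plan is to reduce everything to the reflection principle for Brownian motion. First I would record the equality of the two probabilities in the statement: if $B$ is a Brownian bridge from $0$ to $a$ on $[0,T]$, then $-B$ is a Brownian bridge from $0$ to $-a$ on $[0,T]$, and $\{\max_{[0,T]} B \geq \beta\} = \{\min_{[0,T]} (-B) \leq -\beta\}$. Hence the two events have equal probability and it suffices to compute $\mathbb{P}^{0,T,0,a}_{free}(\max_{0\le t\le T} B(t)\ge\beta)$ for $\beta > \max(a,0)$.

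Next I would pass from the bridge to an unconditioned Brownian motion. Let $W$ be a standard Brownian motion with $W_0 = 0$; then the conditional law of $(W_t)_{0\le t\le T}$ given $W_T = a$ is exactly $\mathbb{P}^{0,T,0,a}_{free}$. So the target probability equals $\mathbb{P}(\max_{0\le t\le T} W_t \ge \beta \mid W_T = a)$, which I would write as the ratio of densities
\begin{equation*}
\mathbb{P}^{0,T,0,a}_{free}\Bigl( \max_{0 \leq t \leq T} W_t \geq \beta \Bigr) = \frac{\frac{d}{da}\,\mathbb{P}\bigl( \max_{0 \leq t \leq T} W_t \geq \beta,\ W_T \in da \bigr)}{\frac{d}{da}\,\mathbb{P}( W_T \in da )},
\end{equation*}
the denominator being the Gaussian density $\frac{1}{\sqrt{2\pi T}} e^{-a^2/(2T)}$. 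For the numerator I would invoke the reflection principle: since $\beta > a$, an application of the strong Markov property at the first hitting time $\tau_\beta$ of level $\beta$ gives $\mathbb{P}(\max_{0\le t\le T} W_t \ge \beta,\ W_T \in da) = \mathbb{P}(W_T \in d(2\beta - a))$, i.e. the reflected endpoint density $\frac{1}{\sqrt{2\pi T}} e^{-(2\beta - a)^2/(2T)}$ (here one uses $\beta > 0$ to be sure $\tau_\beta < \infty$ matters only through $W_T$, and $\beta > a$ so that reaching $\beta$ is genuinely a constraint). Taking the ratio, the $1/\sqrt{2\pi T}$ factors cancel and the exponent becomes $-\frac{1}{2T}\bigl[(2\beta-a)^2 - a^2\bigr] = -\frac{1}{2T}(4\beta^2 - 4\beta a) = -\frac{2\beta(\beta-a)}{T}$, which is the claimed formula.

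The only real subtlety is justifying the density-ratio manipulation rigorously, i.e. that conditioning the joint event on $W_T = a$ really is given by differentiating in $a$ and dividing; this is the standard fact that for a jointly continuous setup the regular conditional probability $\mathbb{P}(\,\cdot \mid W_T = a)$ is obtained as such a Radon--Nikodym derivative, and it can be made precise either by a direct $\epsilon$-approximation ($\mathbb{P}(\cdot \mid W_T \in (a-\epsilon, a+\epsilon))$ and $\epsilon \to 0$, using continuity of both densities in $a$) or by citing the standard disintegration of Brownian motion over its endpoint. I expect this measure-theoretic bookkeeping to be the main (mild) obstacle; the reflection-principle computation itself is routine. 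Alternatively, one could bypass it entirely by quoting the joint density of $(\max_{[0,T]} W, W_T)$ directly from a standard reference and integrating, but the ratio-of-densities route is cleaner. I would then close by noting the formula also follows from, and is consistent with, the reference \cite[Chapter 4, Eq.~3.40]{KS}.
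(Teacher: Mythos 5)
Your argument is correct. Note that the paper itself offers no proof of Lemma \ref{LemmaBBmax} — it simply cites the identity from \cite[Chapter 4, Eq.~3.40]{KS}, where the joint law of $\bigl(\max_{0\le t\le T}W_t, W_T\bigr)$ is derived via the reflection principle. Your proposal essentially reconstructs that reference's derivation from scratch: reduce the bridge probability to a conditional probability for Brownian motion given its endpoint, write it as a ratio of densities, and compute the numerator by reflection. The symmetry reduction from the $\min$ statement to the $\max$ statement, the reflection computation $\mathbb{P}(\max_{[0,T]}W \ge \beta,\ W_T \in da) = \frac{1}{\sqrt{2\pi T}}e^{-(2\beta-a)^2/(2T)}\,da$ for $\beta > \max(a,0)$, and the final exponent simplification $-\frac{1}{2T}\bigl[(2\beta-a)^2-a^2\bigr]=-\frac{2\beta(\beta-a)}{T}$ are all accurate, and you correctly flag that the one genuine point needing care is the disintegration of Brownian motion over its endpoint — handled either by the $\epsilon$-window limit you describe or by directly invoking the known joint density of $(\max,W_T)$. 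So your route is a clean self-contained proof of a fact the paper treats as a black-box citation; the only tradeoff is length, and since the paper is using this lemma as an off-the-shelf input, citing \cite{KS} as the author does is the more economical choice in context.
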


%
\subsection{Proof of Lemma \ref{BridgeLemma}}\label{Section3.2} 
Throughout this section we will denote $\mathbb{P}_{free}^{-a_n, a_n, x,y}$ by $\mathbb{P}$ for simplicity. We also adopt the same notation as in the statement of the lemma. Let 
$$ G_n = G_n^+ \cup G_n^- \mbox{,  where }G^+_n = \{ B_n(b_n) \leq -W_n - V_n \} \mbox{ and } G^-_n =\{ B_n(-b_n) \leq -W_n - V_n \}.$$

We will show that there exist positive constants $C_i, c_i > 0$ for $i = 1,\dots, 4$ such that
\begin{equation}\label{BBE1}
\frac{\mathbb{P}(G_n \cap F_n)}{\mathbb{P}( F_n)} \leq C_1 e^{-c_1 n^{1/12} },
\end{equation}
\begin{equation}\label{BBE2}
\frac{\mathbb{P}(A^c_n \cap G^c_n \cap F_n)}{\mathbb{P}( F_n)} \leq C_2 e^{-c_2 n^{1/12} }, \hspace{5mm} \frac{\mathbb{P}(D^c_n \cap G^c_n \cap F_n)}{\mathbb{P}( F_n)} \leq C_3 e^{-c_3 n^{1/12} },
\end{equation}
\begin{equation}\label{BBE4}
\frac{\mathbb{P}(E^c_n \cap G^c_n \cap F_n)}{\mathbb{P}( F_n)} \leq C_4 e^{-c_4 n^{1/12} }.
\end{equation}
If equations (\ref{BBE1}), (\ref{BBE2}) and (\ref{BBE4}) are all true we conclude that 
\begin{equation*}
\mathbb{P}( A^c_n \cup D^c_n \cup E^c_n |F_n) = \frac{\mathbb{P}\left( (A^c_n \cup D^c_n \cup E^c_n) \cap F_n \right)}{\mathbb{P}( F_n)} \leq \sum_{i = 1}^4 C_i e^{-c_i n^{1/12} },
\end{equation*}
which implies (\ref{BBEM}) with $C = C_1 + C_2 +C_3 + C_4$ and $c = \min(c_1,c_2,c_3, c_4)$. 

We have thus reduced the proof of the lemma to showing that (\ref{BBE1}), (\ref{BBE2}) and (\ref{BBE4}) all hold, which we do in the steps below.\\

{\bf \raggedleft Step 1.} In this step we prove (\ref{BBE2}). As the two inequalities are quite similar we will only establish the first.
 By \cite[Eq.~6.28, p. 359]{KS} we know that the random vector $(B_n(-b_n), B_n(b_n)) \in \mathbb{R}^2$ has the following density function
\begin{equation}\label{BBDensity2}
 \frac{p(a_n - b_n; x, x_1) p(2b_n; x_1, x_2) p(a_n - b_n; x_2, y)}{p(2a_n;x,y)}, \mbox{ where } p(t; x,y) = \frac{e^{-(x-y)^2/2t}}{\sqrt{2\pi t}},
\end{equation}
and moreover conditional on $B_n(-b_n) = x_1$ and $B_n(b_n) = x_2$ the law of $B_n$ on the three intervals $[-a_n, -b_n]$, $[-b_n, b_n]$ and $[b_n, a_n]$ becomes the law of three independent Brownian bridges from $x$ to $x_1$, from $x_1$ to $x_2$ and from $x_2$ to $y$, respectively, all with diffusion parameters $1$. Let us denote 
$$h_n(x,x_1) = \mathbb{P}_{free}^{-a_n, -b_n, x, x_1} \left( -W_n - 1 \leq Q(t) \leq M + 1 \mbox{ for $-a_n \leq t \leq -b_n $} \right),$$
where $Q$ has law $\mathbb{P}_{free}^{-a_n, -b_n, x, x_1}.$ Then we have
\begin{equation}\label{BBE2R1}
\begin{split}
&\frac{\mathbb{P}(A^c_n \cap G^c_n \cap F_n)}{\mathbb{P}( F_n)} = \\
& \frac{\int^{-W_n}_{-W_n -V_n} \int^{-W_n}_{-W_n-V_n} [1 - h_n(x,x_1)]p(a_n - b_n; x, x_1) p(2b_n; x_1, x_2) p(a_n - b_n; x_2, y) dx_1 dx_2}{\int^{-W_n}_{-\infty} \int^{-W_n}_{-\infty}p(a_n - b_n; x, x_1) p(2b_n; x_1, x_2) p(a_n - b_n; x_2, y) dx_1 dx_2}.
\end{split}
\end{equation}
 From equation (\ref{BBE2R1}) we see that to prove the first inequality in (\ref{BBE2}) it suffices to show that there are constants $C_2, c_2 > 0$ such that for $x \in [-M, M]$ and $x_1 \in [-W_n - V_n, -W_n ]$ we have
\begin{equation}\label{BBE2R2}
\begin{split}
1 - h_n(x,x_1) \leq C_2 e^{-c_2 n^{1/12} } .
\end{split}
\end{equation}

We observe that we have the following tower of inequalities for $x \in [-M, M]$, $x_1 \in [-W_n - V_n, -W_n]$ and $W_n \geq M$
\begin{equation}\label{BBE2R3}
\begin{split}
&1 - h_n(x,x_1) \leq \\
& \mathbb{P}_{free}^{-a_n, -b_n, x, x_1} \left( \max_{t \in [-a_n, -b_n]} Q(t) > M+ 1 \right) + \mathbb{P}_{free}^{-a_n, -b_n, x, x_1} \left( \min_{t \in [-a_n, -b_n]} Q(t) < -W_n-1\right)  = \\
& e^{ - \frac{2(M + 1 - x)(M + 1 -x_1)}{2[ a_n - b_n]} }+ e^{ - \frac{2(  W_n + 1 + x)(W_n +1 + x_1)}{2[a_n - b_n]}} \leq e^{ - n^{2}} + e^{ - n^{2}( 1 -V_n) } . \\
\end{split}
\end{equation}
Let us briefly elaborate on (\ref{BBE2R3}). The first inequality follows by subadditivity and the definition of $h_n(x,x_1)$. The equality on the second line follows from Lemma \ref{LemmaBBmax}, and the last inequality follows from the fact that $a_n - b_n = n^{-2}$ and our assumptions $x \in [-M, M]$, $x_1 \in [-W_n-V_n, -W_n]$, $W_n \geq M$. Using that $V_n = n^{-1/3}$, we see that (\ref{BBE2R3}) implies (\ref{BBE2R2}), which concludes the proof of first inequality in (\ref{BBE2}). The second one is handled analogously.\\

{\bf \raggedleft Step 2.} In this step we prove (\ref{BBE4}). As explained in Step 1, we have that $(B_n(-b_n), B_n(b_n))$ has density (\ref{BBDensity2}) and conditional on $B_n(-b_n) = x_1$ and $B_n(b_n) = x_2$ the law of $B_n$ on the three intervals $[-a_n, -b_n]$, $[-b_n, b_n]$ and $[b_n, a_n]$ becomes the law of three independent Brownian bridges from $x$ to $x_1$, from $x_1$ to $x_2$ and from $x_2$ to $y$, respectively, all with diffusion parameters $1$. Let us denote 
$$g_n(x_1,x_2) = \mathbb{P}_{free}^{-b_n, b_n, x_1, x_2} \left(-W_n - 2V_n \leq Q(t) \leq -W_n + 2 V_n \mbox{ for $-b_n \leq t \leq b_n $} \right),$$
where $Q$ has law $\mathbb{P}_{free}^{-b_n, b_n, x_1, x_2} .$ Then we have
\begin{equation}\label{BBE4R1}
\begin{split}
&\frac{\mathbb{P}(E^c_n \cap G^c_n \cap F_n)}{\mathbb{P}( F_n)} = \\
& \frac{\int^{-W_n}_{-W_n - V_n} \int^{-W_n}_{-W_n-V_n} [1 - g_n(x_1,x_2)]p(a_n - b_n; x, x_1) p(2b_n; x_1, x_2) p(a_n - b_n; x_2, y) dx_1 dx_2}{\int^{-W_n}_{-\infty} \int^{-W_n}_{-\infty}p(a_n - b_n; x, x_1) p(2b_n; x_1, x_2) p(a_n - b_n; x_2, y) dx_1 dx_2}.
\end{split}
\end{equation}
 From equation (\ref{BBE4R1}) we see that to prove (\ref{BBE4}) it suffices to show that there are constants $C_4, c_4 > 0$ such that for $x_1, x_2 \in [-W_n - V_n, -W_n]$ we have
\begin{equation}\label{BBE4R2}
\begin{split}
1 - g_n(x_1,x_2) \leq C_4 e^{-c_4 n^{1/12} }.
\end{split}
\end{equation}

We observe that we have the following tower of inequalities for $x_1, x_2 \in [-W_n-V_n, -W_n]$
\begin{equation}\label{BBE4R3}
\begin{split}
&1 - g_n(x_1,x_2) \leq \mathbb{P}_{free}^{-b_n, b_n, x_1, x_2} \left( \max_{t \in [-b_n, b_n]} Q(t) > -W_n+ 2V_n \right) + \\
&  \mathbb{P}_{free}^{-b_n, b_n, x_1, x_2} \left( \inf_{t \in [-b_n, b_n]} Q(t) < -W_n - 2V_n \right)  = \\
& e^{ - \frac{2(-W_n + 2V_n - x_1)(-W_n +2V_n - x_2)}{2b_n} }+ e^{ - \frac{2( x_1 + W_n + 2V_n )(x_2 + W_n + 2V_n)}{2b_n}} \leq 2 e^{-V_n^2/b_n}. \\
&
\end{split}
\end{equation}
Let us briefly elaborate on (\ref{BBE4R3}). The first inequality follows by subadditivity and the definition of $g_n(x_1,x_2)$. The equality on the second line follows from Lemma \ref{LemmaBBmax}, and the last inequality follows from our assumption that $x_1, x_2 \in [-W_n-V_n, -W_n]$. Using that $n^{-5/4} \leq b_n \leq  n^{-3/4}$ and $V_n = n^{-1/3}$, we see that (\ref{BBE4R3}) implies (\ref{BBE4R2}), which concludes the proof of (\ref{BBE4}).\\

{\bf \raggedleft Step 3.} In this step we prove (\ref{BBE1}). As explained in Step 1, we have that $(B_n(-b_n), B_n(b_n))$ has density (\ref{BBDensity2}).
Combining the latter with $\mathbb{P}( G_n \cap F_n) \leq \mathbb{P}( G^+_n \cap F_n) + \mathbb{P}( G^-_n \cap F_n)$ we conclude
\begin{equation}\label{BBE1R1}
\begin{split}
\frac{\mathbb{P}(G_n \cap F_n)}{\mathbb{P}( F_n)} \leq &\frac{\int^{-W_n}_{-\infty} \int^{-W_n - V_n}_{-\infty}p(a_n - b_n; x, x_1) p(2b_n; x_1, x_2) p(a_n - b_n; x_2, y) dx_1 dx_2}{\int^{-W_n}_{-\infty} \int^{-W_n}_{-\infty}p(a_n - b_n; x, x_1) p(2b_n; x_1, x_2) p(a_n - b_n; x_2, y) dx_1 dx_2} + \\
& \frac{\int^{-W_n - V_n}_{-\infty} \int^{-W_n}_{-\infty }p(a_n - b_n; x, x_1) p(2b_n; x_1, x_2) p(a_n - b_n; x_2, y) dx_1 dx_2}{\int^{-W_n}_{-\infty} \int^{-W_n}_{-\infty}p(a_n - b_n; x, x_1) p(2b_n; x_1, x_2) p(a_n - b_n; x_2, y) dx_1 dx_2}.
\end{split}
\end{equation}
We now proceed to find a lower bound for the denominators on the right side of (\ref{BBE1R1}) and upper bounds for the numerators.
\vspace{1mm}

We first note that if $W_n \geq M$ and $x_1 \in [-W_n-V_n/10, -W_n]$ we have
\begin{equation}\label{BBE1R3}
p(a_n - b_n; x, x_1)  \geq \frac{\exp \left( - (W_n  + V_n/10 +  x)^2/[ 2 (a_n -b_n)] \right)}{\sqrt{2\pi (a_n - b_n)}}.
\end{equation}
Analogously, if $W_n \geq M$ and $x_2 \in [-W_n-V_n/10, -W_n]$ we have
\begin{equation}\label{BBE1R4}
p(a_n - b_n; x_2, y)  \geq \frac{\exp \left( - (W_n  + V_n/10 +  y)^2/[ 2 (a_n -b_n)] \right)}{\sqrt{2\pi (a_n - b_n)}}.
\end{equation}
We also have for $x_1, x_2 \in [-W_n - V_n/10, -W_n ]$ that 
\begin{equation}\label{BBE1R5}
\begin{split}
p(2b_n; x_1, x_2) \geq \frac{e^{-(V_n/5)^2/2b_n}}{\sqrt{4\pi b_n} }.
\end{split}
\end{equation}
Combining (\ref{BBE1R3}), (\ref{BBE1R4}) and (\ref{BBE1R5}) we conclude that 
\begin{equation}\label{BBE1R6}
\begin{split}
&\int^{-W_n}_{-\infty} \int^{-W_n}_{-\infty}p(a_n - b_n; x, x_1) p(2b_n; x_1, x_2) p(a_n - b_n; x_2, y) dx_1 dx_2 \geq \frac{V_n^2}{100} \cdot \frac{e^{-(V_n/5)^2/2b_n}}{\sqrt{4\pi b_n} } \cdot \\
&\frac{\exp \left( - (W_n + V_n/10 +  x)^2/[ 2 (a_n -b_n)] \right)}{\sqrt{2\pi (a_n - b_n)}} \cdot \frac{\exp \left( - (W_n  + V_n/10 +  y)^2/[ 2 (a_n -b_n)] \right)}{\sqrt{2\pi (a_n - b_n)}}.
\end{split}
\end{equation}
This gives us a lower bound for the denominators on the right side of (\ref{BBE1R1}).
\vspace{1mm}

We now proceed to obtain upper bounds for the numerators on the right side of (\ref{BBE1R1}). Since
\begin{equation*}
\begin{split}
p(2b_n; x_1, x_2) \leq \frac{1}{\sqrt{4\pi b_n} },
\end{split}
\end{equation*}
we obtain for $W_n \geq M$ that
\begin{equation}\label{BBE1R8}
\begin{split}
&\int^{-W_n}_{-\infty} \int^{-W_n - V_n}_{-\infty }p(a_n - b_n; x, x_1) p(2b_n; x_1, x_2) p(a_n - b_n; x_2, y) dx_1 dx_2 \leq \\
& \frac{1}{\sqrt{4\pi b_n} } \cdot \int^{-W_n}_{-\infty}p(a_n - b_n; x, x_1) dx_1 \cdot \int^{-W_n - V_n}_{-\infty } p(a_n - b_n; x_2, y)  dx_2 = \\
&\frac{1}{\sqrt{4\pi b_n} } \cdot \left[1 - \Phi \left( \frac{W_n + x}{\sqrt{a_n -b_n}} \right) \right]\left[1 - \Phi \left( \frac{W_n +V_n +y}{\sqrt{a_n -b_n}} \right) \right] \leq \\
&\frac{1}{\sqrt{4\pi b_n} }  \cdot \frac{c_0^2}{\left(1 + \frac{W_n - M}{\sqrt{a_n -b_n}} \right)^2} \cdot  \phi \left(  \frac{W_n + x}{\sqrt{a_n -b_n}} \right) \cdot \phi \left(  \frac{W_n + V_n + y}{\sqrt{a_n -b_n}} \right),
\end{split}
\end{equation}
where in the last inequality we used Lemma \ref{LemmaI1} ($c_0$ is as in that lemma) as well as the fact that $x,y \in [-M, M]$. One analogously obtains that if $W_n \geq M$ then
\begin{equation}\label{BBE1R9}
\begin{split}
&\int^{-W_n - V_n}_{-\infty} \int^{-W_n}_{-\infty }p(a_n - b_n; x, x_1) p(2b_n; x_1, x_2) p(a_n - b_n; x_2, y) dx_1 dx_2 \leq \\
&\frac{1}{\sqrt{4\pi b_n} }  \cdot \frac{c_0^2}{\left(1 + \frac{W_n - M}{\sqrt{a_n -b_n}} \right)^2} \cdot  \phi \left(  \frac{W_n +V_n + x}{\sqrt{a_n -b_n}} \right) \cdot \phi \left(  \frac{W_n + y}{\sqrt{a_n -b_n}} \right).
\end{split}
\end{equation}

Combining (\ref{BBE1R8}), (\ref{BBE1R9}) with (\ref{BBE1R6}) and (\ref{BBE1R1}) we obtain for $W_n \geq 4 M$ that
\begin{equation*}
\begin{split}
&\frac{\mathbb{P}(G_n \cap F_n)}{\mathbb{P}( F_n)} \leq \frac{100}{V_n^2} \cdot e^{V_n^2/50 b_n}  \cdot \frac{2\pi (a_n -b_n) c_0^2}{\left(1 + \frac{W_n - M}{\sqrt{a_n -b_n}} \right)^2} \cdot \exp \left(  \frac{(W_n +V_n/10 +  x)^2 + (W_n +V_n/10 + y)^2}{ 2 (a_n -b_n)}  \right) \cdot  \\
& \Bigg{[} \exp \left( - \frac{(W_n + x)^2 + (W_n +V_n +  y)^2}{ 2 (a_n -b_n)}  \right) + \exp \left(  -\frac{(W_n + V_n +  x)^2 + (W_n + y)^2}{ 2 (a_n -b_n)}  \right)\Bigg{]}  = \\
&\frac{100}{V_n^2} \cdot e^{V_n^2/50 b_n}  \cdot \frac{2\pi (a_n -b_n) c_0^2}{\left(1 + \frac{W_n - M}{\sqrt{a_n -b_n}} \right)^2} \cdot \Bigg{[} \exp \left(- \frac{0.98 V_n^2 + V_n[ 1.6 W_n - 0.2 x  + 1.8y]}{2(a_n - b_n)}\right)  +  \\
& \exp \left(- \frac{0.98 V_n^2 + V_n[ 1.6 W_n - 0.2 y  + 1.8x]}{2(a_n - b_n)}\right) \Bigg{]} \leq \\
&  \frac{200}{V_n^2} \cdot e^{V_n^2/50 b_n}  \cdot \frac{2\pi (a_n -b_n) c_0^2}{\left(1 + \frac{W_n - M}{\sqrt{a_n -b_n}} \right)^2} \cdot \exp \left(- \frac{0.98 V_n^2 + V_n W_n }{2(a_n - b_n)}\right).
\end{split}
\end{equation*}
In the last inequality we used that $0.6 W_n -2M \geq 0$ and $x,y \in [-M,M]$. 

Since $V_n = n^{-1/3}$, $W_n =\lambda^{-1} \log n$ and $a_n - b_n = n^{-2}$, while $n^{-5/4} \leq b_n \leq n^{-3/4}$ we see that the last inequality implies that for some large enough constant $C_1$ and all $n \in \mathbb{N}$ we have 
$$\frac{\mathbb{P}(G_n \cap F_n)}{\mathbb{P}( F_n)} \leq C_1 \exp \left( - (1/2 \lambda) n^{5/3} \log n \right),$$
which certainly implies (\ref{BBE1}). \\

%
\subsection{Estimates on $H$-Brownian bridge line ensembles}\label{Section3.3} In this section we present two results for $H$-Brownian bridge line ensembles as in Definition \ref{DefAvoidingLaw}. The first is an extension of Lemma \ref{BridgeLemma} to line ensembles with a $\lambda$-exponential Hamiltonian $H$ as in Definition \ref{S2Halph}, while the second is a simple convergence result for a special observable of an $H$-Brownian bridge ensemble over an asymptotically shrinking interval.

\begin{corollary}\label{BridgeLemmaV2} Fix $\lambda >0$ and suppose that $H$ is a $\lambda$-exponential Hamiltonian as in Definition \ref{S2Halph}. In addition, fix $k \in \mathbb{N}$, $M > 0$, $\vec{x},\vec{y} \in [-M, M]^k$. We let $a_n, b_n$ be sequences such that $n^{-5/4} \leq b_n \leq n^{-3/4}$ and $a_n = b_n + n^{-2}$ for $n \in \mathbb{N}$. We also set $W_n = \lambda^{-1} \log n$, $V_n = n^{-1/3}$ for $n \in \mathbb{N}$. Let $\mathcal{Q}^n$ be the $\llbracket 1, k \rrbracket \times (-a_n, a_n)$-indexed $H$-Brownian bridge line ensemble with law $\mathbb{P}_{H}^{1,k, -a_n, a_n,\vec{x},\vec{y}, \infty, -\infty}$ as in Definition \ref{DefAvoidingLaw}. and denote the events:
\begin{enumerate}
\item $A_n = \{ -W_n - 1 \leq \mathcal{Q}^n_i(t) \leq M + 1 \mbox{ for $-a_n \leq t \leq -b_n $ and $i = 1,\dots, k$} \}$;
\item $D_n = \{ -W_n - 1 \leq \mathcal{Q}^n_i(t) \leq M + 1 \mbox{ for $b_n \leq t \leq a_n$ and $i = 1,\dots, k$} \}$;
\item $E_n = \{  -W_n - 2V_n \leq \mathcal{Q}^n_i(t) \leq -W_n + 2V_n \mbox{ for $-b_n \leq t \leq b_n$ and $i = 1,\dots, k$} \}$;
\item $F_n = \{\mathcal{Q}^n_i(-b_n) \leq -W_n \mbox{ and } \mathcal{Q}^n_i(b_n) \leq -W_n \mbox{ for $i = 1,\dots, k$} \} .$
\end{enumerate}
Then there exist positive constants $C,c$, depending on $ k, M$ and $H$ such that for all $n \geq 1$ 
\begin{equation}\label{BBEMV2}
\mathbb{P}_{H}^{1,k, -a_n, a_n,\vec{x},\vec{y}, \infty, -\infty} ( A^c_n \cup D^c_n \cup E^c_n | F_n) \leq C e^{-c n^{1/12} }.
\end{equation}
\end{corollary}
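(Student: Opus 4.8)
The plan is to transfer the single‑bridge estimate of Lemma \ref{BridgeLemma} to the interacting ensemble by a Radon--Nikodym comparison with the free ensemble. Write $\mathbb{P}^n_{free} := \mathbb{P}^{1,k,-a_n,a_n,\vec{x},\vec{y}}_{free}$, under which the $k$ curves are independent Brownian bridges with diffusion parameter $1$ and endpoints in $[-M,M]$, and write $W^n_H$ and $Z^n_H$ for the corresponding Boltzmann weight and normalizing constant (with $f = \infty$, $g = -\infty$, so the terms $i = 0$ and $i = k$ equal the finite constant $H(-\infty)$). Under $\mathbb{P}^n_{free}$ the events $A_n, D_n, E_n, F_n$ of the corollary are the intersections over $i = 1,\dots,k$ of the single‑curve events $A^{(i)}_n, D^{(i)}_n, E^{(i)}_n, F^{(i)}_n$ to which Lemma \ref{BridgeLemma} applies with parameters $x=x_i$, $y=y_i$. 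Since $Z^n_H$ cancels in the conditional law, for $\Gamma = A^c_n \cup D^c_n \cup E^c_n$ we have $\mathbb{P}^n_H(\Gamma \mid F_n) = \mathbb{E}^n_{free}[\mathbf{1}_{\Gamma \cap F_n}\, W^n_H] / \mathbb{E}^n_{free}[\mathbf{1}_{F_n}\, W^n_H]$. The numerator is at most $\mathbb{P}^n_{free}(\Gamma \cap F_n)$ because $0 \le W^n_H \le 1$, and by the product structure of $\mathbb{P}^n_{free}$ together with a union bound over $i$ and Lemma \ref{BridgeLemma}, $\mathbb{P}^n_{free}(\Gamma \mid F_n) \le \sum_{i=1}^k \mathbb{P}^{-a_n,a_n,x_i,y_i}_{free}\big((A^{(i)}_n)^c \cup (D^{(i)}_n)^c \cup (E^{(i)}_n)^c \mid F^{(i)}_n\big) \le k C e^{-c n^{1/12}}$; hence the numerator is at most $kCe^{-cn^{1/12}}\,\mathbb{P}^n_{free}(F_n)$.

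The real work is the lower bound on the denominator, i.e.\ ruling out that $W^n_H$ is atypically small on the conditioning event $F_n$. I would restrict the expectation to the sub-event $F_n \cap A_n \cap D_n \cap E_n$, on which every curve satisfies $-W_n - 2 \le \mathcal{Q}^n_i(t) \le M+1$ on all of $[-a_n,a_n]$ and, more precisely, $-W_n - 2V_n \le \mathcal{Q}^n_i(t) \le -W_n + 2V_n$ on $[-b_n,b_n]$. On this sub-event I claim $W^n_H \ge 1/2$ for all large $n$: the Boltzmann exponent $\sum_{i=0}^k \int_{-a_n}^{a_n} H(\mathcal{Q}^n_{i+1}(u) - \mathcal{Q}^n_i(u))\,du$ splits into three groups of terms, each of size $o(1)$. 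The boundary terms $i \in \{0,k\}$ contribute $2 H(-\infty) \cdot 2a_n = O(n^{-3/4})$. The interior terms $1 \le i \le k-1$ integrated over $[-b_n,b_n]$ have gaps lying in $[-4V_n,4V_n] \subset [-4,4]$, so each is bounded by $2 b_n \max_{|z|\le 4} H(z)$ and they total $O(b_n) = O(n^{-3/4})$. The interior terms integrated over the two thin intervals $[-a_n,-b_n] \cup [b_n,a_n]$ of total length $2n^{-2}$ have gaps in $[-(W_n+M+2), W_n+M+2]$, so each is at most $2n^{-2}\max_{z \le W_n+M+2}H(z)$; iterating the relation $H(y+1)/H(y) \to e^{\lambda}$ coming from property (3) of a $\lambda$-exponential Hamiltonian gives $H(z) \le C e^{(\lambda+\epsilon)z}$ for $z$ large, so $\max_{z\le W_n+M+2} H(z) = O(n^{1+\epsilon/\lambda}) = o(n^2)$ once $\epsilon < \lambda$, and the contribution is $O(n^{-1+\epsilon/\lambda}) = o(1)$. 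Consequently $\mathbb{E}^n_{free}[\mathbf{1}_{F_n} W^n_H] \ge \tfrac12 \mathbb{P}^n_{free}(F_n \cap A_n \cap D_n \cap E_n) \ge \tfrac12(1 - kCe^{-cn^{1/12}})\,\mathbb{P}^n_{free}(F_n) \ge \tfrac14 \mathbb{P}^n_{free}(F_n)$ for $n$ large.

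Putting the two bounds together yields $\mathbb{P}^n_H(A^c_n \cup D^c_n \cup E^c_n \mid F_n) \le 4kCe^{-cn^{1/12}}$ for all large $n$, which is (\ref{BBEMV2}); the finitely many remaining $n$ are absorbed by enlarging $C$ and shrinking $c$, since a conditional probability is trivially bounded by $1$. I expect the delicate point to be precisely this denominator estimate: one must check that although forcing the curves down to level $-W_n \sim -\lambda^{-1}\log n$ can make $H$ evaluated at the gaps as large as $\sim n$, this happens only on the end intervals of length $n^{-2}$, so the associated energetic cost is negligible, while on the bulk interval $[-b_n,b_n]$ the curves are confined to a window of width $4V_n \to 0$ around $-W_n$, making all gaps tiny and essentially free. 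Note that convexity of $H$ is not needed in this argument — only continuity and the asymptotic condition (3) of Definition \ref{S2Halph} enter.
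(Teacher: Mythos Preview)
Your proof is correct and follows essentially the same approach as the paper: rewrite the $H$-measure conditional probability as a ratio of $\mathbb{P}^n_{free}$-expectations of the Boltzmann weight, bound the numerator using $W^n_H \le 1$ together with the union bound and Lemma \ref{BridgeLemma}, and bound the denominator from below by restricting to $F_n \cap A_n \cap D_n \cap E_n$ and showing $W^n_H$ is uniformly bounded away from zero there via the same three-region decomposition (thin end intervals of length $n^{-2}$ where $H$ can be as large as $n^{1+o(1)}$, bulk interval $[-b_n,b_n]$ where gaps are $O(V_n)$). Your observation that convexity of $H$ is not used here is also in line with the paper, which invokes only continuity and the asymptotic condition (3) in this step.
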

\begin{proof} The idea of the proof is to express the probability on the left side of (\ref{BBEMV2}) in terms of free Brownian bridges and then apply Lemma \ref{BridgeLemma}. In the remainder, we write $\mathbb{P}_{H}^{n}$ in place of $\mathbb{P}_{H}^{1,k, -a_n, a_n,\vec{x},\vec{y}, \infty, -\infty}$ to ease the notation. For clarity we split the proof into two steps.\\

{\bf \raggedleft Step 1.} Let $(\Omega, \mathcal{F}, \mathbb{P})$ denote a probability space that supports $k$ i.i.d. Brownian bridges $\{ \tilde{B}_i \}_{i = 1}^{k}$ (from $\tilde{B}_i(0) = 0$ to $\tilde{B}_i(1) = 0$) with diffusion parameter $1$. We denote the expectation operator with respect to $\mathbb{P}$ by $\mathbb{E}$. We also define for $n \in \mathbb{N}$ and $i = 1, \dots, k$ the bridges on $[-a_n, a_n]$
\begin{equation*}
B_i^n(t) = (2a_n)^{1/2} \cdot \tilde{B}_i \left( \frac{t + a_n}{2a_n} \right) + \left(\frac{a_n-t}{2a_n} \right) \cdot x_{i} + \left( \frac{t+ a_n}{2a_n}\right) \cdot y_{i }.
\end{equation*} 
Let $\tilde{A}_n, \tilde{D}_n, \tilde{E}_n$ and $\tilde{F}_n$ be the sets in $\mathcal{F}$ that have the same form as $A_n, D_n, E_n, F_n$ as in the statement of the lemma but with $\mathcal{Q}^n_i$ replaced with $B_i^n$. We then have from Definition \ref{DefAvoidingLaw} that 

\begin{equation}\label{S33E1}
\begin{split}
&\mathbb{P}_H^n ( A^c_n \cup D^c_n \cup E^c_n | F_n) = \frac{\mathbb{P}_H^n ( (A^c_n \cup D^c_n \cup E^c_n) \cap F_n) }{\mathbb{P}_H^n ( F_n)} = \frac{\mathbb{E}[ {W}^n_H(\mathcal{B}^n) \cdot {\bf 1}_{\tilde{F}_n} \cdot {\bf 1}_{\tilde A^c_n \cup \tilde D^c_n \cup \tilde E^c_n}]}{\mathbb{E}[ {W}^n_H(\mathcal{B}^n) \cdot {\bf 1}_{\tilde{F}_n}]},
\end{split}
\end{equation}
where $\mathcal{B}^n = (B_1^n, \dots, B_k^n)$ is the $\llbracket 1, k \rrbracket$-indexed line ensemble on $[-a_n, a_n]$ formed by the Brownian bridges and 
$$W_H^n(\mathcal{B}^n) = \exp \left( - \sum\limits_{i = 1}^{k - 1} \int_{-a_n}^{a_n} H \left( B^n_{i+1}(u) - B^n_i(u)\right)du \right).$$

We claim that we can find a constant $\delta \in (0,1)$ (depending on $k, M$ and $H$) such that for all $n \in \mathbb{N}$ we have on $\tilde{A}_n \cap \tilde{D}_n \cap \tilde{E}_n$ that
\begin{equation}\label{S33E2}
{W}^n_H(\mathcal{B}^n) \geq \delta.
\end{equation}
We will prove (\ref{S33E2}) in the next step. Here we assume its validity and conclude the proof of (\ref{BBEMV2}). \\

Using (\ref{S33E1}) and (\ref{S33E2}) and the fact that $W^n_H(\cdot) \in [0,1]$ (as $H(\cdot) \in [0, \infty)$) we see that 
\begin{equation*}
\mathbb{P}_H^n ( \hspace{-0.25mm} A^c_n \cup D^c_n \cup E^c_n | F_n)  \leq \frac{\mathbb{P}(\tilde{F}_n \hspace{-0.25mm}\cap \hspace{-0.25mm} (\tilde A^c_n \cup \tilde D^c_n \cup \tilde E^c_n ) )}{\delta \mathbb{P}(\tilde{F}_n \hspace{-0.25mm} \cap\hspace{-0.25mm} \tilde A_n \hspace{-0.25mm} \cap\hspace{-0.25mm} \tilde D_n \hspace{-0.25mm} \cap \tilde E_n ) } = \frac{\mathbb{P}(\tilde{F}_n \hspace{-0.25mm} \cap\hspace{-0.25mm} (\tilde A^c_n \hspace{-0.25mm} \cup\hspace{-0.25mm}  \tilde D^c_n \hspace{-0.25mm}\cup\hspace{-0.25mm} \tilde E^c_n ) )}{\delta \mathbb{P}(\tilde{F}_n  )} \cdot \frac{\mathbb{P}(\tilde{F}_n  )}{ \mathbb{P}(\tilde{F}_n \cap \hspace{-0.25mm} \tilde A_n \cap\hspace{-0.25mm} \tilde D_n \hspace{-0.25mm} \cap \hspace{-0.25mm}\tilde E_n ) }.
\end{equation*}
On the other hand, from Lemma \ref{BridgeLemma} we know that there are constants $C_1, c_1 > 0$ (depending on $k, \lambda$ and $M$) such that for all $n \in \mathbb{N}$
$$\frac{\mathbb{P}(\tilde{F}_n \cap (\tilde A^c_n \cup \tilde D^c_n \cup \tilde E^c_n ) )}{ \mathbb{P}(\tilde{F}_n  ) } \leq C_1 e^{-c_1 n^{1/12}}.$$
The last two inequalities imply that we can find $C,c > 0$ that satisfy (\ref{BBEMV2}). \\

{\bf \raggedleft Step 2.} In this step we prove (\ref{S33E2}). By our assumption that $H$ is a $\lambda$-exponential Hamiltonian we know that there exists $R_1 > 0$ such that for $x \geq R_1$ we have 
\begin{equation*}
H(x) \leq  e^{3\lambda x / 2}.
\end{equation*}
From the fact that $H:[-\infty, \infty) \rightarrow [0, \infty)$ is continuous we conclude that we can find a constant $R_2 > 0$ such that for all $x \in [-\infty, \infty)$ we have
\begin{equation}\label{S33E2R2}
0 \leq H(x) \leq R_2 + e^{3\lambda x /2}.
\end{equation}
We observe that we have the following tower of inequalities for each $i = 1,  \dots, k-1$ on the event $\tilde{A}_n \cap \tilde{D}_n \cap \tilde{E}_n$
\begin{equation*}
\begin{split}
 &\int_{-a_n}^{a_n} \hspace{-1mm} H \left( B^n_{i+1}(u) - B^n_i(u)\right)du \leq  2 (a_n - b_n)  \hspace{-1mm} \left(  \hspace{-1mm} R_2 + e^{3\lambda (W_n + M + 2)/ 2} \right) +  \int_{-b_n}^{b_n} \hspace{-3mm} H \left( B^n_{i+1}(u) - B^n_i(u)\right)du = \\
& 2 n^{-2}  \left(R_2 + n^{3/2}e^{3 \lambda (M + 2)/2} \right) +  \int_{-b_n}^{b_n}  H \left( B^n_{i+1}(u) - B^n_i(u)\right)du \leq  \\
& O(n^{-1/2}) + 2b_n \left( R_2 + e^{6 \lambda V_n} \right) = O(n^{-1/2}).
\end{split}
\end{equation*}
where the constants in the big $O$ notations depend on $\lambda$ and $R_2$. Let us elaborate on the latter equation briefly. In the first inequality we used that on the event $\tilde{A}_n \cap \tilde{D}_n$ we have $B_i^n(u), B_{i+1}^n(u) \in [-W_n - 1, M + 1]$ for $u \in [-a_n, -b_n] \cup [b_n, a_n]$ and equation (\ref{S33E2R2}). In going from the first to the second line we used that $a_n -b_n = n^{-2}$ and $W_n = \lambda^{-1} \log n$. In going from the second to the third line we used that on the event $\tilde{E}_n$ we have $B_i^n(u), B_{i+1}^n(u) \in [-W_n - 2V_n, -W_n + 2V_n]$ for $u \in [-b_n, b_n]$ and equation (\ref{S33E2R2}). The last equality used that $b_n \leq n^{-3/4}$ and $V_n = n^{-1/3}$. 

The above work shows that on the event $\tilde{A}_n \cap \tilde{D}_n \cap \tilde{E}_n$ we have for some positive constant $R$ (depending on $R_2, \lambda$ and $k$) that
$${W}^n_H(\mathcal{B}^n) = \exp \left( - \sum\limits_{i = 1}^{k - 1} \int_{-a_n}^{a_n} H \left( B^n_{i+1}(u) - B^n_i(u)\right)du \right) \geq e^{-R/\sqrt{n}},$$
which clearly establishes (\ref{S33E2}). 
\end{proof}

We end this section with the following simple lemma.
\begin{lemma}\label{S3Simple} Suppose that $H: [-\infty, \infty) \rightarrow [0, \infty)$ is continuous. Fix $k \in \mathbb{N}$, $a < b$, $M > 0$ and $g \in C([a,b])$ such that $\sup_{x \in [a,b]}|g(x)| \leq M$. Then for any sequence $a_n, b_n \in [a,b]$ with $a_n < b_n$, $b_n - a_n \rightarrow 0$, and $\vec{x}^n, \vec{y}^n \in [-M, M]^k$ we have that 
\begin{equation}\label{S3Limit1}
\lim_{n \rightarrow \infty} \mathbb{E}_{H}^{1,k,a_n, b_n, \vec{x}^n, \vec{y}^n, \infty, -\infty} \left[ \exp \left( - \int_{a_n}^{b_n} H \left( g(u) - \mathcal{Q}_k(u)\right)du  \right) \right] = 1,
\end{equation}
where $(\mathcal{Q}_1, \dots, \mathcal{Q}_k)$ has law $\mathbb{P}_{H}^{1,k,a_n, b_n, \vec{x}^n, \vec{y}^n}$ as in Definition \ref{DefAvoidingLaw}. 
\end{lemma}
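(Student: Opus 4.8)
The plan is to show that as $n\to\infty$ the random variable
$\int_{a_n}^{b_n} H(g(u)-\mathcal{Q}_k(u))\,du$ tends to $0$ uniformly enough that the expectation of its (bounded-by-$1$) exponential tends to $1$. Since $H\ge 0$, the integrand inside the expectation in \eqref{S3Limit1} lies in $[0,1]$, so by the bounded convergence theorem it suffices to prove that the integral converges to $0$ in probability under $\mathbb{P}_{H}^{1,k,a_n,b_n,\vec{x}^n,\vec{y}^n}$. Equivalently, I would show that
\[
\mathbb{E}_{H}^{1,k,a_n, b_n, \vec{x}^n, \vec{y}^n, \infty, -\infty}\!\left[ \int_{a_n}^{b_n} H\!\left( g(u) - \mathcal{Q}_k(u)\right)du \right] \longrightarrow 0 ,
\]
which by non-negativity gives convergence in probability and then \eqref{S3Limit1} via $1-e^{-s}\le s$ for $s\ge 0$.

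To estimate that expectation, the key step is to pass from the $H$-interacting law $\mathbb{P}_{H}^{1,k,a_n,b_n,\vec{x}^n,\vec{y}^n}$ to the free law $\mathbb{P}_{free}^{1,k,a_n,b_n,\vec{x}^n,\vec{y}^n}$. Since the Radon-Nikodym derivative is $W_H/Z_H$ with $W_H\in(0,1]$ and $Z_H\in(0,1)$, we have for any non-negative functional $\Psi$ the bound
\[
\mathbb{E}_{H}^{1,k,a_n,b_n,\vec{x}^n,\vec{y}^n,\infty,-\infty}[\Psi] \le \frac{1}{Z_H^{1,k,a_n,b_n,\vec{x}^n,\vec{y}^n,\infty,-\infty}}\,\mathbb{E}_{free}^{1,k,a_n,b_n,\vec{x}^n,\vec{y}^n}[\Psi].
\]
So I would first produce a lower bound $Z_H \ge c > 0$ uniform in $n$: on the event that all $k$ free bridges stay within, say, $[-2M,2M]$ on $[a_n,b_n]$ (an event of probability bounded below uniformly in $n$, since $b_n-a_n\to0$ and the endpoints lie in $[-M,M]$, by Lemma \ref{LemmaBBmax} applied to the fluctuations of each bridge about the interpolating line), the consecutive differences $\mathcal{Q}_{i+1}-\mathcal{Q}_i$ and $g-\mathcal{Q}_k$ are bounded in absolute value by $3M$, so $H$ of them is at most $\sup_{[-3M,3M]}H =: M_H < \infty$ by continuity of $H$; hence $W_H \ge \exp(-k M_H (b_n-a_n)) \ge \exp(-kM_H(b-a))$ there, giving a uniform lower bound on $Z_H$.

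With this in hand it remains to bound $\mathbb{E}_{free}[\int_{a_n}^{b_n}H(g-\mathcal{Q}_k)\,du]$. By Tonelli this equals $\int_{a_n}^{b_n}\mathbb{E}_{free}[H(g(u)-\mathcal{Q}_k(u))]\,du$, and $\mathcal{Q}_k(u)$ under the free law is Gaussian with mean the interpolation of $x_k^n,y_k^n$ (hence in $[-M,M]$) and variance $\le (b_n-a_n)/4 \to 0$. One then splits $H(g(u)-\mathcal{Q}_k(u))$ on the event $\{\mathcal{Q}_k(u)\in[-2M,2M]\}$, where it is $\le M_H$, and its complement, where one uses a crude a priori growth bound on $H$ — by continuity there is a polynomial or exponential majorant, but in fact the simplest route is: $H$ is bounded on compacts, and the Gaussian tail probability $\mathbb{P}_{free}(|\mathcal{Q}_k(u)|>2M)$ decays like $\exp(-cM^2/(b_n-a_n))$, which crushes any fixed polynomial/exponential growth of $H$ once $b_n-a_n$ is small. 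Thus $\mathbb{E}_{free}[H(g(u)-\mathcal{Q}_k(u))]$ is bounded uniformly in $u\in[a_n,b_n]$ and $n$, and the $u$-integral is over an interval of length $b_n-a_n\to0$; hence the whole expression tends to $0$.

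The main obstacle is the last estimate: controlling $\mathbb{E}_{free}[H(g(u)-\mathcal{Q}_k(u))]$ when $\mathcal{Q}_k(u)$ strays into the region where $H$ may be large (recall $H$ is only assumed continuous, so a priori it can grow arbitrarily fast toward $-\infty$ of its argument, i.e. as $g-\mathcal{Q}_k\to-\infty$, which is harmless since $H\ge0$ but could be large; and toward $+\infty$). One cleanly circumvents this by noting the expectation in \eqref{S3Limit1} is already $\le 1$, so it is enough to bound it below; and to bound it below we only need the complementary event to have small probability, not a moment bound on $H$ itself — i.e. use $\exp(-\int H)\ge \mathbf{1}_{\mathcal G_n}\exp(-kM_H(b_n-a_n))$ where $\mathcal G_n$ is the good event above, whose $\mathbb{P}_{H}$-probability tends to $1$ by the same $Z_H$-lower-bound argument combined with Lemma \ref{LemmaBBmax}. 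This avoids any quantitative growth hypothesis on $H$ beyond continuity. I would organize the write-up around that cleaner dichotomy rather than the Tonelli moment computation.
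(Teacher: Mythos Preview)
Your proposal is correct, and the cleaner route you settle on at the end (bound the expectation above by $1$ trivially, and below by $\mathbf{1}_{\mathcal G_n}\exp(-M_H(b_n-a_n))$ with $\mathbb{P}_H(\mathcal G_n)\to 1$ via the $Z_H$ lower bound and Lemma~\ref{LemmaBBmax}) works without any growth hypothesis on $H$. You were right to abandon the Tonelli moment computation, since with only continuity of $H$ there is no reason $\mathbb{E}_{free}[H(g(u)-\mathcal Q_k(u))]$ should be finite.

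The paper takes a different and somewhat slicker path. It couples all $n$ simultaneously on a single probability space by writing each $B_i^n$ as an affine image of a \emph{fixed} standard bridge $\tilde B_i$ on $[0,1]$: $B_i^n(t)=(b_n-a_n)^{1/2}\tilde B_i((t-a_n)/(b_n-a_n))+\text{linear}$. Since $(b_n-a_n)^{1/2}\to 0$ and the endpoints lie in $[-M,M]$, there is a single a.s.\ finite $L(\omega)$ bounding $|B_i^n(t)|$ uniformly in $n,i,t$; continuity of $H$ then gives $\sum_i\int_{a_n}^{b_n}H(B_{i+1}^n-B_i^n)\,du\to 0$ almost surely, and bounded convergence finishes. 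The $H$-expectation in \eqref{S3Limit1} is then written as the ratio of two free expectations (the numerator including the $g$-interaction term, the denominator not), and both tend to $1$ by the same argument. Compared to your approach, this avoids any probability estimates, any separate control of $Z_H$, and any appeal to Lemma~\ref{LemmaBBmax}; the price is that one has to notice the uniform-in-$n$ coupling. Your argument is more hands-on but has the virtue of being entirely self-contained at the level of one fixed $n$ at a time.
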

\begin{proof}
Let $(\Omega, \mathcal{F}, \mathbb{P})$ denote a probability space that supports $k$ i.i.d. Brownian bridges $\{ \tilde{B}_i \}_{i = 1}^{k}$ (from $\tilde{B}_i(0) = 0$ to $\tilde{B}_i(1) = 0$) with diffusion parameter $1$. We denote the expectation operator with respect to $\mathbb{P}$ by $\mathbb{E}$. We also define for $n \in \mathbb{N}$ and $i = 1, \dots, k$ the bridges on $[a_n, b_n]$
\begin{equation*}
B_i^n(t) = (b_n-a_n)^{1/2} \cdot \tilde{B}_i \left( \frac{t - a_n}{b_n-a_n} \right) + \left(\frac{b_n-t}{b_n-a_n} \right) \cdot x^n_{i} + \left( \frac{t- a_n}{b_n-a_n}\right) \cdot y^n_{i }.
\end{equation*}
We also set $B^n_0 = \infty$ and $B^n_{k+1} = g \vert_{[a_n, b_n]}$.

Notice that for $\mathbb{P}$-a.e. $\omega \in \Omega$ there exists $\infty > L(\omega) > 0$ such that 
$$\sup_{ n \geq 1} \max_{i = 1, \dots, k + 1} |B_i^n(t)| \leq L(\omega).$$
The latter together with the facts that $H$ is continuous and $b_n - a_n \rightarrow 0$ imply that for $r \in \{0, 1\}$ we have $\mathbb{P}$-a.s.
$$\lim_{ n\rightarrow \infty} \sum_{i = 0}^{k-r} \int_{a_n}^{b_n} H \left( B^n_{i+1}(u) - B^n_i(u)\right)du = 0.$$
This shows that for $r \in \{0, 1\}$ we have $\mathbb{P}$-a.s.
$$\lim_{ n\rightarrow \infty} \exp \left( - \sum_{i = 0}^{k-r} \int_{a_n}^{b_n} H \left( B^n_{i+1}(u) - B^n_i(u)\right)du \right)  = 1,$$
and so by the bounded convergence theorem (using that $H \geq 0$) we conclude for $r \in \{0,1\}$ that 
$$\lim_{n \rightarrow \infty} \mathbb{E}\left[ \exp \left( - \sum_{i = 0}^{k-r}  \int_{a_n}^{b_n} H \left( B^n_{i+1}(u) - B^n_i(u)\right)du \right)\right] = 1.$$
On the other hand, from Definition \ref{DefAvoidingLaw} we have for $n \in \mathbb{N}$ that 
\begin{equation*}
\begin{split}
&\mathbb{E}_{H}^{n} \left[ \exp \left( - \int_{a_n}^{b_n} \hspace{-3mm} H \left( g(u) - \mathcal{Q}_k(u)\right)du  \right) \right]  = \frac{\mathbb{E}\left[ \exp \left( - \sum_{i = 0}^{k} \int_{a_n}^{b_n} H \left( B^n_{i+1}(u) - B^n_i(u)\right)du \right)\right]}{\mathbb{E}\left[ \exp \left( - \sum_{i = 0}^{k - 1} \int_{a_n}^{b_n} H \left( B^n_{i+1}(u) - B^n_i(u)\right)du \right)\right]}, \\
&.
\end{split}
\end{equation*}
where we wrote $\mathbb{E}_{H}^n$ in place of $\mathbb{E}_{H}^{1,k,a_n, b_n, \vec{x}^n, \vec{y}^n, \infty, -\infty} $ to ease the notation. The last two equations imply (\ref{S3Limit1}).
\end{proof}

%
\subsection{Monotone coupling lemmas}\label{Section3.4} The following lemmas provide couplings of two $H$-Brownian bridge line ensembles on the same interval, which depend monotonically on their entrance, exit and boundary data.

\begin{lemma}\label{MCLxy} Assume the same notation as in Definition \ref{DefAvoidingLaw} and that $H$ is convex. Fix $k \in \mathbb{N}$, $a < b$ and two continuous function $f: [a,b] \rightarrow (-\infty, \infty]$, $g: [a,b] \rightarrow [-\infty, \infty)$ and assume that $\vec{x}, \vec{y}, \vec{x}', \vec{y}' \in \mathbb{R}^k$ are such that $x_i \leq x_i'$, $y_i \leq y_i'$ for $i = 1,\dots, k$. Then there exists a probability space $(\Omega, \mathcal{F}, \mathbb{P})$, which supports two $\llbracket 1, k \rrbracket$-indexed line ensembles $\mathcal{L}^t$ and $\mathcal{L}^b$ on $[a,b]$ such that the law of $\mathcal{L}^{t}$ {\big (}resp. $\mathcal{L}^b${\big )} under $\mathbb{P}$ is given by $\mathbb{P}_{H}^{1,k,a,b, \vec{x}', \vec{y}', f, g}$ {\big (}resp. $\mathbb{P}_{H}^{1,k,a,b, \vec{x}, \vec{y}, f, g}${\big )} and such that $\mathbb{P}$-almost surely we have $\mathcal{L}_i^t(r) \geq \mathcal{L}^b_i(r)$ for all $i = 1,\dots, k$ and $r \in [a,b]$.
\end{lemma}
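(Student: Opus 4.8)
The plan is to build the monotone coupling by a standard discretization/limiting scheme, so that the monotonicity for $H$-Brownian bridge line ensembles follows from an analogous monotonicity statement for random walk bridges with a convex interaction. First I would approximate each continuous Brownian bridge line ensemble by a family of discrete-time random walk bridges: fix a mesh $a = t_0 < t_1 < \cdots < t_m = b$ with $t_{j} - t_{j-1} = (b-a)/m$, and consider the law on $\mathbb{R}^{k \times (m-1)}$ obtained by weighting the product of Gaussian random-walk-bridge increments (with the prescribed endpoints $\vec{x}$, resp.\ $\vec{x}'$, at $a$ and $\vec{y}$, resp.\ $\vec{y}'$, at $b$) by the Riemann-sum analogue of the Boltzmann weight, namely $\exp\bigl(-\tfrac{b-a}{m}\sum_{i=0}^{k}\sum_{j=0}^{m} H(Q_{i+1}(t_j) - Q_i(t_j))\bigr)$, with $Q_0 = f$ and $Q_{k+1} = g$ frozen. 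As $m \to \infty$ these measures converge weakly to $\mathbb{P}_H^{1,k,a,b,\vec{x},\vec{y},f,g}$ (resp.\ with primed data); this is the kind of well-posedness/approximation statement the paper defers to Section~\ref{Section5}, so I would invoke it, or prove convergence directly using that $H$ is continuous and the Gaussian bridge has a.s.\ bounded sample paths, so the Radon--Nikodym densities converge pointwise and are bounded in $(0,1]$.

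Next I would establish the monotone coupling at the discrete level, which is the combinatorial heart of the argument. Fix $m$, and regard the discrete measure as a probability density on $\R^{k(m-1)}$ of the form $\prod (\text{Gaussian increment terms}) \cdot \exp(-\Phi(\mathbf{q}))$ where $\mathbf{q} = (q_{i,j})$ and $\Phi$ collects both the interaction energies and (implicitly, through the increment terms) the quadratic bridge penalties. The key structural fact is that the negative log-density is of the form $\sum_{\text{pairs of coordinates}} \psi(\cdot)$ where each summand is a convex function of the \emph{difference} of two coordinates (for the Gaussian nearest-in-time increments, a quadratic in $q_{i,j} - q_{i,j-1}$; for the $H$-interactions, $H(q_{i+1,j} - q_{i,j})$, convex by hypothesis; the boundary terms $H(f(t_j) - q_{1,j})$ and $H(q_{k,j} - g(t_j))$ likewise). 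A density of the form $\exp(-\sum \psi_{\text{convex of a difference}})$ is, after the affine change of variables making endpoints parameters, a \emph{log-concave} perturbation that is moreover ``attractive'' in the FKG/Holley sense with respect to the coordinatewise partial order when one shifts the boundary data monotonically. I would therefore apply Holley's inequality (or, more directly, the Harris/FKG lattice argument) to conclude the existence of a monotone coupling: raising $\vec{x} \to \vec{x}'$ and $\vec{y} \to \vec{y}'$ pushes the whole field up coordinatewise. Since the paper says in Section~\ref{Section3.4} and the surrounding discussion that these coupling lemmas ``date back to \cite{CorHamK},'' I expect the intended route is to cite or adapt the corresponding discrete monotonicity lemma from \cite{CorHamK} rather than reprove FKG from scratch; either way the convexity of $H$ enters exactly here.

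Then I would pass to the limit $m \to \infty$ in the coupling. For each $m$ we have a probability space carrying $\mathbf{Q}^{t,m} \geq \mathbf{Q}^{b,m}$ coordinatewise with the correct discrete marginals; linearly interpolating in $t$ gives random elements of $C(\llbracket 1,k\rrbracket \times [a,b])$ still satisfying $\mathcal{L}^{t,m}_i(r) \geq \mathcal{L}^{b,m}_i(r)$ for all $r$ in the mesh, hence (by monotonicity of the interpolant and the pointwise inequality at mesh points) everywhere on $[a,b]$. The pair $(\mathcal{L}^{t,m}, \mathcal{L}^{b,m})_{m}$ is tight in $C(\llbracket 1,k\rrbracket\times[a,b])^2$ — each coordinate converges in law to a Brownian-bridge-type object whose laws form a tight family, and one can get joint tightness since the marginals converge — so along a subsequence it converges weakly to a pair $(\mathcal{L}^t, \mathcal{L}^b)$. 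By the weak convergence of the discrete measures to $\mathbb{P}_H^{\cdots,\vec{x}',\vec{y}',\cdots}$ and $\mathbb{P}_H^{\cdots,\vec{x},\vec{y},\cdots}$ the marginals of the limit are the desired laws, and the closed condition $\{\,\mathcal{L}^t_i(r) \geq \mathcal{L}^b_i(r)\ \forall i, \forall r \in [a,b]\,\}$ is preserved under weak limits by the Portmanteau theorem. Invoking the Skorokhod representation theorem if one wants an honest common probability space finishes the construction.

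The main obstacle, I expect, is the discrete monotonicity step: verifying that the weighted random-walk-bridge measure genuinely satisfies the FKG lattice condition. The subtlety is that the interaction $H(q_{i+1,j} - q_{i,j})$ couples \emph{different} curves at the \emph{same} time, while the Gaussian increments couple the \emph{same} curve at \emph{adjacent} times, and one must check the mixed second-difference (``$\log$-supermodularity of $\mu(\mathbf{x}\vee\mathbf{y})\mu(\mathbf{x}\wedge\mathbf{y}) \geq \mu(\mathbf{x})\mu(\mathbf{y})$'') condition across all these couplings simultaneously; for a sum of convex functions of coordinate differences this holds, but it requires the convexity of $H$ in an essential way and a short computation with the lattice operations $\vee,\wedge$. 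The secondary technical nuisance is making the $m\to\infty$ weak-convergence argument rigorous — in particular joint tightness of the coupled pair and the claim that the limiting marginals are exactly the continuum $H$-Brownian bridge laws — but this is precisely the sort of approximation result the paper isolates for Section~\ref{Section5}, so in the write-up I would lean on it as a black box.
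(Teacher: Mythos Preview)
Your outline is correct and follows the same three–stage scheme as the paper: discretize in time, produce a monotone coupling for the discrete model using the convexity of $H$, then pass to the limit via tightness/Skorokhod. The differences are in implementation rather than strategy. You discretize only in time and keep Gaussian (continuous–state) bridges, then invoke the FKG/Holley lattice condition $\mu_1(x\vee y)\,\mu_2(x\wedge y)\ge\mu_1(x)\,\mu_2(y)$ as a black box; the paper instead discretizes in both time and space, using $\{-1,0,1\}$--step random walk bridges so that each discrete measure lives on a \emph{finite} set, and then builds the monotone coupling by hand via a coupled Metropolis/Glauber chain whose updates are checked to preserve the coordinatewise order (this check is exactly the convexity computation you allude to). The paper's construction is essentially a self-contained constructive proof of the Holley domination you would cite; the payoff of the finite state space is that irreducibility and convergence to stationarity are automatic, whereas your continuous--state route needs either a version of Holley/Preston on $\mathbb{R}^{n}$ or an additional spatial discretization. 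Either way, convexity of $H$ enters at the same point and the limiting step (the analogue of your ``pass to $m\to\infty$'') is handled by the paper's Lemma~\ref{lem:RW}.
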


\begin{lemma}\label{MCLfg} Assume the same notation as in Definition \ref{DefAvoidingLaw} and that $H$ is convex. Fix $k \in \mathbb{N}$, $a < b$,  $\vec{x}, \vec{y} \in \mathbb{R}^k$ and four continuous functions$f^t, f^b: [a,b] \rightarrow (-\infty, \infty]$, $g^t, g^b: [a,b] \rightarrow [-\infty, \infty)$ such that $f^t(r) \geq f^b(r)$ and $g^t(r) \geq g^b(r)$ for all $r \in [a,b]$. Then there exists a probability space $(\Omega, \mathcal{F}, \mathbb{P})$, which supports two $\llbracket 1, k \rrbracket$-indexed line ensembles $\mathcal{L}^t$ and $\mathcal{L}^b$ on $[a,b]$ such that the law of $\mathcal{L}^{t}$ {\big (}resp. $\mathcal{L}^b${\big )} under $\mathbb{P}$ is given by $\mathbb{P}_{H}^{1,k,a,b, \vec{x}, \vec{y}, f^t, g^t}$ {\big (}resp. $\mathbb{P}_{H}^{1,k,a,b, \vec{x}, \vec{y}, f^b, g^b}${\big )} and such that $\mathbb{P}$-almost surely we have $\mathcal{L}_i^t(r) \geq \mathcal{L}^b_i(r)$ for all $i = 1,\dots, k$ and $r \in [a,b]$.
\end{lemma}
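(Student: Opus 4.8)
The plan is to prove the stochastic domination $\mathbb{P}_{H}^{1,k,a,b,\vec{x},\vec{y},f^b,g^b} \preceq \mathbb{P}_{H}^{1,k,a,b,\vec{x},\vec{y},f^t,g^t}$ for the pointwise partial order on $C(\llbracket 1,k\rrbracket\times[a,b])$, and then to invoke Strassen's theorem -- applicable since $C(\llbracket 1,k\rrbracket\times[a,b])$ is Polish and the pointwise order is closed -- to obtain a probability space carrying $\mathcal{L}^b,\mathcal{L}^t$ with the prescribed laws and $\mathcal{L}^b_i(r)\le\mathcal{L}^t_i(r)$ for all $i,r$ almost surely. This follows the strategy of \cite{CorHamK}: the domination is obtained from an FKG inequality after a finite-dimensional approximation, and the convexity of $H$ will enter at exactly one point, a log-supermodularity check.

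First I would discretize. For each mesh $a=s^N_0<\cdots<s^N_{m_N}=b$ of vanishing width, let $\mu^{f,g}_N$ be the law of $k$ independent Brownian bridges from $\vec{x}$ to $\vec{y}$ reweighted by the Riemann-sum Boltzmann weight $W^{f,g}_N(\mathcal{Q})=\exp\bigl(-\sum_j (s^N_{j+1}-s^N_j)\sum_{i=0}^{k}H(\mathcal{Q}_{i+1}(s^N_j)-\mathcal{Q}_i(s^N_j))\bigr)$, with the conventions $\mathcal{Q}_0=f$, $\mathcal{Q}_{k+1}=g$ and $H(x-\infty):=H(-\infty)\in[0,\infty)$, so the cases $f\equiv\infty$, $g\equiv-\infty$ are covered. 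Since $H$ is continuous and the bridge paths are continuous, each $u\mapsto H(\mathcal{Q}_{i+1}(u)-\mathcal{Q}_i(u))$ is continuous on $[a,b]$ -- including at the closed sets where $f=\infty$ or $g=-\infty$, by continuity of $H$ at $-\infty$ -- so its Riemann sums converge and $W^{f,g}_N\to W_H^{1,k,a,b,\vec{x},\vec{y},f,g}$ pointwise on $C(\llbracket 1,k\rrbracket\times[a,b])$; these weights all lie in $(0,1]$, so dominated convergence gives $Z^{f,g}_N\to Z_H^{1,k,a,b,\vec{x},\vec{y},f,g}>0$ and Scheff\'e's lemma gives $\mu^{f,g}_N\to\mathbb{P}_H^{1,k,a,b,\vec{x},\vec{y},f,g}$ in total variation. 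The point of $\mu^{f,g}_N$ is that its density against the free bridge law depends only on the coordinates $T_N=\{s^N_0,\dots,s^N_{m_N}\}$.

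The two structural facts at fixed $N$ are these. (i) The $T_N$-marginal of $\mu^{f^b,g^b}_N$ has a strictly positive, log-supermodular Lebesgue density: the Gaussian chain of each bridge contributes, for two consecutive interior coordinates, a mixed second derivative of its log-density equal to $+1/(s^N_{j+1}-s^N_j)\ge0$ and $0$ for non-consecutive times and between distinct curves, while each factor $\exp(-c\,H(q'-q))$ has logarithm with mixed second derivative in $(q,q')$ equal to $c\,H''(q'-q)\ge0$ by convexity of $H$; hence the density is $\mathrm{MTP}_2$ and the $T_N$-marginal is positively associated. (ii) The Radon--Nikodym derivative $h_N:=d\mu^{f^t,g^t}_N/d\mu^{f^b,g^b}_N$ is a strictly positive, continuous, increasing function of the $T_N$-coordinates: up to a positive constant it equals the exponential of $-\sum_j (s^N_{j+1}-s^N_j)$ times a sum of terms $H(\mathcal{Q}_1(s^N_j)-f^t(s^N_j))-H(\mathcal{Q}_1(s^N_j)-f^b(s^N_j))$ and $H(g^t(s^N_j)-\mathcal{Q}_k(s^N_j))-H(g^b(s^N_j)-\mathcal{Q}_k(s^N_j))$ (the inter-curve terms cancel), and each such term is a decreasing function of $\mathcal{Q}_1(s^N_j)$, resp.\ of $\mathcal{Q}_k(s^N_j)$, because $f^t\ge f^b$, $g^t\ge g^b$ and $H'$ is nondecreasing -- here one uses the elementary observation that a convex $H$ with $H(-\infty)<\infty$ is automatically nondecreasing, which also settles the infinite-boundary cases. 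Now for any bounded increasing measurable $F$ on $C(\llbracket 1,k\rrbracket\times[a,b])$, $\widetilde F:=\mathbb{E}_{\mu^{f^b,g^b}_N}[F\mid \mathcal{Q}|_{T_N}]$ is again an increasing function of the $T_N$-coordinates, since conditionally on $\mathcal{Q}|_{T_N}$ the curves are independent Brownian bridges pinned at the $T_N$-values, and these are stochastically increasing in the pinning values (couple by adding nonnegative linear interpolants). Applying the FKG inequality for the $\mathrm{MTP}_2$ density in (i) to the two increasing functions $h_N$ and $\widetilde F$, and using the tower property and $\int h_N\,d\mu^{f^b,g^b}_N=1$, gives $\int F\,d\mu^{f^t,g^t}_N=\int h_N\widetilde F\,d\mu^{f^b,g^b}_N\ge \int \widetilde F\,d\mu^{f^b,g^b}_N=\int F\,d\mu^{f^b,g^b}_N$, i.e.\ $\mu^{f^b,g^b}_N\preceq\mu^{f^t,g^t}_N$.

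Letting $N\to\infty$ and using that stochastic domination passes to (total-variation, hence weak) limits gives $\mathbb{P}_H^{1,k,a,b,\vec{x},\vec{y},f^b,g^b}\preceq\mathbb{P}_H^{1,k,a,b,\vec{x},\vec{y},f^t,g^t}$, and Strassen's theorem then produces the desired coupling. I expect the main obstacle to be the bookkeeping of the finite-dimensional reduction: establishing the $\mathrm{MTP}_2$/log-supermodularity property (i) cleanly with possibly infinite boundary functions, verifying that conditioning on $\mathcal{Q}|_{T_N}$ preserves the monotonicity needed for $\widetilde F$ to be increasing, and checking the total-variation convergence $\mu^{f,g}_N\to\mathbb{P}_H^{\dots,f,g}$ with the fixed endpoints $\vec{x},\vec{y}$ and the boundary data handled uniformly. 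Convexity of $H$ is used only in step (i); continuity suffices for everything else. (An alternative realization of the same coupling, perhaps closer to \cite{CorHamK}, runs a monotone heat-bath Markov chain that resamples one curve at a time and is driven to its stationary law; the engine there is again a single-bridge monotone coupling, itself proved by the discretization above.)
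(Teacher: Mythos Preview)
Your argument is correct, but it follows a genuinely different route from the paper's. The paper proves a single statement (Lemma~\ref{MCLxyfg}) that simultaneously generalizes Lemmas~\ref{MCLxy} and \ref{MCLfg}, and does so by a \emph{lattice} discretization: it replaces the Brownian bridges by random-walk bridges (Definitions~\ref{Grids}--\ref{Grids2}, Lemma~\ref{lem:RW}), constructs an explicit Metropolis-type continuous-time Markov chain on pairs of lattice configurations started from an ordered initial state, and checks by hand that each single-site update preserves the order. Convexity of $H$ enters only in that last check, as the inequality comparing the two acceptance ratios at a site where the two configurations coincide. The coupling is then transported to the continuum by Skorohod's representation along weak limits.

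You instead keep the free Brownian bridge as reference and discretize only the \emph{time} variable in the Boltzmann weight, and you replace the Markov-chain construction by the FKG machinery: the finite-dimensional marginal is $\mathrm{MTP}_2$ (Gaussian chain plus the pair factors $e^{-cH(q'-q)}$, log-supermodular by convexity), the Radon--Nikodym derivative between the two boundary data is increasing, positive association gives stochastic domination, and Strassen's theorem produces the coupling. Convexity enters at exactly the analogous place---it is precisely what makes $e^{-cH(q'-q)}$ log-supermodular in $(q,q')$, which in your proof yields association and in the paper's proof yields the ordered acceptance comparison. Your proof is shorter and more conceptual; the paper's is more constructive (it exhibits the coupling as the stationary law of an explicit chain) and, because the chain is built on \emph{pairs} with possibly different endpoints, it yields the companion endpoint-monotonicity Lemma~\ref{MCLxy} in the same stroke, whereas your FKG/Strassen argument as written is tailored to fixed $\vec{x},\vec{y}$ and would need a separate (easy) step to vary the endpoints.
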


In plain words, Lemma \ref{MCLxy} states that one can couple two line ensembles $\mathcal{L}^{t}$ and $\mathcal{L}^{b}$ of $H$-Brownian bridges, with the same boundary data $(f, g)$, in such a way that if all entrance and exit data of $\mathcal{L}^{t}$ are above the respective entrance and exit data of $\mathcal{L}^{b}$, then all curves of $\mathcal{L}^{t}$ are almost surely above the respective curves of $\mathcal{L}^{b}$. Lemma \ref{MCLfg}, states that one can couple two line ensembles $\mathcal{L}^{t}$ and $\mathcal{L}^{b}$ that have the same entrance and exit data, but the boundary data $(f^t,g^t)$ of $\mathcal{L}^{t}$ is above the boundary data $(f^b,g^b)$ of $\mathcal{L}^{b}$, in such a way that all curves of $\mathcal{L}^{t}$ are almost surely above the respective curves of $\mathcal{L}^{b}$. 

Lemmas \ref{MCLxy} and \ref{MCLfg} can be found in \cite[Section 2]{CorHamK}. The key idea behind their proof is to approximate the Brownian bridges by random walk bridges, for which constructing the monotone couplings is easier, and perform a limit transition. Since the details surrounding that limit transition are only briefly mentioned in \cite{CorHamK}, and since these lemmas are central results that will be used in the proof of Theorem \ref{ThmMain}, we included their proofs in Section \ref{Section5.3}.

%
%
\section{Proofs of Theorem \ref{ThmMain} and Corollary \ref{CorMain2} }\label{Section4} In this section we prove Theorem \ref{ThmMain} and Corollary \ref{CorMain2}. We first state the main result of this section as Proposition \ref{PropMain} and deduce Theorem \ref{ThmMain} from it.  In Section \ref{Section4.1} we present the proof of a basic case of Proposition \ref{PropMain} to illustrate some of the key ideas and we give the full proof in Section \ref{Section4.2}. In Section \ref{Section4.3} we prove Corollary \ref{CorMain2}.

\begin{proposition}\label{PropMain}  Let $\Sigma = \llbracket 1, N \rrbracket$ with $N \in \mathbb{N}$  and $\Lambda = [a,b] \subset \mathbb{R}$. Fix $\lambda > 0$ and suppose that $H$ is a $\lambda$-exponential Hamiltonian in the sense of Definition \ref{S2Halph}. Suppose that $\mathcal{L}^1$ and $\mathcal{L}^2$ are $\Sigma$-indexed line ensembles on $\Lambda$ that satisfy the partial $H$-Brownian Gibbs property with laws $\mathbb{P}_1$ and $\mathbb{P}_2$ respectively. Suppose further that for every $k\in \mathbb{N}$,  $a < t_1 < t_2 < \cdots < t_k <  b$ and $z_1, \dots, z_k \in \mathbb{N}$ we have that 
\begin{equation}\label{FDEPM}
\mathbb{E}_{\mathbb{P}_1} \left[ \prod_{ i =1}^k \exp \left( - z_i \exp \left( \lambda \mathcal{L}^1_{1}(t_i)\right) \right) \right] =\mathbb{E}_{\mathbb{P}_2} \left[ \prod_{ i =1}^k \exp \left( - z_i \exp \left(\lambda \mathcal{L}^2_{1}(t_i)\right) \right)  \right].
\end{equation}
Then for every $k\in \mathbb{N}$,  $a  < t_1 < t_2 < \cdots < t_k <  b$, $n_1, \dots, n_k \in \llbracket 1, N \rrbracket$ and $z_1, \dots, z_k \in \mathbb{N}$ we have
\begin{equation}\label{FDEPM2}
\mathbb{E}_{\mathbb{P}_1} \left[ \prod_{ i =1}^k \exp \left( - z_i \exp \left(\lambda \mathcal{L}^1_{n_i}(t_i)\right) \right) \right] =\mathbb{E}_{\mathbb{P}_2} \left[ \prod_{ i =1}^k \exp \left( - z_i \exp \left(\lambda \mathcal{L}^2_{n_i}(t_i)\right) \right)  \right].
\end{equation}
\end{proposition}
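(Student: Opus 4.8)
The plan is to argue by induction on $k_{\max} := \max_i n_i$, the deepest curve index appearing in \eqref{FDEPM2}. The base case $k_{\max} = 1$ is exactly the hypothesis \eqref{FDEPM}, together with the observation that taking $z_i$ along a dense set (or all of $\mathbb{N}$) and using independence of the factors suffices to pin down finite-dimensional laws via the uniqueness theorem for Laplace transforms of the vector $(e^{\lambda \mathcal{L}^v_1(t_1)}, \dots, e^{\lambda \mathcal{L}^v_1(t_k)})$. For the induction step, suppose \eqref{FDEPM2} holds whenever all indices are $\le k$; equivalently, by Lemma \ref{PropFD} applied to the projected ensembles $(\mathcal{L}^v_1,\dots,\mathcal{L}^v_k)$ (which satisfy the partial $H$-Brownian Gibbs property by Remark \ref{RPBGP}), the laws of $\{\mathcal{L}^1_i\}_{i=1}^k$ and $\{\mathcal{L}^2_i\}_{i=1}^k$ coincide as $\llbracket 1,k\rrbracket$-indexed line ensembles. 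I then want to upgrade this to equality of the laws of $\{\mathcal{L}^v_i\}_{i=1}^{k+1}$, which by Lemma \ref{PropFD} again reduces to establishing \eqref{FDEPM2} for the single observable $\mathbb{E}_{\mathbb{P}_v}[\exp(-z\,e^{\lambda \mathcal{L}^v_{k+1}(t)})]$ at one time $t \in (a,b)$ and one $z \in \mathbb{N}$ (the multi-point version follows by iterating the localization at disjoint intervals, exactly as one handles several $t_i$'s simultaneously).

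The heart of the step is the construction of the observables $p_n^v$ from \eqref{S1Obs2}. Concretely, fix $t \in (a,b)$, choose $\delta_n \to 0$ with $[t-\delta_n, t+\delta_n]\subset (a,b)$, set $a_n, b_n, c_n, d_n$ around $t$ as described (with $b_n-d_n$, $c_n-a_n$ of order $n^{-2}$, $d_n-c_n$ of order $\delta_n$) and $A_n = W_n = \lambda^{-1}\log n$ rescaled so that $\delta_n H(A_n) \to \beta := z$ — this is possible precisely because $H$ satisfies condition (3) of Definition \ref{S2Halph}, which forces $H(x+y)/H(y)\to e^{\lambda x}$ and hence $\delta_n H(x+A_n)\to z e^{\lambda x} =: G(x)$ uniformly for $x$ in compacts. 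Define
\[
p_n^v = \mathbb{E}_{\mathbb{P}_v}\left[ \frac{\mathbf{1}\{\mathcal{L}^v_1(c_n)\le -A_n\}\,\mathbf{1}\{\mathcal{L}^v_1(d_n)\le -A_n\}}{\mathbb{P}_{free}^{v,n}(\mathcal{Q}_{bot}(c_n)\le -A_n,\ \mathcal{Q}_{bot}(d_n)\le -A_n)}\right].
\]
The integrand depends only on $\mathcal{L}^v_1$, so $p_n^1 = p_n^2$ for all $n$. Applying the partial $H$-Brownian Gibbs property on the block $\llbracket 1, k+1\rrbracket \times (a_n,b_n)$ — note $k+1 \le N-1$ is needed, and the boundary case $k+1 = N$ (where $g=-\infty$) must be treated separately, but there the $(k+1)$st curve is the bottom one and the argument is the $H_\infty$-type limit of \cite{DM20}, or one simply notes the lower boundary contributes no Hamiltonian term — rewrites $p_n^v$ as an expectation of a ratio of an $H$-interacting bridge probability to a free one. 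Using Lemma \ref{BridgeLemma} / Corollary \ref{BridgeLemmaV2}: conditionally on $F_n = \{\mathcal{Q}(c_n)\le -A_n, \mathcal{Q}(d_n)\le -A_n\}$ the top $k$ curves of the block are, with probability $1 - Ce^{-cn^{1/12}}$, pinned in the tube $[-A_n - 2V_n, -A_n + 2V_n]$ on $[c_n,d_n]$ and confined to $[-A_n-1, M+1]$ on the thin side intervals, where $M$ is an a.s.\ bound on the relevant endpoint values. On this good event the Radon--Nikodym weight $W^{v,n}_H(\mathcal{Q}) = \exp(-\int_{a_n}^{b_n} H(\mathcal{L}^v_{k+1}(u) - \mathcal{Q}_k(u))\,du)$ — wait, more precisely the contribution involving $\mathcal{L}^v_{k+1}$ and the $(k+1)$st bridge curve — concentrates: since $\mathcal{L}^v_{k+1}$ is continuous and $b_n - a_n \to 0$, $\mathcal{L}^v_{k+1}(u) \approx \mathcal{L}^v_{k+1}(t)$, the bridge sits near $-A_n$, $d_n - c_n \approx \delta_n$, and Lemma \ref{S3Simple} handles the negligible side intervals, yielding
\[
\frac{\mathbb{P}_{H}^{v,n}(\mathcal{Q}_{top}(c_n)\le -A_n,\ \mathcal{Q}_{top}(d_n)\le -A_n)}{\mathbb{P}_{free}^{v,n}(\mathcal{Q}_{bot}(c_n)\le -A_n,\ \mathcal{Q}_{bot}(d_n)\le -A_n)} \xrightarrow{n\to\infty} \exp\bigl(-G(\mathcal{L}^v_{k+1}(t))\bigr) = \exp\bigl(-z\,e^{\lambda \mathcal{L}^v_{k+1}(t)}\bigr)
\]
$\mathbb{P}_v$-almost surely. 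Here the induction hypothesis enters through the comparison of $\mathcal{L}^v_k$ (the curve just above the gap) across the two ensembles, and through controlling the denominator; but the denominator is an explicit Gaussian quantity depending only on $\mathcal{L}^v_1(a_n), \mathcal{L}^v_1(b_n)$, which are handled by the base case.

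Finally, to pass from the almost-sure limit to convergence of $p_n^v$ I invoke the monotone coupling Lemma \ref{MCLfg}: since $H$ is convex, $\mathcal{Q}_{top}$ (interacting with $\mathcal{L}^v_{k+1}$ from below, equivalently with the boundary $g = \mathcal{L}^v_{k+1}$) and $\mathcal{Q}_{bot}$ (with boundary $g = -\infty$) can be coupled so that $\mathcal{Q}_{top} \ge \mathcal{Q}_{bot}$ pointwise, whence the ratio above is bounded by $1$ uniformly in $n$; the bounded convergence theorem then gives
\[
\mathbb{E}_{\mathbb{P}_1}\bigl[\exp(-z\,e^{\lambda \mathcal{L}^1_{k+1}(t)})\bigr] = \lim_n p_n^1 = \lim_n p_n^2 = \mathbb{E}_{\mathbb{P}_2}\bigl[\exp(-z\,e^{\lambda \mathcal{L}^2_{k+1}(t)})\bigr].
\]
Iterating the construction with disjoint windows around $t_1 < \cdots < t_k$ (and arbitrary index pattern $n_1,\dots,n_k \le k+1$, reducing each to the top-curve case at its window via the Gibbs property and the induction hypothesis for the strictly-less-than-$(k+1)$ indices) gives the full multi-point identity \eqref{FDEPM2} at level $k+1$, completing the induction. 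The main obstacle I anticipate is making the concentration estimate \eqref{S1E6} genuinely uniform enough — i.e.\ controlling the error between $\int_{a_n}^{b_n} H(\mathcal{L}^v_{k+1}(u)-\mathcal{Q}(u))\,du$ and $\delta_n H(\mathcal{L}^v_{k+1}(t)+A_n)$ on the good event, where one must simultaneously use the oscillation bound on $\mathcal{L}^v_{k+1}$ near $t$, the tube estimate from Corollary \ref{BridgeLemmaV2}, and condition (3) of Definition \ref{S2Halph} to linearize $\log H$ near $A_n$; the interplay of the three scales $n^{-2} \ll \delta_n \ll 1$ and $A_n \sim \log n$ has to be tuned so that all error terms vanish, and handling the event $F_n$'s conditioning (rather than a plain unconditioned bridge) is where Lemma \ref{BridgeLemma} does the real work.
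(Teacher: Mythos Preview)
Your overall strategy matches the paper's: induction on the deepest curve index, with the step executed by building an observable of the known curves that probes the next one via the Gibbs property, then using monotone coupling and bounded convergence. However, the observable you write down is the \emph{basic case} observable from \eqref{S1Obs2}, and it does not generalize to $k\ge 2$ as stated. There are three concrete mismatches, each of which breaks the argument.

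First, your $p_n^v$ pushes only $\mathcal{L}^v_1$ below $-A_n$. To probe $\mathcal{L}^v_{k+1}$ you must push \emph{all} of $\mathcal{L}^v_1,\dots,\mathcal{L}^v_k$ below $-A_n$; this is the event $\prod_{i=1}^{k}\mathbf{1}\{\mathcal{L}^v_i(c_n)\le -A_n,\ \mathcal{L}^v_i(d_n)\le -A_n\}$ in the paper's \eqref{Trunc1}. Second, the Gibbs conditioning should be on $\llbracket 1,k\rrbracket$, not $\llbracket 1,k+1\rrbracket$: you need the lower boundary to be $\mathcal{L}^v_{k+1}$, the curve you are trying to reach. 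Third, the denominator cannot be the single free bridge; it must be the $k$-curve $H$-ensemble $\mathbb{P}_H^{1,k,a_n,b_n,\vec{x},\vec{y},\infty,-\infty}$ (curves interacting with each other but not with the $(k+1)$-st curve), as in \eqref{Obs1}. Only with numerator and denominator both $k$-curve ensembles, differing solely in the lower boundary $g=\mathcal{L}^v_{k+1}$ versus $g=-\infty$, does Lemma~\ref{MCLfg} yield the uniform bound $\le 1$ needed for bounded convergence. With your observable, there is no obvious coupling between a $k$-curve interacting ensemble and a single free bridge, and the ratio need not be bounded.

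These corrections also repair the role of the induction hypothesis: with the observable depending on all of $\mathcal{L}^v_1,\dots,\mathcal{L}^v_k$, the equality $p_n^1=p_n^2$ follows from the induction hypothesis (equality of the top-$k$ laws), which is where the induction actually enters --- not only in the limit analysis as you suggest. Finally, Corollary~\ref{BridgeLemmaV2} (which you invoke) is precisely the statement that when \emph{all} $k$ curves are conditioned to dip, they all concentrate near $-A_n$ on $[c_n,d_n]$; pushing only $\mathcal{Q}_1$ down would not force $\mathcal{Q}_k$ near $-A_n$, and the weight $\exp(-\int H(\mathcal{L}^v_{k+1}-\mathcal{Q}_k))$ would not concentrate on the claimed limit.
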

The proof of Proposition \ref{PropMain} is given in Section \ref{Section4.2} below. In the remainder of this section we assume its validity and prove Theorem \ref{ThmMain}
\begin{proof}[Theorem \ref{ThmMain}] We assume the same notation as in Theorem \ref{ThmMain}. Let $a, b \in \Lambda$ with $a < b$ and $K \in \Sigma$ be given. We define $\pi_{[a,b]}^{\llbracket 1, K\rrbracket} : C (\Sigma \times \Lambda) \rightarrow C(\llbracket 1, K \rrbracket \times [a,b])$ through
\begin{equation}\label{ProjBox}
\pi_{[a,b]}^{\llbracket 1, K\rrbracket} (f)(i,x) = f(i,x) \mbox{ for $i = 1, \dots, K$ and $x \in [a,b]$.}
\end{equation}
Note that by Definition \ref{DefPBGP} we have that under $\mathbb{P}_v$ the $\llbracket 1, K \rrbracket$-indexed line ensembles $\pi_{[a,b]}^{\llbracket 1, K \rrbracket}(\mathcal{L}^v)$ on $[a,b]$ satisfies the partial $H$-Brownian Gibbs property, where $v \in \{1,2\}$. Here it is important that we work with the partial $H$-Brownian Gibbs property and not the usual $H$-Brownian Gibbs property, cf. Remark \ref{RPBGP}. 

Since the finite-dimensional distribution of $\mathcal{L}^1_1$ under $\mathbb{P}_1$ is the same as that of $\mathcal{L}^2_1$ under $\mathbb{P}_2$, we conclude that (\ref{FDEPM}) is satisfied for the line ensembles $\pi_{[a,b]}^{\llbracket 1, K \rrbracket}(\mathcal{L}^1)$ and $\pi_{[a,b]}^{\llbracket 1, K \rrbracket}(\mathcal{L}^2)$. Consequently, by Proposition \ref{PropMain} we conclude that for every $k\in \mathbb{N}$,  $a  < t_1 < t_2 < \cdots < t_k <  b$, $n_1, \dots, n_k \in \llbracket 1, K \rrbracket$ and $z_1, \dots, z_k \in \mathbb{R}$ we have
\begin{equation*}
\mathbb{E}_{\mathbb{P}_1} \left[ \prod_{ i =1}^k \exp \left( - z_i \exp \left( \lambda \mathcal{L}^1_{n_i}(t_i)\right) \right) \right] =\mathbb{E}_{\mathbb{P}_2} \left[ \prod_{ i =1}^k \exp \left( - z_i \exp \left( \lambda \mathcal{L}^2_{n_i}(t_i)\right) \right)  \right].
\end{equation*}
 Since $[a,b] \subset \Lambda$ and $K \in \Sigma$ were arbitrary we conclude that the latter equality holds for any $k\in \mathbb{N}$;  $ t_1 < t_2 < \cdots < t_k $, with $t_i \in \Lambda^o$ for $i =1 ,\dots, k$; $n_1, \dots, n_k \in \Sigma$ and $z_1, \dots, z_k \in \mathbb{N}$. From Lemma \ref{PropFD} we conclude that $\lambda \cdot \mathcal{L}^1$ has the same distribution as $\lambda \cdot \mathcal{L}^2$, and hence $\mathbb{P}_1 = \mathbb{P}_2$. 

\end{proof}

%
%
\subsection{Basic case of Proposition \ref{PropMain}}\label{Section4.1} In this section we work under the same assumptions as in Proposition \ref{PropMain} when $N = 2$ and prove (\ref{FDEPM2}) in the simplest non-trivial case when $k = 1$ and $n_1 = 2$. As we will see, many of the key ideas that go into the proof of Proposition \ref{PropMain} are already present in this simple case. The goal is to illustrate the key arguments and constructions, so that the reader is better equipped before proceeding with the general proof in the next section.

The special case above asks us to prove that for any $t_1 \in (a,b)$ and $z_1 \in \mathbb{N}$ we have 
\begin{equation}\label{S4E1}
\mathbb{E}_{\mathbb{P}_1} \left[\exp \left( - z_1 \exp \left( \lambda \mathcal{L}^1_{2}(t_1)\right) \right) \right] = \mathbb{E}_{\mathbb{P}_2} \left[\exp \left( - z_1 \exp \left( \lambda \mathcal{L}^2_{2}(t_1)\right) \right) \right].
\end{equation}
From equation (\ref{FDEPM}) and Lemma \ref{PropFD} we know that $\mathcal{L}_1^1$ under $\mathbb{P}_1$ has the same law as $\mathcal{L}_1^2$ under $\mathbb{P}_2$ as $\{1 \}$-indexed line ensembles on $[a,b]$, or equivalently as random variables taking values in $(C([a,b]), \mathcal{C})$. In particular, we have for any bounded measurable function $G : C([a,b]) \rightarrow \mathbb{R}$ that 
\begin{equation}\label{S4E2}
\mathbb{E}_{\mathbb{P}_1} \left[ G( \mathcal{L}^1_{1}) \right] = \mathbb{E}_{\mathbb{P}_2} \left[G( \mathcal{L}^2_{1}) \right].
\end{equation}

The main idea of the argument is to find a sequence of measurable functions $G_w : C([a,b]) \rightarrow [0,\infty]$ for $w\in \mathbb{N}$ that satisfy the following 
\begin{equation}\label{S4E3}
\begin{split}
&  \mathbb{E}_{\mathbb{P}_1} \left[ G_w( \mathcal{L}^1_{1}) \right] = \mathbb{E}_{\mathbb{P}_2} \left[G_w( \mathcal{L}^2_{1}) \right] \mbox{ for $w \in \mathbb{N}$ and } \\
&\mathbb{E}_{\mathbb{P}_v} \left[\exp \left( - z_1 \exp \left( \lambda \mathcal{L}^v_{2}(t_1)\right) \right) \right]  = \lim_{w \rightarrow \infty}\mathbb{E}_{\mathbb{P}_v} \left[ G_w( \mathcal{L}^v_{1}) \right] , \mbox{ where $v \in \{1,2\}$}.
\end{split}
\end{equation}
If a sequence $G_w$ satisfying (\ref{S4E3}) exists, then clearly (\ref{S4E1}) would follow.

 In the remainder of the section we construct the sequence $G_w$ and show that it satisfies (\ref{S4E3}). Given $p,q, c,d, x,y, r \in \mathbb{R}$ with $p < c < d <q$ we define 
\begin{equation}\label{S41Fun}
F( r; p,q; c,d; x, y) = \mathbb{P}_{free}^{1,1,p,q, x,y} \left(B(c) \leq r \mbox{ and } B(d) \leq r \right),
\end{equation}
which is the probability that a Brownian bridge from $B(p) = x$ to $B(q) = y$ with diffusion parameter $1$ is below the level $r$ at times $c$ and $d$. Let us define 
\begin{equation}\label{S41Par}
\begin{split}
&\beta = z_1/2, \hspace{3mm},\hspace{3mm}A_w = \lambda^{-1} \log w, \hspace{3mm} V_w = w^{-1/3}, \hspace{3mm}  a_w = t_1 - \beta/H(A_w)- w^{-2}, \\
& b_w = t_1 + \beta /H(A_w) + w^{-2}, \hspace{3mm}c_w = t_1 - \beta/H(A_w), \hspace{3mm} d_w = t_1 + \beta /H(A_w),
\end{split}
\end{equation}
for $w \geq W_0$, where $W_0$ is sufficiently large so that $ d_w - c_w = 2\beta/ H(A_w) \in [w^{-5/4}, w^{-3/4}]$ and $a_w,b_w \in (a,b)$. Here we used the fact that $H$ is a $\lambda$-exponential Hamiltonian and that $t_1 \in (a,b)$ is in the interior of $[a,b]$. With this data we define for $f \in C([a,b])$ the function
$$G_w(f) = \frac{{\bf 1} \{ f(c_w) \leq -A_w \} \cdot {\bf 1} \{ f(d_w) \leq -A_w \} }{F( -A_w ; a_w,b_w; c_w,d_w; f(a_w), f(b_w)) }, \mbox{ where } w \geq W_0.$$
This is the choice of $G_w$ that satisfies (\ref{S4E3}). 

We first show that $G_w$ satisfies the first line in (\ref{S4E3}). For any $M \in \mathbb{N}$ we introduce the functions 
$$G_w^M(f) =  {\bf 1} \{ f(c_w) \leq -A_w \} \cdot {\bf 1} \{ f(d_w) \leq -A_w\} \cdot \min \left( M , \frac{1 }{F( -A_w; a_w,b_w; c_w,d_w; f(a_w), f(b_w)) } \right),$$
and note that $G_w^M$ are bounded measurable functions on $C([a,b])$. The boundedness of $G_w^M$ by $M$ is evident, while the measurability follows from the fact that $F( -A_w; a_w,b_w; c_w,d_w; x, y)$ is measurable in $x,y$ in view of  Lemma \ref{LemmaMeasExp}. For each $M \in \mathbb{N}$ we have from (\ref{S4E2}) that 
$$\mathbb{E}_{\mathbb{P}_1} \left[ G^M_w( \mathcal{L}^1_{1}) \right] = \mathbb{E}_{\mathbb{P}_2} \left[G^M_w( \mathcal{L}^2_{1}) \right].$$
Taking $M \rightarrow \infty$ in the last equation and applying the monotone convergence theorem, we get
$$\mathbb{E}_{\mathbb{P}_1} \left[ G_w( \mathcal{L}^1_{1}) \right] = \lim_{M \rightarrow \infty} \mathbb{E}_{\mathbb{P}_1} \left[ G^M_w( \mathcal{L}^1_{1}) \right] = \lim_{M \rightarrow \infty}\mathbb{E}_{\mathbb{P}_2} \left[G^M_w( \mathcal{L}^2_{1}) \right] =  \mathbb{E}_{\mathbb{P}_2} \left[G_w( \mathcal{L}^2_{1}) \right],$$
which establishes the first line in (\ref{S4E3}).

Establishing the second line in (\ref{S4E3}) is the hard part, and to accomplish it we first need to rewrite $\mathbb{E}_{\mathbb{P}_v} \left[ G_w( \mathcal{L}^v_{1}) \right]$ for $v \in \{1, 2\}$ using the partial $H$-Brownian Gibbs property. A technical aspect of the latter is that it requires that we work with bounded functions, and so we first rewrite $\mathbb{E}_{\mathbb{P}_v} \left[ G^M_w( \mathcal{L}^v_{1}) \right]$ and then take the $M \rightarrow \infty$ limit. By the partial $H$-Brownian Gibbs property, cf. Definition \ref{DefPBGP}, we have that 
\begin{equation*}
\begin{split}
&\mathbb{E}_{\mathbb{P}_v} \left[ G^M_w( \mathcal{L}^v_{1}) \right] =\mathbb{E}_{\mathbb{P}_v} \left[\mathbb{E}_{\mathbb{P}_v} \left[G^M_w( \mathcal{L}^v_{1}) \vert \mathcal{F}_{ext}( \{1\} \times (a_w, b_w)) \right] \right] = \\
&\mathbb{E}_{\mathbb{P}_v} \left[ \mathbb{P}^{v,w}_H (\mathcal{Q}(c_w) \leq -A_w, \mathcal{Q}(d_w) \leq -A_w) \cdot \min \left( M , \frac{1 }{F( -A_w; a_w,b_w; c_w,d_w; \mathcal{L}_1^v(a_w), \mathcal{L}^v_1(b_w)) } \right) \right],
\end{split}
\end{equation*}
where we write $\mathbb{P}^{v,w}_H$ in place of $\mathbb{P}_H^{1,1,a_w, b_w, \mathcal{L}_1^v(a_w), \mathcal{L}^v_1(b_w), \infty, \mathcal{L}^v_2[a_w,b_w]}$ to simplify the expression, and $\mathcal{Q}$ has law $\mathbb{P}^{v,w}_H$. Taking $M \rightarrow \infty$ in the last equation and applying the monotone convergence theorem again we see that for $v \in \{1, 2\}$ 
\begin{equation}\label{S4E4}
 \mathbb{E}_{\mathbb{P}_v} \left[ G_w( \mathcal{L}^v_{1}) \right] = \mathbb{E}_{\mathbb{P}_v} \left[ \frac{\mathbb{P}^{v,w}_H (\mathcal{Q}(c_w) \leq -A_w, \mathcal{Q}(d_w) \leq -A_w) }{F( -A_w; a_w,b_w; c_w,d_w; \mathcal{L}_1^v(a_w), \mathcal{L}^v_1(b_w)) }  \right].
\end{equation}

The key observation that motivates much of the proof is that $\mathbb{P}_v$-almost surely we have
\begin{equation}\label{S4GlobE1}
\lim_{w \rightarrow \infty} \frac{\mathbb{P}^{v,w}_H (\mathcal{Q}(c_w) \leq -A_w, \mathcal{Q}(d_w) \leq -A_w) }{F( -A_w; a_w,b_w; c_w,d_w; \mathcal{L}_1^v(a_w), \mathcal{L}^v_1(b_w)) }   = \exp \left( - z_1 \exp \left( \lambda \mathcal{L}^v_{2}(t_1)\right) \right) .
\end{equation}
We will justify (\ref{S4GlobE1}) properly and deduce the second line in (\ref{S4E3}) in Steps I-IV below. Before we go into the details we take the time here to give an informal description of the main ideas.\\

First of all, if (\ref{S4GlobE1}) is true then the second line of (\ref{S4E3})  would follow by the bounded convergence theorem and (\ref{S4E4}), since by Lemma \ref{MCLfg} the variables in (\ref{S4GlobE1}) are all bounded by $1$. Consequently, establishing (\ref{S4GlobE1}) is indeed all we need.

To see why (\ref{S4GlobE1}) holds we note that from Definition \ref{DefAvoidingLaw} we have
\begin{equation*}
\begin{split}
&\frac{\mathbb{P}^{v,w}_H (\mathcal{Q}(c_w) \leq -A_w, \mathcal{Q}(d_w) \leq -A_w) }{F( -A_w; a_w,b_w; c_w,d_w; \mathcal{L}_1^v(a_w), \mathcal{L}^v_1(b_w)) }  = \\
& \frac{1}{\mathbb{E}^{v,w}_{free} \left[ W_H^{v,w}(\mathcal{Q}) \right]} \cdot \frac{\mathbb{E}^{v,w}_{free} \left[ {\bf 1} \{\mathcal{Q}(c_w) \leq -A_w, \mathcal{Q}(d_w) \leq -A_w \} \cdot W_H^{v,w}(\mathcal{Q}) \right]}{F( -A_w; a_w,b_w; c_w,d_w; \mathcal{L}_1^v(a_w), \mathcal{L}^v_1(b_w)) },
\end{split}
\end{equation*}
where we have written $\mathbb{P}^{v,w}_{free}$ in place of $\mathbb{P}_{free}^{1,1,a_w, b_w, \mathcal{L}_1^v(a_w), \mathcal{L}^v_1(b_w)}$ to ease the notation and
$$W_H^{v,w}(\mathcal{Q}) = \exp \left(   -   \int_{a_w}^{b_w}\hspace{-3mm} H \left(\mathcal{L}_2^v(u) - \mathcal{Q}(u) \right)du \hspace{-1mm} \right).$$
The quantity $\mathbb{E}^{v,w}_{free} \left[ W_H^{v,w}(\mathcal{Q}) \right]$ converges almost surely to $1$, since both $\mathcal{L}_2^v$ and $\mathcal{Q}$ are of order $1$, while $b_w - a_w \rightarrow 0$ so that $W_H^{v,w}(\mathcal{Q}) \rightarrow 1$. This limit is in fact a consequence of Lemma \ref{S3Simple}. Thus we are left with showing that $\mathbb{P}_v$-almost surely
\begin{equation}\label{S4GlobE2}
\lim_{w \rightarrow \infty} \frac{\mathbb{E}^{v,w}_{free} \left[ {\bf 1} \{\mathcal{Q}(c_w) \leq -A_w, \mathcal{Q}(d_w) \leq -A_w \} \cdot W_H^{v,w}(\mathcal{Q}) \right]}{F( -A_w; a_w,b_w; c_w,d_w; \mathcal{L}_1^v(a_w), \mathcal{L}^v_1(b_w)) } = \exp \left( - z_1 \exp \left( \lambda \mathcal{L}^v_{2}(t_1)\right) \right).
\end{equation}
By Lemma \ref{BridgeLemma} we have that on the event $ \{\mathcal{Q}(c_w) \leq -A_w, \mathcal{Q}(d_w) \leq -A_w \}$ the Brownian bridge $\mathcal{Q}$ is very close to the level $-A_w$ on the whole interval $[c_w, d_w]$ and behaves reasonably on $[a_w, c_w] \cup [d_w, b_w]$ with overwhelming probability. On the other hand, since $\mathcal{L}_2^v$ is continuous and $[a_w, b_w]$ is small, one has that $\mathcal{L}_2^v(u) \approx \mathcal{L}_2^v(t_1)$ for $u \in [a_w, b_w]$ and all large $w$ (again with overwhelming probability). The latter two observations and the fact that $b_w - d_w = c_w - a_w = w^{-2}$ is much smaller than $d_w - c_w = 2\beta/ H(A_w)$ suggest that with very high probability
$$W_H^{v,w}(\mathcal{Q}) = \exp \left(   -   \int_{a_w}^{b_w}\hspace{-3mm} H \left(\mathcal{L}_2^v(u) - \mathcal{Q}(u) \right)du \hspace{-1mm} \right) \approx\exp \left(   -  2\beta \cdot  H \left(\mathcal{L}_2^v(t_1) + A_w \right)/ H(A_w) \right) .$$
Since $\lim_{ w\rightarrow \infty} H(x + A_w)/ H(A_w) = \exp(\lambda x)$ by our assumption that $H$ is a $\lambda$-exponential Hamiltonian and $A_w \rightarrow \infty$ we conclude that with high probability
$$W_H^{v,w}(\mathcal{Q}) \approx\exp \left(   -  2 \beta \cdot  \exp (\lambda \mathcal{L}_2^v(t_1) ) \right) = \exp \left(   -  z_1 \cdot \exp (\lambda \mathcal{L}_2^v(t_1)) \right),$$
where the last equality follows from $2 \beta  = z_1 $, see (\ref{S41Par}).  The last equation suggests that $W_H^{v,w}(\mathcal{Q})$ is essentially a constant with respect to $\mathbb{P}^{v,w}_{free}$ and then 
$$\mathbb{E}^{v,w}_{free} \left[ {\bf 1} \{\mathcal{Q}(c_w) \leq -A_w, \mathcal{Q}(d_w) \leq -A_w \} \cdot W_H^{v,w}(\mathcal{Q}) \right] \approx $$
$$  \exp \left(   -  z_1 \cdot \exp (\lambda \mathcal{L}_2^v(t_1)) \right) \cdot \mathbb{E}^{v,w}_{free} \left[ {\bf 1} \{\mathcal{Q}(c_w) \leq -A_w, \mathcal{Q}(d_w) \leq -A_w \} \right].$$
Since the second factor on the second line is nothing but $F( -A_w; a_w,b_w; c_w,d_w; \mathcal{L}_1^v(a_w), \mathcal{L}^v_1(b_w))$ by definition, we see that the last equation implies (\ref{S4GlobE2}). This concludes the sketch of our arguments.\\

In the remainder of the section we give the necessary details behind the above sketch and for clarity we split the argument into four steps. When we present the general proof of Proposition \ref{PropMain} later in Section \ref{Section4.2} these steps will correspond to Steps 2-5, while the discussion prior to equation (\ref{S4GlobE1}) will correspond to Step 1.\\

{\bf \raggedleft Step I.} We claim that for all $M \in \mathbb{N}$ and $v \in \{1, 2\}$ we have $\mathbb{P}_v$-almost surely
\begin{equation}\label{S41R1}
\begin{split}
&\lim_{ w \rightarrow \infty} {\bf 1 }_{E^v_M} \cdot \frac{\mathbb{E}^{v,w}_{free} \left[ {\bf 1} \{\mathcal{Q}(c_w) \leq -A_w, \mathcal{Q}(d_w) \leq -A_w \} \cdot W_H^{v,w}(\mathcal{Q}) \right]}{F( -A_w; a_w,b_w; c_w,d_w; \mathcal{L}_1^v(a_w), \mathcal{L}^v_1(b_w)) } = {\bf 1 }_{E^v_M} \cdot e^{ - z_1 \exp \left(\lambda \mathcal{L}^v_{2}(t_1)\right) } ,\\
&\mbox{ where }W_H^{v,w}(\mathcal{Q}) = \exp \left(  \hspace{-1mm} -  \hspace{-1mm} \int_{a_w}^{b_w}\hspace{-3mm} H \left(\mathcal{L}_2^v(u) - \mathcal{Q}(u) \right)du \hspace{-1mm} \right) \hspace{-1mm}\mbox{ and }   E^v_M = \left \{ \sup_{ t \in [a, b]} \max_{i = 1,2} |\mathcal{L}^v_{i}(t)| \leq M \right\}.
\end{split}
\end{equation}
In the last equation we have written $\mathbb{P}^{v,w}_{free}$ in place of $\mathbb{P}_{free}^{1,1,a_w, b_w, \mathcal{L}_1^v(a_w), \mathcal{L}^v_1(b_w)}$ to simplify the notation and $\mathcal{Q}$ is $\mathbb{P}^{v,w}_{free}$-distributed. We will prove (\ref{S41R1}) in the steps below. Here we assume its validity and conclude the proof of the second line in (\ref{S4E3}).

From Definition \ref{DefAvoidingLaw} we know that 
\begin{equation*}
\begin{split}
&\frac{\mathbb{P}^{v,w}_H (\mathcal{Q}(c_w) \leq -A_w, \mathcal{Q}(d_w) \leq -A_w) }{F( -A_w; a_w,b_w; c_w,d_w; \mathcal{L}_1^v(a_w), \mathcal{L}^v_1(b_w)) }  = \\
& \frac{1}{\mathbb{E}^{v,w}_{free} \left[ W_H^{v,w}(\mathcal{Q}) \right]} \cdot \frac{\mathbb{E}^{v,w}_{free} \left[ {\bf 1} \{\mathcal{Q}(c_w) \leq -A_w, \mathcal{Q}(d_w) \leq -A_w \} \cdot W_H^{v,w}(\mathcal{Q}) \right]}{F( -A_w; a_w,b_w; c_w,d_w; \mathcal{L}_1^v(a_w), \mathcal{L}^v_1(b_w)) }.
\end{split}
\end{equation*}
In addition, from Lemma \ref{S3Simple} we know that $\mathbb{P}_v$-almost surely we have
$$\lim_{ w \rightarrow \infty} {\bf 1 }_{E^v_M} \cdot \mathbb{E}^{v,w}_{free} \left[ W_H^{v,w}(\mathcal{Q}) \right] = {\bf 1 }_{E^v_M}.$$
Here we used that $\mathbb{P}_{H}^{1,1,p, q, x, y, \infty, -\infty} = \mathbb{P}_{free}^{1,1,p, q, x, y}$, cf. Definition \ref{DefAvoidingLaw}. Combining the last two statements with (\ref{S41R1}) we conclude that for all $M \in \mathbb{N}$ and $v \in \{1, 2\}$ we have $\mathbb{P}_v$-almost surely
$$\lim_{ w \rightarrow \infty} {\bf 1 }_{E^v_M} \cdot \frac{\mathbb{P}^{v,w}_H (\mathcal{Q}(c_w) \leq -A_w, \mathcal{Q}(d_w) \leq -A_w) }{F( -A_w; a_w,b_w; c_w,d_w; \mathcal{L}_1^v(a_w), \mathcal{L}^v_1(b_w)) } = {\bf 1 }_{E^v_M} \cdot e^{ - z_1 \exp \left(\lambda \mathcal{L}^v_{2}(t_1)\right) } .$$
Since $\mathbb{P}_v \left( \cup_{M = 1}^\infty E^v_M \right) = 1$ we conclude that $\mathbb{P}_v$-almost surely
\begin{equation}\label{S4E5}
\lim_{ w \rightarrow \infty}   \frac{\mathbb{P}^{v,w}_H (\mathcal{Q}(c_w) \leq -A_w, \mathcal{Q}(d_w) \leq -A_w) }{F( -A_w; a_w,b_w; c_w,d_w; \mathcal{L}_1^v(a_w), \mathcal{L}^v_1(b_w)) } = e^{ - z_1 \exp \left(\lambda \mathcal{L}^v_{2}(t_1)\right) }
\end{equation}
From Lemma \ref{MCLfg} (applied to $a = a_w$, $b = b_w$, $\vec{x} = \mathcal{L}_1^v(a_w)$, $\vec{y} = \mathcal{L}_1^v(b_w)$, $f^t = f^b = \infty$, $g^t = \mathcal{L}^v_2[a_w, b_w]$ and $g^b = -\infty$) we know that 
\begin{equation}\label{S4E6}
   \frac{\mathbb{P}^{v,w}_H (\mathcal{Q}(c_w) \leq -A_w, \mathcal{Q}(d_w) \leq -A_w) }{F( -A_w; a_w,b_w; c_w,d_w; \mathcal{L}_1^v(a_w), \mathcal{L}^v_1(b_w)) } \leq 1.
\end{equation}
We mention that we used in (\ref{S4E6}) that $H$ is convex, as required for the application of Lemma \ref{MCLfg}. 

Combining (\ref{S4E4}), (\ref{S4E5}), (\ref{S4E6}) and the bounded convergence theorem, we conclude the second line in (\ref{S4E3}). \\

{\bf \raggedleft Step II.} In this step we prove (\ref{S41R1}). For $w \geq W_0$ we denote the events
\begin{enumerate}
\item $\tilde{A}_w = \{ -A_w - 1 \leq \mathcal{Q}(t) \leq M + 1 \mbox{ for $a_w \leq t \leq c_w $} \}$;
\item $\tilde{D}_w = \{ -A_w - 1 \leq \mathcal{Q}(t) \leq M + 1 \mbox{ for $d_w \leq t \leq b_w$} \}$;
\item $\tilde{E}_w = \{  -A_w - 2V_w \leq \mathcal{Q}(t) \leq -A_w + 2V_w \mbox{ for $c_w \leq t \leq d_w$} \}$;
\item $\tilde{F}_w = \{\mathcal{Q}(c_w) \leq -A_w \mbox{ and } \mathcal{Q}(d_w) \leq -A_w \},$
\end{enumerate}
where we recall that $V_w = w^{-1/3}$ from (\ref{S41Par}). We claim that $\mathbb{P}_v$-almost surely
\begin{equation}\label{S41R2}
\lim_{ w \rightarrow \infty} {\bf 1 }_{E^v_M} \cdot \frac{\mathbb{E}^{v,w}_{free} \left[ {\bf 1}_{\tilde{A}_w \cap \tilde{D}_w \cap \tilde{E}_w \cap \tilde{F}_w}  \cdot W_H^{v,w}(\mathcal{Q}) \right]}{\mathbb{P}^{v,w}_{free} \left( \tilde{A}_w \cap \tilde{D}_w \cap \tilde{E}_w \cap \tilde{F}_w\right)} = {\bf 1 }_{E^v_M} \cdot e^{ - z_1 \exp \left( \lambda \mathcal{L}^v_{2}(t_1)\right) }.
\end{equation} 
We will prove (\ref{S41R2}) in the next step. Here we assume its validity and conclude the proof of (\ref{S41R1}). \\

We observe that by definition we have
\begin{equation*} 
\frac{\mathbb{E}^{v,w}_{free} \left[ {\bf 1} \{\mathcal{Q}(c_w) \leq -A_w, \mathcal{Q}(d_w) \leq -A_w \} \cdot W_H^{v,w}(\mathcal{Q}) \right]}{F( -A_w; a_w,b_w; c_w,d_w; \mathcal{L}_1^v(a_w), \mathcal{L}^v_1(b_w)) }  = \frac{\mathbb{E}^{v,w}_{free} \left[ {\bf 1}_{\tilde{F}_w} \cdot W_H^{v,w}(\mathcal{Q})  \right]}{\mathbb{P}^{v,w}_{free}(\tilde{F}_w)},
\end{equation*}
which together with the fact that $W_H^{v,w} \in [0,1]$ (as $H(x) \in [0, \infty)$) implies 

\begin{equation}\label{S41R1V1}
\begin{split}
&\liminf_{ w \rightarrow \infty} {\bf 1 }_{E^v_M} \cdot \frac{\mathbb{E}^{v,w}_{free} \left[  {\bf 1}_{\tilde{A}_w \cap \tilde{D}_w \cap \tilde{E}_w} \cdot {\bf 1}_{\tilde{F}_w} \cdot W_H^{v,w}(\mathcal{Q})  \right]}{\mathbb{P}^{v,w}_{free} \left( \tilde{A}_w \cap \tilde{D}_w \cap \tilde{E}_w \cap \tilde{F}_w\right)}\cdot \frac{\mathbb{P}^{v,w}_{free} \left( \tilde{A}_w \cap \tilde{D}_w \cap \tilde{E}_w \cap \tilde{F}_w\right)}{\mathbb{P}^{v,w}_{free}(\tilde{F}_w)}  \leq \\
&\liminf_{ w \rightarrow \infty} {\bf 1 }_{E^v_M} \cdot \frac{\mathbb{E}^{v,w}_{free} \left[ {\bf 1}_{\tilde{F}_w} \cdot W_H^{v,w}(\mathcal{Q})  \right]}{\mathbb{P}^{v,w}_{free}(\tilde{F}_w)} \leq \limsup_{ w \rightarrow \infty} {\bf 1 }_{E^v_M} \cdot \frac{\mathbb{E}^{v,w}_{free} \left[ {\bf 1}_{\tilde{F}_w} \cdot W_H^{v,w}(\mathcal{Q})  \right]}{\mathbb{P}^{v,w}_{free}(\tilde{F}_w)} \leq  \\
&\limsup_{ w \rightarrow \infty} {\bf 1 }_{E^v_M} \cdot  \frac{\mathbb{E}^{v,w}_{free} \left[  {\bf 1}_{\tilde{A}_w \cap \tilde{D}_w \cap \tilde{E}_w} \cdot {\bf 1}_{\tilde{F}_w} \cdot W_H^{v,w}(\mathcal{Q})  \right] + \mathbb{P}^{v,w}_{free} \left( (\tilde{A}^c_w \cup \tilde{D}^c_w \cup \tilde{E}^c_w) \cap \tilde{F}_w\right) }{\mathbb{P}^{v,w}_{free} \left( \tilde{A}_w \cap \tilde{D}_w \cap \tilde{E}_w \cap \tilde{F}_w\right)} .
\end{split}
\end{equation}

On the other hand, by Lemma \ref{BridgeLemma} we know that we can find constants $C, c > 0$ (depending on $\lambda, M$) such that for all $w \geq W_0$ we have on $E^v_M$ 
\begin{equation}\label{S41R2V1}
\frac{\mathbb{P}^{v,w}_{free}((\tilde{A}^c_w \cup \tilde{D}^c_w \cup \tilde{E}^c_w ) \cap \tilde{F}_w) }{\mathbb{P}^{v,w}_{free}(\tilde{F}_w)} \leq  C e^{-c w^{1/12}}.
\end{equation} 
Combining (\ref{S41R2}) with (\ref{S41R1V1}) and (\ref{S41R2V1}) we conclude (\ref{S41R1}).\\

{\bf \raggedleft Step III.} In this step we prove (\ref{S41R2}). Let us fix $\epsilon_1, \epsilon_2 \in (0,1)$. By our assumption that $H$ is a $\lambda$-exponential Hamiltonian we know that there exists $R_1 > 0$ such that for $x \geq R_1$ we have 
\begin{equation}\label{S41R2E1}
0 \leq H(x) \leq  e^{(1 + \epsilon_1) \lambda x}.
\end{equation}
From the fact that $H:[-\infty, \infty) \rightarrow [0, \infty)$ is continuous we conclude that we can find a constant $R_2 > 0$ such that for all $x \in [-\infty, \infty)$ we have
\begin{equation}\label{S41R2E1.5}
0 \leq H(x) \leq R_2 + e^{(1 + \epsilon_1) \lambda x}.
\end{equation}

Let us fix $\omega \in E_M^v$ and note that by the continuity of $\mathcal{L}^v_2$ on $[a, b]$ we can find $\delta_1$ (depending on $\omega$) such that 
\begin{equation}\label{S41R2E2}
\sup_{s,t \in [a, b], |s - t| \leq \delta_1} \left| \mathcal{L}^v_2(s) - \mathcal{L}^v_2(t) \right| < \epsilon_2.
\end{equation}
Let $W_1 \geq W_0$ be sufficiently large (depending on $\omega$) so that for $w \geq W_1$ we have
\begin{equation}\label{S41R2E3}
\begin{split}
&2 w^{-2} \cdot \sup_{ x \in [-\infty, M+A_w + 1] } H(x) \leq \epsilon_1, \hspace{5mm}  b_w - a_w \leq \delta_1, \hspace{5mm}  \\
&\left|    e^{\pm 2\lambda V_w} - 1 \right| \leq \epsilon_1, \hspace{5mm} \sup_{x \in [-M - 2, M +2]}  \left| \frac{H(x + A_w)}{H(A_w)} - e^{\lambda x} \right| \leq \epsilon_1.
\end{split}
\end{equation}
The latter inequalities hold for all large enough $w$, in view of the fact that $H$ is a $\lambda$-exponential Hamiltonian, (\ref{S41R2E1.5}) and (\ref{S41Par}), where the latter gives $A_w = \lambda^{-1} \log w$, $V_w = w^{-1/3}$, $b_w - a_w = 2 w^{-2} + 2\beta/H(A_w)$.

We claim the following inequalities for $w \geq W_1$ and $\omega \in E^v_M$ on the event $\tilde{A}_w \cap \tilde{D}_w \cap \tilde{E}_w \cap \tilde{F}_w$
\begin{equation}\label{S41R2E4}
\begin{split}
& W_H^{v,w}(\mathcal{Q}) \geq   \exp \left(  - \epsilon_1 (1 + 2\beta)  -  2 \beta (1 + \epsilon_1) \exp(\lambda \epsilon_2) \cdot  \exp  \left( \lambda  \mathcal{L}_2^v(t_1)  \right)  \right) ,
\end{split}
\end{equation}
and 
\begin{equation}\label{S41R2E5}
\begin{split}
&  W_H^{v,w}(\mathcal{Q}) \leq   \exp \left( 2\beta \epsilon_1 -  2\beta ( 1 - \epsilon_1) \exp(-\lambda \epsilon_2)  \cdot \exp  \left( \lambda   \mathcal{L}_2^v(t_1)  \right)\right) .
\end{split}
\end{equation}
We will prove (\ref{S41R2E4}) and (\ref{S41R2E5}) in the next (and final step). Here we assume their validity and conclude the proof of (\ref{S41R2}).\\

Multiplying both sides of (\ref{S41R2E4}) and (\ref{S41R2E5}) by ${\bf 1}_{\tilde{A}_w \cap \tilde{D}_w \cap \tilde{E}_w \cap \tilde{F}_w} $ and taking $\mathbb{E}^{v,w}_{free}$-expectations on both sides we conclude that for $\omega \in E_M^v$ we have
\begin{equation*}
\begin{split}
& \mathbb{P}^{v,w}_{free} \left( \tilde{A}_w \cap \tilde{D}_w \cap \tilde{E}_w \cap \tilde{F}_w\right) \cdot \exp \left(  - \epsilon_1 (1 + 2\beta)  -  2 \beta (1 + \epsilon_1) \exp(\lambda \epsilon_2) \cdot  \exp  \left( \lambda  \mathcal{L}_2^v(t_1)  \right)  \right)  \leq \\
&  \mathbb{E}^{v,w}_{free} \left[ {\bf 1}_{\tilde{A}_w \cap \tilde{D}_w \cap \tilde{E}_w \cap \tilde{F}_w}   W_H^{v,w}(\mathcal{Q}) \right] \leq  \\
& \mathbb{P}^{v,w}_{free} \left( \tilde{A}_w \cap \tilde{D}_w \cap \tilde{E}_w \cap \tilde{F}_w\right)  \cdot  \exp \left( 2\beta \epsilon_1 -  2\beta ( 1 - \epsilon_1) \exp(-\lambda \epsilon_2)  \cdot \exp  \left( \lambda   \mathcal{L}_2^v(t_1)  \right)\right).
\end{split}
\end{equation*}
Dividing the above expressions by $\mathbb{P}^{v,w}_{free} \left( \tilde{A}_w \cap \tilde{D}_w \cap \tilde{E}_w \cap \tilde{F}_w\right) $ and letting $w \rightarrow \infty$ we conclude 
\begin{equation*}
\begin{split}
&{\bf 1 }_{E^v_M} \cdot  \exp \left(  - \epsilon_1 (1 + 2\beta)  -  2 \beta (1 + \epsilon_1) \exp(\lambda \epsilon_2) \cdot  \exp  \left( \lambda  \mathcal{L}_2^v(t_1)  \right)  \right) \leq \\
& \liminf_{w \rightarrow \infty} {\bf 1 }_{E^v_M} \frac{  \mathbb{E}^{v,w}_{free} \left[ {\bf 1}_{\tilde{A}_w \cap \tilde{D}_w \cap \tilde{E}_w \cap \tilde{F}_w}   W_H^{v,w}(\mathcal{Q}) \right] }{\mathbb{P}^{v,w}_{free} \left( \tilde{A}_w \cap \tilde{D}_w \cap \tilde{E}_w \cap \tilde{F}_w\right) } \leq \limsup_{w \rightarrow \infty}  {\bf 1 }_{E^v_M} \frac{  \mathbb{E}^{v,w}_{free} \left[ {\bf 1}_{\tilde{A}_w \cap \tilde{D}_w \cap \tilde{E}_w \cap \tilde{F}_w}   W_H^{v,w}(\mathcal{Q}) \right] }{\mathbb{P}^{v,w}_{free} \left( \tilde{A}_w \cap \tilde{D}_w \cap \tilde{E}_w \cap \tilde{F}_w\right) }  \leq \\
& {\bf 1 }_{E^v_M} \cdot  \exp \left( 2\beta \epsilon_1 -  2\beta ( 1 - \epsilon_1) \exp(-\lambda \epsilon_2)  \cdot \exp  \left( \lambda   \mathcal{L}_2^v(t_1)  \right)\right).
\end{split}
\end{equation*}
Since the above holds for all $\epsilon_1, \epsilon_2 \in (0,1)$ we conclude that 
$$ \lim_{w \rightarrow \infty} {\bf 1 }_{E^v_M} \cdot \frac{  \mathbb{E}^{v,w}_{free} \left[ {\bf 1}_{\tilde{A}_w \cap \tilde{D}_w \cap \tilde{E}_w \cap \tilde{F}_w}   W_H^{v,w}(\mathcal{Q}) \right] }{\mathbb{P}^{v,w}_{free} \left( \tilde{A}_w \cap \tilde{D}_w \cap \tilde{E}_w \cap \tilde{F}_w\right) } = {\bf 1 }_{E^v_M}\cdot e^{-2\beta \exp \left(  \lambda \mathcal{L}_2^v(t_1) \right)},$$
which is the same as (\ref{S41R2}) once we recall from (\ref{S41Par}) that $\beta = z_1/2$. \\

{\bf \raggedleft Step IV.} In this final step we show that (\ref{S41R2E4}) and (\ref{S41R2E5}) hold for $w \geq W_1$ and $\omega \in E^v_M$ on the event $\tilde{A}_w \cap \tilde{D}_w \cap \tilde{E}_w \cap \tilde{F}_w$. In the sequel we assume that we have fixed $w \geq W_1$, $\omega \in E^v_M$ and an elementary outcome $\mathcal{Q}$ in $\tilde{A}_w \cap \tilde{D}_w \cap \tilde{E}_w \cap \tilde{F}_w$.

 We observe that we have the following tower of inequalities
\begin{equation*}
\begin{split}
& W_H^{v,w}(\mathcal{Q}) =   \exp \left( -\int_{a_w}^{b_w} H \left( \mathcal{L}_2^v(u) - \mathcal{Q}(u) \right)du \right) \geq  \\
&  \exp \left(  - 2 w^{-2} \sup_{ x \in [-\infty, M+A_w + 1] } H(x) - \int_{c_w}^{d_w} H \left( \mathcal{L}_2^v(u) - \mathcal{Q}(u)\right)du \right) \geq \\
&\exp \left(  - \epsilon_1 -  2 \beta \cdot \sup_{u \in [c_w, d_w]} H \left( \mathcal{L}_2^v(u) - \mathcal{Q}(u)\right)/H(A_w) \right) \geq \\
&  \exp \left( - \epsilon_1 (1 + 2\beta)  -  2 \beta \cdot \sup_{u \in [c_w, d_w]} \exp  \left( \lambda \cdot ( \mathcal{L}_2^v(u) - \mathcal{Q}(u) - A_w) \right)\right)  \geq \\
&  \exp \left( - \epsilon_1 (1 + 2\beta)  -  2 \beta \cdot  \exp  \left( \lambda \cdot ( \epsilon_2 + \mathcal{L}_2^v(t_1) + 2 V_w) \right)\right)  \geq \\
& \exp \left( - \epsilon_1 (1 + 2\beta)  -  2 \beta (1 + \epsilon_1) e^{\lambda \epsilon_2} \cdot  \exp  \left( \lambda  \mathcal{L}_2^v(t_1)  \right)\right) .
\end{split}
\end{equation*}
Let us elaborate on the last equation briefly. The first line is just the definition of $W_H^{v,w}(\mathcal{Q})$ in (\ref{S41R1}). In going from the first to the second line we used the fact that for $\omega \in E_M^v$ we have $|\mathcal{L}_2^v(u)| \leq M$ and on $\tilde{A}_w \cap \tilde{D}_w$ we have $\mathcal{Q}(u) \geq -A_w - 1$ on $[a_w, c_w] \cup [d_w ,b_w]$, while $b_w - d_w = c_w - a_w = w^{-2}$. In going from the second to the third line we used the first inequality on the first line of (\ref{S41R2E3}) and the fact that $d_w - c_w = 2\beta/H(A_w)$. In going from the third to the fourth line we used the second inequality on the second line of (\ref{S41R2E3}) and the fact that on $\tilde{E}_w$ we have $- A_w + 2V_w \leq\mathcal{Q}(u) \leq -A_w + 2V_w$ for $u \in [c_w, d_w]$ and hence $\mathcal{L}_2^v(u) - \mathcal{Q}(u) - A_w \in [-M-2, M+2]$ on $[c_w, d_w]$. In going from the fourth to the fifth line we used that on $\tilde{E}_w$ we have $\mathcal{Q}(u) \geq -A_w - 2V_w$ and also that $\mathcal{L}_2^v(t_1) + \epsilon_2 \geq \mathcal{L}_2^v(u)$ for $u \in [c_w, d_w]$ as follows from the second inequality on the first line of (\ref{S41R2E3}). In going from the fifth to the sixth line we used the first inequality in the second line of (\ref{S41R2E3}). The above tower implies (\ref{S41R2E4}).

We next observe that we have the following tower of inequalities  
\begin{equation*}
\begin{split}
&  W_H^{v,w}(\mathcal{Q}) \leq    \exp \left(  - \int_{c_w}^{d_w} H \left( \mathcal{L}_2^v(u) - \mathcal{Q}(u) \right)du \right) \leq  \\
&   \exp \left(  -  2\beta \cdot \inf_{ u \in [c_w, d_w]} H \left( \mathcal{L}_2^v(u) - \mathcal{Q}(u) \right)/H(A_w) \right) \leq  \\
&   \exp \left(  2\beta \epsilon_1 -  2\beta \cdot \inf_{ u \in [c_w, d_w]} \exp  \left( \lambda \cdot ( \mathcal{L}_2^v(u) - \mathcal{Q}(u) - A_w) \right) \right) \leq  \\
&   \exp \left(  2\beta \epsilon_1 -  2\beta \cdot \exp  \left( \lambda \cdot ( \mathcal{L}_2^v(t_1) - \epsilon_2 - 2 V_w) \right) \right) \leq  \\
&   \exp \left(  2\beta \epsilon_1 -  2\beta  ( 1 - \epsilon_1)  e^{-\lambda \epsilon_2} \cdot \exp  \left( \lambda   \mathcal{L}_2^v(t_1)  \right) \right).
\end{split}
\end{equation*}
Let us elaborate the last equation briefly. The first inequality follows from the definition of $W_H^{v,w}(\mathcal{Q})$ in (\ref{S41R1}) and the fact that $H \geq 0$. In going from the first to the second line we used that $d_w - c_w = 2\beta/ H(A_w)$. In going from the second to the third line we used the second inequality on the second line of (\ref{S41R2E3}). The latter inequality is applicable since $|\mathcal{L}_2^v(u)| \leq M$  for $\omega \in E_M^v$ and also on $\tilde{E}_w$ we have $- A_w + 2V_w \leq\mathcal{Q}(u) \leq -A_w + 2V_w$ for $u \in [c_w, d_w]$ and hence $\mathcal{L}_2^v(u) - \mathcal{Q}(u) - A_w \in [-M-2, M+2]$ on $[c_w, d_w]$. In going from the third to the fourth line we used that on $\tilde{E}_w$ we have $\mathcal{Q}(u) \leq -A_w + 2V_w$ and also that $\mathcal{L}_{2}^v(t_1) - \epsilon_2 \leq \mathcal{L}_2^v(u)$ for $u \in [c_w, d_w]$ as follows from the second inequality on the first line of (\ref{S41R2E3}). In the last inequality we used the first inequality in the second line of (\ref{S41R2E3}). The above tower implies (\ref{S41R2E5}).

As both (\ref{S41R2E4}) and (\ref{S41R2E5}) were established, we have finished our final step. This completes the proof of proposition in the basic case $k =1$ and $N = n_1= 2$.

%
%
\subsection{Proof of Proposition \ref{PropMain}}\label{Section4.2} Here we present the complete proof of Proposition \ref{PropMain}. We assume the same notation as in Sections \ref{Section2} and \ref{Section3}. We proceed by induction on $N$ with the base case $N = 1$, being obvious. Suppose that we know the result for $N - 1$ and wish to prove it for $N$. 

Suppose that $k \in \mathbb{N}$ and $a = t_0 < t_1 < \cdots < t_k < t_{k+1} = b$, $n_1, \dots, n_k \in \llbracket 1, N \rrbracket$ and $z_1, \dots, z_k \in \mathbb{N}$ are all given. We wish to prove that 
\begin{equation}\label{FDEPMr1}
\mathbb{E}_{\mathbb{P}_1} \left[ \prod_{ i =1}^k \exp \left( - z_i \exp \left(\lambda \mathcal{L}^1_{n_i}(t_i)\right) \right) \right] =\mathbb{E}_{\mathbb{P}_2} \left[ \prod_{ i =1}^k \exp \left( - z_i \exp \left(\lambda \mathcal{L}^2_{n_i}(t_i)\right) \right)  \right].
\end{equation}

For clarity we split the proof into several steps. \\

{\bf \raggedleft Step 1.} We claim that there exist sequences $\{ p^v_w\}_{w = 1}^\infty$ for $v \in \{1, 2\}$ with $p^v_w \in [0,\infty]$ such that
\begin{equation}\label{Red1}
\begin{split}
&p_w^1 = p_w^2 \mbox{ for $w \in \mathbb{N}$ and } \lim_{w \rightarrow \infty} p_w^v = \mathbb{E}_{\mathbb{P}_v} \left[ \prod_{ i =1}^k \exp \left( - z_i \exp \left(\lambda \mathcal{L}^v_{n_i}(t_i)\right) \right) \right] \mbox{ for $v \in \{1, 2\}$.} 
\end{split}
\end{equation}
Since (\ref{Red1}) clearly establishes (\ref{FDEPMr1}) we only need to construct the sequences $p_w^v$. In the remainder of this step we construct $p_w^v$ and show that $p_w^1 = p_w^2$. We begin by introducing some useful notation.\\

Given points $p,q,c,d,r \in \mathbb{R}$ with $p < c< d < q$ and $\vec{x}, \vec{y} \in \mathbb{R}^{N-1}$, we define 
\begin{equation}\label{Obs1}
F(r; p,q;c,d;\vec{x}, \vec{y}):= \mathbb{P}^{1, N-1, p,q,\vec{x},\vec{y}, \infty, -\infty}_{H}\left(\mathcal{Q}_i(c) \leq r \mbox{ and } \mathcal{Q}_i(d) \leq r  \mbox{ for $i = 1, \dots, N-1$}\right),
\end{equation}
which is the probability that the curves of the $\llbracket 1, N-1 \rrbracket \times (p,q)$-indexed $H$-Brownian bridge line ensemble (as in Definition \ref{DefAvoidingLaw}) $\mathcal{Q} = (\mathcal{Q}_1, \dots, \mathcal{Q}_{N-1})$ are all below the level $r$ at times $c$ and $d$. From Lemma \ref{LemmaMeasExp} we know that $F$ is a measurable function of $\vec{x}$ and $\vec{y}$.

For $M \in \mathbb{N}$ we denote 
$$H_M(r; p,q;c,d;\vec{x},\vec{y}) = \min \left(M, \frac{1}{F(r; p,q,c,d,\vec{x},\vec{y})} \right),$$
and note that $H_M$ is a non-negative bounded measurable function of $\vec{x}$ and $\vec{y}$. 

Let $S = \{ s \in \{1, \dots, k\} : n_s = N\}$. For $w \in \mathbb{N}$ and $s \in S$ we define 
\begin{equation}\label{S42Par}
\begin{split}
&\beta_s = z_s/2, \hspace{3mm}A_w = \lambda^{-1} \log w, \hspace{3mm} V_w = w^{-1/3}, \hspace{3mm} a^s_w = t_s - \beta_s /H(A_w) - w^{-2}, \\
& b_w^s = t_s + \beta_s /H(A_w) + w^{-2}, \hspace{3mm} c_w^s = t_s - \beta_s /H(A_w), \hspace{3mm} d_w^s = t_s + \beta_s /H(A_w).
\end{split}
\end{equation}
We also fix $W_0 \in \mathbb{N}$ sufficiently large so that $w \geq W_0$ implies $d^s_w - c^s_w = 2\beta_s/ H(A_w) \in [w^{-5/4}, w^{-3/4}]$ for all $s \in S$ and $2[ w^{-2} + \beta/H(A_w)] < \min_{1 \leq i \leq k+1} (t_i - t_{i-1})$, where $\beta = \max_{i = 1}^k (z_i/2)$. Here we used the fact that $H$ is a $\lambda$-exponential Hamiltonian. 

By the induction hypothesis, we know that (\ref{FDEPMr1}) holds provided $n_1, \dots, n_k \in \llbracket 1, N-1 \rrbracket$. The latter and Lemma \ref{PropFD} imply that $\pi_{[a,b]}^{\llbracket 1, N-1 \rrbracket}(\mathcal{L}^1)$ under $\mathbb{P}_1$ and $\pi_{[a,b]}^{\llbracket 1, N-1 \rrbracket}(\mathcal{L}^2)$ under $\mathbb{P}_2$ have the same distribution as $\llbracket 1, N-1 \rrbracket$-indexed line ensembles on $[a,b]$ (we remind the reader that $\pi_{[a,b]}^{\llbracket 1, N-1 \rrbracket}$ was defined in (\ref{ProjBox})). This means that for $w \geq W_0$ 
\begin{equation}\label{Trunc1}
\mathbb{E}_{\mathbb{P}_1} \left[ G^M_w(\mathcal{L}^1; \vec{t}, \vec{n}, \vec{z}) \right] =  \mathbb{E}_{\mathbb{P}_2} \left[ G^M_w(\mathcal{L}^2; \vec{t}, \vec{n}, \vec{z}) \right],
\end{equation}
with
\begin{equation*}
\begin{split}
&G^M_w(\mathcal{L}^v; \vec{t}, \vec{n}, \vec{z}) = \prod_{s \in S^c} \exp \left( -z_s \exp \left( \lambda \mathcal{L}^v_{n_s}(t_s) \right) \right) \cdot\prod_{s \in S} \prod_{i = 1}^{N-1} {\bf 1}\{ \mathcal{L}_{i}^v(c_w^s) \leq -A_w\} {\bf 1}\{ \mathcal{L}_{i}^v(d_w^s) \leq -A_w\}  \times \\
&\prod_{s \in S} H_M\left(-A_w; a_w^s, b_w^s; c_w^s, d_w^s;\vec{x}^{s,v,w}, \vec{y}^{s,v,w} \right)  ,
\end{split}
\end{equation*}
where $\vec{x}^{s,v,w} = (\mathcal{L}_1^v(a_w^s), \dots, \mathcal{L}_{N-1}^v(a_w^s))$ and $\vec{y}^{s,v,w} = (\mathcal{L}_1^v(b_w^s), \dots, \mathcal{L}_{N-1}^v(b_w^s))$ for $v = 1,2$. Some of the notation we defined above is illustrated in Figure \ref{S4_1}.
\begin{figure}[ht]
\begin{center}
  \includegraphics[scale = 0.8]{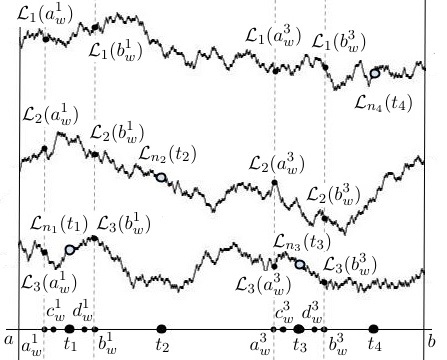}
  \caption{The figure schematically represents $\mathcal{L}^v$ where we have suppressed $v$ from the notation. In the figure, $N = 3$, $S = \{1,3\}$ and $n_1 = 3, n_2 = 2, n_3 = 3, n_4 = 1$. }
  \label{S4_1}
  \end{center}
\end{figure}

For $s \in S$ define $\mathcal{F}_{ext}^{s,w} = \mathcal{F}_{ext} (\llbracket 1, N-1 \rrbracket \times (a_w^s,b_w^s))$ as in Definition \ref{DefPBGP} and observe that by the tower property for conditional expectations and the partial $H$-Brownian Gibbs property we have

\begin{equation}\label{Trunc2}
\begin{split}
&\mathbb{E}_{\mathbb{P}_v} \left[G^M_w(\mathcal{L}^v; \vec{t}, \vec{n}, \vec{z})  \right] = \mathbb{E}_{\mathbb{P}_v} \left[ \mathbb{E}_{\mathbb{P}_v}\left[  \cdots \mathbb{E}_{\mathbb{P}_v}\left[ G^M_w(\mathcal{L}^1; \vec{t}, \vec{n}, \vec{z})\Big{|} \mathcal{F}_{ext}^{s_1,w}  \right] \cdots\Big{|} \mathcal{F}_{ext}^{s_{m},w}  \right] \right] = \\
&\mathbb{E}_{\mathbb{P}_v} \Bigg[  \prod_{s \in S^c} \exp \left( -z_s \exp \left( \lambda \mathcal{L}^v_{n_s}(t_s) \right) \right) \prod_{s \in S} H_M\left(-A_w; a_w^s, b_w^s; c_w^s, d_w^s;\vec{x}^{s,v,w}, \vec{y}^{s,v,w} \right) \times \\
&\prod_{s \in S} \mathbb{P}_{H}^{s,v,w}\left(\mathcal{Q}_{i}(c_w^s) \leq -A_w \mbox{ and } \mathcal{Q}_{i}(d_w^s) \leq - A_w\mbox{ for } i = 1, \dots, N-1 \right)  \Bigg] ,
\end{split}
\end{equation}
where $v \in \{1, 2\}$ and we have written $\mathbb{P}_{H}^{s,v,w}$ in place of $\mathbb{P}_{H}^{a^s_w, b^s_w,\vec{x}^{s,v,w},\vec{y}^{s,v,w}, \infty, \mathcal{L}_N^v[a_w^s,b_w^s]} $ to simplify the expression; note also that $(\mathcal{Q}_{1}, \dots, \mathcal{Q}_{N-1})$ is $\mathbb{P}_{H}^{s,v,w}$-distributed. In addition, $s_1, \dots, s_m$ is an enumeration of the elements of $S$ and in deriving the above expression, we also used Lemma \ref{LemmaMeasExp}, which implies that 
$ \mathbb{P}_{H}^{s,v,w}\left(\mathcal{Q}_{i}(c_w^s) \leq -A_w \mbox{ and } \mathcal{Q}_{i}(d_w^s) \leq - A_w \mbox{ for } i = 1, \dots, N-1 \right) $
is measurable with respect to the $\sigma$-algebra
$$\sigma \left\{ \mathcal{L}^v_i(t) : \mbox{  $i \in \llbracket 1, N-1 \rrbracket$ and $t \in \{a_w^s,b_w^s\}$, or $t \in [a_w^s,b_w^s]$ and $i = N$} \right\} \subset \cap_{i = 1}^m \mathcal{F}_{ext}^{s_i,w}.$$
The inclusion in the last displayed equation follows from the fact that the intervals $[a_w^{s_i}, b_w^{s_i}]$  for $i = 1, \dots, m$ are pairwise disjoint for $w \geq W_0$ by the definition of $W_0$.

Taking the limit as $M \rightarrow \infty$ in (\ref{Trunc2}) (using the monotone convergence theorem) we conclude  that for any $w \geq W_0$ and $v \in \{1,2\}$ we have
\begin{equation}\label{S42MCR}
\begin{split}
&\lim_{M \rightarrow \infty} \mathbb{E}_{\mathbb{P}_v} \left[G^M_w(\mathcal{L}^v; \vec{t}, \vec{n}, \vec{z})  \right]  = \mathbb{E}_{\mathbb{P}_v} \Bigg[  \prod_{s \in S^c} \exp \left( -z_s \exp \left( \lambda \mathcal{L}^v_{n_s}(t_s) \right) \right) \times  \\
&\prod_{s \in S}\frac{ \mathbb{P}_{H}^{s,v,w}\left(\mathcal{Q}_{i}(c_w^s) \leq -A_w \mbox{ and } \mathcal{Q}_{i}(d_w^s) \leq -A_w \mbox{ for } i = 1, \dots, N-1 \right) }{F\left(-A_w; a_w^s, b_w^s; c_w^s, d_w^s;\vec{x}^{s,v,w}, \vec{y}^{s,v,w} \right) }  \Bigg].
\end{split}
\end{equation}

We now define for $w \geq W_0$ and $v \in \{1, 2\}$ 
\begin{equation}\label{S42PDef}
\begin{split}
&p_w^v = \mathbb{E}_{\mathbb{P}_v} \Bigg[ \prod_{s \in S^c} \exp \left( -z_s \exp \left( \lambda \mathcal{L}^v_{n_s}(t_s) \right) \right)  \times \\
& \prod_{s \in S}\frac{ \mathbb{P}_{H}^{s,v,w}\left(\mathcal{Q}_{i}(c_w^s) \leq -A_w \mbox{ and } \mathcal{Q}_{i}(d_w^s) \leq -A_w  \mbox{ for } i = 1, \dots, N-1 \right) }{F\left(-A_w; a_w^s, b_w^s; c_w^s, d_w^s ;\vec{x}^{s,v,w}, \vec{y}^{s,v,w} \right) }  \Bigg],
\end{split}
\end{equation}
and for $w < W_0$ we set $p_w^v = 1$. In view of (\ref{Trunc1}) and (\ref{S42MCR}) we clearly have that $p_w^1 = p_w^2$ for all $w \geq 1$. In the next steps we will show that $p_w^v$ satisfy the second part of (\ref{Red1}). \\

{\bf \raggedleft Step 2.} We claim that for all $M \in \mathbb{N}$, $s \in S$ and $v \in \{1, 2\}$ we have $\mathbb{P}_v$-almost surely
\begin{equation}\label{S42R1}
\begin{split}
&\lim_{ w \rightarrow \infty} {\bf 1 }_{E^v_M} \cdot \frac{ \mathbb{E}_{H}^{s,v,w, -\infty}\left[ \prod_{i = 1}^{N-1}{\bf 1}\{\mathcal{Q}_{i}(c_w^s) \leq -A_w \} \cdot {\bf 1}\{ \mathcal{Q}_{i}(d_w^s)\leq -A_w  \} \cdot W_H^{s,v,w}(\mathcal{Q}) \right] }{F\left(-A_w; a_w^s, b_w^s; c_w^s, d_w^s;\vec{x}^{s,v,w}, \vec{y}^{s,v,w} \right)} = \\
& {\bf 1 }_{E^v_M} \cdot e^{ - z_s \exp \left(\lambda \mathcal{L}^v_{N}(t_s)\right) } ,\mbox{ where } E^v_M = \left \{ \sup_{ t \in [a, b]} \max_{i = 1,\dots, N} |\mathcal{L}^v_{i}(t)| \leq M \right\} \mbox{ and }\\
&W_H^{s,v,w}(\mathcal{Q}) =  \exp \left(  - \int_{a_w^s}^{b_w^s} H \left(\mathcal{L}_N^v(u) - \mathcal{Q}_{N-1}(u) \right)du \right).
\end{split}
\end{equation}
In the above equation we have written $\mathbb{P}_{H}^{s,v,w, -\infty}$ in place of $\mathbb{P}_{H}^{1, N-1, a_w^s, b_w^s,\vec{x}^{s,v,w},\vec{y}^{s,v,w}, \infty, -\infty}$ to ease the notation, and we write $\mathbb{E}_H^{s,v,w, -\infty}$ for the expectation operator with respect to this measure. We also mention that $\mathcal{Q}$ is distributed according to $\mathbb{P}_{H}^{s,v,w, -\infty}$. We will prove (\ref{S42R1}) in the steps below. Here we assume its validity and conclude the proof of (\ref{Red1}).

From Definition \ref{DefAvoidingLaw} we know that 
\begin{equation*}
\begin{split}
&\frac{ \mathbb{P}_{H}^{s,v,w}\left(\mathcal{Q}_{i}(c_w^s) \leq -A_w \mbox{ and } \mathcal{Q}_{i}(d_w^s) \leq -A_w \mbox{ for } i = 1, \dots, N-1 \right) }{F\left(-A_w; a_w^s, b_w^s; c_w^s, d_w^s;\vec{x}^{s,v,w}, \vec{y}^{s,v,w} \right) } = \\
& \frac{1}{\mathbb{E}^{s,v,w,-\infty}_{H} \left[ W_H^{s,v,w}(\mathcal{Q}) \right]} \cdot \frac{ \mathbb{E}_{H}^{s,v,w, -\infty}\left[ \prod_{i = 1}^{N-1}{\bf 1}\{\mathcal{Q}_{i}(c_w^s) \leq -A_w \} \cdot {\bf 1}\{ \mathcal{Q}_{i}(d_w^s)\leq -A_w  \} \cdot W_H^{s,v,w}(\mathcal{Q}) \right] }{F\left(-A_w; a_w^s, b_w^s; c_w^s, d_w^s;\vec{x}^{s,v,w}, \vec{y}^{s,v,w} \right) }.
\end{split}
\end{equation*}
In addition, from Lemma \ref{S3Simple} we know that $\mathbb{P}_v$-almost surely we have
$$\lim_{ w \rightarrow \infty} {\bf 1 }_{E^v_M} \cdot \mathbb{E}^{s,v,w,-\infty}_{H} \left[ W_H^{s,v,w}(\mathcal{Q}) \right] = {\bf 1 }_{E^v_M}.$$
Combining the last two statements with (\ref{S42R1}) we conclude that for all $M \in \mathbb{N}$, $s\in S$ and $v \in \{1, 2\}$ we have $\mathbb{P}_v$-almost surely
$$\lim_{ w \rightarrow \infty} {\bf 1 }_{E^v_M} \cdot \frac{ \mathbb{P}_{H}^{s,v,w}\left(\mathcal{Q}_{i}(c_w^s) \leq -A_w \mbox{ and } \mathcal{Q}_{i}(d_w^s) \leq -A_w \mbox{ for } i = 1, \dots, N-1 \right) }{F\left(-A_w; a_w^s, b_w^s; c_w^s, d_w^s;\vec{x}^{s,v,w}, \vec{y}^{s,v,w} \right)  } = {\bf 1 }_{E^v_M} \cdot e^{ - z_s \exp \left(\lambda \mathcal{L}^v_{N}(t_s)\right) } .$$

Since $\mathbb{P}_v \left( \cup_{M = 1}^\infty E^v_M \right) = 1$ we conclude that $\mathbb{P}_v$-almost surely
\begin{equation}\label{S4E5V2}
\begin{split}
&\prod_{i = 1}^k\exp \left( -z_i \exp \left( \lambda \mathcal{L}^v_{n_i}(t_i) \right) \right) = \lim_{ w \rightarrow \infty}  \prod_{s \in S^c} \exp \left( -z_s \exp \left( \lambda \mathcal{L}^v_{n_s}(t_s) \right) \right) \times \\
& \prod_{s \in S}\frac{ \mathbb{P}_{H}^{s,v,w}\left(\mathcal{Q}_{i}(c_w^s) \leq -A_w \mbox{ and } \mathcal{Q}_{i}(d_w^s) \leq -A_w \mbox{ for } i = 1, \dots, N-1 \right) }{F\left(-A_w; a_w^s, b_w^s; c_w^s, d_w^s;\vec{x}^{s,v,w}, \vec{y}^{s,v,w} \right)  } .
\end{split}
\end{equation}
From Lemma \ref{MCLfg} (applied to $a = a_w^s$, $b = b_w^s$, $\vec{x} = \vec{x}^{s,v,w}$, $\vec{y} = \vec{y}^{s,v,w}$, $f^t = f^b = \infty$, $g^t = \mathcal{L}^v_N[a_w^s, b_w^s]$ and $g^b = -\infty$) we know that for each $s\in S$ and $v \in \{1, 2\}$
\begin{equation}\label{S4E6V2}
 \frac{ \mathbb{P}_{H}^{s,v,w}\left(\mathcal{Q}_{i}(c_w^s) \leq -A_w \mbox{ and } \mathcal{Q}_{i}(d_w^s) \leq -A_w \mbox{ for } i = 1, \dots, N-1 \right) }{F\left(-A_w; a_w^s, b_w^s; c_w^s, d_w^s;\vec{x}^{s,v,w}, \vec{y}^{s,v,w} \right)  } \leq 1.
\end{equation}
We mention that we used in (\ref{S4E6V2}) that $H$ is convex, as required for the application of Lemma \ref{MCLfg}. 

Equation (\ref{S4E6V2}) implies that the terms on the right side of (\ref{S4E5V2}) are all in $[0,1]$. Combining (\ref{S42PDef}), (\ref{S4E5V2}), (\ref{S4E6V2}) and the bounded convergence theorem, we conclude the second part of (\ref{Red1}). \\

{\bf \raggedleft Step 3.} In this step we prove (\ref{S42R1}). For $w \geq W_0$ and $s\in S$ we denote the events
\begin{enumerate}
\item $\tilde{A}^s_w = \{ -A_w - 1 \leq \mathcal{Q}_i(t) \leq M + 1 \mbox{ for $a_w^s \leq t \leq c_w^s $ and $i = 1,\dots, N-1$} \}$;
\item $\tilde{D}^s_w = \{ -A_w - 1 \leq \mathcal{Q}_i(t) \leq M + 1 \mbox{ for $d_w^s \leq t \leq b_w^s$ and $i = 1,\dots, N-1$} \}$;
\item $\tilde{E}^s_w = \{  -A_w - 2V_w \leq \mathcal{Q}_i(t) \leq -A_w + 2V_w \mbox{ for $c_w^s \leq t \leq d_w^s$ and $i = 1,\dots, N-1$} \}$;
\item $\tilde{F}^s_w = \{\mathcal{Q}_i(c_w^s) \leq -A_w \mbox{ and } \mathcal{Q}_i(d_w^s) \leq -A_w \mbox{ for  $i = 1,\dots, N-1$} \},$
\end{enumerate}
where we recall that $V_w = w^{-1/3}$ from (\ref{S42Par}). We claim that $\mathbb{P}_v$-almost surely
\begin{equation}\label{S42R2}
\lim_{ w \rightarrow \infty} {\bf 1 }_{E^v_M} \cdot \frac{\mathbb{P}^{s,v,w,-\infty}_{H} \left[ {\bf 1}_{\tilde{A}^s_w \cap \tilde{D}^s_w \cap \tilde{E}^s_w \cap \tilde{F}^s_w}  \cdot W_H^{s,v,w}(\mathcal{Q}) \right]}{\mathbb{P}^{s,v,w,-\infty}_{H} \left( \tilde{A}^s_w \cap \tilde{D}^s_w \cap \tilde{E}^s_w \cap \tilde{F}^s_w\right)} = {\bf 1 }_{E^v_M} \cdot e^{ - z_s \exp \left( \lambda \mathcal{L}^v_{N}(t_s)\right) }.
\end{equation} 
We will prove (\ref{S42R2}) in the next step. Here we assume its validity and conclude the proof of (\ref{S42R1}). \\

We observe that by definition we have
\begin{equation*}
\begin{split}
 \frac{ \mathbb{E}_{H}^{s,v,w, -\infty}\hspace{-1mm}\left[ \prod_{i = 1}^{N-1}{\bf 1}\{\mathcal{Q}_{i}(c_w^s) \leq -A_w \} \cdot {\bf 1}\{ \mathcal{Q}_{i}(d_w^s)\leq -A_w  \} \cdot W_H^{s,v,w}(\mathcal{Q}) \right] }{F\left(-A_w; a_w^s, b_w^s; c_w^s, d_w^s;\vec{x}^{s,v,w}, \vec{y}^{s,v,w} \right) } =  \frac{ \mathbb{E}_{H}^{s,v,w, -\infty}\left[  {\bf 1}_{\tilde{F}^s_w} \cdot W_H^{s,v,w}(\mathcal{Q})  \right] }{ \mathbb{P}_{H}^{s,v,w, -\infty} ( \tilde{F}^s_w) },
\end{split}
\end{equation*}
which together with the fact that $W_H^{s,v,w} \in [0,1]$ (as $H(\cdot) \in [0, \infty)$) implies 
\begin{equation}\label{S42R1V1}
\begin{split}
&\liminf_{ w \rightarrow \infty} {\bf 1 }_{E^v_M} \cdot \frac{\mathbb{E}_{H}^{s,v,w, -\infty}\hspace{-0.5mm}\left[  {\hspace{-0.5mm} \bf 1}_{\tilde{A}^s_w \cap \tilde{D}^s_w \cap \tilde{E}^s_w \cap \tilde{F}^s_w} \cdot W_H^{s,v,w}(\mathcal{Q}) \hspace{-0.5mm} \right]}{\mathbb{P}_{H}^{s,v,w, -\infty} \left( \hspace{-0.5mm} \tilde{A}^s_w \cap \tilde{D}^s_w \cap \tilde{E}^s_w \cap \tilde{F}^s_w\right)}\cdot \frac{\mathbb{P}_{H}^{s,v,w, -\infty}\hspace{-1mm}\left(\hspace{-0.5mm} \tilde{A}^s_w \cap \tilde{D}^s_w \cap \tilde{E}^s_w \cap \tilde{F}^s_w\hspace{-0.5mm} \right)}{\mathbb{P}_{H}^{s,v,w, -\infty}(\tilde{F}^s_w)}  \leq \\
&\liminf_{ w \rightarrow \infty} {\bf 1 }_{E^v_M} \cdot \frac{\mathbb{E}_{H}^{s,v,w, -\infty} \left[ {\bf 1}_{\tilde{F}^s_w} \cdot W_H^{s,v,w}(\mathcal{Q}) \right]}{\mathbb{P}_{H}^{s,v,w, -\infty} ( \tilde{F}^s_w) } \leq  \limsup_{ w \rightarrow \infty} {\bf 1 }_{E^v_M} \cdot \frac{\mathbb{E}_{H}^{s,v,w, -\infty}\left[  {\bf 1}_{\tilde{F}^s_w} \cdot W_H^{s,v,w}(\mathcal{Q}) \right]}{\mathbb{P}_{H}^{s,v,w, -\infty} ( \tilde{F}^s_w) } \leq  \\
&\limsup_{ w \rightarrow \infty} {\bf 1 }_{E^v_M} \cdot  \frac{\mathbb{E}_{H}^{s,v,w, -\infty} \hspace{-0.5mm}\left[  \hspace{-0.5mm}  {\bf 1}_{\tilde{A}^s_w \cap \tilde{D}^s_w \cap \tilde{E}^s_w \cap \tilde{F}^s_w} \cdot W_H^{s,v,w}(\mathcal{Q}) \hspace{-0.5mm } \right] + \mathbb{P}_{H}^{s,v,w, -\infty} \hspace{-1mm} \left( \hspace{-0.5mm}(\tilde{A}^s_w \cap \tilde{D}^s_w \cap \tilde{E}^s_w)^c \cap \tilde{F}_w \hspace{-0.5mm}\right)}{\mathbb{P}_{H}^{s,v,w, -\infty}  \left( \tilde{A}_w \cap \tilde{D}_w \cap \tilde{E}_w \cap \tilde{F}_w\right)}.
\end{split}
\end{equation}

On the other hand, by Corollary \ref{BridgeLemmaV2} we know that we can find constants $C, c > 0$ (depending on $H, \beta_s, M, N$) such that for all $w \geq W_0$ we have on $E^v_M$ 
\begin{equation}\label{S42R2V1}
\frac{\mathbb{P}_{H}^{s,v,w, -\infty}  \left( (\tilde{A}^s_w \cap \tilde{D}^s_w \cap \tilde{E}^s_w)^c \cap \tilde{F}_w\right) }{\mathbb{P}_{H}^{s,v,w, -\infty}  ( \tilde{F}_w)} \leq  C e^{-c w^{1/12}}.
\end{equation} 
Combining (\ref{S42R2}) with (\ref{S42R1V1}) and (\ref{S42R2V1}) we conclude (\ref{S42R1}).\\

{\bf \raggedleft Step 4.} In this step we prove (\ref{S42R2}). Let us fix $\epsilon_1, \epsilon_2 \in (0,1)$. By our assumption that $H$ is a $\lambda$-exponential Hamiltonian we know that there exists $R_1 > 0$ such that for $x \geq R_1$ we have 
\begin{equation}\label{S42R2E1}
0 \leq H(x) \leq e^{(1 + \epsilon_1) \lambda x}.
\end{equation}
From the fact that $H:[-\infty, \infty) \rightarrow [0, \infty)$ is continuous we conclude that we can find a constant $R_2 > 0$ such that for all $x \in [-\infty, \infty)$ we have
\begin{equation}\label{S42R2E1.5}
0 \leq H(x) \leq R_2 + e^{(1 + \epsilon_1) \lambda x}.
\end{equation}

Let us fix $\omega \in E_M^v$ and note that by the continuity of $\mathcal{L}^v_N$ on $[a, b]$ we can find $\delta_1$ (depending on $\omega$) such that for 
\begin{equation}\label{S42R2E2}
\sup_{x,y \in [a, b], |x - y| \leq \delta_1} \left| \mathcal{L}^v_N(x) - \mathcal{L}^v_N(y) \right| < \epsilon_2.
\end{equation}
Let $W_1 \geq W_0$ be sufficiently large (depending on $\omega$) so that for $w \geq W_1$ we have
\begin{equation}\label{S42R2E3}
\begin{split}
& 2 w^{-2} \cdot \sup_{ x \in [-\infty, M+A_w + 1] } H(x) \leq \epsilon_1, \hspace{5mm} \max_{s \in S} (b^s_w - a^s_w) \leq \delta_1 \\
& \left|  e^{\pm 2\lambda V_w} - 1 \right| \leq \epsilon_1, \hspace{5mm}  \sup_{x \in [-M - 2, M +2]}  \left| \frac{H(x + A_w)}{H(A_w)} - e^{\lambda x} \right| \leq \epsilon_1.
\end{split}
\end{equation}
The latter inequalities hold for all large enough $w$, in view of the fact that $H$ is a $\lambda$-exponential Hamiltonian, (\ref{S42R2E1.5}) and (\ref{S42Par}) , where the latter gives $A_w = \lambda^{-1} \log w$, $V_w = w^{-1/3}$, $b^s_w - a^s_w = 2 w^{-2} + 2\beta_s /H(A_w)$.

We claim that we following inequalities for $w \geq W_1$ and $\omega \in E^v_M$ on the event $\tilde{A}^s_w \cap \tilde{D}^s_w \cap \tilde{E}^s_w \cap \tilde{F}^s_w$
\begin{equation}\label{S42R2E4}
\begin{split}
& W_H^{s,v,w}(\mathcal{Q}) \geq   \exp \left( - \epsilon_1 (1 + 2\beta_s)  -  2 \beta_s (1 + \epsilon_1) \exp( \lambda \epsilon_2) \cdot  \exp  \left( \lambda  \mathcal{L}_N^v(t_s)  \right)  \right),
\end{split}
\end{equation}
and 
\begin{equation}\label{S42R2E5}
\begin{split}
&  W_H^{s,v,w}(\mathcal{Q}) \leq   \exp \left(  2\beta_s \epsilon_1 -  2\beta_s  ( 1 - \epsilon_1)  \exp( - \lambda \epsilon_2) \cdot \exp  \left( \lambda   \mathcal{L}_N^v(t_s)  \right)  \right).
\end{split}
\end{equation}
We will prove (\ref{S42R2E4}) and (\ref{S42R2E5}) in the next (and final step). Here we assume their validity and conclude the proof of (\ref{S42R2}).\\

Multiplying both sides of (\ref{S42R2E4}) and (\ref{S42R2E5}) by ${\bf 1}_{\tilde{A}^s_w \cap \tilde{D}^s_w \cap \tilde{E}^s_w \cap \tilde{F}^s_w} $ and taking $\mathbb{E}^{s,v,w, -\infty}_{H}$-expectations on both sides we conclude that for $\omega \in E_M^v$ we have
\begin{equation*}
\begin{split}
& \mathbb{P}^{s,v,w,-\infty}_{H} \left( \tilde{A}^s_w \cap \tilde{D}^s_w \cap \tilde{E}^s_w \cap \tilde{F}^s_w\right) \cdot \exp \left( - \epsilon_1 (1 + 2\beta_s)  -  2 \beta_s (1 + \epsilon_1) \exp( \lambda \epsilon_2) \cdot  \exp  \left( \lambda  \mathcal{L}_N^v(t_s)  \right)  \right) \leq \\
&  \mathbb{E}^{s,v,w, -\infty}_{H} \left[ {\bf 1}_{\tilde{A}^s_w \cap \tilde{D}^s_w \cap \tilde{E}^s_w \cap \tilde{F}^s_w}   W_H^{s,v,w}(\mathcal{Q}) \right] \leq  \\
& \mathbb{P}^{s,v,w, -\infty}_{H} \left( \tilde{A}^s_w \cap \tilde{D}^s_w \cap \tilde{E}^s_w \cap \tilde{F}^s_w\right)  \cdot   \exp \left(  2\beta_s \epsilon_1 -  2\beta_s  ( 1 - \epsilon_1)  \exp( - \lambda \epsilon_2) \cdot \exp  \left( \lambda   \mathcal{L}_N^v(t_s)  \right)  \right).
\end{split}
\end{equation*}
Dividing the above expressions by $\mathbb{P}^{s,v,w, -\infty}_{H} \left( \tilde{A}^s_w \cap \tilde{D}^s_w \cap \tilde{E}^s_w \cap \tilde{F}^s_w\right) $ and letting $w \rightarrow \infty$ we get
\begin{equation*}
\begin{split}
&{\bf 1 }_{E^v_M} \cdot  \exp \left( - \epsilon_1 (1 + 2\beta_s)  -  2 \beta_s (1 + \epsilon_1) \exp( \lambda \epsilon_2) \cdot  \exp  \left( \lambda  \mathcal{L}_N^v(t_s)  \right)  \right) \leq \\
& \liminf_{w \rightarrow \infty} {\bf 1 }_{E^v_M} \cdot \frac{\mathbb{P}^{s,v,w,-\infty}_{H} \left[ {\bf 1}_{\tilde{A}^s_w \cap \tilde{D}^s_w \cap \tilde{E}^s_w \cap \tilde{F}^s_w}  \cdot W_H^{s,v,w}(\mathcal{Q}) \right]}{\mathbb{P}^{s,v,w,-\infty}_{H} \left( \tilde{A}^s_w \cap \tilde{D}^s_w \cap \tilde{E}^s_w \cap \tilde{F}^s_w\right)} \leq \\
& \limsup_{w \rightarrow \infty}  {\bf 1 }_{E^v_M} \cdot \frac{\mathbb{P}^{s,v,w,-\infty}_{H} \left[ {\bf 1}_{\tilde{A}^s_w \cap \tilde{D}^s_w \cap \tilde{E}^s_w \cap \tilde{F}^s_w}  \cdot W_H^{s,v,w}(\mathcal{Q}) \right]}{\mathbb{P}^{s,v,w,-\infty}_{H} \left( \tilde{A}^s_w \cap \tilde{D}^s_w \cap \tilde{E}^s_w \cap \tilde{F}^s_w\right)}   \leq \\
&{\bf 1 }_{E^v_M} \cdot   \exp \left(  2\beta_s \epsilon_1 -  2\beta_s  ( 1 - \epsilon_1)  \exp( - \lambda \epsilon_2) \cdot \exp  \left( \lambda   \mathcal{L}_N^v(t_s)  \right)  \right).
\end{split}
\end{equation*}
Since the above holds for all $\epsilon_1, \epsilon_2 \in (0,1)$ we conclude that 
$$ \lim_{w \rightarrow \infty} {\bf 1 }_{E^v_M} \cdot \frac{\mathbb{P}^{s,v,w,-\infty}_{H} \left[ {\bf 1}_{\tilde{A}^s_w \cap \tilde{D}^s_w \cap \tilde{E}^s_w \cap \tilde{F}^s_w}  \cdot W_H^{s,v,w}(\mathcal{Q}) \right]}{\mathbb{P}^{s,v,w,-\infty}_{H} \left( \tilde{A}^s_w \cap \tilde{D}^s_w \cap \tilde{E}^s_w \cap \tilde{F}^s_w\right)}= {\bf 1 }_{E^v_M}\cdot e^{-2\beta_s \exp \left(  \lambda \mathcal{L}_N^v(t_s) \right)},$$
which is the same as (\ref{S42R2}) once we recall from (\ref{S42Par}) that $\beta_s = z_s/2$. \\

{\bf \raggedleft Step 5.} In this final step we show that (\ref{S42R2E4}) and (\ref{S42R2E5}) hold for $w \geq W_1$ and $\omega \in E^v_M$ on the event $\tilde{A}^s_w \cap \tilde{D}^s_w \cap \tilde{E}^s_w \cap \tilde{F}^s_w$. In the sequel we assume that we have fixed $w \geq W_1$, $\omega \in E^v_M$ and an elementary outcome $\mathcal{Q}$ in $\tilde{A}_w \cap \tilde{D}_w \cap \tilde{E}_w \cap \tilde{F}_w$.

We observe that we have the following tower of inequalities
\begin{equation*}
\begin{split}
& W_H^{s,v,w}(\mathcal{Q}) =   \exp \left( -\int_{a_w^s}^{b_w^s} H \left( \mathcal{L}_N^v(u) - \mathcal{Q}_{N-1}(u) \right)du \right) \geq  \\
&  \exp \left(  - 2 w^{-2} \cdot \sup_{ x \in [-\infty, M+A_w + 1] } H(x) - \int_{c^s_w}^{d^s_w} H \left( \mathcal{L}_N^v(u) - \mathcal{Q}_{N-1}(u)\right)du \right) \geq \\
&\exp \left(  - \epsilon_1 -  2 \beta_s \cdot \sup_{u \in [c^s_w, d^s_w]} H \left( \mathcal{L}_N^v(u) -\mathcal{Q}_{N-1}(u)\right)/H(A_w) \right) \geq \\
&  \exp \left( - \epsilon_1 (1 + 2\beta_s)  -  2 \beta_s \cdot \sup_{u \in [c^s_w, d^s_w]} \exp  \left( \lambda \cdot ( \mathcal{L}_N^v(u) - \mathcal{Q}_{N-1}(u) - A_w) \right)\right)  \geq \\
&  \exp \left( - \epsilon_1 (1 + 2\beta_s)  -  2 \beta_s \cdot  \exp  \left( \lambda \cdot ( \epsilon_2 + \mathcal{L}_N^v(t_s) + 2 V_w) \right)\right)  \geq \\
& \exp \left( - \epsilon_1 (1 + 2\beta_s)  -  2 \beta_s (1 + \epsilon_1) e^{\lambda \epsilon_2} \cdot  \exp  \left( \lambda  \mathcal{L}_N^v(t_s)  \right)\right) .
\end{split}
\end{equation*}
Let us elaborate on the last equation briefly. The first line is just the definition of $W_H^{s,v,w}(\mathcal{Q})$ in (\ref{S42R1}). In going from the first to the second line we used the fact that for $\omega \in E_M^v$ we have $|\mathcal{L}_N^v(u)| \leq M$ and on $\tilde{A}^s_w \cap \tilde{D}^s_w$ we have $\mathcal{Q}_{N-1}(u) \geq -A_w - 1$ on $[a^s_w, c^s_w] \cup [d^s_w ,b^s_w]$, while $b^s_w - d^s_w = c^s_w - a^s_w = w^{-2}$. In going from the second to the third line we used the first inequality on the first line of (\ref{S42R2E3}) and the fact that $d^s_w - c^s_w = 2\beta_s/H(A_w)$. In going from the third to the fourth line we used the second inequality on the second line of (\ref{S42R2E3} and the fact that on $\tilde{E}^s_w$ we have $- A_w + 2V_w \leq\mathcal{Q}_{N-1}(u) \leq -A_w + 2V_w$ for $u \in [c^s_w, d^s_w]$ and hence $\mathcal{L}_N^v(u) - \mathcal{Q}_{N-1}(u) - A_w \in [-M-2, M+2]$ on $[c^s_w, d^s_w]$. In going from the fourth to the fifth line we used that on $\tilde{E}^s_w$ we have $\mathcal{Q}_{N-1}(u) \geq -A_w - 2V_w$ and also that $\mathcal{L}_N^v(t_s) + \epsilon_2 \geq \mathcal{L}_N^v(u)$ for $u \in [c^s_w, d^s_w]$ as follows from the second inequality on the first line of (\ref{S42R2E3}). In going from the fifth to the sixth line we used the first inequality in the second line of (\ref{S42R2E3}). The above tower implies (\ref{S42R2E4}).

We next observe that we have the following tower of inequalities  
\begin{equation*}
\begin{split}
&  W_H^{s,v,w}(\mathcal{Q}) \leq    \exp \left(  - \int_{c^s_w}^{d^s_w} H \left( \mathcal{L}_N^v(u) - \mathcal{Q}_{N-1}(u) \right)du \right) \leq  \\
&   \exp \left(  -  2\beta_s \cdot \inf_{ u \in [c^s_w, d^s_w]} H \left( \mathcal{L}_N^v(u) - \mathcal{Q}_{N-1}(u) \right)/H(A_w) \right) \leq  \\
&   \exp \left(  2\beta_s \epsilon_1 -  2\beta_s \cdot \inf_{ u \in [c^s_w, d^s_w]} \exp  \left( \lambda \cdot ( \mathcal{L}_N^v(u) - \mathcal{Q}_{N-1}(u) - A_w) \right) \right) \leq  \\
&   \exp \left(  2\beta_s \epsilon_1 -  2\beta_s \cdot \exp  \left( \lambda \cdot ( \mathcal{L}_N^v(t_s) - \epsilon_2 - 2 V_w) \right) \right) \leq  \\
&   \exp \left(  2\beta_s \epsilon_1 -  2\beta_s  ( 1 - \epsilon_1)  e^{-\lambda \epsilon_2} \cdot \exp  \left( \lambda   \mathcal{L}_N^v(t_s)  \right) \right).
\end{split}
\end{equation*}
Let us elaborate the last equation briefly. The first inequality follows from the definition of $W_H^{s,v,w}(\mathcal{Q})$ in (\ref{S42R1}) and the fact that $H \geq 0$. In going from the first to the second line we used that $d^s_w - c^s_w = 2\beta_s/ H(A_w)$. In going from the second to the third line we used the second inequality on the second line of (\ref{S42R2E3}). The latter inequality is applicable since $|\mathcal{L}_N^v(u)| \leq M$  for $\omega \in E_M^v$ and also on $\tilde{E}^s_w$ we have $- A_w + 2V_w \leq\mathcal{Q}_{N-1}(u) \leq -A_w + 2V_w$ for $u \in [c^s_w, d^s_w]$ and hence $\mathcal{L}_N^v(u) - \mathcal{Q}_{N-1}(u) - A_w \in [-M-2, M+2]$ on $[c^s_w, d^s_w]$. In going from the third to the fourth line we used that on $\tilde{E}^s_w$ we have $\mathcal{Q}_{N-1}(u) \leq -A_w + 2V_w$ and also that $\mathcal{L}_{N}^v(t_s) - \epsilon_2 \leq \mathcal{L}_N^v(u)$ for $u \in [c^s_w, d^s_w]$ as follows from the second inequality on the first line of (\ref{S42R2E3}). In the last inequality we used the first inequality in the second line of (\ref{S42R2E3}). The above tower implies (\ref{S42R2E5}).

As both (\ref{S42R2E4}) and (\ref{S42R2E5}) were established, we have finished our final step. This completes the proof of Proposition \ref{PropMain}.

%
%
\subsection{Proof of Corollary \ref{CorMain2}}\label{Section4.3} In this section we give the proof of Corollary \ref{CorMain2}. We will use the same notation as in the statement of the corollary and Section \ref{Section4.2} above.\\

The proof is by contradiction and we assume that for every
$k\in \mathbb{N}$,  $t_1 < t_2 < \cdots < t_k$ with $t_i \in \Lambda$ and $x_1, \dots, x_k \in \mathbb{R}$ we have
\begin{equation}\label{S43E1}
\mathbb{P}_1 \left( \mathcal{L}^1_1(t_1) \leq x_1, \dots,\mathcal{L}^1_1(t_k) \leq x_k  \right) = \mathbb{P}_2 \left( \mathcal{L}^2_1(t_1) \leq x_1, \dots,\mathcal{L}^2_1(t_k) \leq x_k  \right).
\end{equation}
We know that the projection of $\mathcal{L}^2$ to the top $N_1$ curves is a $\Sigma_1$-indexed line ensemble on $\Lambda$ that satisfies the partial $H$-Brownian Gibbs property, cf. Remark \ref{RPBGP}. By our assumption above we have that this line ensemble under $\mathbb{P}_2$ has the same top curve distribution as $\mathcal{L}^1$ under $\mathbb{P}_1$ and so by Theorem \ref{ThmMain} we conclude that for any $a < b$ with $a, b \in \Lambda$ we have that $\pi^{\llbracket 1, N_1 \rrbracket}_{[a,b]}(\mathcal{L}^1)$ under $\mathbb{P}_1$ has the same distribution as $\pi^{\llbracket 1, N_1 \rrbracket}_{[a,b]}(\mathcal{L}^2)$ under $\mathbb{P}_2$ as $\Sigma_1$-indexed line ensembles (recall that $\pi^{\llbracket 1, N_1 \rrbracket}_{[a,b]}$ was defined in (\ref{ProjBox})). This allows us to repeat the arguments in Step 1 of Section \ref{Section4.2} and we let $p^v_w$ be as in (\ref{S42PDef}) for the case $k = 1$, $t_1 = (b+a)/2$, $ N- 1 =  N_1$, $S = \{1\}$, $z_1 \in \mathbb{N}$ and $W_0$ sufficiently large so that $ a^1_w, b^1_w \in [a,b]$ for $w \geq W_0$. In particular, $p_w^1 = p_w^2$ for $w \geq W_0$, where for $v \in \{1,2\}$ 
\begin{equation}\label{Obs2S43}
\begin{split}
p^v_w = \mathbb{E}_{\mathbb{P}_v} \left[ \frac{ \mathbb{P}_{H}^{1,v,w}\left(\mathcal{Q}_{i}(c_w^1) \leq -A_w \mbox{ and } \mathcal{Q}_{i}(d_w^1) \leq -A_w  \mbox{ for } i = 1, \dots, N-1 \right) }{F\left(-A_w; a_w^1, b_w^1; c_w^1, d_w^1 ;\vec{x}^{1,v,w}, \vec{y}^{1,v,w} \right) }  \right],
\end{split}
\end{equation}
and if $v = 1$ we have $\mathcal{L}_{N_1 + 1}^1[ a_w^1, b_w^1] = - \infty$ (here we used that $\mathcal{L}^1$ satisfies the $H$-Brownian Gibbs rather than the partial $H$-Brownian Gibbs property). In particular, we have by definition that
$$ \frac{ \mathbb{P}_{H}^{1,1,w}\left(\mathcal{Q}_{i}(c_w^1) \leq -A_w \mbox{ and } \mathcal{Q}_{i}(d_w^1) \leq -A_w  \mbox{ for } i = 1, \dots, N-1 \right) }{F\left(-A_w; a_w^1, b_w^1; c_w^1, d_w^1 ;\vec{x}^{1,v,w}, \vec{y}^{1,v,w} \right) } = 1,$$
and so $p_w^1 = p_w^2 = 1$ for all $w \geq W_0$. On the other hand, by repeating the arguments in Steps 2-5 of Section \ref{Section4.2} we have that
$$\lim_{w \rightarrow \infty} p^2_w = \mathbb{E}_{\mathbb{P}_2} \left[ e^{ - z_1 \exp \left(\lambda \mathcal{L}^2_{N_1 + 1}(t_1)\right)}  \right].$$
This shows that 
$$\mathbb{E}_{\mathbb{P}_2} \left[ e^{ - z_1 \exp \left(\lambda \mathcal{L}^2_{N_1 + 1}(t_1)\right) } \right] = 1,$$
which is our desired contradiction. Hence (\ref{S43E1}) cannot hold for every $k\in \mathbb{N}$,  $t_1 < t_2 < \cdots < t_k$ with $t_i \in \Lambda$ and $x_1, \dots, x_k \in \mathbb{R}$, which is what we wanted to prove.

%
\section{Proofs of lemmas from Sections \ref{Section2.3} and \ref{Section3.4}}\label{Section5} In this section we prove the lemmas from Sections \ref{Section2.3} and \ref{Section3.4}. Lemmas \ref{LemmaMeasExp} and \ref{PropFD} are proved in Section \ref{Section5.1}. Lemmas \ref{MCLxy} and \ref{MCLfg} are proved in Section \ref{Section5.3} after we establish a few results in Section \ref{Section5.2}.

%
\subsection{Proofs of Lemmas \ref{LemmaMeasExp} and \ref{PropFD}}\label{Section5.1} In this section we present the proofs of Lemmas \ref{LemmaMeasExp} and \ref{PropFD}, whose statements are recalled here for the reader's convenience. Below we will use the notation from Section \ref{Section2}, in particular Definition \ref{ContFunInf}. 

\begin{lemma}\label{S5LemmaMeasExp} Assume the same notation as in Definitions \ref{DefAvoidingLaw} and \ref{ContFunInf} and suppose that $h : C( \llbracket k_1, k_2 \rrbracket \times [a,b]) \rightarrow \mathbb{R}$ is a bounded Borel-measurable function. Let $k = k_2 - k_1 + 1$, and
\begin{equation*}
\begin{split}
&S = \{ (\vec{x}, \vec{y}, f,g) \in \mathbb{R}^k \times  \mathbb{R}^k \times C^+([a,b]) \times C^-([a,b]) \}, \\
\end{split}
\end{equation*}
where $S$ is endowed with the product topology and corresponding Borel $\sigma$-algebra. Then the function $G_h: S \rightarrow \mathbb{R}$, given by
\begin{equation}\label{S5MeasExpFun}
\begin{split}
&G_h(\vec{x}, \vec{y}, f,g) := \mathbb{E}_{H}^{k_1, k_2, a,b,\vec{x}, \vec{y},f,g}[h(\mathcal{Q})],
\end{split}
\end{equation}
is bounded and measurable.
\end{lemma}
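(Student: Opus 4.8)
\textbf{Proof plan for Lemma \ref{S5LemmaMeasExp}.} The goal is to show that $G_h(\vec{x},\vec{y},f,g)=\mathbb{E}_{H}^{k_1,k_2,a,b,\vec{x},\vec{y},f,g}[h(\mathcal{Q})]$ is a bounded measurable function on $S$. Boundedness is immediate: since $h$ is bounded, say $|h|\le C$, we have $|G_h|\le C$ regardless of the parameters. For measurability, the plan is to write everything in terms of a single fixed probability space of standard Brownian bridges and use the Radon--Nikodym representation from Definition \ref{DefAvoidingLaw}, then invoke a monotone-class / functional approximation argument.

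First I would set up a probability space $(\Omega,\mathcal{F},\mathbb{P})$ carrying $k$ i.i.d.\ standard Brownian bridges $\{\tilde B_i\}_{i=k_1}^{k_2}$ (from $0$ to $0$, diffusion parameter $1$), and, for $(\vec x,\vec y)\in\mathbb{R}^k\times\mathbb{R}^k$, define the affinely transformed bridges $B_i^{\vec x,\vec y}(t)=(b-a)^{1/2}\tilde B_i\big(\tfrac{t-a}{b-a}\big)+\tfrac{b-t}{b-a}x_i+\tfrac{t-a}{b-a}y_i$, with the convention $B^{\vec x,\vec y}_{k_1-1}=f$, $B^{\vec x,\vec y}_{k_2+1}=g$. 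Writing $\mathcal{B}^{\vec x,\vec y}=(B^{\vec x,\vec y}_{k_1},\dots,B^{\vec x,\vec y}_{k_2})$, Definition \ref{DefAvoidingLaw} gives
\begin{equation*}
G_h(\vec x,\vec y,f,g)=\frac{\mathbb{E}\big[h(\mathcal{B}^{\vec x,\vec y})\,W_H(\vec x,\vec y,f,g)\big]}{\mathbb{E}\big[W_H(\vec x,\vec y,f,g)\big]},\qquad
W_H(\vec x,\vec y,f,g)=\exp\Big(-\sum_{i=k_1-1}^{k_2}\int_a^b H\big(B^{\vec x,\vec y}_{i+1}(u)-B^{\vec x,\vec y}_i(u)\big)\,du\Big).
\end{equation*}
The denominator is $Z_H^{k_1,k_2,a,b,\vec x,\vec y,f,g}\in(0,1)$, never zero, so it suffices to prove that both numerator and denominator are measurable in $(\vec x,\vec y,f,g)\in S$, and then take the quotient.

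The heart of the matter is a joint-measurability / continuity statement for the map $(\omega;\vec x,\vec y,f,g)\mapsto W_H(\vec x,\vec y,f,g)(\omega)$. For fixed $\omega$, the curves $B^{\vec x,\vec y}_i(u)(\omega)$ depend continuously on $(\vec x,\vec y)$ (uniformly in $u$), and the boundary contributions involve $H(f(u)-B^{\vec x,\vec y}_{k_1}(u))$ and $H(B^{\vec x,\vec y}_{k_2}(u)-g(u))$; here I would use that in the topology of Definition \ref{ContFunInf}, $f_n\to f$ in $C^+([a,b])$ means $e^{-f_n}\to e^{-f}$ uniformly, and since $H$ is continuous on $[-\infty,\infty)$ and vanishes-compatibly at $+\infty$ for the argument $f(u)-B_{k_1}(u)$ (the argument tends to $-\infty$ where $f(u)=+\infty$, at which $H$ is finite-valued and continuous by hypothesis $H:[-\infty,\infty)\to[0,\infty)$), the integrand $H(f(u)-B_{k_1}(u))$ depends continuously on $f\in C^+([a,b])$; symmetrically for $g\in C^-([a,b])$. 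Thus for each fixed $\omega$ the map $(\vec x,\vec y,f,g)\mapsto W_H(\vec x,\vec y,f,g)(\omega)$ is continuous on $S$, and it is jointly measurable in $(\omega,\vec x,\vec y,f,g)$ since it is built from the measurable process $\tilde B$ by continuous operations and Lebesgue integration in $u$ (the $u$-integral of a jointly measurable, uniformly bounded-in-$[0,1]$ integrand is measurable by Fubini/Tonelli). Consequently, for fixed bounded continuous $h$, the function $(\omega,\vec x,\vec y,f,g)\mapsto h(\mathcal{B}^{\vec x,\vec y})(\omega)\,W_H(\vec x,\vec y,f,g)(\omega)$ is jointly measurable and bounded by $C$, so by Tonelli's theorem its $\mathbb{P}$-integral is a measurable function of $(\vec x,\vec y,f,g)$; the same applies with $h\equiv 1$ for the denominator. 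This proves the claim when $h$ is bounded and continuous.

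Finally, I would upgrade from continuous $h$ to general bounded Borel $h$ by a monotone-class (functional) argument: the collection $\mathcal{H}$ of bounded Borel $h:C(\llbracket k_1,k_2\rrbracket\times[a,b])\to\mathbb{R}$ for which $G_h$ is measurable is a vector space, contains the constants and (by the previous paragraph) all bounded continuous functions, and is closed under bounded monotone pointwise limits $h_n\uparrow h$ by the dominated convergence theorem applied inside the expectation (and then the quotient of measurable functions is measurable). Since bounded continuous functions generate the Borel $\sigma$-algebra $\mathcal{C}_{k}$ on this Polish space, the functional monotone class theorem gives that $\mathcal{H}$ contains all bounded Borel $h$, completing the proof. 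The main obstacle I anticipate is the careful handling of the boundary terms involving the possibly infinite-valued functions $f$ and $g$: one must check that the $C^+/C^-$ topologies of Definition \ref{ContFunInf} are exactly what makes $u\mapsto H(f(u)-B_{k_1}(u))$ and $u\mapsto H(B_{k_2}(u)-g(u))$ depend continuously on $(f,g)$, using that $H$ extends continuously to $-\infty$ through the convention $H:[-\infty,\infty)\to[0,\infty)$ and is bounded on the relevant ranges; everything else is routine measure theory.
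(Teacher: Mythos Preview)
Your proposal is correct and follows essentially the same route as the paper: write $G_h$ as the quotient $F_h/F_1$ of unnormalized expectations, establish that $F_h$ is continuous (hence measurable) on $S$ whenever $h$ is bounded continuous by working on a fixed probability space of standard bridges and using continuity of $H$ together with bounded convergence, and then extend to all bounded Borel $h$ via a monotone class argument. The only cosmetic differences are that the paper uses the $\pi$-system of finite-dimensional cylinder sets (approximating their indicators by continuous functions) rather than invoking the functional monotone class theorem directly with bounded continuous functions, and that you have a harmless sign slip in writing the boundary terms as $H(f-B_{k_1})$ and $H(B_{k_2}-g)$ instead of $H(B_{k_1}-f)$ and $H(g-B_{k_2})$.
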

\begin{proof} 

For clarity we split the proof into three steps. In the first step we reduce the proof of the lemma to showing that $G_h$ is bounded and measurable for a large enough class of bounded measurable functions, by using a monotone class argument. In the second step, we reduce the proof of the lemma to showing that $G_h$ is bounded and continuous if $h$ is bounded and continuous, which is established in the third and final step.\\

{\bf \raggedleft Step 1.} By the definition of $\mathbb{P}_{H}^{k_1, k_2, a,b,\vec{x}, \vec{y}, f , g}$ (see Definition \ref{DefAvoidingLaw}) we know that
$$G_h(\vec{x}, \vec{y}, f,g) = \frac{F_h(\vec{x}, \vec{y}, f,g)}{F_1(\vec{x}, \vec{y}, f,g)}.$$
In the above equation $1$ stands for the constant function that is equal to $1$ and
\begin{equation}\label{S11E1}
\begin{split}
F_h(\vec{x}, \vec{y},f,g)  =  \mathbb{E}_{free}^{k_1, k_2, a,b,\vec{x}, \vec{y}} \left[ h(\mathcal{Q}) \cdot \exp \left( - \sum_{i = k_1 - 1}^{k_2} \int_a^b H \left( \mathcal{Q}_{i+1}(u) - \mathcal{Q}_i(u)\right)du \right)  \right],
\end{split}
\end{equation}
where $\mathcal{Q} = (\mathcal{Q}_{k_1}, \dots, \mathcal{Q}_{k_2})$ is distributed according to $\mathbb{P}^{k_1, k_2, a,b, \vec{x},\vec{y}}_{free}$ and $\mathcal{Q}_{k_1- 1} = f$, while $\mathcal{Q}_{k_2 +1} = g$.

Let $\mathcal{H}$ denote the set of bounded measurable functions $h$ such that $G_h$ is measurable. Fix $K \in \mathbb{N}$ and $n_1, \dots,n_K \in \llbracket k_1, k_2 \rrbracket$, $t_1, \dots, t_K \in [a,b]$ and $z_1, \dots, z_K \in \mathbb{R}$. We define with this data the function $\tilde{h}: C( \llbracket 1, k \rrbracket \times [a,b]) \rightarrow \mathbb{R}$ through
\begin{equation}\label{S2Step}
\tilde{h}(Q) = \prod_{i = 1}^K {\bf 1} \{ Q(n_i, t_i) \leq z_i \}.
\end{equation}
We claim that the function $G_{\tilde{h}}$ is measurable, i.e. $\tilde{h} \in \mathcal{H}$. We will establish this claim the next step. For now we assume its validity and conclude the proof of the lemma.\\

It is clear that $\mathcal{H}$ is closed under linear combinations (by linearity of the expectation). Furthermore, if $h_n \in \mathcal{H}$ is an increasing sequence of non-negative measurable functions that increase to a bounded function $h$ then $h \in \mathcal{H}$ by the monotone convergence theorem. Finally, in view of our earlier claim we know that ${\bf 1}_{A} \in \mathcal{H}$ for any set $A \in \mathcal{A}$, where $\mathcal{A}$ is the $\pi$-system of sets of the form
$$ \{ Q \in C( \llbracket 1, k \rrbracket \times [a,b]) : Q(n_i, t_i) \leq z_i \mbox{ for $i = 1, \dots, K$}\}.$$
By the monotone class theorem, see e.g. \cite[Theorem 5.2.2]{Durrett}, we have that $\mathcal{H}$ contains all bounded measurable functions with respect to $\sigma(\mathcal{A})$, and the latter is $\mathcal{C}_{\llbracket k_1, k_2 \rrbracket}$ in view of \cite[Lemma 3.1]{DM20}. This proves the measurability of $G_h$ in (\ref{S5MeasExpFun}) for any bounded measurable $h$ and as it is clearly bounded by $\|h\|_{\infty}$ we conclude that it is bounded and measurable.\\

{\bf \raggedleft Step 2.} In this step we show that if $\tilde{h}$ is as in (\ref{S2Step}) then $G_{\tilde{h}}$ is measurable.

We claim that $F_h$ as in (\ref{S11E1}) is bounded and continuous if $h$ is bounded and continuous. This statement will be established in the next step. For now we assume its validity and conclude the proof of the lemma.\\

Observe that when $h \equiv 1$ is the constant function we have that $F_1(\vec{x}, \vec{y}, f,g) > 0$ and so if $h$ is continuous we see that $G_h$ is the ratio of two continuous functions, with the denominator being positive. This means that $G_h$ is continuous and thus measurable. 

Let us define for $r \in \mathbb{R}$ and $n \in \mathbb{N}$ the function
$$q_n(x;r) = \begin{cases} 0 &\mbox{ if $x > r + n^{-1}$} \\ 1 - n(x - r) &\mbox{ if $x \in [r, r+ n^{-1}]$} \\ 1 &\mbox{ if $x < r$},   \end{cases}$$
and then set
$$\tilde{h}_n(Q) = \prod_{i = 1}^K q_n(Q(n_i,t_i);z_i).$$
From our earlier discussion, we know $G_{h_n}$ are measurable for all $n \in \mathbb{N}$ and bounded by $1$. By the bounded convergence theorem we conclude that $G_{\tilde{h}}(\vec{x}, \vec{y}, f,g) = \lim_{n \rightarrow \infty} G_{\tilde{h}_n}(\vec{x}, \vec{y}, f,g)$ is also measurable, as desired.\\

{\bf  \raggedleft Step 3.} In this step we prove that $F_h$ as in (\ref{S11E1}) is continuous and bounded if $h$ is continuous and bounded. Fix some points $\vec{x}^{\infty}, \vec{y}^\infty \in \mathbb{R}^k$ and $f \in C^+([a,b])$ and $g \in C^-([a,b])$ and suppose that we are given any sequences $\vec{x}^n, \vec{y}^n \in \mathbb{R}^k$, $f^n \in C^+([a,b])$, $g^n \in C^-([a,b])$ such that 
$$\lim_{n \rightarrow \infty} \vec{x}^n = \vec{x}^\infty, \hspace{3mm}\lim_{n \rightarrow \infty} \vec{y}^n = \vec{y}^\infty, \hspace{3mm} \lim_{n \rightarrow \infty} f^n = f^{\infty}, \hspace{3mm} \lim_{n \rightarrow \infty} g^n = g^\infty.$$

Since $H(x) \geq 0$ we see from (\ref{S11E1}) that $F_h(\vec{x}, \vec{y}, f,g)$ is bounded by $\|h\|_\infty$. Thus we only need to prove that 
\begin{equation}\label{S2Limit}
\lim_{n \rightarrow \infty} F_h(\vec{x}^n, \vec{y}^n, f^n, g^n) =  F_h(\vec{x}^\infty, \vec{y}^\infty, f^{\infty}, g^{\infty}).
\end{equation}

Let $(\Omega, \mathcal{F}, \mathbb{P})$ denote a probability space that supports $k = k_2 - k_1 + 1$ i.i.d. Brownian bridges $\{ \tilde{B}_i \}_{i = k_1}^{k_2}$ (from $\tilde{B}_i(0) = 0$ to $\tilde{B}_i(1) = 0$) with diffusion parameter $1$. We denote the expectation operator with respect to $\mathbb{P}$ by $\mathbb{E}$. We also define for $n \in \mathbb{N} \cup \{ \infty\}$ and $i = k_1, \dots, k_2$ the bridges
\begin{equation}\label{S2BM1}
B_i^n(t) = (b-a)^{1/2} \cdot \tilde{B}_i \left( \frac{t - a}{b-a} \right) + \left(\frac{b-t}{b-a} \right) \cdot x^n_{i-k_1+1} + \left( \frac{t- a}{b-a}\right) \cdot y^n_{i - k_1 + 1}.
\end{equation}
By the definition of $F_h(\vec{x}, \vec{y}, f,g)$ in (\ref{S11E1}) we know that 
\begin{equation}\label{S2BM2}
F_h(\vec{x}, \vec{y}, f,g) = \mathbb{E} \left[ h(\mathcal{B}^n) \cdot \exp \left( - \sum_{i = k_1 - 1}^{k_2} \int_a^b H \left( B^n_{i+1}(u) - B^n_i(u)\right)du \right)  \right],
\end{equation}
where $\mathcal{B}^n = (B_{k_1}^n, \dots, B_{k_2}^n)$ is the line ensemble formed by the Brownian bridges from (\ref{S2BM1}), $B^n_{k_1-1} = f^n$ and $B^n_{k_2 + 1} = g^n$. By the continuity of $h$ we know that $\mathbb{P}$-almost surely
$$\lim_{n \rightarrow \infty}  h(\mathcal{B}^n)  =  h(\mathcal{B}^\infty).$$
By the continuity of $H$ and the bounded convergence theorem we conclude
$$\lim_{n \rightarrow \infty} \sum_{i = k_1 - 1}^{k_2} \int_a^b H \left( B^n_{i+1}(u) - B^n_i(u)\right)du = \sum_{i = k_1 - 1}^{k_2} \int_a^b H \left( B^\infty_{i+1}(u) - B^\infty_i(u)\right)du.$$
Consequently, we know that $\mathbb{P}$-almost surely
$$ \lim_{n \rightarrow \infty}  \exp \left( - \sum_{i = k_1 - 1}^{k_2} \int_a^b H \left( B^n_{i+1}(u) - B^n_i(u)\right)du \right) = \exp \left( - \sum_{i = k_1 - 1}^{k_2} \int_a^b H \left( B^\infty_{i+1}(u) - B^\infty_i(u)\right)du \right).$$
Combining the last few statements we conclude that the random variables in the expectation on the right side of (\ref{S2BM2}) converge $\mathbb{P}$-almost surely to 
$$h(\mathcal{B}^\infty) \cdot \exp \left( - \sum_{i = k_1 - 1}^{k_2} \int_a^b H \left( B^\infty_{i+1}(u) - B^\infty_i(u)\right)du \right) ,$$
and are bounded by $\|h\|_\infty$ (here we used again that $H(x) \geq 0$). We thus conclude that (\ref{S2Limit}) holds by the bounded convergence theorem. This suffices for the proof.
\end{proof}

\begin{lemma}\label{S5PropFD}  
 Let $\Sigma = \llbracket 1, N \rrbracket$ with $N \in \mathbb{N}$ or $N =\infty$, and let $\Lambda \subset \mathbb{R}$ be an interval. Suppose that $\mathcal{L}^1$ and $\mathcal{L}^2$ are $\Sigma$-indexed line ensembles on $\Lambda$ with laws $\mathbb{P}_1$ and $\mathbb{P}_2$ respectively. Suppose further that for every $k\in \mathbb{N}$,  $ t_1 < t_2 < \cdots < t_k$ with $t_i \in \Lambda^o$ (the interior of $\Lambda$) for $i = 1, \dots, k$; $n_1, \dots, n_k \in \Sigma$ and $z_1, \dots, z_k \in \mathbb{N}$ we have
\begin{equation}\label{S5FDStrict}
\mathbb{E}_{\mathbb{P}_1} \left[ \prod_{ i =1}^k \exp \left( - z_i \exp \left( \mathcal{L}^1_{n_i}(t_i)\right) \right) \right] =\mathbb{E}_{\mathbb{P}_2} \left[ \prod_{ i =1}^k \exp \left( - z_i \exp \left( \mathcal{L}^2_{n_i}(t_i)\right) \right)  \right].
\end{equation}
Then we have that $\mathbb{P}_1 = \mathbb{P}_2$.
\end{lemma}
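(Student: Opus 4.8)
\textbf{Proof proposal for Lemma \ref{S5PropFD}.}

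The plan is to reduce the claim to the agreement of all finite-dimensional distributions of $\mathcal{L}^1$ and $\mathcal{L}^2$, and then to extract those from the hypothesis by a compactification trick that turns (\ref{S5FDStrict}) into a moment problem on a compact cube. First, recall that $\mathbb{P}_1$ and $\mathbb{P}_2$ are Borel probability measures on $(C(\Sigma\times\Lambda),\mathcal{C}_\Sigma)$, and by \cite[Lemma 3.1]{DM20} the $\sigma$-algebra $\mathcal{C}_\Sigma$ is generated by the evaluation maps, hence by the $\pi$-system $\mathcal{A}$ of cylinder sets $\{f: f(n_i,t_i)\le x_i,\ i=1,\dots,m\}$ over finite tuples of distinct pairs $(n_i,t_i)\in\Sigma\times\Lambda$ and reals $x_i$. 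By a standard $\pi$--$\lambda$ argument (see e.g. \cite{Durrett}) it therefore suffices to show that $\mathbb{P}_1$ and $\mathbb{P}_2$ agree on $\mathcal{A}$, i.e. that for every such tuple of pairs the law of $(\mathcal{L}^1_{n_1}(t_1),\dots,\mathcal{L}^1_{n_m}(t_m))$ under $\mathbb{P}_1$ equals the law of $(\mathcal{L}^2_{n_1}(t_1),\dots,\mathcal{L}^2_{n_m}(t_m))$ under $\mathbb{P}_2$ as Borel measures on $\mathbb{R}^m$. Since the cylinder events and the target statement are insensitive to permuting the pairs, we may assume throughout that $t_1\le t_2\le\cdots\le t_m$.

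I would first treat the case $t_1<t_2<\cdots<t_m$ with all $t_i\in\Lambda^o$. The key observation is that $\rho:\mathbb{R}\to(0,1)$, $\rho(x)=\exp(-\exp(x))$, is a strictly decreasing homeomorphism, so the componentwise map $\rho^{\otimes m}:\mathbb{R}^m\to(0,1)^m$ is a Borel isomorphism. Let $\nu_v$ denote the law of $\bigl(\rho(\mathcal{L}^v_{n_1}(t_1)),\dots,\rho(\mathcal{L}^v_{n_m}(t_m))\bigr)$; this is a Borel probability measure supported on $(0,1)^m\subset[0,1]^m$, and it is enough to prove $\nu_1=\nu_2$. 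Because $[0,1]^m$ is compact, the Stone--Weierstrass theorem makes the polynomials dense in $C([0,1]^m)$, so it suffices to check that $\int P\,d\nu_1=\int P\,d\nu_2$ for every monomial $P(s)=\prod_{i=1}^m s_i^{z_i}$ with $z_i\in\mathbb{Z}_{\ge 0}$. If all $z_i\ge 1$, then $\prod_i\rho(\mathcal{L}^v_{n_i}(t_i))^{z_i}=\prod_i\exp(-z_i\exp(\mathcal{L}^v_{n_i}(t_i)))$, so $\int P\,d\nu_v$ equals the expectation appearing in (\ref{S5FDStrict}) for the data $(t_i),(n_i),(z_i)$, and the two integrals coincide by hypothesis. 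If $z_i=0$ for $i$ outside some set $I$, then $\int P\,d\nu_v$ equals $\int\prod_{i\in I}s_i^{z_i}\,d\nu_v$, which depends only on the marginal of $\nu_v$ on the coordinates in $I$; that marginal is the image under $\rho^{\otimes|I|}$ of the law of $(\mathcal{L}^v_{n_i}(t_i))_{i\in I}$, whose times are still strictly increasing and interior, so again the two integrals agree by (\ref{S5FDStrict}) applied to the sub-tuple indexed by $I$. The empty monomial gives total mass $1$ for both. Hence $\nu_1=\nu_2$, and applying $(\rho^{\otimes m})^{-1}$ gives equality of the laws of $(\mathcal{L}^1_{n_i}(t_i))_i$ and $(\mathcal{L}^2_{n_i}(t_i))_i$.

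To remove the restriction to strictly ordered interior times, I would approximate using continuity of the curves. Given distinct pairs $(n_1,t_1),\dots,(n_m,t_m)$ with $t_1\le\cdots\le t_m$, choose for each $r\in\mathbb{N}$ points $t_i^{(r)}\in\Lambda^o$ with $t_1^{(r)}<\cdots<t_m^{(r)}$ and $t_i^{(r)}\to t_i$; for $r$ large the pairs $(n_i,t_i^{(r)})$ are distinct, so by the case already established the law of $(\mathcal{L}^1_{n_i}(t_i^{(r)}))_i$ under $\mathbb{P}_1$ equals that of $(\mathcal{L}^2_{n_i}(t_i^{(r)}))_i$ under $\mathbb{P}_2$. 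Since each $\mathcal{L}^v_{n_i}$ is $\mathbb{P}_v$-a.s.\ continuous, $(\mathcal{L}^v_{n_i}(t_i^{(r)}))_i\to(\mathcal{L}^v_{n_i}(t_i))_i$ $\mathbb{P}_v$-a.s., hence in distribution, and letting $r\to\infty$ yields equality of the laws of $(\mathcal{L}^1_{n_i}(t_i))_i$ and $(\mathcal{L}^2_{n_i}(t_i))_i$. This proves that $\mathbb{P}_1$ and $\mathbb{P}_2$ agree on $\mathcal{A}$, and the $\pi$--$\lambda$ theorem gives $\mathbb{P}_1=\mathbb{P}_2$.

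I expect the only substantive point — and the one deserving the most care — to be the reduction in the third paragraph: recognizing that conjugating $\mathcal{L}^v_{n_i}(t_i)$ by $x\mapsto e^{-e^x}$ sends the quantities in (\ref{S5FDStrict}) to the full array of monomial moments of a probability measure carried by the \emph{compact} set $[0,1]^m$, which is therefore moment-determinate via Stone--Weierstrass; the auxiliary points (recovering monomials with vanishing exponents by passing to sub-tuples, and the continuity-based treatment of boundary or coincident times) are routine, and the measurability prerequisite that finite-dimensional cylinders generate $\mathcal{C}_\Sigma$ is supplied by \cite[Lemma 3.1]{DM20}.
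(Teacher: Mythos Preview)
Your proof is correct and follows essentially the same approach as the paper: both arguments compose with $x\mapsto e^{-e^x}$ to push the problem onto $[0,1]^m$, invoke Stone--Weierstrass to reduce to polynomial moments, and read those moments off from (\ref{S5FDStrict}). The only differences are cosmetic: the paper packages the passage from finite-dimensional agreement to $\mathbb{P}_1=\mathbb{P}_2$ into a citation of \cite[Proposition 3.3]{DM20}, whereas you unfold that step via \cite[Lemma 3.1]{DM20} and the $\pi$--$\lambda$ theorem, and you are more explicit than the paper about recovering monomials with vanishing exponents by restricting to sub-tuples and about handling boundary or coincident times by continuous approximation.
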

\begin{proof} We first claim that for every $k\in \mathbb{N}$,  $ t_1 < t_2 < \cdots < t_k$ with $t_i \in \Lambda^o$ for $i = 1, \dots, k$ and $n_1, \dots, n_k \in \Sigma$ we have that the random vectors $(X_1^1, \dots, X_k^1)$ and $(X_1^2, \dots, X_k^2)$, given by 
$$X_i^1 =  \mathcal{L}^1_{n_i}(t_i) \mbox{ and } X_i^2 =  \mathcal{L}^2_{n_i}(t_i) \mbox{ for $i = 1, \dots, k$},$$
with $\mathcal{L}^1$, $\mathcal{L}^2$ having laws $\mathbb{P}_1, \mathbb{P}_2$, respectively, have the same distribution. If true, then \cite[Proposition 3.3]{DM20} would imply that $\mathbb{P}_1 = \mathbb{P}_2$. 

To show that $(X_1^1, \dots, X_k^1)$ and $(X_1^2, \dots, X_k^2)$ have the same distribution, it suffices to show that the vectors $(Y_1^1, \dots, Y_k^1)$ and $(Y_1^2, \dots, Y_k^2)$, given by
$$Y_i^1 = \exp \left( - \exp (X_i^1) \right) \mbox{ and } Y_i^2= \exp \left( - \exp (X_i^2) \right)  \mbox{ for $i = 1, \dots, k$},$$
have the same distribution. The latter is true since the function $\exp( - \exp(x))$ is a bijective diffeomorphism between $\mathbb{R}$ and $(0,1)$. In particular, by \cite[Theorem 1.2]{Billing} it suffices to show that for any bounded continuous function $f: [0,1]^k \rightarrow \mathbb{R}$ we have 
\begin{equation}\label{S2FDRed}
\mathbb{E}_{\mathbb{P}_1} \left[  f(Y_1^1, \dots, Y_k^1)  \right] =\mathbb{E}_{\mathbb{P}_2} \left[  f(Y_1^2, \dots, Y_k^2) \right].
\end{equation}
Using (\ref{S5FDStrict}) and the linearity of expectation we have for any polynomial $P: [0,1]^k \rightarrow \mathbb{R}$ that
\begin{equation}\label{S2FDRed2}
\mathbb{E}_{\mathbb{P}_1} \left[  P(Y_1^1, \dots, Y_k^1)  \right] =\mathbb{E}_{\mathbb{P}_2} \left[  P(Y_1^2, \dots, Y_k^2) \right].
\end{equation}
Moreover, by the Stone-Weierstrass theorem, see e.g. \cite[Theorem 7.26]{Rudin}, we know that there exists a sequence $P_n$ converging uniformly to $f$ on $[0,1]^k$. By the bounded convergence theorem
$$\mathbb{E}_{\mathbb{P}_1} \left[  f(Y_1^1, \dots, Y_k^1)  \right] = \hspace{-1mm}\lim_{n \rightarrow \infty} \mathbb{E}_{\mathbb{P}_1} \left[  P_n(Y_1^1, \dots, Y_k^1)  \right] \mbox{, } \mathbb{E}_{\mathbb{P}_2} \left[  f(Y_1^2, \dots, Y_k^2)  \right] = \hspace{-1mm} \lim_{n \rightarrow \infty} \mathbb{E}_{\mathbb{P}_2} \left[  P_n(Y_1^2, \dots, Y_k^2) \right] \hspace{-1mm} .$$
Combining the last statement and (\ref{S2FDRed2}) we conclude (\ref{S2FDRed}). This suffices for the proof.
\end{proof}

%
\subsection{Discrete approximations of $H$-Brownian bridges }\label{Section5.2} In this section we show that the $H$-Brownian bridges from Definition \ref{DefAvoidingLaw} can be approximated by sequences of random discrete line ensembles. The precise statement is given as Lemma \ref{lem:RW} below, and will be used in the proofs of Lemmas \ref{MCLxy} and \ref{MCLfg} in Section \ref{Section5.3}. We begin by introducing some relevant notation.\\

Let $X_i$ be i.i.d. random variables such that $\mathbb{P}(X_1 = - 1) = \mathbb{P}(X_1 = 0) = \mathbb{P}(X_1 = 1) = 1/3$. In addition, we let $S_N = X_1 + \cdots + X_N$ and for $z \in \llbracket -N, N\rrbracket$ we let $S^{(N,z)} = \{S_m^{(N,z)}\}_{m = 0}^N$ denote the process $\{S_m \}_{m = 0}^N$ with law conditioned so that $S_N = z$. We extend the definition of $S^{(N,z)}_t$ to non-integer values of $t$ by linear interpolation. 

We summarize some useful notation in the following definition.
\begin{definition}\label{Grids}
Fix $a,b \in \mathbb{R}$ with $b > a$ and a scaling parameter $n \in \N$. With the latter data we define two quantities $\Delta_n^t = (b-a) / n^2$ and $\Delta_n^x = \sqrt{3 \Delta^t_n / 2}$. Furthermore, we introduce two grids $\R_{n} = (\Delta_n^x) \cdot \Z$ and $\Lambda_{n^2} = \{a + m \cdot \Delta_n^t : m \in \mathbb{Z}\}$. Given $x,y \in \mathbb{R}_n$ with $|x - y| \leq \frac{\Delta_n^x}{\Delta_n^t} \cdot (b-a)$, we define the $C([a,b])$-valued random variable
$$Y(t) = x + \Delta_n^x \cdot S^{(n^2,  (y-x)/ \Delta_n^x)}_{(t -a)/\Delta_n^t}  \mbox{ for $t \in [a,b]$}.$$
As defined, $Y(t)$ is a continuous function on $[a,b]$ such that $Y(a) = x$ and $Y(b) = y$. We denote the law of $Y$ by $\mathbb{P}^{a,b,x,y}_{free, n}$. We let $\Omega(a,b, x,y)$ denote the state space of $Y$ and note that $\Omega(a,b, x,y)$ is finite and the condition $|x - y| \leq \frac{\Delta_n^x}{\Delta_n^t} \cdot (b-a)$ is equivalent to stating that $|\Omega(a,b, x,y)| \geq 1$. Moreover, it is clear from the definition of $Y$ that it is uniformly distributed on $\Omega(a,b, x,y)$. 
\end{definition}

The following result roughly states that the laws $\mathbb{P}^{a,b,x_n,y_n}_{free, n}$ weakly converge to the law of a Brownian bridge as $n \rightarrow \infty$  if the quantities $x,y$ converge. See \cite[Lemma 5.3]{DM20} for the proof.
\begin{lemma}\label{ConvToBridge} Let $x,y, a, b \in \mathbb{R}$ with $a < b$. In addition, for $n \in \mathbb{N}$ let $x_n, y_n \in \mathbb{R}_n$ be such that $|x_n - y_n| \leq \frac{\Delta_n^x}{\Delta_n^t} \cdot (b-a)$ (here we used the notation from Definition \ref{Grids}). Suppose that $x_n \rightarrow x$ and $y_n \rightarrow y$ as $n \rightarrow \infty$. If $Y^n$ are random variables with laws $\mathbb{P}_{free,n}^{a,b,x_n,y_n}$, then $Y^n$ converge weakly to $\mathbb{P}_{free}^{1,1,a,b, x,y}$ as $n \rightarrow \infty$, where we recall that $\mathbb{P}_{free}^{1,1,a,b, x,y}$ was introduced just before Definition \ref{DefAvoidingLaw}.
\end{lemma}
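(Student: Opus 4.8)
The plan is to establish weak convergence by verifying the two standard ingredients: tightness of $\{Y^n\}_{n\ge 1}$ in $C([a,b])$, and convergence of the finite-dimensional distributions of $Y^n$ to those of the Brownian bridge law $\mathbb{P}_{free}^{1,1,a,b,x,y}$. Once both hold, Prokhorov's theorem produces subsequential limits, each of which must coincide with $\mathbb{P}_{free}^{1,1,a,b,x,y}$ by the finite-dimensional agreement, so $Y^n$ converges weakly to $\mathbb{P}_{free}^{1,1,a,b,x,y}$. Before doing this I would record the scaling bookkeeping: the steps $X_i$ are centered with $\mathrm{Var}(X_1) = 2/3$, and by Definition~\ref{Grids} one has $(\Delta_n^x)^2 = (3/2)\,\Delta_n^t$, so a single step of the rescaled walk $t\mapsto \Delta_n^x S_{(t-a)/\Delta_n^t}$ has variance $\mathrm{Var}(X_1)\,(\Delta_n^x)^2 = \Delta_n^t$; summing over the $n^2$ steps that span $[a,b]$ gives total variance $b-a$, the diffusive scaling underlying Donsker's theorem. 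Also, with $z_n := (y_n - x_n)/\Delta_n^x$ one has $z_n\,\Delta_n^x = y_n - x_n \to y-x$, hence $|z_n| = O(1/\Delta_n^x) = O(n)$ and $z_n/n^2 \to 0$: the conditioning $S_{n^2} = z_n$ imposes only an $o(1)$ per-step drift, consistent with the Brownian bridge being centered on the segment from $x$ to $y$.

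The main analytic input for both ingredients is the local central limit theorem for $S_m$. Since the $X_i$ are bounded and lattice with span $1$, one has the uniform estimate $\mathbb{P}(S_m = w) = \big(4\pi m/3\big)^{-1/2}\exp\big(-3w^2/(4m)\big)\big(1+o(1)\big)$ as $m\to\infty$, with error uniform over $w$ in the relevant coset of the lattice, together with its joint and increment versions. For the finite-dimensional distributions I would fix $a\le t_1<\cdots<t_k\le b$, set $m_j = \lfloor (t_j-a)/\Delta_n^t\rfloor$, and write
\[
\mathbb{P}\big(S_{m_1}=w_1,\ldots,S_{m_k}=w_k \mid S_{n^2}=z_n\big) = \frac{\mathbb{P}(S_{m_1}=w_1)\,\prod_{j=2}^{k}\mathbb{P}(S_{m_j-m_{j-1}}=w_j-w_{j-1})\,\mathbb{P}(S_{n^2-m_k}=z_n-w_k)}{\mathbb{P}(S_{n^2}=z_n)}.
\]
Applying the local CLT to each factor and changing variables $w_j = (s_j-x_n)/\Delta_n^x$ with $s_j\to u_j$, the right-hand side times the cell volume $(\Delta_n^x)^k$ converges to the joint density at $(u_1,\ldots,u_k)$ of the Brownian bridge from $x$ to $y$ on $[a,b]$; summing over cells gives convergence in distribution of $(Y^n(t_1),\ldots,Y^n(t_k))$ to $(B^{br}(t_1),\ldots,B^{br}(t_k))$.

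For tightness I would invoke the Kolmogorov--Chentsov criterion, for which it suffices to bound $\mathbb{E}[(Y^n(t)-Y^n(s))^4]$ by $C\big((t-s)^2 + n^{-1}\big)$ uniformly in $n$ and in $a\le s<t\le b$; the endpoints of $Y^n$ are deterministic, so no separate boundary modulus estimate is needed. Such a bound for the \emph{free} rescaled walk is classical. To transfer it to the bridge, compare the conditioned law of the walk up to a time $m\le(1-\delta)n^2$ with the free law: by the local CLT the Radon--Nikodym density is $\mathbb{P}(S_{n^2-m}=z_n-S_m)/\mathbb{P}(S_{n^2}=z_n)$, which is bounded above by a constant uniformly on the overwhelmingly likely event (by Hoeffding's inequality) that $|S_m|$ is not atypically large, the complementary event contributing a superpolynomially small error. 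Thus the free fourth-moment increment bound passes to the bridge on $[a, b-\delta(b-a)]$ up to uniformly bounded constants, and reversing time handles a neighborhood of $b$; together this gives tightness. (Alternatively one could quote a ready-made invariance principle for random-walk bridges, but the self-contained route via the local CLT is cleaner in this setting.)

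The step I expect to be the main obstacle is securing the local CLT with \emph{uniform} error control in exactly the regime needed — in particular at the conditioning point $S_{n^2}=z_n$, where $z_n$ is not fixed but drifts with $n$ (though $z_n = o(n^2)$) — and propagating this uniformity through the ratio above for the finite-dimensional limit and through the Radon--Nikodym domination for tightness. Once a sufficiently strong lattice local limit theorem (e.g.\ from the Edgeworth expansion for bounded lattice increments) is in hand, both ingredients follow by routine estimates, and what remains is bookkeeping with the scaling constants $\Delta_n^t$ and $\Delta_n^x$. We note that this is precisely \cite[Lemma~5.3]{DM20}, so in the write-up it suffices to cite that result; the sketch above indicates how the argument there proceeds.
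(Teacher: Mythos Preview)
Your proposal is correct and aligned with the paper: the paper does not give its own proof of this lemma but simply cites \cite[Lemma~5.3]{DM20}, which you also identify at the end of your sketch. The tightness plus finite-dimensional convergence argument you outline (via the local CLT for the lattice walk and a Radon--Nikodym comparison for the bridge) is a standard and reasonable route to such a result, so your write-up can indeed just cite the reference.
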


We next introduce the following discrete analogue of Definition \ref{DefAvoidingLaw}.
\begin{definition}\label{Grids2} Continue with the same notation as in Definition \ref{Grids}. Let us fix $k \in \mathbb{N}$ and a continuous function $H: [-\infty, \infty) \rightarrow [0, \infty)$.  In addition, let $\vec{x}, \vec{y} \in \R_n^k$ be such that $|x_i  -y_i| \leq \frac{\Delta_n^x}{\Delta_n^t}\cdot(b-a)$. Put $\Omega(a,b, \vec{x}, \vec{y}) = \prod_{i = 1}^k \Omega(a,b, x_i, y_i),$
and note that an element in $\Omega(a,b, \vec{x}, \vec{y})$ is a $k$-tuple $\mathcal{Y}^n = (Y^n_1, \dots, Y^n_k)$ of continuous functions in $C([a,b])$. Suppose that $f:[a,b] \rightarrow (-\infty, \infty]$ and $g: [a,b] \rightarrow [-\infty,\infty)$ are two continuous functions. By analogy with Definition \ref{DefAvoidingLaw} we define the following weight function on $\Omega(a,b, \vec{x}, \vec{y}) $
\begin{equation}\label{S5weight}
W^{1,k,a,b,\vec{x}, \vec{y},f,g}_{H, n}(\mathcal{Y}^n) = \exp \left( \hspace{-1mm} -  \sum_{i = 0}^{k} \sum_{s \in [a,b] \cap \Lambda_{n^2}} \Delta^t_n \cdot  H \left({Y}^n_{i+1}(s) - {Y}^n_i(s)\right)   \right), \mbox{ where }
\end{equation}
 $Y^n_0 = f, Y^n_{k+1} = g$. Note that by our assumptions on $H$ we have that $W^{1,k,a,b,\vec{x}, \vec{y},f,g}_{H, n}(\mathcal{Y}^n) \in (0, 1]$.

With the above data, we define a probability distribution function on $\Omega(a,b, \vec{x}, \vec{y})$ via 
$$\mathbb{P}_{H,n}^{1,k, a,b, \vec{x}, \vec{y},f,g} (\mathcal{Y}^n) = \frac{W^{1,k,a,b,\vec{x}, \vec{y},f,g}_{H, n}(\mathcal{Y}^n)}{Z^{1,k,a,b,\vec{x}, \vec{y},f,g}_{H, n}}, \mbox{ where }Z^{1,k,a,b,\vec{x}, \vec{y},f,g}_{H, n} = \sum_{\mathcal{Y}^n \in \Omega(a,b, \vec{x}, \vec{y})}W^{1,k,a,b,\vec{x}, \vec{y},f,g}_{H, n}(\mathcal{Y}^n) .$$
Note that $\mathcal{Y}^n$ is a $\llbracket 1, k \rrbracket$-indexed line ensemble on $[a,b]$ in the sense of Definition \ref{DefLE}, defined on the probability space $(\Omega(a,b, \vec{x}, \vec{y}), \mathcal{F}_{disc}, \mathbb{P}_{H,n}^{1,k, a,b, \vec{x}, \vec{y},f,g} )$ (here $\mathcal{F}_{disc}$ is the discrete $\sigma$-algebra). 
\end{definition}

With the above notation in place we are ready to state the main result of the section.
\begin{lemma}\label{lem:RW}
Fix $k \in \N$ and $a,b \in \mathbb{R}$ with $a < b$ and assume the same notation as in Definition \ref{Grids2}. Suppose that $f : [a,b] \to (-\infty, +\infty], g : [a,b] \to [-\infty, +\infty)$ are continuous functions. Fix $\vec{x}, \vec{y} \in \mathbb{R}^k$ and let $\vec{x}^n, \vec{y}^n \in \R_{n}^k$ be such that $\lim_{n \to \infty} \vec{x}^n = \vec x$, $\lim_{n \to \infty} \vec{y}^n = \vec y$. Then there exists $N_0 \in \mathbb{N}$ such that $\mathbb{P}_{H, n}^{1,k,a,b, \vec{x}^n, \vec{y}^n, f, g}$ are well-defined for $n \geq N_0$. Moreover, if $\mathcal{Y}^n$ are $\llbracket 1, k \rrbracket$-indexed line ensembles with laws $\mathbb{P}_{H, n}^{1,k,a,b, \vec{x}^n, \vec{y}^n, f, g}$, then $\mathcal{Y}^n$ converge weakly to $\mathbb{P}_{H}^{1, k,a,b, \vec{x}, \vec{y}, f, g}$ as $n \to \infty$.
\end{lemma}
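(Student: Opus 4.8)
The plan is to exhibit both the discrete measure $\mathbb{P}_{H,n}^{1,k,a,b,\vec{x}^n,\vec{y}^n,f,g}$ and the continuous measure $\mathbb{P}_{H}^{1,k,a,b,\vec{x},\vec{y},f,g}$ as reweightings of the corresponding \emph{free} (discrete, resp.\ Brownian) bridge ensembles, to pass to the free limit via Lemma \ref{ConvToBridge}, and then to transport the Radon--Nikodym factors through a Skorokhod coupling.

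\textbf{Well-definedness.} From Definition \ref{Grids} one has $\Delta_n^x/\Delta_n^t = \sqrt{3/(2\Delta_n^t)} = \sqrt{3 n^2/(2(b-a))} \to \infty$, while $|x_i^n - y_i^n| \to |x_i - y_i| < \infty$ for each $i$. Hence the constraint $|x_i^n - y_i^n| \le (\Delta_n^x/\Delta_n^t)(b-a)$ holds for every $i$ once $n$ is large, so $\Omega(a,b,\vec{x}^n,\vec{y}^n)$ is non-empty and $\mathbb{P}_{H,n}^{1,k,a,b,\vec{x}^n,\vec{y}^n,f,g}$ is defined for $n\ge N_0$. Next, let $\mathbb{P}_{free,n}$ denote the uniform law on $\Omega(a,b,\vec{x}^n,\vec{y}^n)=\prod_{i=1}^k\Omega(a,b,x_i^n,y_i^n)$, i.e.\ $\bigotimes_{i=1}^k\mathbb{P}_{free,n}^{a,b,x_i^n,y_i^n}$. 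By Definition \ref{Grids2}, $\mathbb{P}_{H,n}^{1,k,a,b,\vec{x}^n,\vec{y}^n,f,g}$ has Radon--Nikodym derivative $W_{H,n}(\cdot)/\mathbb{E}_{free,n}[W_{H,n}]$ with respect to $\mathbb{P}_{free,n}$, where $W_{H,n}$ abbreviates the discrete weight in (\ref{S5weight}); likewise $\mathbb{P}_{H}^{1,k,a,b,\vec{x},\vec{y},f,g}$ has derivative $W_H(\cdot)/Z_H$ with respect to $\mathbb{P}_{free}^{1,k,a,b,\vec{x},\vec{y}}$, with $Z_H=\mathbb{E}_{free}^{1,k,a,b,\vec{x},\vec{y}}[W_H]\in(0,1)$ by (\ref{S2E1}). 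Thus for any bounded continuous $F:C(\llbracket 1,k\rrbracket\times[a,b])\to\mathbb{R}$,
\begin{equation*}
\mathbb{E}_{\mathbb{P}_{H,n}}[F(\mathcal{Y}^n)] = \frac{\mathbb{E}_{free,n}[F(\mathcal{Y}^n)W_{H,n}(\mathcal{Y}^n)]}{\mathbb{E}_{free,n}[W_{H,n}(\mathcal{Y}^n)]}, \qquad \mathbb{E}_{\mathbb{P}_{H}}[F(\mathcal{Q})] = \frac{\mathbb{E}_{free}[F(\mathcal{Q})W_H(\mathcal{Q})]}{\mathbb{E}_{free}[W_H(\mathcal{Q})]},
\end{equation*}
so it suffices to prove $\mathbb{E}_{free,n}[F(\mathcal{Y}^n)W_{H,n}(\mathcal{Y}^n)]\to\mathbb{E}_{free}[F(\mathcal{Q})W_H(\mathcal{Q})]$ for every bounded continuous $F$ (the denominators being the case $F\equiv1$, with limit $Z_H>0$).

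\textbf{Skorokhod coupling and convergence of the weights.} By Lemma \ref{ConvToBridge} applied coordinatewise, and stability of weak convergence under finite products of independent laws, $\mathcal{Y}^n$ under $\mathbb{P}_{free,n}$ converges weakly to $\mathcal{Q}$ under $\mathbb{P}_{free}^{1,k,a,b,\vec{x},\vec{y}}$ in the Polish space $C(\llbracket 1,k\rrbracket\times[a,b])$. By the Skorokhod representation theorem we may assume $\mathcal{Y}^n\to\mathcal{Q}$ uniformly on $[a,b]$ almost surely on a common space. There the curves are a.s.\ uniformly bounded by some finite $L$, and $f$ is bounded below and $g$ bounded above (as $e^{-f}$, resp.\ $e^{g}$, is continuous on the compact $[a,b]$ by Definition \ref{ContFunInf}), so all nearest-neighbour differences $Y_{i+1}^n(s)-Y_i^n(s)$ (with $Y_0^n=f$, $Y_{k+1}^n=g$) and $Q_{i+1}(s)-Q_i(s)$ lie in a fixed compact $K\subset[-\infty,\infty)$ on which $H$ is bounded and uniformly continuous for the metric $d(x,x')=|e^x-e^{x'}|$. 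Writing $I_n(\mathcal{Z})=\sum_{i=0}^k\sum_{s\in[a,b]\cap\Lambda_{n^2}}\Delta_n^t\,H(Z_{i+1}(s)-Z_i(s))$ and $I(\mathcal{Z})=\sum_{i=0}^k\int_a^b H(Z_{i+1}(u)-Z_i(u))\,du$, one splits $|I_n(\mathcal{Y}^n)-I(\mathcal{Q})|\le|I_n(\mathcal{Y}^n)-I_n(\mathcal{Q})|+|I_n(\mathcal{Q})-I(\mathcal{Q})|$: the second term vanishes since $I_n(\mathcal{Q})$ is a Riemann sum of the bounded continuous integrands $u\mapsto H(Q_{i+1}(u)-Q_i(u))$ with mesh $\Delta_n^t\to0$, and the first term is controlled using $\sum_{s\in[a,b]\cap\Lambda_{n^2}}\Delta_n^t=(b-a)+\Delta_n^t$ together with $\sup_s d\big(Y_{i+1}^n(s)-Y_i^n(s),\,Q_{i+1}(s)-Q_i(s)\big)\to0$ and uniform continuity of $H$ on $K$; for the boundary indices $i=0,k$ the latter supremum is estimated via identities such as $e^{Y_1^n(s)-f(s)}=e^{-f(s)}e^{Y_1^n(s)}$ and the uniform bound on $e^{-f}$. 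Hence $I_n(\mathcal{Y}^n)\to I(\mathcal{Q})$ a.s., so $W_{H,n}(\mathcal{Y}^n)=e^{-I_n(\mathcal{Y}^n)}\to e^{-I(\mathcal{Q})}=W_H(\mathcal{Q})$ a.s. Since $F$ is bounded and continuous, $F(\mathcal{Y}^n)\to F(\mathcal{Q})$ a.s., and as $W_{H,n},W_H\in(0,1]$ the bounded convergence theorem yields the required limits of numerator and denominator; dividing gives $\mathbb{E}_{\mathbb{P}_{H,n}}[F(\mathcal{Y}^n)]\to\mathbb{E}_{\mathbb{P}_H}[F(\mathcal{Q})]$, and since $F$ was arbitrary this is the asserted weak convergence.

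The main obstacle is precisely the almost-sure convergence $W_{H,n}(\mathcal{Y}^n)\to W_H(\mathcal{Q})$: one must simultaneously control the Riemann-sum discretization error (whose integrand is itself moving with $n$) and the two boundary-gap terms, the latter requiring the $e^{-f}$, $e^{g}$ topologies of Definition \ref{ContFunInf} rather than naive uniform convergence, because the arguments of $H$ there may approach $-\infty$ when $f$ takes the value $+\infty$ (or $g$ the value $-\infty$), a point at which $H$ need not extend continuously.
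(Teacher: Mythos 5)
Your proposal is correct and follows essentially the same route as the paper: reduce via the Radon--Nikodym representation to convergence of the free numerator and denominator, pass to a Skorokhod coupling using Lemma \ref{ConvToBridge}, show $W_{H,n}(\mathcal{Y}^n)\to W_H(\mathcal{Q})$ almost surely, and finish with bounded convergence. The paper simply invokes ``$\mathcal{L}^n\to\mathcal{L}$ and $H$ continuous plus bounded convergence'' for the weight convergence, whereas you spell this step out in more detail (splitting into a Riemann-sum error plus a moving-argument error and handling the boundary indices $i=0,k$ via the $e^{-f}$, $e^{g}$ topologies of Definition \ref{ContFunInf}); this is a legitimate elaboration of the same step, not a different argument.
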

\begin{proof} The proof below is inspired by \cite[Lemma 5.5]{DM20}.

Observe that $\mathbb{P}_{H, n}^{1,k,a,b, \vec{x}^n, \vec{y}^n, f, g}$ are well-defined if $|x^n_i  -y^n_i| \leq \frac{\Delta_n^x}{\Delta_n^t}\cdot(b-a) = n \sqrt{3(b-a)/2}$ for all $i = 1, \dots, k$, which clearly holds for all large enough $n$ as $\lim_{n \to \infty} \vec{x}^n = \vec x$, and $\lim_{n \to \infty} \vec{y}^n = \vec y$. This proves the existence of $N_0$ as in the statement of the lemma.

Let $\Lambda = [a,b]$ and $\Sigma = \llbracket 1, k\rrbracket$, we need to show that for any bounded continuous function $F : C(\Sigma \times \Lambda) \to \R$ we have
\begin{equation}\label{RWLimit}
\lim_{n \to \infty} \E\left[F(\mathcal{Y}^{n})\right] = \E\left[F(\mathcal{Q})\right],
\end{equation}
where $\mathcal{Q}$ is a $\Sigma$-indexed line ensemble, whose distribution is $\mathbb{P}_{H}^{1,k,a,b, \vec{x}, \vec{y}, f, g}$.

From Definition \ref{Grids2} we have for $n \geq N_0$ that
\begin{equation}\label{RW}
\E[F( \mathcal{Y}^n)] = \frac{\E\left[F(\CL^{n}) W_{H,n}(\CL^{n}) \right]}{\E\left[W_{H,n}(\CL^{n}) \right]},
\end{equation}
where $\CL^{n} = (\mathcal{L}_1, \dots, \mathcal{L}_k)$ is a line ensemble of $k$ independent random walk bridges with $\mathcal{L}_i$ having distribution $\mathbb{P}_{free, n}^{a,b,x_i^n,y_i^n}$ and we have written $W_{H,n}$ in place of $W^{1,k,a,b,\vec{x}, \vec{y},f,g}_{H, n}$ to ease the notation. For concreteness, we will denote the law of $\CL^n$ by $\mathbb{P}_{free, n}^{1,k,a,b,\vec x^n,\vec y^n}$. On the other hand, By Definition \ref{DefAvoidingLaw} we also have
\begin{equation}\label{S5BB}
\E[F(\mathcal{Q})] = \frac{\E\left[F(\CL) W_H(\CL) \right]}{\E\left[W_H(\CL) \right]},
\end{equation}
where $\CL = (\mathcal{L}_1, \dots, \mathcal{L}_k)$ is a $\llbracket1,k \rrbracket$-indexed line ensemble of independent Brownian bridges with distribution $\mathbb{P}_{free}^{1,k,a,b,\vec x,\vec y}$ and we have written $W_H$ in place of $W_H^{k_1, k_2, a,b, \vec{x}, \vec{y}, f,g}$ to ease the notation. 

In view of (\ref{RW}) and (\ref{S5BB}) we see that to prove (\ref{RWLimit}) it suffices to prove that for any bounded continuous function $F : C(\Sigma \times \Lambda) \to \R$ we have
\begin{equation}\label{RWLimitRed}
\lim_{n \to \infty} \E\left[F(\CL^{n})W_{H,n}(\CL^{n}) \right] = \E\left[F(\CL)  W_H(\CL)\right].
\end{equation}

By Lemma \ref{ConvToBridge} we know that $\mathcal{L}^n \implies \mathcal{L}$ as $n \rightarrow \infty.$ In addition, using that $C([a,b])$ with the uniform topology is separable, see e.g. \cite[Example 1.3, pp 11]{Billing}, we know that $C(\Sigma \times \Lambda) $ is also separable. In particular we can apply the Skorohod Representation Theorem, see \cite[Theorem 6.7]{Billing}, from which we conclude that there exists a probability space $(\Omega, \mathcal{F}, \mathbb{P})$, which supports $C(\Sigma \times [a,b])$-valued random variables $\mathcal{L}^n$ and a $C(\Sigma \times \Lambda)$-valued random variable $\mathcal{L}$ such that $\mathcal{L}^n \rightarrow \mathcal{L}$ for every $\omega \in \Omega$ and such that under $\mathbb{P}$ the law of $\mathcal{L}^n$ is $\mathbb{P}_{free, n}^{1,k,a,b,\vec x^n,\vec y^n}$, while under $\mathbb{P}$ the law of $\mathcal{L}$ is $\mathbb{P}_{free}^{1,k,a,b,\vec x,\vec y}$. Since $\mathcal{L}^n \rightarrow \mathcal{L}$ and $H$ is continuous we conclude by the bounded convergence theorem that for any $\omega \in \Omega$ we have
\begin{equation*}
\lim_{n \to \infty} \sum_{i = 0}^{k} \sum_{s \in [a,b] \cap \Lambda_{n^2}} \Delta^t_n \cdot  H \left(\CL^n_{i+1}(s) - \CL^n_i(s)\right)    = \sum_{i =0}^{k} \int_a^b H \left( \mathcal{L}_{i+1}(u) - \mathcal{L}_i(u)\right)du.
\end{equation*}
The last equation and the continuity of $F$ imply that for any $\omega \in \Omega$ we have
\begin{equation*}
\lim_{n \to \infty}F(\CL^{n})W_{H,n}(\CL^{n}) = F(\CL)  W_H(\CL).
\end{equation*}
The last equation and the the bounded convergence theorem (here we used that $W_{H,n} \in [0, 1]$ and that $F$ is bounded) imply (\ref{RWLimitRed}). This suffices for the proof.
\end{proof}

%
\subsection{Proofs of Lemmas \ref{MCLxy} and \ref{MCLfg} }\label{Section5.3} The main result of this section is as follows.
\begin{lemma}\label{MCLxyfg} Assume the same notation as in Definition \ref{DefAvoidingLaw} and that $H$ is convex. Fix $k \in \mathbb{N}$, $a < b$ and four continuous functions $f^t, f^b:[a,b] \rightarrow [-\infty, \infty)$, $g^t, g^b: [a,b] \rightarrow [-\infty, \infty)$ such that $f^t(x) \geq f^b(x)$ and $g^t(x) \geq g^b(x)$ for all $x \in [a,b]$. We also fix $\vec{x}^b, \vec{y}^b, \vec{x}^t, \vec{y}^t \in \mathbb{R}^k$ such that $x_i^b \leq x_i^t$, $y_i^b \leq y_i^t$ for $i = 1,\dots, k$. Then there exists a probability space $(\Omega, \mathcal{F}, \mathbb{P})$, which supports two $\llbracket 1, k \rrbracket$-indexed line ensembles $\mathcal{L}^t$ and $\mathcal{L}^b$ on $[a,b]$ such that the law of $\mathcal{L}^{t}$ {\big (}resp. $\mathcal{L}^b${\big )} under $\mathbb{P}$ is $\mathbb{P}_{H}^{1,k,a,b, \vec{x}^t, \vec{y}^t, f^t, g^t}$ {\big (}resp. $\mathbb{P}_{H}^{1,k,a,b, \vec{x}^b, \vec{y}^b, f^b, g^b}${\big )} and such that $\mathbb{P}$-almost surely we have $\mathcal{L}_i^t(x) \geq \mathcal{L}^b_i(x)$ for all $i = 1,\dots, k$ and $x \in [a,b]$.
\end{lemma}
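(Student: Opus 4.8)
The plan is to follow the strategy of \cite[Section 2]{CorHamK}: construct the monotone coupling in the discrete setting of Definition \ref{Grids2}, where the relevant measures are supported on finite state spaces and a monotone coupling is elementary, and then pass to the continuum limit using Lemma \ref{lem:RW}. We adopt the conventions of Definition \ref{DefAvoidingLaw}, so that $f^t,f^b\colon[a,b]\to(-\infty,\infty]$ and $g^t,g^b\colon[a,b]\to[-\infty,\infty)$; note that Lemmas \ref{MCLxy} and \ref{MCLfg} are the special cases of Lemma \ref{MCLxyfg} with $f^t=f^b,\,g^t=g^b$ and with $\vec x^t=\vec x^b,\,\vec y^t=\vec y^b$ respectively, so it suffices to prove the combined statement. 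First I would fix, for all large $n$, discretizations $\vec x^{t,n},\vec x^{b,n},\vec y^{t,n},\vec y^{b,n}\in\R_n^k$ (notation of Definition \ref{Grids}) with $\vec x^{t,n}\to\vec x^t$, $\vec x^{b,n}\to\vec x^b$, $\vec y^{t,n}\to\vec y^t$, $\vec y^{b,n}\to\vec y^b$ and, crucially, $x^{b,n}_i\le x^{t,n}_i$ and $y^{b,n}_i\le y^{t,n}_i$ for all $i$; this is possible because the analogous (strict or equality) relations hold in the limit. By Lemma \ref{lem:RW} the measures $\mathbb P^t_n:=\mathbb P_{H,n}^{1,k,a,b,\vec x^{t,n},\vec y^{t,n},f^t,g^t}$ and $\mathbb P^b_n:=\mathbb P_{H,n}^{1,k,a,b,\vec x^{b,n},\vec y^{b,n},f^b,g^b}$ are well defined for $n\ge N_0$ and converge weakly to $\mathbb P_{H}^{1,k,a,b,\vec x^t,\vec y^t,f^t,g^t}$ and $\mathbb P_{H}^{1,k,a,b,\vec x^b,\vec y^b,f^b,g^b}$.

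The heart of the argument is the discrete claim that for each $n\ge N_0$ there is a coupling of $\mathbb P^t_n$ and $\mathbb P^b_n$ under which $\mathcal L^{t,n}_i(s)\ge\mathcal L^{b,n}_i(s)$ for all $i=1,\dots,k$ and all $s\in[a,b]$. To prove this I would use heat-bath (Glauber) dynamics. Both $\mathbb P^t_n$ and $\mathbb P^b_n$ are (up to a constant) the uniform measure on a finite product of random walk bridge spaces reweighted by the Boltzmann factor $W_{H,n}$ of (\ref{S5weight}), so one can run the continuous-time Markov chain that, at rate one per interior vertex $(i,s)\in\llbracket 1,k\rrbracket\times(\Lambda_{n^2}\cap(a,b))$, replaces the value at $(i,s)$ by a fresh sample from its conditional law given all other values. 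That conditional law of $Y_i(s)$ is proportional, on the set $\R_n\cap[\max\{Y_i(s-\Delta^t_n),Y_i(s+\Delta^t_n)\}-\Delta^x_n,\ \min\{Y_i(s-\Delta^t_n),Y_i(s+\Delta^t_n)\}+\Delta^x_n]$, to $\exp(-\Delta^t_n[H(Y_{i+1}(s)-v)+H(v-Y_{i-1}(s))])$, with the conventions $Y_0=f$, $Y_{k+1}=g$. The key point is that convexity of $H$ makes this conditional law stochastically increasing in the quadruple $(Y_{i-1}(s),Y_{i+1}(s),Y_i(s-\Delta^t_n),Y_i(s+\Delta^t_n))$: monotonicity in $Y_{i-1}(s)$ and $Y_{i+1}(s)$ is exactly the fact that $u\mapsto H(u+c)-H(u)$ is nondecreasing for $c\ge 0$, while monotonicity in $Y_i(s\pm\Delta^t_n)$ holds because raising these neighbors shifts the support interval upward and conditioning a fixed measure on an upward-shifted interval yields a stochastically larger law.

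Granting the stochastic monotonicity, the standard monotone (grand) coupling applies: run the two chains with common Poisson clocks and, at each update, feed the same uniform random variable through the inverse distribution functions of the two conditional laws. Since $f^t\ge f^b$, $g^t\ge g^b$, $\vec x^{t,n}\ge\vec x^{b,n}$, $\vec y^{t,n}\ge\vec y^{b,n}$, and the componentwise order is preserved at every update (the resampled coordinate stays ordered, all others are untouched), starting the top chain from the pointwise-maximal configuration in $\Omega(a,b,\vec x^{t,n},\vec y^{t,n})$ and the bottom chain from the pointwise-minimal configuration in $\Omega(a,b,\vec x^{b,n},\vec y^{b,n})$ — which are ordered, as one checks from the explicit extremal random walk bridges together with $\vec x^{b,n}\le\vec x^{t,n}$, $\vec y^{b,n}\le\vec y^{t,n}$ — keeps $\mathcal L^{t,n}\ge\mathcal L^{b,n}$ for all time. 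Both chains are irreducible on their finite state spaces (connectivity under single-site moves of lattice paths with increments in $\{-\Delta^x_n,0,\Delta^x_n\}$), so the time-$T$ laws of each chain converge to $\mathbb P^t_n$, respectively $\mathbb P^b_n$; extracting a weak limit point of the joint time-$T$ laws as $T\to\infty$ (trivial, since the state space is finite) and using that $\{\mathcal L^t\ge\mathcal L^b\text{ pointwise}\}$ is closed gives the desired discrete coupling.

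Finally I would pass to the limit. The laws $\mathbb P_n$ of the coupled pairs $(\mathcal L^{t,n},\mathcal L^{b,n})$ on $C(\llbracket 1,k\rrbracket\times[a,b])^2$ form a tight sequence, because each marginal converges and is therefore tight; by Prokhorov some subsequence converges weakly to a law $\mathbb P_\infty$. By Lemma \ref{lem:RW} its marginals are $\mathbb P_{H}^{1,k,a,b,\vec x^t,\vec y^t,f^t,g^t}$ and $\mathbb P_{H}^{1,k,a,b,\vec x^b,\vec y^b,f^b,g^b}$, and since $\{(\mathcal L^t,\mathcal L^b):\mathcal L^t_i(s)\ge\mathcal L^b_i(s)\ \forall i,s\}$ is closed in the product topology, the Portmanteau theorem yields $\mathbb P_\infty$-measure $\ge\limsup_n\mathbb P_n$-measure $=1$ for this set. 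Realizing $\mathbb P_\infty$ on a probability space $(\Omega,\mathcal F,\mathbb P)$ produces the line ensembles $\mathcal L^t,\mathcal L^b$ with all the required properties. The main obstacle is the discrete claim, and more precisely the verification that convexity of $H$ gives stochastic monotonicity of the single-site conditionals uniformly over the relevant configurations, together with the bookkeeping (order-preserving discretization of the entrance, exit and boundary data; the ordering of the two extremal initial configurations; irreducibility and ergodicity of the dynamics); the continuum limit is then routine given Lemma \ref{lem:RW}.
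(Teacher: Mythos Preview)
Your proposal is correct and follows essentially the same strategy as the paper: discretize via Definition \ref{Grids2}, construct a monotone coupling of the two discrete measures by running coupled Markov chains whose ordering is preserved at each update (using convexity of $H$), and then pass to the continuum via Lemma \ref{lem:RW} together with tightness, Prokhorov, and the closedness of the ordering set.

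The only notable difference is in the choice of dynamics for the discrete coupling. The paper uses Metropolis-type updates: at a ring of the $(i,r)$-Poisson clock one proposes a uniform shift $\delta\in\{-1,0,1\}$ and accepts with probability $\min\bigl(1, W_{H,n}(\text{proposed})/W_{H,n}(\text{current})\bigr)$, with both chains sharing the same $\delta$ and the same acceptance uniform $U$; the verification that ordering is preserved (the paper's Step 5) is a short case analysis on $\delta=\pm 1$ that reduces to the convexity inequality ``$H(x+c)-H(x)$ is nondecreasing in $x$ for $c\ge 0$''. You instead use heat-bath updates (resample from the full single-site conditional, couple via a common uniform through the inverse CDFs) and argue stochastic monotonicity of that conditional in the four neighboring values $Y_{i\pm 1}(s),\,Y_i(s\pm\Delta_n^t)$. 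Both arguments are valid and rest on exactly the same convexity fact; your version packages it as a likelihood-ratio ordering, the paper's as two explicit inequalities. The paper also initializes both chains at their respective \emph{maximal} configurations $y^{\max},z^{\max}$ and checks $y^{\max}\le z^{\max}$ from the endpoint ordering, whereas you start the top at its maximum and the bottom at its minimum; either choice gives an initially ordered pair. One small caution: your phrase ``conditioning a fixed measure on an upward-shifted interval'' is slightly loose, since the weight factor also changes with $Y_{i\pm 1}(s)$; the clean statement is that the full likelihood ratio $p_t(v_2)p_b(v_1)\ge p_t(v_1)p_b(v_2)$ holds for $v_1<v_2$, which covers both the shifted support (where one side vanishes) and the interior (where convexity of $H$ does the work).
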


It is clear that Lemmas \ref{MCLxy} and \ref{MCLfg} both follow from Lemma \ref{MCLxyfg}. The reason we keep the statements of the two lemmas separate earlier in the paper is that it makes their application a bit more transparent in the main body of text.

\begin{proof}[Proof of Lemma \ref{MCLxyfg}] The proof below is inspired by the proofs of \cite[Lemmas 2.6 and 2.7]{CorHamK} and \cite[Lemma 5.6]{DM20}. We assume the same notation as in Lemma \ref{MCLxyfg} and also Definition \ref{Grids2}. Specifically, we fix $\Sigma = \llbracket 1, k\rrbracket$ and $\Lambda = [a,b]$. For clarity we split the proof into five steps.\\

{\bf \raggedleft Step 1.} We choose any sequences $\vec{x}^n, \vec{y}^n, \vec{u}^n, \vec{v}^n \in \R_{n}^k$ such that for each $n \in \mathbb{N}$ we have $x_i^n \leq u_i^n$, $y_i^n \leq v_i^n$ for $i = 1, \dots,k$ and also such that $\lim_{n \to \infty} \vec{x}^n = \vec{x}^b$, $\lim_{n \to \infty} \vec{y}^n = \vec{y}^b$, $\lim_{n \rightarrow \infty}\vec{u}^n = \vec{x}^t$ and $\lim_{n\rightarrow \infty} \vec{v}^n = \vec{y}^t$. It follows from Lemma \ref{lem:RW} that there exists $N_0 \in \mathbb{N}$ such that if $n \geq N_0$ we have that $\mathbb{P}_{H, n}^{1,k,a,b, \vec{x}^n, \vec{y}^n, f^b, g^b}$ and $\mathbb{P}_{H, n}^{1,k,a,b, \vec{u}^n, \vec{v}^n, f^t, g^t}$ are well-defined. 

We claim that we can construct sequences of probability spaces $(\Omega_n, \mathcal{F}_n, \mathbb{P}_n)$ for $n \geq N_0$ that support $\llbracket 1, k \rrbracket$-indexed line ensembles $\mathcal{Y}^n$ and $\mathcal{Z}^n$, whose laws are $\mathbb{P}_{H, n}^{1,k,a,b, \vec{x}^n, \vec{y}^n, f^b, g^b}$ and  $\mathbb{P}_{H, n}^{1,k,a,b, \vec{u}^n, \vec{v}^n, f^t, g^t}$ respectively, such that for each $\omega \in \Omega_n$ we have
\begin{equation}\label{S52Red1}
\mathcal{Y}^n(\omega)(i,x) \leq \mathcal{Z}^n(\omega)(i,x) \mbox{ for $i = 1, \dots, k$ and $x \in [a,b]$}.
\end{equation}
We show (\ref{S52Red1}) in the next step. Here we assume its validity and conclude the proof of the lemma.\\

It follows from Lemma \ref{lem:RW} that $\mathcal{Y}^n$ converge weakly to $\mathbb{P}_{H}^{1,k,a,b, \vec{x}^b, \vec{y}^b, f^b, g^b}$ and $\mathcal{Z}^n$ converge weakly to $\mathbb{P}_{H}^{1,k,a,b, \vec{x}^t, \vec{y}^t, f^t, g^t}$ as $n \to \infty$. In particular, the latter sequences of measures are relatively compact, which by the separability and completeness of $C(\llbracket 1, k \rrbracket \times [a,b])$ implies that these sequences are tight, cf. \cite[Theorem 5.2]{Billing}. In particular, the sequence of random variables $(\mathcal{Y}^n, \mathcal{Z}^n)$ on $(\Omega_n, \mathcal{F}_n, \mathbb{P}_n)$ (viewed as $C(\llbracket 1, k \rrbracket \times [a,b]) \times C(\llbracket 1, k \rrbracket \times [a,b])$-valued random variables with the product topology and corresponding Borel $\sigma$-algebra) are also tight. 

By Prohorov's theorem, see \cite[Theorem 5.1]{Billing}, we conclude that the sequence of laws of $(\mathcal{Y}^n, \mathcal{Z}^n)$ is relatively compact. Let $n_m$ be a subsequence such that $(\mathcal{Y}^{n_m}, \mathcal{Z}^{n_m})$ converges weakly. By the Skorohod Representation Theorem, see \cite[Theorem 6.7]{Billing}, we conclude that there exists a probability space $(\Omega, \mathcal{F}, \mathbb{P})$, which supports $C(\Sigma \times [a,b])$-valued random variables $\mathcal{Y}^{n_m}, \mathcal{Z}^{n_m}$ and $\mathcal{Y}, \mathcal{Z}$ such that
\begin{enumerate}
\item  $\mathcal{Y}^{n_m} \rightarrow \mathcal{Y}$ for every $\omega \in \Omega$ as $m \rightarrow \infty$; 
\item $\mathcal{Z}^{n_m} \rightarrow \mathcal{Z}$  for every $\omega \in \Omega$ as $m \rightarrow \infty$; 
\item  under $\mathbb{P}$ the law of $\mathcal{Y}^{n_m}$ is $\mathbb{P}_{H, n_m}^{1,k,a,b,\vec x^{n_m},\vec y^{n_m}, f^b, g^b}$;
\item   under $\mathbb{P}$ the law of $\mathcal{Z}^{n_m}$ is $\mathbb{P}_{H, n_m}^{1,k,a,b,\vec u^{n_m},\vec v^{n_m}, f^t, g^t}$;
\item  $\mathbb{P}$-almost surely we have $\mathcal{Y}^{n_m}(i,x) \leq \mathcal{Z}^{n_m}(i,x) \mbox{ for $m \geq 1$, $i = 1, \dots, k$ and $x \in [a,b]$}$.
\end{enumerate}
Since $\mathcal{Y}^n$ converge weakly to $\mathbb{P}_{H}^{1,k,a,b, \vec{x}^b, \vec{y}^b, f^b, g^b}$ and $\mathcal{Z}^n$ converge weakly to $\mathbb{P}_{H}^{1,k,a,b, \vec{x}^t, \vec{y}^t, f^t, g^t}$ we conclude that under $\mathbb{P}$ the variables $\mathcal{Y}$ and $\mathcal{Z}$ have laws $\mathbb{P}_{H}^{1,k,a,b, \vec{x}^b, \vec{y}^b, f^b, g^b}$ and $\mathbb{P}_{H}^{1,k,a,b, \vec{x}^t, \vec{y}^t, f^t, g^t}$ respectively. Also conditions (1), (2) and (5) above imply that $\mathbb{P}$-almost surely we have 
$$\mathcal{Y}(i,x) \leq \mathcal{Z}(i,x) \mbox{ for $i = 1, \dots, k$ and $x \in [a,b]$}.$$
Consequently, taking the above probability space $(\Omega, \mathcal{F}, \mathbb{P})$ and setting $(\mathcal{L}^t, \mathcal{L}^b) =(\mathcal{Y}, \mathcal{Z})$ we obtain the statement of the lemma.\\

{\bf \raggedleft Step 2.} In this step we prove (\ref{S52Red1}). Let $Y_n$ and $Z_n$ denote the (finite) sets of possible elements in $C(\llbracket 1, k \rrbracket \times [a,b])$ that the line ensembles $\mathcal{Y}^n$ and $\mathcal{Z}^n$ can take with positive probability. We will construct a continuous time Markov chain $(A_t, B_t)$ taking values in $Y_n \times  Z_n$, such that:
\begin{enumerate}
\item $A_t$ and $B_t$ are each Markov in their own filtration;
\item $A_t$ is irreducible and has invariant measure $\mathbb{P}_{H, n}^{1,k,a,b, \vec{x}^n, \vec{y}^n, f^b, g^b}$;
\item $B_t$ is irreducible and has invariant measure $\mathbb{P}_{H, n}^{1,k,a,b, \vec{u}^n, \vec{v}^n, f^t, g^t}$;
\item for every $t \geq 0$ we have $A_t(i,x) \leq B_t(i,x)$ for $i \in \llbracket 1, k \rrbracket$ and $x \in [a,b]$.
\end{enumerate}
We will construct the Markov chain $(A_t, B_t)$ in the next step. Here we assume we have such a construction and conclude the proof of (\ref{S52Red1}).\\

From \cite[Theorems 3.5.3 and 3.6.3]{Norris} we know that $A_N$ weakly converges to $\mathbb{P}_{H, n}^{1,k,a,b, \vec{x}^n, \vec{y}^n, f^b, g^b}$ and $B_N$ weakly converges to $\mathbb{P}_{H, n}^{1,k,a,b, \vec{u}^n, \vec{v}^n, f^t, g^t}$ as $N \rightarrow \infty$. In particular, we see that $A_N, B_N$ are tight and then so is the sequence $(A_N,B_N)$. By Prohorov's theorem, see \cite[Theorem 5.1]{Billing}, we conclude that the sequence of laws of $(A_N, B_N)$ is relatively compact. Let $N_m$ be a subsequence such that  $(A_{N_m}, B_{N_m})$ converge weakly. By the Skorohod Representation Theorem, see \cite[Theorem 6.7]{Billing}, we conclude that there exists a probability space $(\Omega, \mathcal{F}, \mathbb{P})$, which supports $C(\Sigma \times [a,b])$-valued random variables $\mathcal{A}_m, \mathcal{B}_m$ and $\mathcal{A}, \mathcal{B}$ such that
\begin{itemize}
\item  $\mathcal{A}_{m} \rightarrow \mathcal{A}$ for every $\omega \in \Omega$ as $m \rightarrow \infty$; 
\item $\mathcal{B}_m \rightarrow \mathcal{B}$  for every $\omega \in \Omega$ as $m \rightarrow \infty$; 
\item  under $\mathbb{P}$ the law of $(\mathcal{A}_m, \mathcal{B}_m)$ is the same as that of $(A_{N_m}, B_{N_m})$.
\end{itemize}
The weak convergence of $A_N, B_N$ implies that $\mathcal{A}$ under $\mathbb{P}$ has law $\mathbb{P}_{H, n}^{1,k,a,b, \vec{x}^n, \vec{y}^n, f^b, g^b}$ and $\mathcal{B}$ under $\mathbb{P}$ has law $\mathbb{P}_{H, n}^{1,k,a,b, \vec{u}^n, \vec{v}^n, f^t, g^t}$. Furthermore, the fourth condition in the beginning of the step shows that $\mathcal{A}(i,x) \leq \mathcal{B}(i,x)$ for $i \in \llbracket 1, k \rrbracket$ and $x \in [a,b]$. Consequently, we can take $(\Omega_n, \mathcal{F}_n, \mathbb{P}_n)$ to be the above space $(\Omega, \mathcal{F}, \mathbb{P})$ and set $(\mathcal{Y}^n, \mathcal{Z}^n)= (\mathcal{A}, \mathcal{B})$. This proves (\ref{S52Red1}).\\

{\bf \raggedleft Step 3.} In this step we construct the chain $(A_t, B_t)$, satisfying the four conditions in the beginning of Step 2. We first describe the initial state of the Markov chain $(A_0, B_0)$. Notice that if $y \in Y_n$ there is a natural way to encode $y(i,x)$ for $i \in \llbracket 1, k \rrbracket$ by a list of $n^2$ symbols $\{-1, 0, 1\}$, where the $j$-th symbol is precisely 
$$\frac{y(i, a + j \cdot \Delta^t_n) - y(i, a + (j-1) \cdot \Delta^t_n)}{\Delta_n^x}.$$
 We define the lexicographic ordering on the set of all such lists of symbols (where of course $1 > 0 > -1$). If we look at $y(i,x)$, we see that there is a maximal sequence of $n^2$ symbols, which consists of 
$\left\lfloor \frac{1}{2} \cdot \left( \frac{y_i^n - x_i^n}{ \Delta_x}+ n^2 \right)\right\rfloor$
symbols $1$, followed by a $0$ if 
$ \frac{1}{2} \cdot \left( \frac{y_i^n - x_i^n}{ \Delta_x}+ n^2 \right)\not \in \mathbb{Z},$
 followed by 
$\left\lfloor \frac{1}{2} \cdot \left( \frac{x_i^n - y_i^n}{ \Delta_x}+ n^2 \right) \right\rfloor$
 symbols $-1$. We call the curve corresponding to this list $y^{max}(i,\cdot)$. One directly checks that $y^{max}=(y^{\max}(1, \cdot), \cdots, y^{\max}(k, \cdot)) \in Y_n.$ In showing the last statement, we implicitly used that $n \geq N_0$ so that $\Omega(a,b, \vec{x}^n, \vec{y}^n)$ as in Definition \ref{Grids2} is non-empty. 

 We analogously define $z^{max} \in Z_n$ by replacing everywhere above $x_i^n, y_i^n$ with $u_i^n, v_i^n$ respectively. Again one needs to use that $n \geq N_0$ so that $\Omega(a,b, \vec{u}^n, \vec{v}^n)$ is non-empty.  One further checks directly that $y^{max}(i,x) \leq z^{max}(i,x)$ for all $i \in \llbracket 1, k \rrbracket$ and $x\in [a,b]$. The state $(y^{max}, z^{max})$ is the initial state of our chain.\\

We next describe the dynamics. For each point $r \in \Lambda_{n^2} \cap (a,b)$ and $i \in \llbracket 1,k \rrbracket$ we have an independent Poisson process $R(i,r)$, with rate $1$. In addition, we assume that we have countable collections of uniform $\{-1, 0, 1\}$ random variables $\delta(i,r,m)$ and uniform $(0,1)$ random variables $U(i,r,\delta,m)$, indexed by $r \in \Lambda_{n^2} \cap (a,b)$, $i \in \llbracket 1,k \rrbracket$, $\delta \in \{-1, 0, 1\}$ and $m \in \mathbb{N}$. These uniform random variables and the Poisson processes are assumed to be all independent from each other. 

When $R(i,r)$ jumps at time $T$, we fix $\delta = \delta(i,r,R_T(i,r))$ and we update $(A_{T-},B_{T-})$ as follows. We erase the part of $A_{T-}(i,x)$ (resp. $B_{T-}(i,x)$) for $x \in [r - \Delta_n^t, r+\Delta_n^t]$ and replace that piece with two linear pieces connecting the points $\left(r-\Delta_n^t, A_{T-}(i, r- \Delta_n^t)\right)$ and $\left(r+\Delta_n^t, A_{T-}(i, r+ \Delta_n^t)\right)$ with $\left(r, A_{T-}(i, r) + \delta \cdot \Delta_n^x\right)$ (resp. $\left(r-\Delta_n^t, B_{T-}(i, r- \Delta_n^t)\right)$ and $\left(r+\Delta_n^t, B_{T-}(i, r+ \Delta_n^t)\right)$ with $\left(r, B_{T-}(i, r) + \delta \cdot \Delta_n^x\right)$). Call the resulting $C(\Sigma \times [a,b])$-valued elements $A_T'$ and $B_T'$. If $A_T'$ is in $Y_n$ we set $A_T$ to it if 
$$U(i,r,\delta, R_T(i,r)) \leq \frac{W^{1,k,a,b,\vec{x}^n, \vec{y}^n, f^{b}, g^b}_{H,n}(A_T')}{W^{1,k,a,b,\vec{x}^n, \vec{y}^n, f^{b}, g^b}_{H,n}(A_{T-})},$$
and otherwise we set $A_T = A_{T-}$. We remark that the ratio on the right side above is the same as ratio with $W^{1,k,a,b,\vec{x}^n, \vec{y}^n, f^{b}, g^b}_{H,n}$ replaced with $\mathbb{P}_{H, n}^{1,k,a,b, \vec{x}^n, \vec{y}^n, f^b, g^b}$, and recall that $W^{1,k,a,b,\vec{x}, \vec{y}, f, g}_{H,n}$ was defined in (\ref{S5weight}). Similarly, if $B_T'$ is in $Z_n$ we set $B_T$ to it if 
$$U(i,r,\delta, R_T(i,r)) \leq \frac{W^{1,k,a,b,\vec{x}^n, \vec{y}^n, f^{t}, g^t}_{H,n}(B_T')}{W^{1,k,a,b,\vec{x}^n, \vec{y}^n, f^{t}, g^t}_{H,n}(B_{T-})},$$
and otherwise we set $B_T = B_{T-}$. We remark that the ratio on the right side above is the same as ratio with $W^{1,k,a,b,\vec{x}^n, \vec{y}^n, f^{t}, g^t}_{H,n}$ replaced with $\mathbb{P}_{H, n}^{1,k,a,b, \vec{x}^n, \vec{y}^n, f^t, g^t}$.

This defines the dynamics of our process $(A_t, B_t)$. It is clear from the above definition that $(A_t, B_t)$ is a Markov chain and that $A_t$ and $B_t$ are individually Markov in their own filtration. This proves that condition (1) in the beginning of Step 2 hold. In the following steps we show that $(A_t, B_t)$ satisfies conditions (2), (3) and (4) in the beginning of Step 2 as well. \\

{\bf \raggedleft Step 4.} We first show that the measure $\mathbb{P}_{H, n}^{1,k,a,b, \vec{x}^n, \vec{y}^n, f^b, g^b}$ on $Y_n$ is invariant with respect to the $A_t$ dynamics from Step 3. In order to do this, it suffices to show that for any $r \in \Lambda_{n^2} \cap (a,b)$ and $i \in \llbracket 1,k \rrbracket$ if the $R(i,r)$ process jumps at time $T$ and $ A_{T-}$ has distribution $\mathbb{P}_{H, n}^{1,k,a,b, \vec{x}^n, \vec{y}^n, f^b, g^b}$ so does $A_T$. Setting $p(y) := \mathbb{P}_{H, n}^{1,k,a,b, \vec{x}^n, \vec{y}^n, f^b, g^b}(y)$, we need to show that for each $ y \in Y_n$ we have
\begin{equation}\label{S5S4E1}
\mathbb{P}(A_T = y) = p(y).
\end{equation}
We prove (\ref{S5S4E1}) by considering different cases, depending on the structure of $y$ near $y(i,r)$ -- see Figure \ref{S5_1}.\\
\begin{figure}[ht]
\begin{center}
  \includegraphics[scale = 0.6]{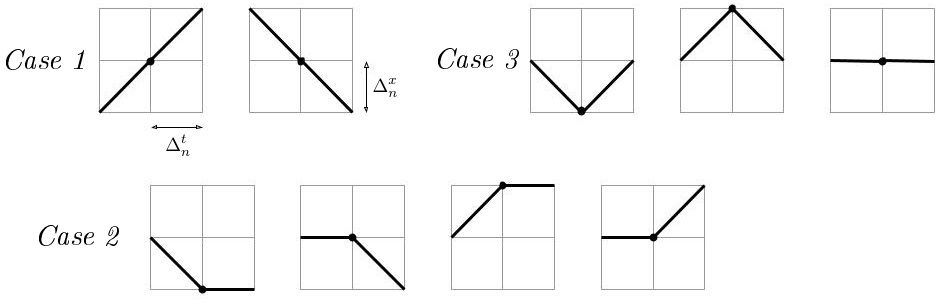}
  \vspace{-2mm}
  \caption{The picture shows the possible local structures of $y(i, \cdot)$ near the point $(i,r)$. The dot represents $y(i,r)$.}
  \label{S5_1}
  \end{center}
\end{figure}

{\em \raggedleft Case 1.} Suppose that $|y(i, r - \Delta_n^t) - y(i, r + \Delta_n^t)| = 2 \Delta_n^x$. We have two possibilities for $y$ and as they are quite similar, we only prove (\ref{S5S4E1}) when $y(i,r) = y(i, r \pm \Delta_n^t) \mp \Delta_n^x$. Notice that in this case we have that $A_T = y$ can only be achieved if $A_{T-} = y$. Moreover, any $\delta \in \{-1, 0, 1\}$ results in $A_T = y$ and so we conclude that 
$$\mathbb{P}(A_T = y)  = \mathbb{P}(A_{T-} = y) = p(y).$$

{\em \raggedleft Case 2.} Suppose that $|y(i, r - \Delta_n^t) - y(i, r + \Delta_n^t)| =  \Delta_n^x$. We have four possibilities for $y$ and as they are quite similar, we only prove (\ref{S5S4E1}) when $y(i,r) = y(i, r - \Delta_n^t) + \Delta_n^x = y(i, r+ \Delta_n^t)$. We let $y^-$ denote the element in $Y_n$ that is obtained from $y$ by erasing the part of $y(i,x)$ between $[r-\Delta_n^t, r+\Delta_n^t]$ and replacing it with two linear pieces connecting $(r- \Delta_n^t, y(i, r-\Delta_n^t))$ and $(r + \Delta_n^t, y(i, r+\Delta_n^t))$ with $(r, y(i, r) - \Delta_n^x)$. Notice that in this case we have that $A_T = y$ can only be achieved if $A_{T-} = y$ or $A_{T-} = y^-$. If $A_{T-} = y$ then we either must have $\delta = 0,1$ (and then $A_{T} = A_{T-}$ with probability $1$) or we must have $\delta = -1$ and also $p(y^{-})/ p(y) < U(i,r, -1, R_T(i,r))$. If $A_{T-} = y^-$ then we must have $\delta = 1$ and $p(y)/p(y^-) \geq U(i,r, 1, R_T(i,r))$. If $p(y) \geq p(y^-)$ then we have
$$\mathbb{P}(A_T = y) = p(y) \cdot \frac{2}{3} + p(y) \cdot \frac{1}{3} \cdot \left( 1 - \frac{p(y^{-})}{ p(y)} \right) + p(y^{-}) \cdot \frac{1}{3} \cdot 1 = p(y).$$
If $p(y) \leq p(y^-)$ then we have
$$\mathbb{P}(A_T = y) = p(y) \cdot \frac{2}{3} + p(y) \cdot \frac{1}{3} \cdot 0+ p(y^{-}) \cdot \frac{1}{3} \cdot \frac{p(y)}{ p(y^-)}  = p(y).$$
We conclude in this case that (\ref{S5S4E1}) holds.\\

{\em \raggedleft Case 3.} Suppose that $|y(i, r - \Delta_n^t) - y(i, r + \Delta_n^t)| =  0$. We have three possibilities for $y$ and as they are quite similar, we only prove (\ref{S5S4E1}) when $y(i,r) = y(i, r - \Delta_n^t) = y(i, r+ \Delta_n^t)$. We let $y^{\pm}$ denote the element in $Y_n$ that is obtained from $y$ by erasing the part of $y(i,x)$ between $[r-\Delta_n^t, r+\Delta_n^t]$ and replacing it with two linear pieces connecting $(r- \Delta_n^t, y(i, r-\Delta_n^t))$ and $(r + \Delta_n^t, y(i, r+\Delta_n^t))$ with $(r, y(i, r) \pm \Delta_n^x)$. Notice that in this case we have that $A_T = y$ can only be achieved if $A_{T-} = y$, $A_{T-} = y^-$ or $A_{T-} = y^+$. If $A_{T-} = y$ then we either must have $\delta = 0$ (and then $A_{T} = A_{T-}$ with probability $1$), or $\delta = -1$ and also $p(y^{-})/ p(y) < U(i,r, -1, R_T(i,r))$, or $\delta = 1$ and also $p(y^{+})/ p(y) < U(i,r, 1, R_T(i,r))$. If $A_{T-} = y^-$ then we must have $\delta = 1$ and $p(y)/p(y^-) \geq U(i,r, 1, R_T(i,r))$. If $ A_{T-} = y^+$ then we must have $\delta = -1$ and $p(y)/p(y^+) \geq U(i,r, -1, R_T(i,r))$. Below we compute $\mathbb{P}(A_T = y)$ in the different cases, corresponding to the possible orderings of $p(y), p(y^-)$ and $p(y^+)$.

If $p(y^-) \leq p(y) \leq p(y^+)$ then
$$\mathbb{P}(A_T = y)  = p(y) \cdot \frac{1}{3} + p(y) \cdot \frac{1}{3} \cdot \left( 1 - \frac{p(y^{-})}{ p(y)} \right) + p(y) \cdot \frac{1}{3} \cdot 0 + p(y^-) \cdot \frac{1}{3} \cdot 1 + p(y^+) \cdot \frac{1}{3} \cdot \frac{p(y)}{p(y^+)} = p(y).$$
If $p(y^-) \leq p(y^+) \leq p(y)$ then
$$\mathbb{P}(A_T = y)  = p(y) \cdot \frac{1}{3} + p(y) \cdot \frac{1}{3} \cdot \left( 1 - \frac{p(y^{-})}{ p(y)} \right) + p(y) \cdot \frac{1}{3} \cdot \left( 1 - \frac{p(y^{+})}{ p(y)} \right) + p(y^-) \cdot \frac{1}{3} \cdot 1 + p(y^+) \cdot \frac{1}{3} \cdot 1 = p(y).$$
If $p(y) \leq p(y^-) \leq p(y^+)$ or $p(y) \leq p(y^+) \leq p(y^-)$ we have
$$\mathbb{P}(A_T = y)  = p(y) \cdot \frac{1}{3} + p(y) \cdot \frac{1}{3} \cdot 0 + p(y) \cdot \frac{1}{3} \cdot 0 + p(y^-) \cdot \frac{1}{3} \cdot \frac{p(y)}{p(y^-)} + p(y^+) \cdot \frac{1}{3} \cdot \frac{p(y)}{p(y^+)} = p(y).$$
If $p(y^+) \leq p(y) \leq p(y^-)$ then
$$\mathbb{P}(A_T = y)  = p(y) \cdot \frac{1}{3} + p(y) \cdot \frac{1}{3} \cdot \left( 1 - \frac{p(y^{+})}{ p(y)} \right) + p(y) \cdot \frac{1}{3} \cdot 0 + p(y^+) \cdot \frac{1}{3} \cdot 1 + p(y^-) \cdot \frac{1}{3} \cdot \frac{p(y)}{p(y^-)} = p(y).$$
If $p(y^+) \leq p(y^-) \leq p(y)$ then
$$\mathbb{P}(A_T = y)  = p(y) \cdot \frac{1}{3} + p(y) \cdot \frac{1}{3} \cdot \left( 1 - \frac{p(y^{+})}{ p(y)} \right) + p(y) \cdot \frac{1}{3} \cdot \left( 1 - \frac{p(y^{-})}{ p(y)} \right) + p(y^+) \cdot \frac{1}{3} \cdot 1 + p(y^-) \cdot \frac{1}{3} \cdot 1 = p(y).$$

Summarizing our work above, we see that in all cases we get (\ref{S5S4E1}). This means that the measure $\mathbb{P}_{H, n}^{1,k,a,b, \vec{x}^n, \vec{y}^n, f^b, g^b}$ on $Y_n$ is invariant with respect to the $A_t$ dynamics from Step 3. An analogous argument shows that the measure $\mathbb{P}_{H, n}^{1,k,a,b, \vec{u}^n, \vec{v}^n, f^t, g^t}$ on $Z_n$ is invariant with respect to the $B_t$ dynamics from Step 3. Consequently, $(A_t, B_t)$ satisfies conditions (2) and (3) in the beginning of Step 2.\\

{\bf \raggedleft Step 5.} In this step we show that $(A_t, B_t)$ satisfies condition (4) in the beginning of Step 2. By construction, we know that $A_t(i,x) \leq B_t(i,x)$ for all $i \in \llbracket 1, k\rrbracket$ and $x \in [a,b]$ when $t =0$. What remains to be seen is that the update rule, explained in Step 3, maintains this property for all $t \geq 0$. \\

For the sake of contradiction, suppose that $A_{T-} \in Y_n$, $B_{T-}\in Z_n$ are such that $A_{T-}(i,x) \leq B_{T-}(i,x)$ for all $i \in \llbracket 1, k\rrbracket$ and $x \in [a,b]$, but that after the $(r,i)$-th Poisson process has jumped at time $T$ we no longer have that $A_{T}(i,x) \leq B_{T}(i,x)$ for all $i \in \llbracket 1, k\rrbracket$ and $x \in [a,b]$. By the formulation of the dynamics, the latter implies that $A_T(i,r) > B_T(i,r)$, and is only possible if $A_{T-}(i,r) = B_{T-}(i,r)$. In particular, we distinguish two cases: {\em Case A}. $\delta = \delta(r,i) = 1$ and $A_{T}(i,r) = A_{T-}(i,r) + \Delta_n^x$, while $B_{T}(i,r) = B_{T-}(i,r)$ or {\em Case B}. $\delta = -1$ and $A_{T}(i,r) = A_{T-}(i,r)$, while $B_{T}(i,r) = B_{T-}(i,r) - \Delta_n^x$. \\

{\em Case A.} By direct inspection one observes that the conditions $A_{T}(i,r) = A_{T-}(i,r) + \Delta_n^x$ and $B_{T-}(i,r) = A_{T-}(i,r)$ imply that $B_{T}' \in Z_n$ (see Step 3 for the definition of $B_T'$). Consequently, in order for $B_T(i,r) = B_{T-}(i,r)$ we must have that 
$$ \frac{W^{1,k,a,b,\vec{x}^n, \vec{y}^n, f^{t}, g^t}_{H,n}(B_T')}{W^{1,k,a,b,\vec{x}^n, \vec{y}^n, f^{t}, g^t}_{H,n}(B_T)} < U(i,r,1, R_T(i,r)) \leq \frac{W^{1,k,a,b,\vec{x}^n, \vec{y}^n, f^{b}, g^b}_{H,n}(A_T')}{W^{1,k,a,b,\vec{x}^n, \vec{y}^n, f^{b}, g^b}_{H,n}(A_T)}.$$
Recalling the definition of $W^{1,k,a,b,\vec{x}, \vec{y}, f, g}_{H,n}$ from (\ref{S5weight}), we see that the latter inequality implies 
\begin{equation*}
\begin{split}
&H(B_{T-}(i+1,r) - B_{T-}(i,r)) + H(B_{T-}(i,r) - B_{T-}(i-1,r)) - \\
&H(B_T'(i+1,r) - B_T'(i,r)) -  H(B_T'(i,r) - B_T'(i-1,r))  < \\
&  H(A_{T-}(i+1,r) - A_{T-}(i,r)) +  H(A_{T-}(i,r) - A_{T-}(i-1,r))- \\
& H(A_T'(i+1,r) - A_T'(i,r)) -  H(A_T'(i,r) - A_T'(i-1,r)),
\end{split}
\end{equation*}
where $A_{T-}(0,r) = A_T'(0,r) = f^b(r)$, $A_{T-}(k+1, r) = A_T'(k+1,r) = g^b(r)$, $B_{T-}(0,r) = B_T'(0,r) = f^t(r)$ and $B_{T-}(k+1, r) = B_T'(k+1,r) = g^t(r)$. 

Let us set $X: =B_{T-}(i,r) = A_{T-}(i,r)$, $Y_t : = B_{T-}(i+1,r) = B_T'(i+1,r) $, $ Z_t :=B_{T-}(i-1,r)= B_T'(i-1,r)$, $Y_b : = A_{T-}(i+1,r) = A_T'(i+1,r) $, $ Z_b :=A_{T-}(i-1,r)=A_T'(i-1,r)$ and note that then the above inequality is equivalent to
\begin{equation}\label{S5Earlier1}
\begin{split}
&H(Y_t- X) + H(X- Z_t) -  H(Y_t- X - \Delta_n^x) -  H(X +\Delta_n^x- Z_t)  < \\
& H(Y_b- X) +  H(X- Z_b)-  H(Y_b - X - \Delta_n^x) -  H(X + \Delta_n^x- Z_b).
\end{split}
\end{equation}
However, by the convexity of $H$ and the fact that $Y_t \geq Y_b$ and $Z_t \geq Z_b$ we have
$$H(Y_t- X) -  H(Y_t- X - \Delta_n^x) \geq H(Y_b- X) -  H(Y_b- X - \Delta_n^x) \mbox{ and }$$
$$H(X + \Delta_n^x- Z_b) - H(X- Z_b) \geq H(X + \Delta_n^x- Z_t) - H(X- Z_t),$$
which contradicts (\ref{S5Earlier1}).\\

{\em Case B.} By direct inspection one observes that the conditions $B_{T}(i,r) = B_{T-}(i,r) - \Delta_n^x$ and $B_{T-}(i,r) = A_{T-}(i,r)$ imply that $A_{T}' \in Z_n$. Consequently, in order for $A_T(i,r) = A_{T-}(i,r)$ we must have that 
$$\frac{W^{1,k,a,b,\vec{x}^n, \vec{y}^n, f^{b}, g^b}_{H,n}(A_T')}{W^{1,k,a,b,\vec{x}^n, \vec{y}^n, f^{b}, g^b}_{H,n}(A_T)} < U(i,r,-1, R_T(i,r)) \leq   \frac{W^{1,k,a,b,\vec{x}^n, \vec{y}^n, f^{t}, g^t}_{H,n}(B_T')}{W^{1,k,a,b,\vec{x}^n, \vec{y}^n, f^{t}, g^t}_{H,n}(B_T)}.$$
Recalling the definition of $W^{1,k,a,b,\vec{x}, \vec{y}, f, g}_{H,n}$ from (\ref{S5weight}), we see that the latter inequality implies 
\begin{equation*}
\begin{split}
&  H(A_{T-}(i+1,r) - A_{T-}(i,r)) +  H(A_{T-}(i,r) - A_{T-}(i-1,r))- \\
& H(A_T'(i+1,r) - A_T'(i,r)) -  H(A_T'(i,r) - A_T'(i-1,r)) < \\
&H(B_{T-}(i+1,r) - B_{T-}(i,r)) + H(B_{T-}(i,r) - B_{T-}(i-1,r)) - \\
&H(B_T'(i+1,r) - B_T'(i,r)) -  H(B_T'(i,r) - B_T'(i-1,r)),
\end{split}
\end{equation*}
where as before $A_{T-}(0,r) = A_T'(0,r) = f^b(r)$, $A_{T-}(k+1, r) = A_T'(k+1,r) = g^b(r)$, $B_{T-}(0,r) = B_T'(0,r) = f^t(r)$ and $B_{T-}(k+1, r) = B_T'(k+1,r) = g^t(r)$. 

Let us set $X: =B_{T-}(i,r) = A_{T-}(i,r)$, $Y_t : = B_{T-}(i+1,r) = B_T'(i+1,r) $, $ Z_t :=B_{T-}(i-1,r)= B_T'(i-1,r)$, $Y_b : = A_{T-}(i+1,r) = A_T'(i+1,r) $, $ Z_b :=A_{T-}(i-1,r)=A_T'(i-1,r)$ and note that then the above inequality is equivalent to
\begin{equation}\label{S5Earlier2}
\begin{split}
&  H(Y_b - X) +  H(X- Z_b)-  H(Y_b - X + \Delta_n^x) -  H(X - \Delta_n^x - Z_b) < \\
& H(Y_t - X) + H(X - Z_t) - H(Y_t- X + \Delta_n^x) -  H(X - \Delta_n^x - Z_t),
\end{split}
\end{equation}
However, by the convexity of $H$ and the fact that $Y_t \geq Y_b$ and $Z_t \geq Z_b$ we have
$$H(X- Z_b)-  H(X - \Delta_n^x - Z_b) \geq H(X - Z_t)  -  H(X - \Delta_n^x - Z_t) \mbox{ and }$$
$$H(Y_t- X + \Delta_n^x)  - H(Y_t - X) \geq H(Y_b- X + \Delta_n^x)  - H(Y_b - X) ,$$
which contradicts (\ref{S5Earlier2}).\\

Overall, we see that we reach a contradiction in both cases. This means that $(A_t, B_t)$ satisfies all four conditions in Step 2, which concludes the proof of the lemma.
\end{proof}

\bibliographystyle{alpha}
\bibliography{PD}

\end{document}